\crefname{theorem}{Theorem}{Theorems}
\crefname{thm}{Theorem}{Theorems}
\crefname{lemma}{Lemma}{Lemmas}
\crefname{claim}{Claim}{Claims}
\crefname{lem}{Lemma}{Lemmas}
\crefname{remark}{Remark}{Remarks}
\crefname{prop}{Proposition}{Propositions}
\crefname{defn}{Definition}{Definitions}
\crefname{corollary}{Corollary}{Corollaries}
\crefname{conjecture}{Conjecture}{Conjectures}
\crefname{question}{Question}{Questions}
\crefname{chapter}{Chapter}{Chapters}
\crefname{section}{Section}{Sections}
\crefname{part}{Part}{Parts}
\crefname{figure}{Figure}{Figures}
\theoremstyle{plain}
\newtheorem{thm}{Theorem}[section]
\newtheorem{lemma}[thm]{Lemma}
\newtheorem{theorem}[thm]{Theorem}
\newtheorem{corollary}[thm]{Corollary}
\newtheorem{prop}[thm]{Proposition}
\theoremstyle{definition}
\theoremstyle{remark}
\newtheorem{remark}[thm]{Remark}
\numberwithin{equation}{section}
\renewcommand{\P}{\mathbb P}
\newcommand{\E}{\mathbb E}
\newcommand{\R}{\mathbb R}
\newcommand{\Z}{\mathbb Z}
\newcommand{\N}{\mathbb N}
\newcommand{\F}{\mathfrak F}
\newcommand{\cE}{\mathcal E}
\newcommand{\cI}{\mathcal I}
\newcommand{\cW}{\mathcal W}
\newcommand{\sA}{\mathscr A}
\newcommand{\sB}{\mathscr B}
\newcommand{\sD}{\mathscr D}
\newcommand{\sE}{\mathscr E}
\newcommand{\sI}{\mathscr I}
\newcommand{\sT}{\mathscr T}
\newcommand{\fF}{\mathfrak F}
\newcommand{\fP}{\mathfrak P}
\newcommand{\fT}{\mathfrak T}
\newcommand{\LE}{\mathsf{LE}}
\newcommand{\eps}{\varepsilon}
\newcommand{\bP}{\mathbf P}
\newcommand{\Eta}{\mathrm{H}}
\newcommand{\Av}{\operatorname{Av}}
\def\B{\mathcal{B}}
\renewcommand{\epsilon}{\varepsilon}
\newcommand{\1}{{\text{\Large $\mathfrak 1$}}}
\renewcommand{\emptyset}{\varnothing}
\newcommand{\til}{\widetilde}
\newcommand{\pr}[1]{\mathbb{P}\!\left(#1\right)}
\renewcommand{\E}[1]{\mathbb{E}\!\left[#1\right]}
\newcommand{\estart}[2]{\mathbb{E}_{#2}\!\left[#1\right]}
\newcommand{\prstart}[2]{\mathbb{P}_{#2}\!\left(#1\right)}
\newcommand{\prcond}[3]{\mathbb{P}_{#3}\!\left(#1\;\middle\vert\;#2\right)}
\newcommand{\econd}[2]{\mathbb{E}\!\left[#1\;\middle\vert\;#2\right]}
\newcommand{\tn}{|\kern-.1em|\kern-0.1em|}
\newcommand{\cpc}[2]{\mathrm{Cap}_{#1}(#2)}
\newcommand{\vr}[1]{\mathrm{Var}\left(#1\right)}
\newcommand{\cc}[1]{\mathrm{Cap}\left(#1\right)}
\newcommand\be{\begin{equation}}
\newcommand\ee{\end{equation}}
\def\bP{\mathbb{P}}
\def\eps{\varepsilon}
\newcommand{\lr}[1]{{\rm{LE}}(#1)}
\newcommand{\cpp}[1]{\mathrm{Cap}\left(#1\right)}
\newcommand{\ler}[2]{{\mathsf{LE}}_{#1}\!\left(#2 \right)}
\newcommand{\Ceff}{\mathscr{C}_\mathrm{eff}}
\newcommand{\bG}{\mathbf{G}}
\newcommand{\AB}{\mathsf{AB}}
\newcommand{\radint}{\operatorname{rad}_\mathrm{int}}
\newcommand{\radext}{\operatorname{rad}_\mathrm{ext}}
\newcommand{\AvC}{\operatorname{AvC}}
\title{\bf Logarithmic corrections to scaling in the four-dimensional uniform spanning tree}
\renewenvironment{abstract}
 {\par\noindent\textbf{\abstractname.}\ \ignorespaces}
 {\par\medskip}
\author{{\bf Tom Hutchcroft$^*$ and Perla Sousi$^\dagger$}}
\begin{document}

\date{\small{\today}}

\maketitle

\begin{abstract}
We compute  the precise logarithmic corrections to mean-field scaling for various  quantities describing the uniform spanning tree of the four-dimensional hypercubic lattice $\Z^4$. We are particularly interested in the distribution of the \emph{past} of the origin, that is, the finite piece of the tree that is separated from infinity by the origin. We prove that the probability that the past contains a path of length $n$ is of order $(\log n)^{1/3}n^{-1}$, that the probability that the past contains at least $n$ vertices is of order $(\log n)^{1/6} n^{-1/2}$, and that the probability that the past reaches the boundary of the box $[-n,n]^4$ is of order $(\log n)^{2/3+o(1)}n^{-2}$. 
 An important part of our proof is to prove concentration estimates for the capacity of the four-dimensional loop-erased random walk which may be of independent interest.

 Our results imply that the Abelian sandpile model also  exhibits non-trivial polylogarithmic corrections to mean-field scaling in four dimensions, although it remains open to compute the precise order of these corrections.
\end{abstract}

\newpage

\tableofcontents

\newpage

\setstretch{1.1}

\section{Introduction}\label{sec:intro}

Many models in probability and statistical mechanics are believed to have an \emph{upper-critical dimension} $d_c$ above which they exhibit \emph{mean-field} critical behaviour. This means that when $d>d_c$ the model behaves at criticality in roughly the same way on a $d$-dimensional lattice as it does in ``geometrically trivial" settings such as the complete graph or the $3$-regular tree. In low dimensions $d < d_c$ the geometry of the lattice affects the model in a non-negligible way so that its behaviour is substantially different to the high-dimensional case. At the upper-critical dimension $d=d_c$ itself the model's behaviour is expected to be \emph{almost} mean-field: in particular, several quantities of interest are expected to differ from their mean-field values by a polylogarithmic factor when $d=d_c$ and a polynomial factor when $d<d_c$. For many natural models the upper-critical dimension is equal to $4$, and understanding these models at the upper-critical dimension is closely related to important problems in constructive quantum field theory in $3+1$ space-time dimensions~\cite{aizenman2019marginal,fernandez2013random}. While important progress has been made on various models including the Ising model~\cite{aizenman2019marginal}, weakly self-avoiding walk \cite{MR3339164,MR3345374}, loop-erased random walk \cite{Lawlerlog,Loop}, and the $\varphi^4$ model \cite{MR3269689}, the class of models that can be understood rigorously at the upper-critical dimension remains very limited.

In this paper we analyze the logarithmic corrections to mean-field scaling in the four dimensional \emph{uniform spanning tree}, particularly with regard to the distribution of the \emph{past} of the origin. 
Our results complement those of Lawler \cite{Lawlerlog} and Schweinsberg \cite{schweinsberg2009loop}, who computed the logarithmic corrections to scaling for some other features of the model.
 Before stating our results, let us first recall the definition of the model, referring the reader to \cite{LP:book,barlow2014loop,1804.04120} for further background\footnote{
A further introduction to the subject along with an informal overview of the arguments of the present paper and of~\cite{1804.04120,hutchcroft2015interlacements} can be found in the first author's lectures at the 2020 \href{https://www.math.ubc.ca/Links/OOPS/}{Online Open Probability Summer School}, available at \url{https://www.youtube.com/playlist?list=PLpgoGs2cboIXEVeL0OoKRP6rKpAdW5Kiz}.
 }. 
 A uniform spanning tree of a finite connected graph is simply a spanning tree of the graph chosen uniformly at random; the \textbf{uniform spanning forest} of the hypercubic lattice $\Z^d$ is defined to be the weak limit of the uniform spanning trees of the boxes $\Lambda_r=[-r,r]^d \cap \Z^d$, or equivalently of any other exhaustion of $\Z^d$ by finite connected subgraphs. This limit was proven to exist independently of the choice of exhaustion by Pemantle \cite{Pem91}, who also proved that the uniform spanning forest of $\Z^d$ is almost surely connected, i.e., a single tree, if and only if $d \leq 4$: this is a consequence of the fact that two independent walks on $\Z^d$ intersect infinitely often a.s.\ if and only if $d\leq 4$ \cite{ErdosTaylor}, and is closely related to the fact that the upper-critical dimension of the uniform spanning tree is $4$. 
 In light of these results, we refer to the uniform spanning forest of $\Z^d$ as the uniform spanning \emph{tree} when $d\leq 4$.
There are various interesting senses in which the four-dimensional uniform spanning tree only just manages to be connected: For example, it can be shown that the length of the path connecting two neighbouring vertices has an extremely heavy $(\log n)^{-1/3}$ tail \cite{Lawlerlog}.

Besides connectivity, the other basic topological features of the uniform spanning forest are also now understood in every dimension. Indeed, following partial results of Pemantle \cite{Pem91}, it was proven by Benjamini, Lyons, Peres, and Schramm \cite{USFBenLyPeSc} that every tree in the uniform spanning forest of $\Z^d$ is \emph{one-ended} almost surely when $d\geq 2$. This means that for every vertex $x\in \Z^d$ there is exactly one simple path to infinity emanating from $x$ which, by Wilson's algorithm \cite{Wilson96,USFBenLyPeSc}, is distributed as an infinite loop-erased random walk.
 See also \cite{LMS08,hutchcroft2015interlacements,H15} for further related results.
  In order to quantify this one-endedness and better understand the geometry of the trees, we seek to analyze
the distribution of the \emph{finite} pieces of the tree that hang off this infinite spine. 

Let us now introduce some relevant notation. Let $\fT$ be the uniform spanning tree of $\Z^4$.
For each $x\in \Z^4$, the \textbf{past}\footnote{The character $\fP$ we use to denote the past is $\backslash$mathfrak$\{$P$\}$.} $\fP(x)$ of $x$ in $\fT$ is defined to be the union of $x$ with the finite connected components of $\fT \setminus \{x\}$.  We refer to the graph distance on $\fT$ as the \textbf{intrinsic}  distance (a.k.a.\ chemical distance) and the graph distance on $\Z^4$ as the \textbf{extrinsic} distance.
We write $\radint(\fP(0))$ and $\radext(\fP(0))$ for the \textbf{intrinsic} and \textbf{extrinsic radii} of $\fP(0)$, that is, the maximum intrinsic or extrinsic distance between $0$ and another point in $\fP(0)$ as appropriate.
In high dimensions, it is proven in \cite[Theorem 1.1]{1804.04120} that the past has intrinsic diameter at least $n$ with probability of order $n^{-1}$ and volume at least $n$ with probability of order $n^{-1/2}$;
 the same as the probabilities that the survival time and total progeny of a critical, finite-variance branching process are at least $n$ respectively \cite{LPP95}.
Our first main theorem computes the logarithmic corrections to this behaviour in four dimensions, giving up-to-constants estimates on the probability that the past has large intrinsic radius or volume.

\begin{theorem}[Volume and the intrinsic one-arm]
\label{thm:wusf}
Let $\fT$ be the uniform spanning tree of $\Z^4$ and let $\fP=\fP(0)$ be the past of the origin. Then
\begingroup
\addtolength{\jot}{0.5em}
\begin{align}
 \pr{
 \radint(\fP) \geq n 
 } &\asymp \frac{(\log n)^{1/3}}{n} 
 \hspace{3cm} \text{ and}
\label{exponent:intrad}
\\
 \pr{|\fP| \geq n } &\asymp \frac{(\log n)^{1/6}}{n^{1/2}} 
\label{exponent:vol}
\end{align}
\endgroup
for every $n\geq 2$.
\end{theorem}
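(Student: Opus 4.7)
The strategy is to translate the past $\fP$ into properties of a loop-erased random walk (LERW) via Wilson's algorithm rooted at infinity, and then exploit the $(\log n)^{1/3}$ corrections to four-dimensional LERW due to Lawler and Schweinsberg. Let $\gamma$ denote the infinite LERW from $0$, which is the path from $0$ to infinity in $\fT$. A vertex $y$ belongs to $\fP$ precisely when the tree-path from $y$ to infinity first meets $\gamma$ at $0$ itself, and the intrinsic distance from $y$ to $0$ is then the length of the initial segment of this tree-path before it merges with $\gamma$. The task therefore reduces to estimating hitting probabilities of $0$ by simple random walks that are forbidden to touch $\gamma\setminus\{0\}$, together with the distribution of the loop-erased length of a successful excursion.

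For the intrinsic-radius estimate \eqref{exponent:intrad} I would carry out a first- and second-moment argument on $L_n$, the number of vertices of $\fP$ at intrinsic distance exactly $n$ from $0$. By the Lawler--Schweinsberg relation between simple-random-walk length and LERW length in dimension four, a LERW of length $n$ has typical extrinsic diameter $R_n\asymp n^{1/2}(\log n)^{1/6}$; since the Green's function in $\Z^4$ decays as $|x|^{-2}$, a random walk starting at extrinsic distance $R_n$ hits $0$ with probability $\asymp R_n^{-2}\asymp (\log n)^{-1/3}n^{-1}$. Aggregating this hitting probability over the potential starting vertices, weighted by the probability that the loop-erased excursion has length $n$ at the moment of absorption, should yield $\E{L_n}\asymp (\log n)^{1/3}$. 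A parallel two-walk computation, using the logarithmic sparsity of LERW--LERW intersections in four dimensions, should give $\E{L_n^2}\asymp n(\log n)^{1/3}$, and the Paley--Zygmund inequality then produces $\pr{\radint(\fP)\geq n}\gtrsim (\log n)^{1/3}/n$. For the matching upper bound I would convert $\{\radint(\fP)\geq n\}$ into a quantitative lower bound on an effective capacity inside $\fP$, which can be ruled out using the capacity concentration estimates for LERW announced in the abstract.

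The volume bound \eqref{exponent:vol} I would reduce to \eqref{exponent:intrad} via the scaling relation that the volume of the past, conditioned on its intrinsic radius being $m$, concentrates around $m^2/(\log m)^{1/3}$. Solving $m^2/(\log m)^{1/3}=n$ gives $m\asymp n^{1/2}(\log n)^{1/6}$, and substituting into \eqref{exponent:intrad} yields the target $(\log n)^{1/6}/n^{1/2}$. Concretely, the upper bound splits according to whether $\radint(\fP)$ exceeds a large multiple of $n^{1/2}(\log n)^{1/6}$: the large-radius term is handled by \eqref{exponent:intrad}, while the complementary event is controlled by summing the one-point estimates used in the first moment of $L_n$ over vertices at intrinsic distance at most $Cn^{1/2}(\log n)^{1/6}$. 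The lower bound comes from a second-moment computation applied directly to $|\fP|$, using the same ingredients.

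The main obstacle is the capacity concentration statement for four-dimensional LERW: that $\cp(\gamma|_{[0,m]})$ concentrates around a deterministic quantity of order $m/(\log m)^{1/3}$. This is what allows the extrinsic hitting calculations (governed by capacity) to be exchanged with the intrinsic-length statements (governed by LERW length) without accumulating spurious factors of $\log n$. Neither Lawler nor Schweinsberg supplies such a concentration statement, and it is the new technical ingredient that must be engineered in order to pin down the exponents $(\log n)^{1/3}$ and $(\log n)^{1/6}$ on the nose rather than only up to $(\log n)^{o(1)}$ errors.
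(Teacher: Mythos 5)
Your proposal has two genuine gaps, one factual and one structural. First, the first-moment claim is wrong: by the mass-transport principle, $\E{|\partial\fP(0,n)|}=1$ \emph{exactly} for every $n$ (each vertex has exactly one vertex $n$ steps into its future), so $\E{L_n}\asymp(\log n)^{1/3}$ is false; the logarithmic correction lives entirely in the conditional size given non-emptiness, not in the mean. This is not a harmless bookkeeping slip: feeding your moments into your own volume argument gives $\E{|\fP(0,r)|}\asymp r(\log r)^{1/3}$ and hence a volume upper bound of order $(\log n)^{1/2}/n^{1/2}$ rather than $(\log n)^{1/6}/n^{1/2}$. Moreover, the second-moment inputs you lean on — $\E{L_n^2}$ up to constants, a direct second moment of $|\fP|$, or concentration of $|\fP|$ given the intrinsic radius — are supplied neither by Lawler nor by Schweinsberg and are exactly what the paper goes out of its way to avoid: its volume lower bound replaces the direct second moment by a mass-transport comparison between the second moment of the past-ball volume restricted to a box and the first moment of the whole-tree ball in that box, which is then controlled by a two-walk intersection estimate inside $\Lambda_r$ giving $r^4/\log r$.

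The larger gap is the upper bound on $\pr{\radint(\fP)\geq n}$, which is the technical heart of the theorem. ``Convert the event into a capacity lower bound and rule it out by concentration'' is not a mechanism: capacity has no direct bearing on the static UST until one introduces dynamics. In the paper it enters through the interlacement Aldous--Broder algorithm — a path of capacity $c$ in the past survives a time window of length $\delta$ with probability $e^{-\delta c}$ — and the bound is closed by an inductive inequality of the form $Q(2n)\leq C(\log n)^{1/3}n^{-1}+\tfrac14Q(n)$, with separate control of the low-capacity error via the lower-tail capacity estimates. More importantly, a capacity-only argument cannot produce the $(\log n)^{1/3}$ numerator at all: tree paths of length $n$ have capacity of order $n/(\log n)^{2/3}$, which by itself yields only the $0$-WUSF order $(\log n)^{2/3}/n$; the missing factor $(\log n)^{-1/3}$ is the walk--LERW non-intersection probability, and on the upper-bound side the paper must prove (its Lemma on early arrival times) that conditionally on the origin being hit by a trajectory in a short time window the past is deep with probability $\lesssim(\log n)^{-1/3}$, using the $0$-WUSF radius bound and the stochastic domination property. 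Nothing in your sketch accounts for where this factor enters the upper bound, so as proposed the argument would at best give $(\log n)^{2/3}/n$, i.e.\ it could not distinguish the past of the UST from the component in the $0$-wired forest.
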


Our proof builds upon both the ideas developed to analyze the high-dimensional uniform spanning forest in \cite{1804.04120}
 and on Lawler's results on the logarithmic corrections for \emph{loop-erased random walk} in four dimensions \cite{Lawlerlog,Law91}. We discuss Lawler's results in detail in \cref{subsec:LERWbackground}.
An outline of the proof, including a heuristic derivation of the exponents appearing here from Lawler's results, is given in \cref{subsec:about_the_proof}. As in \cite{1804.04120}, our proof relies heavily on the analysis of the \emph{interlacement Aldous--Broder algorithm} \cite{hutchcroft2015interlacements}. In order to perform this analysis, it is important to establish concentration estimates for the \emph{capacity} of 4d loop-erased random walk, which are stated in \cref{sec:capacity,subsec:polygamous_deviants} and may be of independent interest. The proof of \cref{thm:wusf,thm:extrinsic} also yields related estimates for the Abelian sandpile model, which are stated in \cref{subsec:sandpile}.


Here and elsewhere, we write $\asymp$, $\gtrsim$, and $\lesssim$ for equalities and inequalities that hold to within multiplication by a positive constant. 
We also make use of standard Landau asymptotic big-$O$ and little-$o$ notation.
In particular, if $f,g:\N \to [0,\infty)$ then $f(n) \lesssim g(n)$ and $f(n)=O(g(n))$ both mean that there exists a positive constant $C$ such that $f(n) \leq C g(n)$ for every $n\geq 1$, while $f(n)=o(g(n))$ means that $f(n)/g(n) \to 0$ as $n\to\infty$. When the implicit constants depend on an additional parameter we denote this using subscripts so that, for example, ``$f_\lambda(n) \asymp_\lambda g_\lambda(n)$ for every $n\geq 1$ and $\lambda \geq 1$'' means that for every $\lambda \geq 1$ there exist positive constants $c_\lambda$ and $C_\lambda$ such that $c_\lambda g_\lambda(n) \leq f_\lambda(n) \leq C_\lambda g_\lambda(n)$ for every $n \geq 1$. In particular, if $g(n)$ is positive and bounded away from zero then the statements ``$f(n) \lesssim g(n)^{1+o(1)}$ for every $n\geq 1$'' and ``$f(n) \lesssim_\eps g(n)^{1+\eps}$ for every $\eps>0$ and $n\geq 1$'' are equivalent. 






We now consider the \emph{extrinsic} radius. 
 When $d \geq 5$, it is proven in \cite[Theorem 1.4]{1804.04120} that the past of the origin has extrinsic radius at least $r$ with probability of order $r^2$; the same as the probability that a critical, finite-variance branching random walk reaches distance at least $r$. Our next main theorem establishes the order of the logarithmic correction to the same probability in four dimensions.

\begin{theorem}[The extrinsic one-arm]
\label{thm:extrinsic}
Let $\fT$ be the uniform spanning tree of $\Z^4$ and let $\fP=\fP(0)$ be the past of the origin. Then
\begin{equation}
 \frac{(\log n)^{2/3}}{n^2}  \lesssim \pr{ \radext(\fP)\geq n } \lesssim \frac{(\log n)^{2/3+o(1)}}{n^2} 
\label{exponent:extrad}
\end{equation}
for every $n\geq 2$.
\end{theorem}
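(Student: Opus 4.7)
The aim is to prove matching upper and lower bounds. The lower bound is a second-moment argument on the number of past-vertices in an annulus around $0$, while the upper bound splits according to the intrinsic radius and uses the concentration of the extrinsic extent of four-dimensional loop-erased random walk.

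\emph{Lower bound.} I would set $X_n = |\fP(0) \cap (\Lambda_n \setminus \Lambda_{n/2})|$ and apply Paley--Zygmund. By Wilson's algorithm rooted at infinity, $v \in \fP(0)$ if and only if the simple random walk from $v$ hits $0$, so $\pr{v \in \fP(0)} = G(0,v)/G(0,0) \asymp \|v\|^{-2}$ and hence $\E{X_n} \asymp n^2$. For the second moment I would expand $\E{X_n^2}$ as a sum of pair probabilities $\pr{u,v \in \fP(0)}$. Running Wilson's algorithm on $\{0,u,v\}$ (first $0$, then $u$, then $v$), on the event $u \in \fP(0)$ we produce a LERW path $\eta$ of typical length $\asymp n^2 (\log n)^{1/3}$ joining $u$ to $0$; the condition $v \in \fP(0)$ then requires the walk from $v$ to first meet the current tree on $\eta$. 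Summing over pairs $u,v$ at distance $\asymp n$ and using standard Green-function / hitting estimates, this total hitting probability is governed by $\cp(\eta)/n^2$, times the $n^{-2}$ factor for the initial Wilson step from $u$. By the capacity concentration results of \cref{sec:capacity,subsec:polygamous_deviants}, $\cp(\eta) \asymp n^2/(\log n)^{1/3}$ with high probability, yielding $\E{X_n^2} \lesssim n^6/(\log n)^{2/3}$. Paley--Zygmund then gives $\pr{\radext(\fP) \geq n/2} \gtrsim (\log n)^{2/3}/n^2$.

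\emph{Upper bound.} I would split according to intrinsic radius: with $R = n^2 (\log n)^{-1/3}$,
\[
\pr{\radext(\fP) \geq n} \leq \pr{\radint(\fP) \geq R} + \pr{\radext(\fP) \geq n,\ \radint(\fP) < R}.
\]
The first term is $\asymp (\log R)^{1/3}/R \asymp (\log n)^{2/3}/n^2$ by \cref{thm:wusf}. For the second term, every path in $\fT$ is a LERW between its endpoints, so a past of intrinsic radius at most $R$ has extrinsic extent concentrated around $R^{1/2}/(\log R)^{1/6} \asymp n/(\log n)^{1/3}$; reaching extrinsic scale $n$ requires a $(\log n)^{1/3}$-factor deviation. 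The deviation estimates for the extrinsic extent of 4d LERW developed in \cref{sec:capacity,subsec:polygamous_deviants}, combined with a dyadic decomposition over intrinsic scales $\leq R$, control this event up to a $(\log n)^{o(1)}$ loss, producing the stated upper bound.

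\emph{Main obstacle.} The crux is the two-point estimate $\pr{u,v \in \fP(0)}$, whose sharpness depends on controlling the capacity of a 4d LERW on a logarithmic scale. The capacity concentration input---that a 4d LERW of length $m$ has capacity $\asymp m/(\log m)^{1/3}$ with quantitative tails---is the main technical novelty of the paper and is precisely what produces the specific logarithmic exponent $2/3$ rather than any other value. The $(\log n)^{o(1)}$ slack in the upper bound reflects the strength of the currently available deviation estimates for the extrinsic extent of 4d LERW, and any sharpening of those tail bounds would eliminate it and close the gap between the upper and lower bounds in \eqref{exponent:extrad}.
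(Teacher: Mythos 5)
Both halves of your argument contain genuine gaps, and the quantitative inputs you cite are mis-stated. For the lower bound, the identity $\pr{v\in\fP(0)}=\bG(0,v)/\bG(0,0)$ is false: hitting $0$ by the walk from $v$ is the criterion for $v$ to lie in the component $\fT_0$ of the \emph{$0$-wired} forest, not in the past of $0$ in the UST. In Wilson's algorithm rooted at infinity, $v\in\fP(0)$ requires the walk from $v$ to hit the future of $0$ \emph{precisely at} $0$ (equivalently, the infinite LERW from $v$ passes through $0$), which carries an extra non-intersection factor of order $(\log\|v\|)^{-1/3}$; hence $\E{X_n}\asymp n^2(\log n)^{-1/3}$, not $n^2$. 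With the correct first moment, your claimed bound $\E{X_n^2}\lesssim n^6(\log n)^{-2/3}$ only yields $\pr{X_n>0}\gtrsim n^{-2}$ via Paley--Zygmund, i.e.\ the logarithmic correction is lost; to recover $(\log n)^{2/3}n^{-2}$ you would need the sharp two-point bound $\E{X_n^2}\lesssim n^6(\log n)^{-4/3}$, which you do not establish (and which is not in the paper). Your auxiliary numerics are also off: a tree path joining a point at Euclidean distance $n$ to $0$ has length $\asymp n^2(\log n)^{-1/3}$, not $n^2(\log n)^{1/3}$, and the capacity of a LERW path of length $m$ is $\asymp m/(\log m)^{2/3}$ (so $\asymp n^2/\log n$ here), not $m/(\log m)^{1/3}$. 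The paper instead proves the lower bound constructively via the interlacement Aldous--Broder algorithm (\cref{pro:lowerbounds}, using the displacement estimate \cref{lem:displacement}), which avoids any two-point estimate.

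For the upper bound the displacement scaling is inverted, and this breaks the structure of the argument. A LERW of length $m$ typically has extrinsic extent $\asymp m^{1/2}(\log m)^{1/6}$ (the log factor multiplies, cf.\ \cref{lem:displacement}), so with $R=n^2(\log n)^{-1/3}$ the typical extrinsic extent at intrinsic radius $R$ is $\asymp R^{1/2}(\log R)^{1/6}\asymp n$, not $n/(\log n)^{1/3}$. Consequently $\{\radext(\fP)\geq n,\ \radint(\fP)<R\}$ is not a $(\log n)^{1/3}$-factor deviation event, and controlling it requires, at every intrinsic scale $m\ll R$, a strong deviation estimate for the extrinsic extent of \emph{all} paths in the past of that length. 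No such estimates are developed in \cref{sec:capacity,subsec:polygamous_deviants} (those sections concern the capacity and the length of the loop-erasure, not displacement deviations), and the paper states explicitly that it could not prove a concentration estimate for the length of LERW strong enough to run exactly this argument. That is why it introduces the typical time $T(\eta)$ and the time radius: \cref{lem:typicaltime}, \cref{lem:geomfixedtime}, and \cref{prop:timeradius} replace the missing concentration input, and the dyadic/Azuma argument of \cref{subsec:extrinsicproof} (with cut-offs $s\asymp r^2/((\log r)^{1/3}(\log\log r)^2)$ and $t\asymp r^2/(\log\log r)^2$) is where the $(\log n)^{o(1)}$ loss actually arises. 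As written, your second term rests on estimates that do not exist in the paper, so the upper bound is not proved.
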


\begin{remark}
 We believe that it should be possible to remove the $(\log n)^{o(1)}$ error from the upper bound in this theorem using roughly the same methods we develop here but that this may be a highly technical matter. We have chosen not to pursue this further in light of the paper's already formidable length.
\end{remark}

\begin{remark}
It is a fairly straightforward matter to adapt the high-dimensional methods of \cite{1804.04120} to prove that e.g.\ $\P(\radint(\fP) \geq n) \asymp n^{-1} (\log n)^{\pm O(1)}$ in four dimensions, along with similar bounds on the tail of the volume and extrinsic radius. See also \cite{MR3718713} for earlier results of a similar nature. The main contribution of the present paper is to identify the \emph{precise order} of the logarithmic corrections, which is much more challenging technically.
\end{remark}

\begin{remark}
Our methods are not particularly specific to the hypercubic lattice, and we expect that all our results should extend to arbitrary transitive graphs of four-dimensional volume growth. In particular, we believe that the same logarithmic corrections to mean-field scaling should hold universally even in non-Euclidean examples rough-isometric to the Heisenberg group. To establish such a generalization, one would first need to generalize Lawler's work on loop-erased random walk \cite{Law91,Lawlerlog} to this setting. We do not pursue this here.
\end{remark}

\begin{remark}
The results of the present paper play a central role in the computation of the logarithmic corrections to mean-field scaling for the \emph{random walk} on the four-dimensional uniform spanning tree as carried out in subsequent work of Halberstam and the first author \cite{halberstam2022logarithmic}.
\end{remark}

\textbf{Further discussion and relation to previous work.} 
The uniform spanning tree is closely connected to many other interesting models in probability and statistical mechanics, most notably to random walk and loop-erased random walk via the Aldous--Broder algorithm \cite{Aldous90,broder1989generating,hutchcroft2015interlacements} and Wilson's algorithm \cite{Wilson96,USFBenLyPeSc}.
These connections 
  have made it much more amenable to rigorous analysis than essentially any other non-trivial statistical mechanics-type model. 
 This tractability has led the uniform spanning tree to be at the forefront of developments in probability theory over the last thirty years.
Indeed, the study of the uniform spanning tree and loop-erased random walk in two dimensions was instrumental in the development of the theory of Schramm-Loewner evolutions and conformally invariant scaling limits \cite{LaSchWe04,Ken00,SchrammSLE}. More recently, Kozma~\cite{MR2350070} and Angel, Croydon, Hernandez-Torres, and Shiraishi \cite{angel2020scaling} have proven that the scaling limit of the three-dimensional loop-erased random walk and uniform spanning tree are well-defined, a fact that is not known for essentially any other non-trivial three-dimensional model.

Closer to our setting, a very detailed understanding of loop-erased random walk on $\Z^d$ with $d\geq 4$ has been established in the work of Lawler \cite{Law91,Law85,LawlerSAW,Lawlerlog}, who proved in particular that loop-erased random walk on $\Z^d$ has Brownian motion as its scaling limit when $d\geq 4$ \cite{Loop}. Further strong results on 4d loop-erased random walk have recently been obtained in \cite{LawlerSunWu}. Lawler's results for four-dimensional loop-erased random walk play an important role in this paper, and are discussed in detail in \cref{subsec:LERWbackground}. In addition to the work on high-dimensional spanning forests \cite{1804.04120,MR3718713} mentioned above, several related works have also studied the uniform spanning trees of high-dimensional \emph{tori}. Peres and Revelle \cite{peres2004scaling} proved that the uniform spanning tree of the torus $\mathbb{T}^d_n:=(\Z/n\Z)^d$ converges to Aldous's continuum random tree when $d\geq 5$ and $n\to\infty$ after rescaling by a factor of order $n^{d/2}\asymp |\mathbb{T}^d_n|^{1/2}$. Building on this work, Schweinsberg~\cite{schweinsberg2009loop} established a similar scaling limit theorem for four-dimensional tori but where the relevant scaling factor is of order $n^2 (\log n)^{1/6} \asymp |\mathbb{T}^4_n|^{1/2} (\log |\mathbb{T}^4_n|)^{1/6}$. See also \cite{MR2172682} for further related results. 

While the results of Schweinsberg are closely related in spirit to those that we prove here, our results operate at a different scale to his and it does not seem that either set of results can be used to deduce the other. 
Indeed, both aforementioned convergence theorems are stated in terms of the \emph{Gromov-weak} topology, which means that the matrix of distances between $k$ uniform random points converges in distribution to the corresponding distribution in the continuum random tree for each fixed $k\geq 2$. In particular, Schweinsberg's result implies that the intrinsic distance between two \emph{typical} points of $\mathbb{T}^4_n$ is of order $n^2 (\log n)^{1/6}$ but does not establish a similar estimate for the \emph{diameter} of the spanning tree. Recently, Michaeli, Nachmias, and Shalev \cite{michaeli2020diameter} developed a finite-volume version of the methods of \cite{1804.04120} that allowed them to prove that the diameter of the uniform spanning tree of $\mathbb{T}^d_n$ is of order $|\mathbb{T}^d_n|^{1/2}$ with high probability when $d\geq 5$ and $n$ is large, and Archer, Nachmias, and Shalev \cite{archer2021ghp} subsequently sharpened this result further to show Gromov-Hausdorff-Prokhorov convergence of USTs of high-dimensional tori to the continuum random tree. See also \cite{alon2020diameter} for further related results. We are optimistic that a synthesis of the methods of the present paper with those of \cite{michaeli2020diameter,archer2021ghp} may allow one to prove that the diameter of the uniform spanning tree of $\mathbb{T}^4_n$ is of order $n^2 (\log n)^{1/6}$ with high probability when $n$ is large, and hopefully also to strengthen Schweinsberg's results from Gromov-weak to Gromov-Hausdorff or Gromov-Hausdorff-Prokhorov convergence.



\subsection{The $v$-wired uniform spanning forest}

In this section we state analogues of our main theorems for the \emph{$0$-wired uniform spanning forest} ($0$-WUSF) of $\Z^4$. This model is a variant of the uniform spanning tree in which the origin is `wired to infinity' first introduced by J\'arai and Redig \cite{JarRed08} as part of their work on the Abelian sandpile model.
Besides their intrinsic interest, our results on this model also serve an important auxiliary role in the proofs of \cref{thm:wusf,thm:extrinsic} and are used to derive upper bounds on the Abelian sandpile model as discussed in the next subsection.

We now recall the relevant definitions, taking the opportunity to recall the definition of the wired uniform spanning forest of a general graph also.
Let $G=(V,E)$ be an infinite graph, let $v$ be a distinguished vertex of $G$, and let  $( V_n )_{n\geq 1}$ be an exhaustion of $V$ by finite connected sets. For each $n\geq 1$, we define $G_n^*$ to be the graph obtained from identifying (a.k.a.\ wiring) $V\setminus V_n$ into a single point that we denote by $\partial_n$. Recall that the \textbf{wired uniform spanning forest} of $G$ is defined to be the weak limit of the uniform spanning trees of $G_n^*$. This limit was (implictly) proven to exist by Pemantle \cite{Pem91}, who also proved that the wired uniform spanning forest of $\Z^d$ coincides with the uniform spanning forest as we defined it above. Now, for each $n\geq 1$ let $G^{*v}_n$ be the graph obtained from identifying $v$ with~$\partial_n$ in the graph $G_n^*$. We define the $v$-wired uniform spanning forest on $G$ to be the weak limit of the uniform spanning trees on $G_n^{*v}$, which is well-defined and does not depend on the choice of exhaustion \cite[\S3]{LMS08}. Note that this model is \emph{not} automorphism-invariant in general, since the vertex $v$ plays a special role.

Let $G$ be an infinite transient graph, let $\fP(v)$ be the past of $v$ in the wired uniform spanning forest of $G$ and let $\fT_v$ denote the component containing $v$ in the $v$-wired uniform spanning forest of $G$. Lyons, Morris, and Schramm \cite[Proposition 3.1]{LMS08} proved that $\fT_v$ stochastically dominates $\fP(v)$ and moreover that $\fT_v$ is a.s.\ finite if and only if $\fP(v)$ is a.s.\ finite. In our primary setting of $\Z^4$, it can be deduced that the $0$-wired uniform spanning forest has exactly two components almost surely: a finite component containing $0$ and an infinite component that does not contain $0$. In \cite[Lemma 2.1]{1804.04120} (\cref{lem:domination} of this paper), a stronger version of the aforementioned stochastic domination property was derived, in which one may condition on the future of $v$ in advance. This property makes the $0$-WUSF very useful in the study of the usual WUSF, and can often be used in a similar manner to the BK inequality in the theory of Bernoulli percolation. 

As with the past of the WUSF, it is proven in \cite{1804.04120} that the component of the origin in the $0$-WUSF behaves similarly to a critical, finite-variance branching random walk in high dimensions.
Our next theorem establishes the logarithmic-corrections to this scaling in four dimensions. 

\begin{theorem}\label{thm:wusfo}
Let $\fT_0$ be the component of the origin in the $0$-WUSF of $\Z^4$. Then
\begingroup
\addtolength{\jot}{0.5em}
\begin{align}
\frac{(\log n)^{2/3}}{n} &\lesssim\hspace{1.85cm} \pr{\operatorname{rad}_\mathrm{int}(\fT_0) \geq n } &&\lesssim \frac{(\log n)^{2/3+o(1)}}{n}, 
\label{exponent:intrado}
\\
 \frac{\log n}{n^2} &\lesssim\hspace{1.85cm} \pr{\operatorname{rad}_\mathrm{ext}(\fT_0) \geq n} &&\lesssim \frac{(\log n)^{1+o(1)}}{n^2}, 
\label{exponent:extrado}
\\
 \text{and} 
  &\phantom{\lesssim}\hspace{2.075cm} \pr{|\fT_0| \geq n } &&\lesssim \frac{(\log n)^{1/2+o(1)}}{n^{1/2}} 
\label{exponent:volo}
\end{align}
\endgroup
for every $n\geq 2$.
\end{theorem}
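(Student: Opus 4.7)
The plan is to sample the $0$-WUSF via Wilson's algorithm rooted at the wired point where $0$ and the boundary at infinity are identified. Under this representation, a vertex $v$ lies in $\fT_0$ if and only if simple random walk from $v$ hits $0$ before escaping to infinity, and the $\fT_0$-path from $v$ to $0$ is the loop-erasure of the walk stopped at $0$. All three tail bounds thus reduce to estimates on such conditioned random walks and on 4d loop-erased random walk. The analytic inputs are the Green's function asymptotics $G(v,0)\asymp\|v\|^{-2}$ on $\Z^4$, Lawler's length--diameter concentration (a 4d LERW of extrinsic radius $r$ has intrinsic length $\asymp r^2/(\log r)^{1/3}$ with high probability), and the sharp capacity concentration $\operatorname{cap}(\gamma_r)\asymp r^2/\log r$ (up to $(\log r)^{o(1)}$ errors) for 4d LERW stopped at extrinsic radius $r$, established in \cref{sec:capacity,subsec:polygamous_deviants}.

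For the lower bounds I use a second-moment argument. For \eqref{exponent:intrado}, choose $r\asymp\sqrt{n(\log n)^{1/3}}$, the scale at which a LERW of extrinsic radius $r$ has typical intrinsic length of order $n$, and set
\[
N \;:=\; \bigl|\bigl\{ v \in \fT_0 \cap (\Lambda_{2r}\setminus\Lambda_r) : d_{\fT_0}(v,0)\geq n \bigr\}\bigr|.
\]
Lawler's length concentration combined with $\P(v\in\fT_0)\asymp r^{-2}$ gives $\E{N}\gtrsim r^2$. For the second moment, run Wilson's algorithm from $u$ first: the resulting LE-path has extrinsic radius $\lesssim r$ and, by the capacity estimate, capacity $\lesssim r^2/\log r$, so the conditional probability that the subsequent walk from $v\in\Lambda_{2r}$ is absorbed into $\fT_0$ is $\lesssim (r^2/\log r)/r^2 = 1/\log r$. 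Summing over pairs gives $\E{N^2} \leq \E{|\fT_0\cap\Lambda_{2r}|^2} \lesssim r^6/\log r$, and Paley--Zygmund yields
\[
\P\bigl(\radint(\fT_0) \geq n\bigr) \;\geq\; \frac{\E{N}^2}{\E{N^2}} \;\gtrsim\; \frac{r^4 \log r}{r^6} \;\asymp\; \frac{(\log n)^{2/3}}{n}.
\]
The extrinsic lower bound \eqref{exponent:extrado} is analogous, taking $M := |\{v\in\fT_0 : \|v\|\in[n,2n]\}|$: one obtains $\E{M}\asymp n^2$ and $\E{M^2}\lesssim n^6/\log n$, so $\P(\radext(\fT_0)\geq n)\gtrsim\log n/n^2$.

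For the upper bounds I combine first-moment estimates with the capacity concentration and a truncation argument. The extrinsic bound $(\log n)^{1+o(1)}/n^2$ is obtained by a careful Wilson-algorithm analysis: the event $\{\radext(\fT_0)\geq n\}$ forces some walk in the algorithm to produce a LE-prefix reaching distance $n$ before hitting $\{0\}\cup\text{tree}$, and the capacity estimate at each step controls the probability of this event. For the intrinsic upper bound, I observe that $\radint(\fT_0)\geq n$ forces the existence of an LE-path in $\fT_0$ of length $\geq n$, and Lawler's length--diameter concentration in the converse direction then forces $\radext(\fT_0) \geq R := \sqrt{n(\log n)^{1/3}}/(\log n)^{o(1)}$ with overwhelming conditional probability; the extrinsic bound then yields $\P(\radint(\fT_0)\geq n)\lesssim \P(\radext(\fT_0)\geq R) \lesssim (\log n)^{2/3+o(1)}/n$. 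Finally, the volume bound follows by the truncation
\[
\P(|\fT_0| \geq n) \;\leq\; \P\bigl(\radext(\fT_0) \geq r\bigr) + \P\bigl(|\fT_0\cap\Lambda_r| \geq n\bigr) \;\lesssim\; \frac{(\log r)^{1+o(1)}}{r^2} + \frac{r^2}{n},
\]
using the $\radext$ upper bound together with Markov's inequality and $\E{|\fT_0\cap\Lambda_r|}\asymp r^2$. Optimizing at $r^2\asymp\sqrt{n}\,(\log n)^{1/2+o(1)}$ yields $(\log n)^{1/2+o(1)}/n^{1/2}$, as in \eqref{exponent:volo}.

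The principal obstacle is the sharp capacity concentration for 4d LERW, proved in the earlier sections of the paper and used essentially as an equality throughout the second-moment computation: without the precise $r^2/\log r$ scaling one obtains the claimed exponents only up to uncontrolled $(\log n)^{O(1)}$ factors, in particular losing the correct $2/3$ in the intrinsic one-arm exponent. A secondary technical point is the two-step Wilson decomposition in the second moment, where the conditional probability that the second walk is absorbed by the first must be controlled as a function of the random geometry of the first LE-path; this is handled using the high-probability capacity bound together with a crude treatment of the low-probability event that the first path's capacity is atypically large.
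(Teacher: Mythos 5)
Your route inverts the paper's logic (the paper proves the lower bounds via the $0$-wired interlacement Aldous--Broder dynamics in \cref{pro:lowerbounds0}, the intrinsic upper bound first via an inductive scheme in \cref{prop:rootedupper}, and only then the extrinsic and volume bounds), and the two load-bearing steps of your alternative are not substantiated. First, the second-moment bound $\E{N^2}\lesssim r^6/\log r$: the uniform claim that the conditional absorption probability of the second Wilson walk is $\lesssim 1/\log r$ is false for $v$ close to the first LE-path $\gamma_u$ (it is then of order one), so you must aggregate over $v$ via the last-exit decomposition, giving $\sum_{v\in\Lambda_{2r}}\P_v(\tau_{\gamma_u\cup\{0\}}<\infty)\lesssim r^2\,\cpc{}{\gamma_u\cup\{0\}}$. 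At that point you need the capacity of $\gamma_u$ to be $\lesssim r^2/\log r$ \emph{in expectation} (after suitable spatial restriction), not merely with high probability: the available upper-tail estimate (\cref{cor:lercap}) fails only with probability $\asymp\log\log r/(\log r)^{2/3}$, and on that event your ``crude treatment'' can only invoke the trivial bound $r^4$, contributing $\asymp r^6\log\log r/(\log r)^{2/3}\gg r^6/\log r$ and destroying exactly the logarithmic gain that produces the exponent $2/3$. Worse, $\gamma_u$ is the loop-erasure of an $h$-transformed walk stopped at $\tau_0$, whose conditional expected duration is logarithmically divergent, so neither \cref{cor:lercap} nor \cref{lem:known} applies verbatim; the same issue affects your first-moment claim $\E{N}\gtrsim r^2$, which needs a length lower bound for the loop-erasure of the \emph{conditioned} walk rather than Lawler's unconditioned \cref{lem:amounterased}. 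This is precisely the wastefulness-of-conditioning problem the paper warns about, and it is why the paper's lower bound instead pays an explicit $\exp(-\eps\,\cpc{}{\eta^n})$ for a single interlacement trajectory with $\eps=(\log n)^{2/3}/n$.

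Second, your upper bounds rest on an extrinsic one-arm bound that is only asserted: ``the capacity estimate at each step controls the probability'' does not explain how to sum over the walks of Wilson's algorithm, control the capacity of the partially built tree, or avoid union-bound losses, and in the paper the extrinsic bound (\cref{prop:extrinsicupper}) is \emph{downstream} of the intrinsic bound together with the typical-time machinery (\cref{prop:timeradius}, \cref{lem:crude}). Your proposed deduction of the intrinsic bound from the extrinsic one also has a concrete gap: to conclude $\P(\radint(\fT_0)\geq n)\lesssim\P(\radext(\fT_0)\geq R)$ you need the unconditional estimate that $\P(\radint(\fT_0)\geq n,\ \radext(\fT_0)<R)$ is $o((\log n)^{2/3}/n)$, i.e.\ that a tree path of length $n$ is very unlikely to be confined to radius $R\approx\sqrt{n(\log n)^{1/3}}$ up to $(\log n)^{o(1)}$. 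The known length--diameter concentration for 4d LERW (\cref{lem:amounterased}) has error probabilities that are only polylogarithmically small, far too weak for a target of polynomial size, and the confinement event is borderline-typical; the authors state explicitly in \cref{subsec:about_the_proof} that they could not prove concentration of this strength and introduced typical times precisely to circumvent it, with the actual confinement estimate handled by the multiscale argument around \cref{lem:crude}. Your volume truncation on the extrinsic radius is fine arithmetic (and a legitimate variant of \cref{prop:volume_upper}, which truncates on the intrinsic radius using \cref{prop:WUSFo_ball}), but it inherits the unproven extrinsic input. As it stands, the proposal would at best recover the exponents up to uncontrolled powers of $\log n$, which is the easy part; the sharp $2/3$, $1$, and $1/2$ corrections are exactly what the missing steps were supposed to deliver.
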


Note that the logarithmic corrections for the two models differ by a factor of $(\log n)^{1/3}$ in each case. This is in contrast to the high-dimensional setting, where the two models have the same behaviour up to constants \cite{1804.04120}. This difference between the two models makes many of the arguments of this paper much more delicate than those of \cite{1804.04120}: bounding conditional probabilities using the stochastic domination property (\cref{lem:domination}) is typically not sharp, and indeed will usually lead to an unwanted additional polylogarithmic factor.
As such, we mostly confine the use of the stochastic domination property to show that various `bad events' have negligible probability; for these arguments to go through, it will be important for us to have concentration bounds (on e.g.\ the length and capacity of loop-erased random walks) that are strong enough to counteract the wastefulness of the stochastic domination bounds. This is in contrast to the high-dimensional case, where we could mostly get by with arbitrarily weak concentration estimates.

\begin{remark}
Again, we expect that an elaboration of our methods should be able to remove the $(\log n)^{o(1)}$ errors from the upper bounds and establish a matching lower bound for \eqref{exponent:volo}, but do not pursue this here in view of the paper's length.
\end{remark}

\subsection{Corollaries for the Abelian sandpile model}
\label{subsec:sandpile}

We now discuss applications of \cref{thm:wusf,thm:extrinsic,thm:wusfo} to the \emph{Abelian sandpile model}, a popular example of a system exhibiting \emph{self-organized criticality} that was first introduced by Bak, Tang, and Wiesenfeld \cite{MR949160} and was brought to mathematical maturity in the seminal works of Dhar \cite{Dhar90} and Majumdar and Dhar \cite{MajDhar92}. We will keep our discussion of the model brief, referring the reader to the surveys \cite{MR3857602,dhar2006theoretical} for further background.

Let $d\geq 1$ and let $K \subseteq \Z^d$ be finite.  A \textbf{sandpile} on $K$ is a function $\eta: K \to \{0,1,\ldots\}$, where $\eta(x)$ represents the number of grains of sand at $x$. A sandpile $\eta$ is said to be \textbf{unstable} at $x$ if $\eta(x) \geq 2d$. If $\eta$ is unstable at $x$ we can \textbf{topple} $\eta$ at $x$, 
decreasing the value of $\eta(x)$ by $2d$ and increasing $\eta(y)$ by $1$ for each neighbour $y$ of $x$ in $K$. 
It is a theorem of Dhar~\cite{Dhar90} that if $K$ is finite and $\eta$ is a sandpile on $K$ then carrying out successive topplings of unstable vertices will eventually result in a stable sandpile that does \emph{not} depend on the order in which the topplings are made.
Repeatedly adding a grain of sand to a uniform random vertex of $K$ and stabilizing the resulting configuration defines a Markov chain on the set of stable sandpile configurations on $K$. This Markov chain has a unique closed communicating class, consisting of the \emph{recurrent} configurations, and has a unique stationary distribution equal to the uniform measure on the set of recurrent configurations. We are particularly interested in studying the distribution of the \emph{avalanche} of topplings that is produced when we perform a single step of the Markov chain at equilibrium, which is expected to exhibit many interesting critical-like properties.

Majumdar and Dhar \cite{MajDhar92} discovered a bijection between 
 recurrent sandpiles and spanning trees, known as the \emph{burning bijection},
which allows us to relate many questions about sandpiles to questions about spanning trees. Athreya and J\'arai \cite{MR2077255} used the burning bijection to prove that there is a well-defined infinite volume uniform recurrent sandpile on $\Z^d$ for each $d\geq 2$ 
which can be obtained by applying the burning bijection to the USF.  
Let $\Eta$ (capital $\eta$) be a uniform recurrent sandpile on $\Z^d$, and suppose that we add a single grain of sand to the origin and then attempt to stabilize the resulting configuration. The \textbf{avalanche} is defined to be the \emph{multiset} $\Av_0(\Eta)$ of vertices counted according to the number of times they topple when this stabilization is performed, while the \emph{set} $\AvC_0(\Eta)$ of vertices that topple at least once is called the \textbf{avalanche cluster}.
J\'arai and Redig~\cite{JarRed08} related the distribution of the avalanche cluster $\AvC_0(\Eta)$ to that of the tree $\fT_0$ and used this to deduce that the avalanche on $\Z^d$ is finite almost surely when $d\geq 3$. This methodology was further refined by Bhupatiraju, Hanson, and J\'arai \cite{MR3718713}, who developed general techniques for comparing the critical behaviours of the Abelian sandpile with the WUSF and $0$-WUSF. Roughly speaking, they show that the avalanche cluster is sandwiched between the two random sets $\fP(0)$ and $\fT_0$ in the stochastic ordering; the precise statements they prove are a little more complicated than this but are applicable in much the same way. See \cite[Section 9]{1804.04120} for a summary.

 In high dimensions the two random sets $\fP(0)$ and $\fT_0$ have similar behaviour up to constants, so that these comparison inequalities yield up-to-constants estimates on the distribution of the avalanche \cite[Theorem 1.7]{1804.04120}. Applying the same methods with our main theorems as input yields the following bounds on the four dimensional model: the upper bounds are corollaries of \cref{thm:wusfo} while the lower bounds are corollaries of \cref{thm:wusf,thm:extrinsic}. 

\begin{corollary}
\label{thm:sandpile}
Let $\Eta$ be a uniform recurrent sandpile on $\Z^4$. Then 
\begin{align*}
 \frac{(\log n)^{2/3}}{n^2} &\lesssim&&
 \P\Bigl(\radext\left(\AvC_0(\Eta)\right) \geq n\Bigr) &&\lesssim \frac{(\log n)^{1+o(1)}}{n^2} &&\text{and} \\
\frac{(\log n)^{1/6}}{n^{1/2}} &\lesssim&&
\P\Bigl(|\AvC_0(\Eta)| \geq n\Bigr) &&\leq \P\Bigl(|\Av_0(\Eta)| \geq n\Bigr)  &&\lesssim \frac{(\log n)^{1/2+o(1)}}{n^{1/2}} 
\end{align*}
for every $n\geq 2$.
\end{corollary}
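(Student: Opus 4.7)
The plan is to deduce all four inequalities from the comparison principles relating the avalanche to the USF past $\fP(0)$ and the $0$-WUSF tree $\fT_0$ developed by Járai--Redig \cite{JarRed08} and refined by Bhupatiraju--Hanson--Járai \cite{MR3718713}, combined with \cref{thm:wusf,thm:extrinsic,thm:wusfo}. Roughly, those comparison inequalities show that the avalanche cluster $\AvC_0(\Eta)$ is sandwiched between $\fP(0)$ and $\fT_0$ in an appropriate stochastic sense, so lower bounds for $\AvC_0(\Eta)$ follow from lower bounds for $\fP(0)$, and upper bounds for $\AvC_0(\Eta)$ follow from upper bounds for $\fT_0$.

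\medskip

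\noindent\textbf{Lower bounds.} For the extrinsic radius lower bound, I would use the fact that if $\fP(0)$ reaches the boundary of $[-n,n]^4$ then so does the avalanche cluster (under a suitable coupling coming from the burning bijection and the BHJ comparison). Hence
\[
\P\bigl(\radext(\AvC_0(\Eta)) \geq n\bigr) \gtrsim \P\bigl(\radext(\fP(0)) \geq n\bigr) \gtrsim \frac{(\log n)^{2/3}}{n^2}
\]
by \cref{thm:extrinsic}. Similarly, the volume lower bound follows from $|\AvC_0(\Eta)| \gtrsim_{\mathrm{sto}} |\fP(0)|$ combined with \eqref{exponent:vol}. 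The trivial inequality $|\AvC_0(\Eta)| \le |\Av_0(\Eta)|$ then transfers the lower bound to the avalanche itself.

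\medskip

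\noindent\textbf{Upper bounds.} For the extrinsic radius upper bound, I would apply the BHJ comparison in the other direction to obtain
\[
\P\bigl(\radext(\AvC_0(\Eta)) \geq n\bigr) \lesssim \P\bigl(\radext(\fT_0) \geq n\bigr) \lesssim \frac{(\log n)^{1+o(1)}}{n^2}
\]
by \eqref{exponent:extrado}. The upper bound on $|\Av_0(\Eta)|$ (not just $|\AvC_0(\Eta)|$) requires a little more care, since counting with multiplicity could in principle blow up. Here one uses the standard identity expressing the number of times $x$ topples in the avalanche started from $0$ as the effective Green's function on the spanning tree component restricted to $\fT_0$ (or equivalently, as a harmonic-measure-weighted sum in the burning-bijection picture), following \cite[Section 9]{1804.04120} and \cite{MR3718713}. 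This shows $|\Av_0(\Eta)|$ is controlled by $|\fT_0|$ up to a factor that is tight enough to preserve the $(\log n)^{1/2+o(1)}$ polylogarithmic order, whence \eqref{exponent:volo} yields
\[
\P\bigl(|\Av_0(\Eta)| \geq n\bigr) \lesssim \frac{(\log n)^{1/2+o(1)}}{n^{1/2}}.
\]

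\medskip

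\noindent\textbf{Main obstacle.} The routine step is the stochastic sandwiching; the subtle step is the passage from $|\AvC_0(\Eta)|$ to $|\Av_0(\Eta)|$, since an a priori avalanche could topple individual vertices many times. To handle this, I would invoke the representation of the number of topplings at $x$ as $\mathbf{1}(x \in \AvC_0(\Eta))$ times a quantity controlled by the intrinsic structure of $\fT_0$, and then absorb the extra factor into the $(\log n)^{o(1)}$ slack already present in \eqref{exponent:volo}. Everything else is a direct substitution of our main theorems into the comparison framework of \cite{MR3718713,1804.04120}.
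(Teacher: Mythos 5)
Your proposal follows essentially the same route as the paper: the paper omits the proof entirely, stating that the deduction of \cref{thm:sandpile} from \cref{thm:wusf,thm:extrinsic,thm:wusfo} is identical to the argument of \cite[Section 9]{1804.04120} built on the Bhupatiraju--Hanson--J\'arai comparison inequalities \cite{MR3718713}, which is exactly the sandwiching of $\AvC_0(\Eta)$ between $\fP(0)$ and $\fT_0$ that you describe. One small caveat: in that framework the passage from $|\AvC_0(\Eta)|$ to $|\Av_0(\Eta)|$ is handled via the wave decomposition of avalanches rather than the Green's-function-type identity you sketch, but since you defer to the same references for this step it does not affect the correctness of your argument.
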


The deduction of \cref{thm:sandpile} from \cref{thm:wusf,thm:extrinsic,thm:wusfo} is identical to the argument of \cite[Section 9]{1804.04120} and is omitted. The previous best bounds on these quantities were due to Bhupatiraju, Hanson, and J\'arai~\cite{MR3718713} who proved that 
\begin{align*}
 \frac{1}{n^2(\log n)^{1/3}} &\lesssim&&
 \P\Bigl(\radext\left(\AvC_0(\Eta)\right) \geq n\Bigr) &&\lesssim \frac{1}{n^{1/4}} &&\text{and} \\
\frac{1}{n^{1/2}(\log n)^{5/6}} &\lesssim&&
\P\Bigl(|\AvC_0(\Eta)| \geq n\Bigr) &&\lesssim \frac{1}{n^{1/16}} 
\end{align*}
for every $n\geq 2$.
Although \cref{thm:sandpile} does not determine the precise order of the logarithmic corrections for the 4d Abelian sandpile model, it does suffice to show that these logarithmic corrections are 
non-trivial, i.e., that the exponents describing these logarithmic corrections are positive if they are well-defined.
  Computation of the precise order of the logarithmic correction to scaling in four dimensions appears to require a substantial new idea, and we are not aware of any conjectured values for the relevant exponents. Further interesting open problems that may be of intermediate difficulty concern the distribution of the total number of times the origin topples and the probability that $x$ topples at all when $x$ is large; see \cite{MR3718713} for partial results and \cite{angel2020tail,MR3395472,MR3962482} for analogous results for branching random walk.



\subsection{About the proof and organization}
\label{subsec:about_the_proof}

We now give a brief overview of our proof, including a heuristic computation of the relevant logarithmic corrections. As in \cite{1804.04120}, our proof utilizes the interplay between two different ways of sampling the uniform spanning tree: \emph{Wilson's algorithm} and the \emph{interlacement Aldous--Broder algorithm}. The latter algorithm, introduced in \cite{hutchcroft2015interlacements}, extends the classical Aldous--Broder algorithm \cite{Aldous90,broder1989generating} to infinite transient graphs by replacing the random walk in the classical algorithm with Sznitman's \emph{random interlacement process} \cite{Sznitman,Teix09}.

Let us now recall the key features of the interlacement Aldous--Broder algorithm; detailed definitions are given in \cref{sec:AldousBroder}. Let $d\geq 3$.
The \emph{random interlacement process} $\sI$  on $\Z^d$ is a Poisson point process on $\cW^* \times \R$, where $\cW^*$ is the space of paths in $\Z^d$ modulo time shift and the $\R$ coordinate of each point is thought of as an arrival time. Intuitively, we think of this process as a Poissonian soup of bi-infinite `random walk excursions from infinity'. The most important property of this process for the present discussion is that the set of times in which any given finite set $K \subseteq \Z^d$ is visited is a Poisson process of intensity given by the \textbf{capacity} (a.k.a.\ conductance to infinity) of $K$, defined by $\cpc{}{K} = 2d\sum_{x\in K} \P_x(\tau_K^+=\infty)$ where $\P_x$ denotes the law of a simple random walk started at $x$ and $\tau^+_K$ denotes the first positive time the walk returns to $K$.  
For each $t\in \R$ and $x\in \Z^d$ we write $\sigma_t(x)$ for the first time after $t$ that $x$ is visited by a trajectory of the random interlacement process and write $e_t(x)$ for the oriented edge that is traversed by this trajectory as it enters $x$ for the first time. For each $t\in \R$, it is proven in \cite{hutchcroft2015interlacements} that
the set of reversals $\AB_t(\sI):=\{e_t(x)^\leftarrow : x\in \Z^d\}$ is distributed as the uniform spanning forest of $\Z^d$ oriented so that each vertex has exactly one oriented edge emanating from it. In this formulation, the future of $x$ can be defined as the unique oriented path emanating from $x$ and the past of $x$ can be defined as the tree spanned by those vertices having $x$ in their future. 

Varying $t$ allows us to think of the uniform spanning forest dynamically, and it turns out that the past of a vertex evolves in a fairly tractable way under these dynamics. Indeed, as we \emph{decrease} $t$, the past of a vertex $x$ becomes monotonically smaller except possibly at those times when $x$ is itself visited by a trajectory. More precisely, on the event that $x$ is not visited by a trajectory between times $s$ and $t$, a point $y$ lies in the past of $x$ in $\AB_s(\sI)$ if and only if it lies in the past of $x$ in $\AB_t(\sI)$ and the path connecting $y$ to $x$ in $\AB_t(\sI)$ is not hit between times $s$ and $t$ (\cref{lem:PastDynamics}). Since the restriction of the interlacement process to $[s,t]$ is independent of $\AB_t(\sI)$, a path $\Gamma$ lying in the past of $x$ in $\AB_t(\sI)$ also belongs to the past in $\AB_s(\sI)$ with conditional probability of order $\exp\left(-|t-s|\cpc{}{\Gamma}\right)$ on the event that $x$ is not hit in $[s,t]$.

Since paths in the uniform spanning tree are distributed as loop-erased random walks by Wilson's algorithm, it will therefore be important for us to understand the capacity of loop-erased random walks. This is the primary subject of \cref{part:LERW} of the paper. In \cref{thm:expectlerwcap}, we establish a very general estimate showing that the \emph{expected} capacity of the simple random walk $X^n:=(X_i)_{i=0}^n$ and its loop-erasure $\LE(X^n)$ are of the same order on any transient graph. It is known that the simple random walk $X^n$ on $\Z^4$ has capacity of order $n/\log n$ with high probability \cite{Law91,ASS}, and we prove that the same is true for the loop-erasure $\LE(X^n)$ in \cref{prop:firstconcentration}. Since $\LE(X^n)$ has length of order $n /(\log n)^{1/3}$ with high probability by the results of Lawler \cite{Law91,Lawlerlog}, this suggests that paths of length $m$ in the uniform spanning tree should typically have capacity of order $m/(\log m)^{2/3}$. It will be important for us to have reasonably good concentration estimates to this effect, which are proven in \cref{sec:capacity,subsec:polygamous_deviants}.

\cref{part:UST} of the paper applies the results of \cref{part:LERW} to study the uniform spanning tree. The first section of \cref{part:UST}, \cref{sec:UST_background}, provides relevant background on the model. The second, \cref{sec:intrinsic}, is primarily devoted to our results on the tail of the intrinsic radius.
Let us now give a brief overview of how these results are proven. We argue heuristically that the considerations of the previous two paragraphs lead to the relation
\begin{align}
\P(\radint(\fP)\geq n) &\approx \frac{\left(\text{non-intersection probability of an $n$-step LERW with an infinite SRW}\right)}{\left(\text{typical capacity of an $n$-step LERW}\right)}
\nonumber\\
& \approx \frac{1}{(\log n)^{1/3}} \cdot \frac{(\log n)^{2/3}}{n},
\label{eq:heuristic}
\end{align}
where the estimate of the non-intersection probability is due to Lawler \cite{Law91,Lawlerlog}. 
We will assume for the sake of this discussion that all length $n$ paths in the uniform spanning tree have capacity between $cn/(\log n)^{2/3}$ and $Cn/(\log n)^{2/3}$ for some positive constants $c$ and $C$; bounding the relevant error terms in order to justify this approximation accounts for much of the technical work in the paper. We first explain the lower bound, which will follow by considering an explicit strategy for generating a large past in the interlacement Aldous--Broder algorithm. Let $\eps=\eps(n)>0$ and let $\sA_n$ be  the event that the origin is visited by a unique trajectory $W$ between times $0$ and $\eps$. When we apply the Aldous--Broder algorithm to the positive part of $W$ (i.e., the part of $W$ after its first visit to $x$) we obtain a tree that contains a unique infinite path starting from the origin which is distributed as a loop-erased random walk (this is not the loop-erasure of the positive part of $W$ but rather a sort of infinite reverse loop-erasure; see \cref{subsec:AB_variant}). If the first $n$ steps of this path are not hit by the negative part of $W$ or by any other trajectory arriving between times $0$ and $\eps$, then this initial segment is contained in the past of the origin in $\AB_0(\sI)$. We expect these two non-intersection events to be approximately independent, suggesting that
\begin{equation*}
\P(\radint(\fP)\geq n \mid \sA_n) \gtrsim 
\left(\text{non-intersection probability}\right)\cdot\exp\left(-\frac{C n}{(\log n)^{2/3}} \cdot \eps\right).
\end{equation*}
Since $\P(\sA_n) \approx \eps$, the lower bound of \eqref{eq:heuristic} follows by taking $\eps =  (\log n)^{2/3} n^{-1}$. A similar heuristic calculation leads to the lower bound on the extrinsic radius of the past in the UST and on the intrinsic and extrinsic radii of the component of the origin in the $0$-WUSF (for which the analogue of the interlacement Aldous--Broder algorithm is discussed in \cref{sec:AldousBroder}). In the later case, the non-intersection probability does not feature in the computations, leading to the $(\log n)^{1/3}$ difference between the two tail probabilities.

The upper bound on the intrinsic radius is more delicate. 
Let $\partial \fP(x,n)$ be the set of vertices belonging to the past of $x$ that have intrinsic distance exactly $n$ from $x$, and let $Q(n)$ be the probability that this set is non-empty. It suffices to prove an inductive inequality of the form
\begin{equation}
\label{eq:inductive_overview}
Q(2n) \leq \frac{C(\log n)^{1/3}}{n} + \frac{1}{4} Q(n).
\end{equation}
(The only important feature of the constant $1/4$ is that it is strictly smaller than $1/2$.)
Proving inductive inequalities such as these is a common approach to arm-exponents in high-dimensional models which we believe first arose in the work of Kozma and Nachmias \cite{Armexpon,AlexOrbach}. (They are also a standard approach to the study of various four-dimensional loop-erased random walk quantities, see e.g.\ \cite[Section 4.4]{Law91}.)
Once again we take $\eps=\eps(n)>0$, but now consider a union bound according to whether the arrival time $\sigma_0(0)$ is smaller or larger than $\eps$. 
We argue in \cref{lem:exceptional_time} that the argument of the previous paragraph admits an approximate converse
\begin{equation}
\label{eq:early_arrival_overview}
\P(\partial \fP(0,n) \neq \emptyset \mid \sigma_0(0) \leq \eps) \lesssim \left(\text{non-intersection probability}\right) \asymp \frac{1}{(\log n)^{1/3}}
\end{equation}
provided that $\eps$ is not too large. This is a rather technical matter and in fact relies upon the computation of the upper bound on the intrinsic radius for the $0$-WUSF and the stochastic domination property; this upper bound on the $0$-WUSF is itself proven using a similar (but simpler) inductive scheme to that discussed here. For the other term in the union bound, we observe that if $\partial \fP(0,2n)$ is non-empty then there must exist $y \in \partial \fP(0,n)$ such that $\partial \fP(y,n)$ is non-empty. Let $\Gamma$ be the future of $0$ and let $\Gamma^n$ denote the first $n$ steps of $\Gamma$. By Markov's inequality and the mass-transport principle (as explained in the proof of Lemma~\ref{lemma:WUSF_ball}), we deduce that
\[
\P(\partial \fP(0,2n) \neq \emptyset, \sigma_0(0) \geq \eps) \leq \P(\Gamma^n \text{ is not hit between times $-\eps$ and $0$} \text{ and } \partial\fP(0,n) \neq \emptyset).
\]
Using the splitting property of Poisson processes, we deduce that
\begin{equation}
\label{eq:late_arrival_overview}
\P(\partial \fP(0,2n) \neq \emptyset, \sigma_0(0) \geq \eps) \leq \exp\left(-\frac{cn}{(\log n)^{2/3}} \cdot \eps\right)Q(n) + \cE(n),
\end{equation}
where $\cE(n)$ is an error term accounting for the possibility of $\Gamma^n$ having small capacity. 
In \cref{pro:o(n)} we apply the results of \cref{part:LERW} together with the upper bound on the tail of the intrinsic radius of the $0$-WUSF to prove that $\cE(n)=o(n^{-1}(\log n)^{1/3})$. The desired inequality \eqref{eq:inductive_overview} follows from \eqref{eq:early_arrival_overview} and \eqref{eq:late_arrival_overview} by taking $\eps$ to be $C' n^{-1} (\log n)^{2/3}$ for appropriately large $C'$.

In \cref{sec:volume} we apply our results on the intrinsic radius to prove our results concerning the volume. The upper bounds on the tail of the volume follow immediately from our results on the intrinsic radius via a standard truncated first moment argument (\cref{prop:volume_upper}). The lower bounds are proven using a second moment argument. While this is also a standard strategy, the required second moment upper bounds are non-trivial to obtain. As a part of this proof we prove various estimates on the geometry of the restriction of the uniform spanning tree to a box which may be of independent interest.

Finally, in \cref{sec:extrinsic} we apply our results on the intrinsic radius to prove the upper bounds on the tail of the extrinsic radius, the lower bounds having already been proven in \cref{sec:intrinsic}.
The heuristic outline of the argument is as follows: When sampling the UST with Wilson's algorithm, we expect that paths of length $n$ in the UST should be generated by walks with length of order $n (\log n)^{1/3}$. As such, it should be very difficult for the past of the origin to have extrinsic radius at least $\sqrt{n (\log n)^{1/3} (\log \log n)^2}$, say, without having intrinsic radius at least $n$, from which the result would follow. Unfortunately, we were not able to prove a sufficiently strong concentration estimate on the length of the loop-erased random walk to push such an argument through. To circumvent this problem, we introduce the notion of the \emph{typical time} $T(\eta)$ of a self-avoiding path $\eta$. We show that a random walk conditioned to have $\eta$ as its loop-erasure has length of order $T(\eta)$ with high probability (\cref{lem:typicaltime}), and prove that if $\eta$ is distributed as a loop-erased random walk then $T(\eta)$ is of order $|\eta| (\log |\eta|)^{1/3+o(1)}$ with very high probability (\cref{lem:geomfixedtime}). Be careful to note the subtle distinction between this claim and a true concentration estimate for the length of a loop-erased random walk! Once this is done, we apply these estimates together with our results on the intrinsic radius to prove that the past contains a path of typical time at least $m$ with probability of order at most $m^{-1}(\log m)^{2/3+o(1)}$ (\cref{prop:timeradius}). In \cref{subsec:extrinsicproof} we complete the proof by 
showing that it is very difficult for 
the past of the origin to have extrinsic radius at least $\sqrt{n (\log n)^{1/3} (\log \log n)^2}$ without containing a path of length at least $n$ or typical time at least $n (\log n)^{1/3}$, and the claimed upper bound on the tail of the extrinsic radius follows.

\begin{remark}
In \cite{1804.04120}, the proof of the upper bound on the tail of the extrinsic radius followed a completely different method to that used here, based on the notion of \emph{pioneer points}. Unfortunately we were not able to prove the required upper bounds on the number of pioneer points needed to implement this strategy in four dimensions. 
\end{remark}

\part{Loop-erased random walk}
\label{part:LERW}

\section{Background}

In this section we start by setting up some notation and recalling the definition and some properties of capacity that will be used later in the proofs. 

\medskip

\textbf{Norms and balls.}
The Euclidean norm of $x\in \Z^4$ is denoted $\|x\|=\|x\|_2$, and the Euclidean ball of center $x$ and radius~$r$ is denoted $B(x,r) \subseteq \Z^4$. We also write $\Lambda_r=[-r,r]^4 \cap \Z^4$ for the box of side length $2r$ centred at the origin, i.e., the $\|\cdot\|_\infty$-ball of radius $r$.

\medskip

\textbf{Hitting times and the Green's function.} We write $\P_x$ for the law of simple random walk $X$ started at $x$ on $\Z^4$. For a set $A\subseteq \Z^4$ we define 
\[
\tau_A=\min\{t\geq 0: X_t\in A\} \ \text{ and } \ \tau_A^+=\min\{t\geq 1: X_t\in A\}. 
\]
If $A=\{x\}$, we simply write $\tau_x=\tau_{\{x\}}$. 
The discrete \textbf{Green's function} $\bG$ is defined by 
$$
\bG(x,y) \ =\ \sum_{k\ge 0} \prstart{X_k=y}{x},\quad\text{ so that }
\quad \frac{\bG(x,y) }{\bG(0,0)}=\ \prstart{\tau_y<\infty}{x}
$$
for every $x,y \in \Z^4$. We also write $\bG(x)=\bG(0,x)$. The function $\bG$ is symmetric, satisfies $\bG(x,y)=\bG(y-x)$ and the estimate
\begin{equation}
\label{Green.bound}
\bG(x) \asymp \frac{1}{\norm{x}^2+1}
\end{equation}
for every $x\in \Z^4$ \cite[Theorem 4.3.1]{LawLim}.
We will make repeated use of the \textbf{last exit decomposition}, which states that
\begin{equation}
\label{eq:last_exit}
\P_x(\tau_A <\infty) = \sum_{a\in A} \bG(x,a) \P_a(\tau_A^+ =\infty)
\end{equation}
for every $x\in \Z^4$ and every finite set $A \subseteq \Z^4$.

\medskip

\textbf{Notation for paths.}
Let $G$ be a graph, which we will usually take to be the hypercubic lattice $\Z^4$. For each $-\infty \leq n \leq m \leq \infty$, let $L(n,m)$  be the graph with vertex set $\{i \in \Z : n \leq i\leq m\}$ and edge set $\{\{i,i+1\}: n\leq i \leq m-1\}$. We define a \textbf{path} in $G$ to be a multigraph homomorphism from $L(n,m)$ to $G$ for some $-\infty \leq n \leq m \leq \infty$. We can consider the random walk $X$ on $G$ to be a random \emph{path} by keeping track of the edges it traverses as well as the vertices it visits. (In our primary setting of $\Z^4$ this distinction is of little consequence as there is at most one edge between any pair of vertices.) Given a path $w : L(n,m)\to G$ we will use  $w(i)$ and $w_i$ interchangeably to denote the  vertex visited by $w$ at time $i$ and use $w(i,i+1)$ and $w_{i,i+1}$ interchangeably to denote the oriented edge crossed by $w$ between times $i$ and $i+1$.
Given $n\leq a \leq b \leq m$, we write $w[a,b]$ for the restriction of $w$ to $L(a,b)$. Similar conventions apply to open and half-open intervals, so that e.g.\ $w(a,b]$ is the restriction of $w$ to $L(a+1,b)$. In the case that $n=a$, we also use the notation $w^b = w[n,b]$ and call $w^b$ the path $w$ \emph{stopped at $b$}. In particular, we will often use the notation $X^T$ for the random walk $X$ stopped at some (possibly random) time $T$.

We will often abuse notation and write simply $w$ for both the path $w$ and the set of vertices it visits. In particular, we will use $X[a,b]$ to denote both the portion of $X$ between times $a$ and $b$ and the set of vertices visited by $X$ between times $a$ and $b$; the precise meaning will be clear from context.




\subsection{Loop-erased random walk}
\label{subsec:LERWbackground}

Let $G$ be a graph. A path in $G$ is said to be \textbf{transient} if it visits each vertex of $G$ at most finitely many times. (In particular, every finite path in $G$ is transient.) Given $0\leq m \leq \infty$ and a transient path $w : L(0,m) \to G$, we define the sequence of times $\ell_n(w)$ recursively by $\ell_0(w)=0$ and 
\[\ell_{n+1}(w) = 1+\max\{ k : w_k = w_{\ell_n}\},\] where we terminate the sequence the first time that $\max\{ k : w_k = w_{\ell_n}\}=m$ when $m<\infty$. The \textbf{loop-erasure}  $\LE(w)$ of $w$ is the path in $G$ defined by
\[\LE(w)_i = w_{\ell_i(w)} \qquad \LE(w)_{i,i+1} = w_{\ell_{i+1}-1,\ell_{i+1}}.\]
Informally, $\LE(w)$ is the path formed from $w$ by erasing cycles chronologically as they appear. The loop-erasure of simple random walk is known as \textbf{loop-erased random walk}. The theory of loop-erased random walk was both introduced and developed extensively by Lawler \cite{LawlerSAW,Loop,Law91}, whose results on four-dimensional loop-erased random walk \cite{Law82,Lawlerlog} shall play a very important role in this paper.

Let $X$ be a simple random walk on $\Z^4$ started from $0$, and let $\ell_n=\ell_n(X)$ for each $n\geq 0$. Be careful to note that $\LE(X^n)$ denotes the loop-erasure of the walk run up to time $n$ (which is of random length), while $\LE(X)^n$ denotes the first $n$ steps of the \emph{infinite} loop-erased random walk $\LE(X)$. We define
\[
 \rho_n=\sum_{k=0}^{n} \1(k=\ell_i \text{ for some $i\geq 0$}) = \max\{m\geq 0: \ell_m \leq n\}\ \  
\]
for each $n\geq 0$, which counts the number of points up to time $n$ that are not erased when computing the loop-erasure of $X$.
The sequences $(\ell_n)_{n\geq 0}$ and $(\rho_n)_{n\geq 0}$ are inverses of each other in the sense that
\begin{align}\label{eq:equivtnrhom}
\ell_n\leq m \quad \text{ if and only if } \quad \rho_m\geq n
\end{align}
for each $n,m\geq 0$. 
 For each $n\geq 0$ we also define $\eta_n = \eta_n(X) = \max\{\ell_k : k\geq 0, \ell_k \leq n\}$ to be the maximal time before $n$ contributing to the infinite loop-erasure and define 
\begin{align}\label{eq:definflerw}
\LE_\infty(X^n):=
\LE(X^{\eta_n}) = \LE(X^n)^{\rho_n} = \LE(X)^{\rho_n},
\end{align}
where the equality of these expressions follows from the definitions. (This is a slight abuse of notation since $\LE_\infty(X^n)$ is not a function of $X^n$ but depends on the entire walk $X$.) We think of $\LE_\infty(X^n)$ as the part of the infinite loop-erasure $\LE(X)$ that is contributed by the first $n$ steps of the walk: it is an initial segment of both $\LE(X)$ and $\LE(X^n)$.

The following theorem of Lawler tells us that, in four dimensions, $\rho_n$ and $\ell_n$ are weakly concentrated around $n(\log n)^{-1/3}$ and $n (\log n)^{1/3}$ respectively. (In contrast, $\rho_n$ and $\ell_n$ are approximately linear in $n$ when $d>4$ \cite[Theorem 7.7.2]{Law91} and differ from $n$ by a power when $d<4$ \cite{Ken00,Daisuke,MR1703133}.)



\begin{theorem}[\cite{Law91}, Theorem 7.7.5]
\label{lem:amounterased}
	Let $X$ be a simple random walk on $\Z^4$. Then  
	\begin{align*}
	\pr{\left| \frac{\rho_n}{n(\log n)^{-1/3}} -1\right|>\epsilon} &\lesssim_\eps \frac{\log \log n}{(\log n)^{2/3}} \qquad \text{and hence}\\ \pr{\left| \frac{\ell_n}{n(\log n)^{1/3}} -1\right|>\epsilon} &\lesssim_\eps \frac{\log \log n}{(\log n)^{2/3}} 
	\end{align*}
	for every $\eps>0$ and $n\geq 3$.
\end{theorem}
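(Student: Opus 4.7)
The plan is to follow the classical route for such concentration statements: first establish the sharp first-moment asymptotic $\E{\rho_n} \sim n(\log n)^{-1/3}$, then deduce concentration by a second-moment/Chebyshev argument, and finally obtain the $\ell_n$ estimate from the $\rho_m$ estimate by inverting via the relation \eqref{eq:equivtnrhom}.

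For the first moment, write $\rho_n = \sum_{k=0}^{n} I_k$ with $I_k = \mathbf 1\{k\in\{\ell_i\}_{i\ge 0}\}$, and aim to compute $p_k := \pr{I_k=1}$. Using the Markov property at $X_k$, the past $X[0,k]$ and the future $X[k,n]$ are conditionally independent given $X_k$, and the past reversed has the law of a simple random walk from $X_k$. The event $\{I_k=1\}$ translates, after the last-exit decomposition for visits to $X_k$, into a non-intersection-type event for these two independent walks ($X_k$ survives in $\LE(X^n)$ iff the last visit to $X_k$ in $X^n$ is at time $k$ \emph{and} the loops formed after $k$ do not swallow $X_k$). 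Lawler's sharp $4$d asymptotics for non-intersection and escape probabilities (Lemmas 7.7.1--7.7.4 of \cite{Law91}; see also \cite{Law85,Lawlerlog}) then yield, uniformly in the bulk $c n \le k \le (1-c) n$,
\[
p_k = (\log n)^{-1/3} \bigl(1 + O((\log\log n)/(\log n)^{2/3})\bigr),
\]
while the boundary contributions (where $p_k$ is of constant order) sum to $O(n(\log n)^{-1/3})$ and are absorbed into the $O(\cdot)$ error. Summing gives $\E{\rho_n} = n(\log n)^{-1/3}\bigl(1+O((\log\log n)/(\log n)^{2/3})\bigr)$.

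For the second moment, estimate $\pr{I_j=I_k=1}$ for $j<k$. For nearby pairs with $k-j = o(k)$, the trivial bound $\pr{I_j=I_k=1}\le p_j$ together with the fact that such pairs contribute negligibly is enough. For well-separated pairs, apply the Markov property at time $k$ and argue that, conditional on $I_k=1$, whether $j$ is an LE time depends only on the walk up to a suitable cut-like time, giving approximate independence $\pr{I_j=I_k=1} \le p_j p_{k-j}(1+o(1))$. This leads to $\operatorname{Var}(\rho_n) \lesssim n^2(\log\log n)/(\log n)^{2/3}\cdot(\log n)^{-4/3}$, and Chebyshev yields
\[
\pr{|\rho_n - \E{\rho_n}| > \delta \E{\rho_n}} \lesssim_\delta \frac{\operatorname{Var}(\rho_n)}{\E{\rho_n}^2} \lesssim \frac{\log\log n}{(\log n)^{2/3}},
\]
which combined with the mean asymptotic gives the stated tail bound on $\rho_n/[n(\log n)^{-1/3}]$ (adjusting $\delta$ in terms of $\eps$).

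Finally, the bound on $\ell_n$ follows from \eqref{eq:equivtnrhom} applied at the two values $m_{\pm} = (1\pm\eps) n(\log n)^{1/3}$: the event $\bigl|\ell_n/[n(\log n)^{1/3}] - 1\bigr| > \eps$ is contained in the union of the events $\bigl|\rho_{m_\pm}/[m_\pm (\log m_\pm)^{-1/3}] - 1\bigr| > \eps'$ for some $\eps'$ comparable to $\eps$, and the already-proved bound for $\rho$ at $m_\pm$ gives the claim. The main obstacle is the first-moment asymptotic with the precise $(\log\log n)/(\log n)^{2/3}$ error term: this is where Lawler's delicate $4$d LERW analysis (in particular the ODE/transfer-matrix control of the logarithmic corrections to escape and non-intersection probabilities) does the heavy lifting, and the variance step also depends on having these sharp inputs rather than crude $(\log n)^{O(1)}$ bounds.
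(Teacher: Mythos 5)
Your overall architecture (mean of $\rho_n=\sum_k I_k$, second moment, Chebyshev, then inversion via \eqref{eq:equivtnrhom}) is the same skeleton as Lawler's proof of \cite[Theorem 7.7.5]{Law91}, which is all the paper itself does here: the theorem is imported from Lawler, with the quantitative error obtained by feeding the sharp asymptotic $\P(n=\ell_i \text{ for some } i)\sim(\log n)^{-1/3}$ from \cite{Lawlerlog} into Lawler's block argument (the paper explicitly says the details are as in, but simpler than, the proof of \cref{prop:firstconcentration}). The gap in your write-up is precisely the step you dispose of in one sentence: ``apply the Markov property at time $k$ and argue that, conditional on $I_k=1$, whether $j$ is an LE time depends only on the walk up to a suitable cut-like time, giving $\P(I_j=I_k=1)\le p_jp_{k-j}(1+o(1))$.'' Whether $j$ survives in $\LE(X^n)$ depends on the whole future of the walk after time $k$ (the post-$k$ path can return and erase $X_j$), so the indicators are genuinely dependent, and an unquantified $o(1)$ is worthless in a statement whose entire content is a second-order logarithmic correction: to run Chebyshev you need covariances of size roughly $\log\log n/(\log n)^{4/3}$ per well-separated pair. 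The only known mechanism producing such a bound is the loop-free (cut) time decomposition: the loop erasure factorizes exactly across cut times, and the failure probability $\lesssim \log\log n/\log n$ of finding a cut time in a stretch (Lawler's Lemma 7.7.4, quoted as \cref{lem:Lawlercuttimes}) is exactly the source of the $\log\log n/(\log n)^{2/3}$ in the statement, entering via a Markov-inequality bound on the total length of blocks without cut times (compare the terms $16m2^r+\sum_j(1-A_j)\cpc{}{X[I_j]}$ in the proof of \cref{prop:firstconcentration}). Your proposal never actually invokes this input, so the decorrelation claim is asserted rather than proved.

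Two further signs that the error bookkeeping has not been done. First, your claimed variance bound $\operatorname{Var}(\rho_n)\lesssim n^2(\log\log n)(\log n)^{-2/3}(\log n)^{-4/3}=n^2\log\log n/(\log n)^2$ would give, via Chebyshev against $\E[\rho_n]^2\asymp n^2(\log n)^{-2/3}$, a bound $\log\log n/(\log n)^{4/3}$ --- not the $\log\log n/(\log n)^{2/3}$ you then state; and a variance that small is stronger than what the cut-time method delivers (the achievable per-pair covariance is $\asymp\log\log n/(\log n)^{4/3}$, giving $\operatorname{Var}\lesssim n^2\log\log n/(\log n)^{4/3}$ and then the correct $\log\log n/(\log n)^{2/3}$). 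Second, the pointwise asymptotic $p_k=(\log n)^{-1/3}(1+O(\log\log n/(\log n)^{2/3}))$ uniformly in the bulk is more than you need (only a $1+o(1)$ relative error in the mean is required) and more than is cleanly quotable from \cite{Lawlerlog}; relating the retention probability for the finite-time erasure $\LE(X^n)$ to the infinite-erasure statement cited in the paper again requires a cut-time argument. The inversion step from $\rho$ to $\ell_n$ via \eqref{eq:equivtnrhom} is fine. To repair the proof, replace the ``approximate independence'' assertion by the block-plus-buffer decomposition with loop-free times, as in \cite[Theorem 7.7.5]{Law91} or the proof of \cref{prop:firstconcentration}.
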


\begin{proof}
This theorem is \emph{almost} proven in~\cite[Theorem~7.7.5]{Law91}. The quantitative bound on the probability is not stated there, but can easily be inferred by combining the proof given there  with the fact that $\P(n = \ell_i$ for some $i\geq 0)\sim(\log n)^{-1/3}$ as $n\to\infty$, which was proven by Lawler in his later work \cite{Lawlerlog}. (The details are similar to, but simpler than, the proof of our \cref{prop:firstconcentration}.)
\end{proof}


We will also use the following closely related result of Lawler \cite{Lawlerlog} on avoidance probabilities for simple random walk and loop-erased random walk. See also \cref{lem:displacement} for a useful minor variation on the same estimate and \cite{LawlerSunWu} for more precise asymptotic estimates.

\begin{theorem}[\cite{Lawlerlog}]\label{thm:logpaper}
	Let $X$ and $Y$ be independent random walks on $\Z^4$ both started at the origin. Then 
	\begin{align*}
		\pr{X(0,\infty)\cap \LE(Y^n) =\emptyset} \asymp \pr{X(0,n)\cap \LE(Y) =\emptyset} \asymp \frac{1}{(\log n)^{1/3}} 
	\end{align*}
	for every $n\geq 2$.
\end{theorem}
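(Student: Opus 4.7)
The plan is to reduce both $\asymp$ estimates to Lawler's asymptotic from \cite{Lawlerlog} already invoked in the proof of \cref{lem:amounterased}, namely
\begin{equation*}
p_n \ :=\ \pr{n = \ell_i(Y) \text{ for some } i \geq 0} \ \sim\ c(\log n)^{-1/3}
\end{equation*}
for some constant $c>0$, which is the probability that time $n$ survives the loop-erasure of an infinite SRW and is the natural ``loop-erased'' analogue of the quantities in the theorem.

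First I would translate $p_n$ into a two-walk non-intersection probability via time reversal. The time $n$ is loop-erased for $Y$ if and only if the vertex sets $Y[0, n-1]$ and $Y[n, \infty)$ are disjoint (``past and future of $Y$ at time $n$ disjoint''). Conditioning on $Y_n$, reversing the segment $Y^n$, and translating everything by $-Y_n$, this disjointness becomes
\begin{equation*}
p_n \ =\ \pr{W_1(0, n] \cap W_2[0, \infty) = \emptyset},
\end{equation*}
where $W_1, W_2$ are independent simple random walks from $0$. So the SRW-versus-SRW non-intersection probability is already of order $(\log n)^{-1/3}$.

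Second, I would pass from walk-walk to walk-LERW non-intersection by comparison. The lower bounds in the theorem are immediate from monotonicity: since $\LE(Y^n) \subseteq Y[0, n]$ as vertex sets, any walk avoiding the range of $Y^n$ also avoids its loop-erasure, giving
\begin{equation*}
\pr{X(0, \infty) \cap \LE(Y^n) = \emptyset} \ \geq\ \pr{X(0, \infty) \cap Y(0, n] = \emptyset} \ \asymp\ (\log n)^{-1/3};
\end{equation*}
exchanging the roles of $X$ and $Y$ and truncating $\LE(Y)$ at its first $\rho_n\asymp n/(\log n)^{1/3}$ steps using \cref{lem:amounterased} yields the corresponding lower bound in the second estimate. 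For the matching upper bounds I would condition on $\LE(Y^n) = \gamma$ and use the standard decomposition (see \cite[Ch.~7]{Law91}) that writes $Y^n$ given $\LE(Y^n)=\gamma$ as $\gamma$ with independent random-walk loops attached at its vertices; the extra hazard to $X$ of intersecting one of these loops (rather than $\gamma$ itself) is bounded by the expected total capacity of the loops, controlled via the Green's function estimate $\bG(x) \asymp (\|x\|^2 + 1)^{-1}$ from \eqref{Green.bound} and a Markov / last-exit argument.

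The main obstacle is this capacity estimate for the attached loops: it must be sharp enough that the comparison loses only a constant factor, not a polylogarithmic one. In four dimensions the walk is critically transient and the capacities of $\LE(Y^n)$ and $Y^n$ already differ by only logarithmic factors, so any wasteful bound on the loop capacities would destroy the target order $(\log n)^{-1/3}$. This is essentially where Lawler's analysis in \cite{Lawlerlog} does its real work, and my plan is to import those estimates directly rather than reprove them.
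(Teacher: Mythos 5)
Your reduction fails at its first step, and the error propagates through everything that follows. The claimed equivalence ``$n$ is retained in the loop-erasure of $Y$ if and only if $Y[0,n-1]\cap Y[n,\infty)=\emptyset$'' is false: disjointness of the past and future ranges (a cut-time type event) implies retention, but not conversely, since $n$ is erased precisely when some vertex \emph{retained} before time $n$ is revisited at or after time $n$ — the future may freely revisit vertices of the past that have themselves been erased. The two events have different polylogarithmic orders in $\Z^4$: by Lawler's two-walk non-intersection estimates \cite{Law91} one has $\pr{W_1(0,n]\cap W_2[0,\infty)=\emptyset}\asymp (\log n)^{-1/2}$, whereas $p_n=\pr{n=\ell_i(Y)\text{ for some }i}\sim c(\log n)^{-1/3}$ by \cite{Lawlerlog}. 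So your identity for $p_n$ cannot hold (only the inequality $p_n\geq\pr{W_1(0,n]\cap W_2[0,\infty)=\emptyset}$ does), and the same confusion sinks your lower bound: monotonicity does give $\pr{X(0,\infty)\cap\LE(Y^n)=\emptyset}\geq\pr{X(0,\infty)\cap Y(0,n]=\emptyset}$, but the right-hand side is of order $(\log n)^{-1/2}$, not $(\log n)^{-1/3}$, so the comparison is off by a factor $(\log n)^{1/6}$; the symmetric step for the second quantity fails the same way. The entire content of the exponent $1/3$ is that avoiding $\LE(Y^n)$ is polylogarithmically \emph{easier} than avoiding the range of $Y^n$, so no argument comparing the two events up to constants — including your proposed upper bound, in which the loops attached to $\gamma$ were to contribute only a bounded extra hazard — can succeed. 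Indeed, were your chain valid, it would also show that two independent walks in $\Z^4$ avoid each other with probability $\asymp(\log n)^{-1/3}$, contradicting the classical $(\log n)^{-1/2}$ asymptotics.

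Note also that there is no proof in the paper to compare against: \cref{thm:logpaper} is imported directly from \cite{Lawlerlog} (see also \cref{thm:lawlerlog} and the derived variant in \cref{lem:displacement}), and producing the $1/3$ exponent is exactly the hard analytic content of Lawler's paper. It requires controlling the conditional probability of avoiding the loop-erased path given that path, whose typical size is genuinely larger than the walk--walk avoidance probability, and it cannot be extracted from a range comparison together with the Green's function bound \eqref{Green.bound}. If your intention is to quote rather than reprove the estimate, the honest course is to cite \cite{Lawlerlog} for both asymptotics, as the paper does; if you want a genuine reduction to the retention probability $p_n$, the link goes through reversibility and the domain Markov property of the loop-erasure, not through disjointness of the ranges.
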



\subsection{Capacity of random walk}
In this section we recall the definition of the \emph{capacity} of a set in $\Z^4$ and state several results on the capacity of simple random walk due to the second author, Asselah, and Schapira \cite{ASS,Asselah:2016hccc} which will play an important role in our analysis of the uniform spanning tree. Further background on capacities can be found in e.g.\ \cite{LP:book}.

Let $G=(V,E)$ be an infinite transient network with positive edge conductances $(c_e)_{e\in E}$. We will usually take $G$ to be the hypercubic lattice $\Z^4$ with unit edge conductances. The \textbf{capacity} of a finite set $A \subseteq V$ is defined by
\[
\cpc{}{A} = \sum_{x\in A}c(x)\prstart{\tau_A^+=\infty}{x},
\]
where we define $c(x)$ to be the total conductance of all oriented edges emanating from $x$ for each vertex $x\in V$.
The capacity is also known as the \textbf{conductance to infinity}.  
We will make repeated use of the fact that the capacity is both \emph{increasing} and \emph{subadditive} \cite[Proposition 2.3.4]{Law91}: If $A$ and $B$ are finite sets of vertices in an infinite transient network then
\begin{equation}
\cpc{}{A} \leq \cpc{}{A \cup B} \leq \cpc{}{A} + \cpc{}{B}.
\end{equation}
The following proposition provides a partial converse to this subadditivity estimate.

\begin{prop}[\cite{ASS}, Proposition~1.6]
\label{pro:deccap}
Let $A$ and $B$ be two finite subsets of $\Z^d$. We have that
\begin{equation}\label{dec.intro}
\cc{A\cup B} = \cc{A} + \cc{B} - \chi(A,B) -\chi(B,A)+\epsilon(A,B),
\end{equation}
where
\[
\chi(A,B) = 2d\sum_{y\in A} \sum_{z\in B} \prstart{\tau^+_{A\cup B}=
\infty}{y} \bG(y,z)\prstart{\tau_B^+=\infty}{z}
\]
and $0\leq\epsilon(A,B) \leq\cc{A\cap B}$.
\end{prop}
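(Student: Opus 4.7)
The plan is to expand $\cc{A\cup B}$ from the definition, perform an inclusion--exclusion split over $A$, $B$, and $A \cap B$, and then match the resulting ``leakage'' terms against the $\chi$ quantities using the Markov property and a last-exit decomposition.

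Starting from $\cc{A\cup B} = \sum_{x \in A \cup B} 2d\, \prstart{\tau_{A\cup B}^+ = \infty}{x}$ and splitting using $|A\cup B| = |A| + |B| - |A \cap B|$,
\begin{equation*}
\cc{A \cup B} = \sum_{y \in A} 2d\, \prstart{\tau_{A\cup B}^+ = \infty}{y} + \sum_{z \in B} 2d\, \prstart{\tau_{A\cup B}^+ = \infty}{z} - \epsilon_0,
\end{equation*}
where $\epsilon_0 := \sum_{w \in A \cap B} 2d\, \prstart{\tau_{A\cup B}^+ = \infty}{w}$. Since $A \cap B \subseteq A \cup B$ forces $\prstart{\tau_{A\cup B}^+=\infty}{w} \leq \prstart{\tau_{A\cap B}^+=\infty}{w}$, we get $0 \leq \epsilon_0 \leq \cc{A \cap B}$ immediately. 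For each $y \in A$ the disjoint decomposition $\{\tau_A^+ = \infty\} = \{\tau_{A\cup B}^+ = \infty\} \sqcup \{\tau_A^+ = \infty,\, \tau_B^+ < \infty\}$ yields $\sum_{y \in A} 2d\, \prstart{\tau_{A \cup B}^+ = \infty}{y} = \cc{A} - S_A$, where $S_A := \sum_{y \in A} 2d\, \prstart{\tau_A^+ = \infty,\, \tau_B^+ < \infty}{y}$, and symmetrically $\sum_{z \in B} 2d\, \prstart{\tau_{A\cup B}^+=\infty}{z} = \cc{B} - S_B$. Substituting gives $\cc{A \cup B} = \cc{A} + \cc{B} - S_A - S_B - \epsilon_0$, so the proposition reduces to showing the identity $S_A = \chi(B,A) - \epsilon_0$ together with its symmetric analogue.

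To prove $S_A = \chi(B,A) - \epsilon_0$, fix $y \in A$. On the event $\{\tau_A^+ = \infty,\, \tau_B^+ < \infty\}$, transience of the walk guarantees a last visit $L \in [1,\infty)$ to $B$, and the vertex $z = X_L$ must lie in $B \setminus A$ (else $X_L \in A$ at a time $L \geq 1$ would contradict $\tau_A^+ = \infty$). Decomposing over $z \in B \setminus A$ and the possible values $t = L \geq 1$, then applying the Markov property at the deterministic time $t$,
\begin{equation*}
\prstart{\tau_A^+ = \infty,\, \tau_B^+ < \infty}{y} = \sum_{z \in B \setminus A} \bG_A(y,z)\, \prstart{\tau_{A\cup B}^+ = \infty}{z},
\end{equation*}
where $\bG_A(y,z) := \sum_{t\geq 0} \prstart{X_t = z,\, t < \tau_A^+}{y}$ is the Green's function of the walk killed upon returning to $A$. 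A short time-reversal argument using that the edge conductances on $\Z^4$ are constant shows that for $y \in A$ and $z \notin A$,
\begin{equation*}
\bG_A(y, z) = \prstart{X_{\tau_A} = y,\, \tau_A < \infty}{z},
\end{equation*}
and summing over $y \in A$ gives $\sum_{y \in A} \bG_A(y,z) = \prstart{\tau_A < \infty}{z}$. On the other hand, applying the symmetry $\bG(y,z) = \bG(z,y)$ together with the last-exit identity \eqref{eq:last_exit} lets us rewrite
\begin{equation*}
\chi(B, A) = \sum_{z \in B} 2d\, \prstart{\tau_{A \cup B}^+ = \infty}{z}\, \prstart{\tau_A < \infty}{z}.
\end{equation*}
Splitting $B = (B \setminus A) \sqcup (A \cap B)$ and noting that $\prstart{\tau_A < \infty}{z} = 1$ for $z \in A \cap B$ produces precisely $\chi(B, A) = S_A + \epsilon_0$. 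The same argument with $A$ and $B$ swapped gives $S_B = \chi(A,B) - \epsilon_0$, and substituting both into the first-step formula yields the decomposition with $\epsilon(A, B) = \epsilon_0 \in [0, \cc{A \cap B}]$.

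The main obstacle is the second paragraph: one must carefully justify both the path decomposition via the last visit to $B$ (and the observation that this last visit cannot lie in $A \cap B$) and the time-reversal identity for $\bG_A(y,z)$ in the slightly non-standard regime where the starting point $y$ belongs to the killing set $A$. Once these two ingredients are established, the remainder is bookkeeping.
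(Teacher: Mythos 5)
Your argument is correct. Note first that the paper does not prove this proposition at all: it is imported verbatim (up to the factor $2d$ in the normalisation of the capacity) from \cite{ASS}, Proposition~1.6, so there is no internal proof to compare against. Your derivation is a valid, self-contained substitute, and it runs along the same standard lines as the argument in \cite{ASS}: split $\cc{A\cup B}=\sum_{x\in A\cup B}2d\,\prstart{\tau^+_{A\cup B}=\infty}{x}$ by inclusion--exclusion, convert the ``escape $A$ but later hit $B$'' corrections $S_A$, $S_B$ via a last-visit decomposition and the first-entrance/last-exit duality $\bG_A(y,z)=\prstart{X_{\tau_A}=y,\ \tau_A<\infty}{z}$ (valid by reversibility of the walk), and then recognise $\chi(B,A)=2d\sum_{z\in B}\prstart{\tau^+_{A\cup B}=\infty}{z}\prstart{\tau_A<\infty}{z}$ through the last-exit identity \eqref{eq:last_exit}. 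All the individual steps check out: the decomposition $\{\tau_A^+=\infty\}=\{\tau^+_{A\cup B}=\infty\}\sqcup\{\tau_A^+=\infty,\tau_B^+<\infty\}$ is genuinely disjoint, the last visit to $B$ on the event $\{\tau_A^+=\infty,\tau_B^+<\infty\}$ indeed lies in $B\setminus A$ and is finite by transience, the Markov property at that time factorises correctly, and the bookkeeping $S_A=\chi(B,A)-\epsilon_0$, $S_B=\chi(A,B)-\epsilon_0$ with $\epsilon_0=2d\sum_{w\in A\cap B}\prstart{\tau^+_{A\cup B}=\infty}{w}\in[0,\cc{A\cap B}]$ reproduces \eqref{dec.intro} exactly; as a small bonus your proof identifies $\epsilon(A,B)$ explicitly rather than only bounding it. The only cosmetic points are that the statement is for $\Z^d$ (transient, $d\geq 3$) rather than $\Z^4$, and that the reversal identity should be phrased for general constant conductances, but neither affects the argument.
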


(Note that the statement given here differs from that given in~\cite{ASS} since 
 our definition of the capacity differs from the one given there by a factor of $2d$.)

It will be useful for us to have a coarser version of this result involving quantities that are easier to work with.
For $A,B$ finite subsets of $\Z^d$ we define
\[
\til{\chi}(A,B)= 2d\sum_{y\in A} \sum_{z\in B} \prstart{\tau^+_{A}=
\infty}{y} \bG(y,z)\prstart{\tau_B^+=\infty}{z}.
\]
We obviously have that $\chi(A,B)\leq \til{\chi}(A,B)$, and consequently by \cref{pro:deccap} that
\begin{align}\label{eq:obvious}
\cc{A\cup B} \geq \cc{A} + \cc{B} - \tilde\chi(A,B) -\tilde\chi(B,A)
\end{align}
for every two finite subsets $A$ and $B$ of $\Z^4$. This inequality is more helpful to us than \eqref{dec.intro} due to the following useful property.

\begin{lemma}
\label{lem:monotonicityoftilchi}
	Let $A\subseteq A'$ and $B\subseteq B'$ be four finite subsets of $\Z^d$. Then 
	$\til{\chi}(A,B) \leq \til{\chi}(A',B')$.
\end{lemma}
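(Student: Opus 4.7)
The plan is to reduce monotonicity of $\tilde\chi$ to monotonicity of the simple hitting probability $z\mapsto \P_z(\tau_C<\infty)$ via the last exit decomposition \eqref{eq:last_exit}, which collapses the double sum in the definition of $\tilde\chi$ into a single sum.

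First, I would fix $A$ and vary only $B\subseteq B'$. Factoring the inner sum and applying \eqref{eq:last_exit} with starting point $y$ and target set $B$ gives
\[
\tilde\chi(A,B) \;=\; 2d\sum_{y\in A}\P_y(\tau_A^+=\infty)\sum_{z\in B}\bG(y,z)\P_z(\tau_B^+=\infty) \;=\; 2d\sum_{y\in A}\P_y(\tau_A^+=\infty)\,\P_y(\tau_B<\infty).
\]
In this formula $A$ appears only through $\P_y(\tau_A^+=\infty)$ (and the range of summation), while $B$ appears only through $\P_y(\tau_B<\infty)$. Since hitting probabilities of a set are monotone increasing in the set, enlarging $B$ to $B'$ increases each factor $\P_y(\tau_B<\infty)\le \P_y(\tau_{B'}<\infty)$ termwise, giving $\tilde\chi(A,B)\le \tilde\chi(A,B')$.

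Next, to pass from $A$ to $A'$ with $B'$ held fixed, I would run the same computation after first swapping the order of summation. Using symmetry of the Green's function $\bG(y,z)=\bG(z,y)$ and then last exit with starting point $z$ and target set $A$, one gets the dual representation
\[
\tilde\chi(A,B') \;=\; 2d\sum_{z\in B'}\P_z(\tau_{B'}^+=\infty)\,\P_z(\tau_A<\infty),
\]
in which now only $A$ appears inside the hitting probability. The same monotonicity argument yields $\tilde\chi(A,B')\le \tilde\chi(A',B')$. Chaining the two inequalities gives $\tilde\chi(A,B)\le \tilde\chi(A',B')$, as desired.

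I do not foresee a real obstacle here: the only nontrivial move is recognizing that $\tilde\chi$, unlike $\chi$, is built from the escape probability of a single set rather than of the union $A\cup B$, so the last exit identity applies separately in each variable. The rest is just monotonicity of $C\mapsto \P_\cdot(\tau_C<\infty)$.
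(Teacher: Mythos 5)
Your proposal is correct and is essentially the paper's own argument: both collapse $\tilde\chi(A,B)$ via the last exit decomposition \eqref{eq:last_exit} to $2d\sum_{y\in A}\P_y(\tau_A^+=\infty)\P_y(\tau_B<\infty)$, use monotonicity of hitting probabilities to enlarge $B$ to $B'$, and then apply symmetry of $\bG$ together with last exit once more to obtain the dual single-sum representation in which $A$ can be enlarged to $A'$. No gaps; the two proofs coincide step for step.
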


In particular, when $A$ and $B$ are two segments of a loop-erased random walk, we will be able to bound the associated cross-terms $\tilde \chi(A,B)$ and $\tilde \chi(B,A)$ in terms of the cross-terms associated to the \emph{simple} random walk.

\begin{proof}[Proof of \cref{lem:monotonicityoftilchi}]
Using the last exit decomposition formula we obtain
\begin{align*}
	\til{\chi}(A,B)=  2d\sum_{x\in A} \prstart{\tau_A^+=\infty}{x}\prstart{\tau_B<\infty}{x}  \leq  2d\sum_{x\in A} \prstart{\tau_A^+=\infty}{x}\prstart{\tau_{B'}<\infty}{x}. 
\end{align*}
	Applying the last exit decomposition formula two more times and using the symmetry of the Green's function $\bG(x,y)$ we obtain that
		\begin{align*}
	&	\sum_{x\in A} \prstart{\tau_A^+=\infty}{x}\prstart{\tau_{B'}<\infty}{x} = \sum_{x\in A}\prstart{\tau_A^+=\infty}{x} \sum_{y\in B'}\bG(x,y) \prstart{\tau_{B'}^+=\infty}{y}\\
		&= \sum_{y\in B'} \prstart{\tau_{B'}=\infty}{y} \prstart{\tau_A<\infty}{y}
		\leq \sum_{y\in B'} \prstart{\tau_{B'}=\infty}{y} \prstart{\tau_{A'}<\infty}{y} = \frac{1}{2d}\til{\chi}(A',B')
	\end{align*}
as required.
\end{proof}

The capacity of the simple random walk is understood very well in every dimension \cite{Asselah:2016hccc,ASS}. We will use the following estimates giving control of the mean capacity and concentration around this mean in four dimensions. 

\begin{lemma}\label{lem:known}
	Let $X$ be a simple random walk in $\Z^4$. Then
	\[
	\E{\cpc{}{X^n} }\asymp \frac{n}{\log n} \quad \text{ and }\quad  \E{\left|\cc{X^n} - \E{\cc{X^n}}\right|^k } \lesssim_k \frac{n^k}{(\log n)^{2k}}
	\]
	for every $k\geq 1$ and $n\geq 2$.
\end{lemma}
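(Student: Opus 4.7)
This lemma is proved in \cite{ASS, Asselah:2016hccc}, and I would present a brief sketch of its two halves.

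For the mean, I would begin with the last-exit decomposition
$$
\E{\cc{X^n}} = 2d \sum_{k=0}^n \P\!\left(X_k \notin X(k,n],\ \widetilde X[1,\infty) \cap X^n = \emptyset\right),
$$
where $\widetilde X$ is an independent simple random walk started from $X_k$. Applying the strong Markov property at time $k$ together with time-reversal of the segment $X[0,k]$, the event inside the probability becomes a simultaneous non-intersection condition among three independent walks issued from a common vertex, of respective lengths $k$, $n-k$, and $\infty$. Lawler's sharp non-intersection estimate in $\Z^4$ (\cite[Chapter~3]{Law91}; the simple-random-walk analogue of \cref{thm:logpaper}) then shows that this summand is of order $1/\log n$ whenever $k\in[n/4,3n/4]$, while a routine analysis of the boundary terms shows they contribute at most $O(n/\log n)$ in total. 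Summing yields $\E{\cc{X^n}} \asymp n/\log n$.

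For the moment bound, I would use the Doob martingale decomposition
$$
\cc{X^n} - \E{\cc{X^n}} = \sum_{i=1}^n D_i, \qquad D_i := \E{\cc{X^n} \mid \cF_i} - \E{\cc{X^n} \mid \cF_{i-1}},
$$
with $\cF_i = \sigma(X_0,\ldots,X_i)$, and apply the Burkholder--Davis--Gundy inequality to reduce matters to bounding $\E{\bigl(\sum_i D_i^2\bigr)^k}$. Representing $D_i$ through the coupling that resamples the $i$-th step of the walk independently (so that $X$ and $X'$ agree on $[0,i-1]$ and continue independently from $X_{i-1}$ thereafter), the capacity decomposition \cref{pro:deccap} expresses $D_i$ as a linear combination of \emph{differences} of cross-terms $\chi(X[0,i-1],\cdot)$ between the common past and each of the two coupled continuations: the translation-invariant capacities of the two continuations have the same conditional distribution given $\cF_i$ and hence cancel in conditional expectation. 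Bounding these cross-terms above by $\widetilde\chi$ via \cref{lem:monotonicityoftilchi} and then applying the mean estimate to subwalks of length $n-i$ gives $\E{\sum_i D_i^2} \lesssim n^2/(\log n)^4$, hence the variance bound; the higher moments follow by combining the higher-order Burkholder inequality with the same cross-term estimates.

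The principal obstacle is that the pointwise Lipschitz constant of $\cc{X^n}$ under a single-step resampling can be as large as the full capacity $\sim n/\log n$, so uniform bounds on $|D_i|$ are too weak. The argument must instead exploit the translation-invariance cancellations in \cref{pro:deccap} to isolate only the genuinely informative part of $D_i$, whose $L^2$ norm concentrates on the correct, much smaller scale.
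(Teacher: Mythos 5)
You correctly reduce the lemma to the same two references the paper itself invokes---the paper's ``proof'' consists entirely of citing \cite[Cor.~1.4]{Asselah:2016hccc} for the mean and \cite[Lemma~7.1]{ASS} for the centered moments---and your sketch of the mean estimate (last-exit decomposition, time reversal, Lawler's two-sided non-intersection bound) is exactly the argument those citations point to.

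For the moment bound your route diverges from the cited one, and the sketch has a concrete gap. The cited argument, which this paper adapts when it proves \cref{prop:firstconcentration}, is a dyadic decomposition of $[0,n]$: apply \cref{pro:deccap} recursively across $O(\log n)$ scales, writing $\cc{X^n}$ as a sum of independent block capacities plus cross-terms $\chi(i,j)$, and then bound the moments of the cross-term sums via an intersection estimate of the type in \cref{lem:secondmoment}. You instead propose a Doob martingale with Burkholder--Davis--Gundy and a single-step resampling coupling. The cancellation you identify ($\cc{B}$ and $\cc{B'}$ have the same conditional law given $\cF_{i-1}$, so the block capacities drop out of $D_i$) is correct, but the next step does not follow as stated: ``applying the mean estimate to subwalks of length $n-i$'' controls $\E{\cc{X^m}}\asymp m/\log m$, not the cross-term differences $\chi(A,B)-\chi(A,B')$ that survive in $D_i$; what is actually needed there is a second-moment estimate for cross-terms in the spirit of \cref{lem:secondmoment}, which is a separate and nontrivial intersection computation that the mean bound does not supply. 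More seriously, BDG reduces the $2k$-th central moment to $\E\bigl[\bigl(\sum_i D_i^2\bigr)^k\bigr]$, and your sketch offers nothing beyond second moments to control the upper tails of the $D_i$, which are not pointwise small (a worst-case configuration makes $|D_i|$ comparable to the full capacity $\sim n/\log n$). So the higher-moment case---which is the entire content of the statement beyond the variance---is asserted rather than proved, and the step from the cancellation structure to the stated bound on $\E{\sum_i D_i^2}$ is also missing.
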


\begin{proof}[Proof of \cref{lem:known}]
The first statement follows from~\cite[Corollary~1.4]{Asselah:2016hccc} which is a direct consequence of Lawler's two sided non-intersection estimate \cite{Law91}. (See also \cite[Lemmas 5.5 and 5.6]{1804.04120} for a simple argument giving a lower bound of the correct order in every dimension.)
The second statement follows from~\cite[Lemma~7.1]{ASS}.
\end{proof}

We will also use the following bound on the cross-terms. (Note that $n\geq 3 > e$ ensures that $\log n$ and $\log \log n$ are both positive.)
\begin{lemma}\label{lem:secondmoment}
	Let $X$ be a simple random walk in $\Z^4$. Then there exists a positive constant $C$ such that
	\begin{align*}
		\E{\left(\til{\chi}\left(X[0,n], X[n,2n]\right)\right)^2} \leq Cn^2 \cdot \frac{(\log\log n)^2}{(\log n)^4}
	\end{align*}
	for every $n\geq 3$.
\end{lemma}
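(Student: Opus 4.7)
By applying the last-exit decomposition to the $B$-factor, as already used in the proof of \cref{lem:monotonicityoftilchi}, we may rewrite
$$\til{\chi}(X[0,n], X[n,2n]) = 8\sum_{y\in X[0,n]} \prstart{\tau_{X[0,n]}^+=\infty}{y}\, \prstart{\tau_{X[n,2n]}<\infty}{y}.$$
My plan is to use the Markov property at time $n$ together with translation invariance and time-reversal: setting $X^-_k := X_{n-k} - X_n$ and $X^+_k := X_{n+k} - X_n$ for $0\leq k\leq n$ produces two independent $n$-step simple random walks from the origin, whose ranges are translates of $X[0,n]$ and $X[n,2n]$ respectively. Since capacity, Green's function, and hitting/escape probabilities are translation-invariant, it suffices to bound $\E{\til{\chi}(X^-, X^+)^2}$ for these two independent walks.

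Squaring the representation and using the independence of $X^-$ and $X^+$ factorizes the resulting double sum:
$$\E{\til{\chi}(X^-, X^+)^2} = 64 \sum_{y_1, y_2 \in \Z^4} F_-(y_1, y_2)\, F_+(y_1, y_2),$$
where $F_-(y_1,y_2) := \E{\1(y_1, y_2 \in X^-)\, \prstart{\tau_{X^-}^+=\infty}{y_1}\, \prstart{\tau_{X^-}^+=\infty}{y_2}}$ and $F_+(y_1, y_2) := \E{\prstart{\tau_{X^+}<\infty}{y_1}\, \prstart{\tau_{X^+}<\infty}{y_2}}$. I would handle $F_+$ by applying the last-exit decomposition to $X^+$ twice, combined with the Green's function bound \eqref{Green.bound} and the 4d non-intersection estimate of \cref{thm:logpaper}; this should yield a main term roughly of the form $(\log n)^{-2}\bG(0,y_1)\bG(0,y_2)$ plus a near-diagonal correction proportional to $(\log n)^{-1}\bG(y_1, y_2)$. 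The factor $F_-$, after re-expressing the escape probabilities via another last-exit decomposition, reduces to a Green's-function-weighted quadratic form in $\cc{X^-}$, which can be bounded in moments by \cref{lem:known}.

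Summing over $y_1, y_2$, the off-diagonal contribution produces $(\E{\cc{X^-}}/\log n)^2 \asymp n^2/(\log n)^4$, which already matches the claimed bound up to the $(\log\log n)^2$ factor. This additional factor would emerge from the near-diagonal regime, where $y_1$ and $y_2$ lie close along $X^-$: in this regime the two escape events and the two hitting events are correlated and cannot be factorized naively, and one must invoke the sharp concentration estimate of \cref{lem:amounterased} for $\rho_n$ and $\ell_n$ together with the higher-moment capacity bound of \cref{lem:known} to control the contribution. The tail bound in \cref{lem:amounterased} decays like $(\log\log n)/(\log n)^{2/3}$ rather than $1/(\log n)^{2/3}$, which is the ultimate origin of the $(\log\log n)$-corrections in the final bound. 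The main obstacle is performing this near-diagonal estimate without incurring an extra $\log n$ factor: a crude union-bound approach loses a full power of $\log n$, so one must carefully decompose $X^-$ into macroscopic pieces and control the cross-terms between pieces using both the moment bound of \cref{lem:known} and the concentration of \cref{lem:amounterased}, in the spirit of the decomposition argument of \cite{ASS}.
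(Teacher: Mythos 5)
The paper does not actually prove this lemma itself: it is quoted directly from the second part of the proof of Lemma~3.1 of \cite{ASS}, so your proposal should be measured against that argument. Your setup is sound and is the same in spirit: rewriting $\til{\chi}(X[0,n],X[n,2n])=8\sum_{y\in X[0,n]}\prstart{\tau^+_{X[0,n]}=\infty}{y}\prstart{\tau_{X[n,2n]}<\infty}{y}$ via the last-exit decomposition, recentering at $X_n$ to obtain two independent $n$-step walks, and factorizing the squared sum into $F_-\cdot F_+$ by independence are all correct, and your off-diagonal heuristic correctly produces the main order $n^2/(\log n)^4$.

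The genuine gap is the step you yourself flag as ``the main obstacle'': bounding $F_+(y_1,y_2)=\E{\prstart{\tau_{X^+}<\infty}{y_1}\prstart{\tau_{X^+}<\infty}{y_2}}$, in particular for $y_1,y_2$ near the junction point, without losing a power of $\log n$ --- and the tools you propose for it are the wrong ones. Both \cref{lem:amounterased} and \cref{thm:logpaper} are statements about \emph{loop-erased} random walk, but no loop-erasure appears anywhere in $\til{\chi}(X[0,n],X[n,2n])$, which involves only simple-random-walk ranges, escape and hitting probabilities; in particular the $(\log\log n)^2$ in the statement does not originate from the $\log\log n/(\log n)^{2/3}$ error term of \cref{lem:amounterased}. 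In the argument of \cite{ASS} the $\log\log n$ comes from simple-random-walk hitting estimates: for a point $y$ at distance $r\ll\sqrt{n}$ from $X_n$, the probability that the independent $n$-step range is hit from $y$ is of order $\log(n/r^2)/\log n$ rather than $1/\log n$, and summing these enhanced near-junction contributions over dyadic scales of $r$ is what produces the $\log\log n$ corrections in both the first and second moments of $\til{\chi}$. Without such distance-quantified hitting bounds (in the spirit of \cite[Theorem~4.3.3]{Law91}), your sketch establishes only the off-diagonal heuristic and does not reach the claimed estimate. A minor additional point: your proposed ``main term'' $(\log n)^{-2}\bG(0,y_1)\bG(0,y_2)$ for $F_+$ is dimensionally inconsistent --- it is missing a factor of order $\E{\cc{X^+}}^2\asymp n^2/(\log n)^2$ --- although your subsequent summation indicates you intended the consistent version $F_+\lesssim(\log n)^{-2}$ at the typical scale $\|y_i\|\asymp\sqrt{n}$.
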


The proof of this lemma can be found in the second part of the proof of Lemma~3.1 of~\cite{ASS}, where a bound on $\til{\chi}$ is implicitly established (although the notation $\tilde \chi$ is not used).

\section{The capacity of loop-erased random walk}
\label{sec:capacity}

In this section we prove several estimates concerning the capacity of the four-dimensional loop-erased walk that will be used when analysing the UST via the interlacement Aldous--Broder algorithm.
Both upper and lower bounds on the capacity of the LERW will be useful for our analysis of the UST: Generally speaking, we will want lower bounds on the capacity when proving upper bounds on arm events for the UST, and upper bounds on the capacity for proving lower bounds on arm events for the UST. We compute the order of $\E{\cpc{}{\ler{}{X^n}}}$ in \cref{subsec:expected_capacity} and establish lower and upper tail estimates in \cref{subsec:capacity_lower_tail,subsec:capacity_upper_tail} respectively.

\subsection{The expected capacity}
\label{subsec:expected_capacity}


We begin with the following theorem, which states very generally that the \emph{expected} capacity of the simple random walk and loop-erased random walk are of the same order. (In the other direction, we trivially have that $\cpc{}{\ler{}{X^n}} \leq \cpc{}{X^n}$ by monotonicity of capacity.)
	
\begin{thm}
\label{thm:expectlerwcap}
Let $G=(V,E)$ be a transient, locally finite network, let $x\in V$, and let $X$ be a random walk on $G$ started from $x$. Then
\[
\E{\cpc{}{\ler{}{X^n}}} \geq \frac{1}{256}\, \E{\cpc{}{X^n}}
\]
for every $n\geq 0$.
\end{thm}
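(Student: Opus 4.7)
The plan is to reduce to a statement about the equilibrium measure. Writing $A = X^n$ and $L = \mathrm{LE}(X^n)$ (as vertex sets), we have $L \subseteq A$, so $\tau^+_L \geq \tau^+_A$ pointwise, and in particular $\P_a(\tau^+_L = \infty) \geq \P_a(\tau^+_A = \infty)$ for every $a$. Applying this to the definition of $\mathrm{Cap}$ yields
\begin{equation*}
\mathrm{Cap}(L) = \sum_{a \in L} c(a) \P_a(\tau^+_L = \infty) \geq \sum_{a \in L} c(a) \P_a(\tau^+_A = \infty) = e_A(L),
\end{equation*}
where $e_A(\cdot)$ denotes the equilibrium measure of $A$. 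Since $\mathrm{Cap}(A) = e_A(A)$, it thus suffices to prove the annealed inequality $\E{e_{X^n}(\mathrm{LE}(X^n))} \geq \tfrac{1}{256}\,\E{e_{X^n}(X^n)}$; in words, that a positive fraction of the equilibrium mass of the walk trace sits on the loop-erasure.

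To prove this, I would introduce an independent random walk $Y$ and exploit the identity $c(a)\P_a(\tau^+_A = \infty) = c(a) \P_a^Y(Y \text{ never returns to } A \mid X)$, so that both sides of the desired inequality can be expressed as expected escape weights of an independent walker. The natural strategy is to construct, for each $a \in X^n \setminus \mathrm{LE}(X^n)$, a ``witness'' $\phi(a) \in \mathrm{LE}(X^n)$ together with a coupling of the escape events such that escape from $a$ implies escape from $\phi(a)$ with probability bounded below by a constant. The canonical choice of $\phi$ is the base of the loop containing $a$ in the decomposition $X^n = \mathrm{LE}(X^n) \cup (\text{loops attached to LE vertices})$: the segment of $X$ between the time $a$ is (last) visited and the time $\phi(a)$ is last visited forms a subpath that, by the Markov property and time-reversal, can be used to relate $\P_a(\tau^+_A = \infty)$ to $\P_{\phi(a)}(\tau^+_A = \infty)$ up to a bounded loss.

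The main obstacle, and the place where the factor $\tfrac{1}{256}$ presumably enters, is controlling the multiplicity of this charging: many vertices $a$ may share the same witness $\phi(a) = v$, so one needs to bound $\sum_{a \in \phi^{-1}(v)} e_A(a)$ by a constant multiple of $e_A(v)$. I would try to carry this out using an exploration process (in the spirit of Wilson's algorithm) that generates the walk in a way adapted to the loop-erasure, combined with a Markov/reversibility argument showing that a walk escaping from a loop must, with positive probability, ``pass through'' the base of that loop. Because the theorem is stated for \emph{arbitrary} transient networks, there is no geometric boundary to exploit; the entire proof must be combinatorial and probabilistic, relying only on the Markov property, reversibility, and monotonicity of capacity, which makes the bookkeeping on overlaps the most delicate step.
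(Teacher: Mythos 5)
Your opening reduction is fine: since $\ler{}{X^n}\subseteq X^n$, monotonicity of escape probabilities gives $\cc{\ler{}{X^n}}\geq e_{X^n}(\ler{}{X^n})$, and it would indeed suffice to show that, \emph{in expectation}, a constant fraction of the equilibrium mass of the trace sits on the loop-erasure. The gap is in how you propose to prove that. Your charging scheme is pathwise, and no pathwise argument can work, because the deterministic inequality $\cc{\ler{}{X^n}}\geq c\,\cc{X^n}$ is false: on a realization in which the walk makes one long excursion that returns exactly to its starting vertex, the loop-erasure at that moment is a single point while the trace still has capacity comparable to that of an $n$-step range (of order $n/\log n$ on $\Z^4$). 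Concretely, both of your key estimates fail. First, there is no uniform comparison between $\P_a(\tau_A^+=\infty)$ and $\P_{\phi(a)}(\tau_A^+=\infty)$ on a general transient network; note also that the base $\phi(a)$ lies in $A$, so a walk escaping from $a$ by definition never ``passes through the base of the loop'', and you cannot manufacture an escape path from $\phi(a)$ by first running along the loop to $a$, since that path revisits $A$. Second, the multiplicity bound $\sum_{a\in\phi^{-1}(v)}e_A(a)\lesssim e_A(v)$ is false for exactly the realizations above: essentially all of the equilibrium mass of a long loop gets charged to its single base vertex $v$, whose own equilibrium mass is at most $c(v)$. Any correct proof must average over the law of $X$, and making your charging work ``in expectation'' is essentially the theorem itself, so the outline is circular at the decisive step.

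For contrast, the paper avoids charging altogether: it writes both capacities as limits of effective conductances to the wired boundary $\partial_r$ of an exhaustion, so that the two expectations become $c(\partial_r)$ times the probabilities that an independent walk $Y$ from $\partial_r$ hits $\ler{}{X^{(\tau-1)\wedge n}}$, respectively $X^{(\tau-1)\wedge n}$, before returning to $\partial_r$. The inequality between these hitting probabilities is then proved with the Lyons--Peres--Schramm second-moment theorem (which produces a weight function $w$ with $\P(\mathrm{hit})\leq 64\,\E{S_w}^2/\E{S_w^2}$) combined with a symmetry argument: conditioned on $X_a=Y_b$, the two continuations are i.i.d., so with conditional probability at least $1/2$ the first point of $\ler{}{X^a}$ hit by $Y$'s continuation is no later than the first point hit by $X$'s continuation, and on that event the loop-erasure itself meets $Y$; Cauchy--Schwarz contributes the remaining factor $4$, giving $256=4\times 64$. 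If you wish to salvage your formulation via $e_{X^n}$, you would still need a probabilistic input of this kind (e.g.\ that the last-exit point of the trace under harmonic measure from infinity lies on the loop-erasure with probability bounded below), which is not easier than the theorem you are trying to prove.
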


Applying this result together with \cref{lem:known} yields the following immediate corollary.

\begin{corollary}
\label{cor:expectedlerwcapZ4}
Let $X$ be simple random walk on $\Z^4$. Then
\[
\E{\cpc{}{\ler{}{X^n}}} \asymp \frac{n}{\log n}
\]
for every $n\geq 2$.
\end{corollary}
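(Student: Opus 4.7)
The plan is to combine the two preceding results, which together immediately yield the corollary: \cref{thm:expectlerwcap} provides a dimension-free lower bound relating the expected capacity of the loop-erasure to that of the simple random walk, while \cref{lem:known} pins down the latter on $\Z^4$.

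For the upper bound, I would first observe that $\ler{}{X^n}$ is (as a set of vertices) a subset of the range $X^n$, so by the monotonicity of capacity we trivially have $\cpc{}{\ler{}{X^n}} \leq \cpc{}{X^n}$ pointwise. Taking expectations and applying the first half of \cref{lem:known} gives
\[
\E{\cpc{}{\ler{}{X^n}}} \;\leq\; \E{\cpc{}{X^n}} \;\lesssim\; \frac{n}{\log n}.
\]

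For the matching lower bound, I would apply \cref{thm:expectlerwcap} with $G = \Z^4$ (which is transient and locally finite) and the origin as the starting vertex, obtaining $\E{\cpc{}{\ler{}{X^n}}} \geq \frac{1}{256}\E{\cpc{}{X^n}}$, and then invoke the first half of \cref{lem:known} once more to conclude that this is $\gtrsim n/\log n$.

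There is essentially no obstacle here: the content is entirely packaged in the two cited results, and the corollary is their immediate combination. The only thing worth being careful about is ensuring the monotonicity step in the upper bound is valid, which follows from the standard fact that capacity is monotone under set inclusion (recalled in Section~\ref{sec:capacity}, and used repeatedly later in the paper). The two bounds combine to give $\E{\cpc{}{\ler{}{X^n}}} \asymp n/\log n$ for all $n\geq 2$, as claimed.
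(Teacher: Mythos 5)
Your proof is correct and follows exactly the route the paper intends: monotonicity of capacity plus \cref{lem:known} for the upper bound, and \cref{thm:expectlerwcap} plus \cref{lem:known} for the lower bound. The paper presents this as an immediate consequence, noting the monotonicity observation just before stating \cref{thm:expectlerwcap}, so there is no difference in approach.
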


\cref{thm:expectlerwcap} is closely related to the results of Lyons, Peres, and Schramm \cite{LyonsPeresScrhamm}, and will be deduced using techniques similar to that paper. In particular, we will apply Theorem 3.1 of that paper, which we now state in full.

\begin{theorem}[\cite{LyonsPeresScrhamm}, Theorem~3.1]
\label{thm:LPS}
Let $X$ and $Y$ be independent Markov chains on countable state spaces\footnote{In \cite{LyonsPeresScrhamm} the authors require the two chains to have the same state space. This condition is redundant since they do not require the two chains to have the same transition probabilities: one can always take the two state spaces to be $\N$ without loss of generality.} $\mathcal{S}_1$ and $\mathcal{S}_2$ and with initial states $x_0$ and $y_0$ respectively. Let $A \subseteq \N \times \mathcal{S}_1 \times \N \times \mathcal{S}_2$, and let $\mathrm{hit}(A)$ be the event that $(n,X_n,m,Y_m) \in A$ for some $n,m \geq 0$. Given any weight function $w:\N \times \mathcal{S}_1 \times \N \times \mathcal{S}_2 \to [0,\infty)$ that vanishes outside of $A$, consider the random variable
\[
S_w = \sum_{n,m=0}^\infty w(n,X_n,m,Y_m).
\]
If $\P(\mathrm{hit}(A))>0$ then there exists such a weight function  $w:\N \times \mathcal{S}_1 \times \N \times \mathcal{S}_2 \to [0,\infty)$  vanishing outside of $A$ such that $0<\E{S_w^2} <\infty$ and
\begin{equation}
\label{eq:reverse_Cauchy_Schwarz}
\frac{\E{S_w}^2}{\E{S_w^2}} \leq \P\left(\mathrm{hit}(A)\right) \leq 64\frac{\E{S_w}^2}{\E{S_w^2}}.
\end{equation}
\end{theorem}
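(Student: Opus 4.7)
The plan is two-fold, corresponding to the two sides of~\eqref{eq:reverse_Cauchy_Schwarz}. The left inequality $\E{S_w}^2 / \E{S_w^2} \leq \P(\mathrm{hit}(A))$ is elementary and holds for \emph{any} non-negative weight function $w$ vanishing off $A$: since $S_w$ also vanishes off $\mathrm{hit}(A)$, Cauchy--Schwarz applied to $S_w = S_w \1_{\mathrm{hit}(A)}$ yields $\E{S_w}^2 \leq \E{S_w^2}\,\P(\mathrm{hit}(A))$. The substance of the theorem is therefore the right inequality, which requires constructing a specific near-optimal $w$.

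For the construction I would pass to the \emph{time-space} chains $\tilde X_n := (n, X_n)$ and $\tilde Y_m := (m, Y_m)$ on $\N \times \mathcal{S}_1$ and $\N \times \mathcal{S}_2$ respectively, which are transient in the strong sense that each state is visited at most once. Denoting their ranges by $R_X$ and $R_Y$, the event $\mathrm{hit}(A)$ becomes $\{(R_X \times R_Y) \cap A \neq \emptyset\}$. For $p \in \N \times \mathcal{S}_1$ and $q \in \N \times \mathcal{S}_2$ set $u(p) := \P(p \in R_X)$, $v(q) := \P(q \in R_Y)$, and for $(p,q) \in A$ set
\[
\phi(p, q) := \P\bigl(\text{lex-earliest element of } (R_X \times R_Y) \cap A \text{ equals } (p,q)\bigr)
\]
with respect to any canonical total order on $A$ (for instance, by $n(p)+m(q)$ with arbitrary tie-breaking). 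By construction $\sum_{(p,q)\in A}\phi(p,q) = \P(\mathrm{hit}(A))$. I would then take
\[
w(p, q) := \frac{\phi(p,q)}{u(p)\, v(q)}\,\1_A(p,q),
\]
which is well-defined since $\phi(p,q) > 0$ forces $u(p), v(q) > 0$. By independence of $\tilde X$ and $\tilde Y$ we immediately get $\E{S_w} = \sum \phi(p,q) = \P(\mathrm{hit}(A))$.

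The main work is to prove a second-moment bound of the form $\E{S_w^2} \leq C\, \P(\mathrm{hit}(A))$ for some explicit constant $C \leq 64$, which then combines with the above to give the right inequality of~\eqref{eq:reverse_Cauchy_Schwarz}. Expanding using independence,
\[
\E{S_w^2} = \sum_{(p_i, q_i) \in A} \frac{\phi(p_1, q_1)\,\phi(p_2, q_2)}{u(p_1)\, u(p_2)\, v(q_1)\, v(q_2)}\,\P(p_1, p_2 \in R_X)\,\P(q_1, q_2 \in R_Y),
\]
and because each time-space chain visits every state at most once the strong Markov property decomposes $\P(p_1, p_2 \in R_X)$ as a sum over which of $p_1, p_2$ is visited first, with each summand factorising as $u(p_i)\, H_X(p_i \to p_{3-i})$, where $H_X$ denotes the hitting kernel of $\tilde X$. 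Applying the same decomposition to the $Y$-factor produces four cross-terms; in each, the Markov property identifies the inner sum $\sum_{(p_1, q_1)\in A} \phi(p_1, q_1)\, H_X(p_1 \to p_2)\, H_Y(q_1 \to q_2)$ as the probability that the chains first visit $A$ at some $(p_1, q_1)$ and subsequently also reach $p_2$ and $q_2$ respectively, which is bounded by $\P(p_2 \in R_X, q_2 \in R_Y) = u(p_2)\, v(q_2)$. Each cross-term therefore collapses to at most $\sum_{(p_2, q_2)} \phi(p_2, q_2) = \P(\mathrm{hit}(A))$, giving the required bound.

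The main obstacle will be the bookkeeping: diagonal contributions (e.g.\ $p_1 = p_2$ or $q_1 = q_2$) require separate treatment using the crude bound $\phi(p,q) \leq u(p)\, v(q)$; one must also verify that the lex ordering used to define $\phi$ is compatible with the time-ordering exploited in the Markov decomposition, and keep track of directional constraints such as $H_X(p_1 \to p_2) = 0$ unless the time coordinate of $p_2$ exceeds that of $p_1$. The explicit constant $64 = 2^6$ in the statement is generous: the scheme above yields a constant closer to $4$ or $5$, and any finite constant suffices for all subsequent applications of the theorem in this paper.
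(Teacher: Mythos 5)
First, a point of comparison: the paper does not prove this statement at all — it is imported verbatim from \cite{LyonsPeresScrhamm}, with only the remark that the lower bound of \eqref{eq:reverse_Cauchy_Schwarz} is the trivial Cauchy--Schwarz inequality. Your treatment of that lower bound is correct and matches the paper's remark. The substance of your proposal is therefore the construction of $w$ and the second-moment bound, and there the argument has a genuine gap at its central step.

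The gap is the factorisation behind the claim that
$\sum_{(p_1,q_1)\in A}\phi(p_1,q_1)\,H_X(p_1\to p_2)\,H_Y(q_1\to q_2)\leq u(p_2)\,v(q_2)$.
For this you implicitly treat the event $E(p_1,q_1)=\{\text{the $\prec$-earliest element of }(R_X\times R_Y)\cap A\text{ is }(p_1,q_1)\}$ as measurable with respect to the pair of pasts $\bigl(X^{n(p_1)},Y^{m(q_1)}\bigr)$, so that the strong Markov property gives $\mathbb{P}\bigl(E(p_1,q_1)\cap\{p_2\in R_X\}\cap\{q_2\in R_Y\}\bigr)=\phi(p_1,q_1)H_X(p_1\to p_2)H_Y(q_1\to q_2)$ and disjointness over $(p_1,q_1)$ finishes the bound. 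But no total order on $A$ makes $E(p_1,q_1)$ measurable with respect to those pasts: with your order by $n(p)+m(q)$, a pair $(p,q)\prec(p_1,q_1)$ may have $n(p)>n(p_1)$ (compensated by $m(q)<m(q_1)$), so deciding whether it was hit requires the future of $X$ beyond time $n(p_1)$; ordering lexicographically by the $X$-time instead shifts the same problem onto the future of $Y$. Since $E(p_1,q_1)$ then carries (negative) information about the futures of the chains, the Markov factorisation fails, and because conditioning a chain to avoid a space-time set is not monotone, no inequality in the needed direction is available either. This absence of a joint ``first intersection'' stopping time for two independently parameterised chains is precisely the obstruction that makes the theorem nontrivial, and it is why the proofs of \cite{LyonsPeresScrhamm} (building on Salisbury and Fitzsimmons--Salisbury) proceed by a more elaborate route and only reach the generous constant $64$; your estimate that the scheme yields a constant near $4$ should itself have been a warning sign. (A smaller omission: you never verify $0<\mathbb{E}\bigl[S_w^2\bigr]<\infty$ for your $w$, which the statement requires, but this is minor next to the main issue.)
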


\medskip

This theorem is based on earlier work of Salisbury \cite{MR1395618} and Fitzsimmons and Salisbury~\cite{MR1023955}.
Note that the lower bound of \eqref{eq:reverse_Cauchy_Schwarz} is a trivial consequence of Cauchy-Schwarz and holds for \emph{any} choice of weight function; the content of the theorem is that, in this context, there always exists a weight function such that the reverse inequality holds up to multiplication by a universal constant. Similar but more explicit results for a \emph{single} Markov chain can be proven using the notion of \emph{Martin capacity} \cite{MR1349175}.


\medskip
The proof of \cref{thm:expectlerwcap} will also use the fact that the capacity of a set can be expressed as a limit of effective conductances in finite networks. Recall that if $G=(V,E)$ is a finite network and $A$ and $B$ are disjoint sets of vertices in $G$ then the \textbf{effective conductance} is defined to be
\[
\Ceff(A \leftrightarrow B;G)=\sum_{a\in A}c(a)\P_a(\tau_B <\tau^+_A) = \sum_{b\in B}c(b)\P_b(\tau_A <\tau^+_B).
\]
See \cite[Chapter 2]{LP:book} for background. Now suppose that $G$ is an infinite network and that $(V_n)_{n\geq 1}$ is an exhaustion of $V$ by finite sets. For each $n\geq 1$, we define $G_n^*$ to be the finite network obtained from $G$ by contracting every vertex in $V\setminus V_n$ down to a single vertex, denoted $\partial_n$, and deleting all the self-loops from $\partial_n$ to itself. Then 
\[
\cpc{}{A}=\Ceff(A \leftrightarrow \infty;G) =\lim_{n\to\infty}\Ceff(A \leftrightarrow \partial_n;G_n^*)
\]
for every finite set $A \subseteq V$.

\begin{proof}[Proof of \cref{thm:expectlerwcap}]
This proof will be a small variation on the proof of \cite[Lemma 4.1]{LyonsPeresScrhamm}. The main difference is that the walk $X$ is stopped at a deterministic time $n$ rather than at the hitting time of some set, which means that the exact symmetry used to prove eq.\ (4.3) of \cite{LyonsPeresScrhamm} does not hold. Luckily for us, this issue is fairly straightforward to resolve.


Fix $x\in V$. Let $(V_r)_{r\geq 1}$ be an exhaustion of $V$ by finite connected sets each of which contains $x$, and for each $r \geq 1$ let $G_r^*$ be defined as above.  Let $r\geq 1$ and let $X$ be a random walk started at $x$ and let $Y$ be an independent random walk started at $\partial_r$. Let $\tau$ be the first time $X$ visits $\partial_r$ and let $\kappa$ be the first positive time $Y$ visits $\partial_r$. It suffices to prove that
\begin{equation}
\label{eq:256finitevolume}
\P\left(\ler{}{X^{(\tau-1)\wedge n}} \cap Y^{\kappa-1} \neq \emptyset\right)
\geq \frac{1}{256}\P\left(X^{(\tau-1)\wedge n} \cap Y^{\kappa-1} \neq \emptyset\right)
\end{equation}
for each $r,n \geq 1$. Indeed, we will then have by the definitions that 
\begin{align*}
\E{\Ceff\left(\ler{}{X^{(\tau-1)\wedge n}} \leftrightarrow \partial_r ;G_r^*\right)} &= c(\partial_r)\P\left(\ler{}{X^{(\tau-1)\wedge n}} \cap Y^{\kappa-1} \neq \emptyset\right)
\\
& \geq \frac{c(\partial_r)}{256}\P\left(X^{(\tau-1)\wedge n} \cap Y^{\kappa-1} \neq \emptyset\right)
\\&
= \frac{1}{256}\E{\Ceff\left(X^{(\tau-1)\wedge n} \leftrightarrow \partial_r ;G_r^*\right)}
\end{align*}
for every $r,n\geq 1$, so that the claim will follow by taking $r\to\infty$ and applying the bounded convergence theorem.

We now begin the proof of \eqref{eq:256finitevolume}. Fix $r,n\geq 1$ and let 
\[A=\{(a,v,b,v) : 0 \leq a \leq n, b \geq 0, v \in V_r \}.\] Applying \cref{thm:LPS} to the stopped walks $X^\tau$ and $Y^\kappa$ we obtain that there exists $w : \N \times V_r \times \N \times V_r \to [0,\infty)$ that is supported on $A$ and satisfies
\begin{equation}
\label{eq:Swinitial}
\P\left(X^{(\tau-1)\wedge n} \cap Y^{\kappa-1} \neq \emptyset\right) = \P(\operatorname{hit}(A)) \leq 64 \frac{\E{S_w}^2}{\E{S_w^2}}
\end{equation}
where 
\[S_w = \sum_{a=0}^\infty \sum_{b=0}^\infty w(a,X_a^\tau,b,Y_b^\kappa)=\sum_{a=0}^{(\tau-1)\wedge n} \sum_{b=1}^{\kappa-1} w(a,X_a,b,Y_b).\]
(The second equality here follows from the fact that $w$ is supported on $A$.) 
Let $0 \leq a \leq n$ and $b\geq 0$. On the event $\sA_{a,b}:=\{X_a=Y_b, a \leq \tau-1, b \leq \kappa-1\}$, define
\begin{align*}
j(a,b) &:= \min\left\{j \geq 0 : \ler{}{X^a}_j \in \{X_a,\ldots,X_{(\tau-1) \wedge n}\}\right\}
\intertext{and}
i(a,b) &:= \min\left\{i \geq 0 : \ler{}{X^a}_i \in \{Y_b,\ldots,Y_{(\kappa-1)}\}\right\},
\end{align*}
noting that both sets being minimized over are never empty. Define $i(a,b)=j(a,b)=0$ on the complement of $\sA_{a,b}$.
Observe that if $X^{\tau \wedge n}_a=Y^{\kappa}_b \in V_r$ for some $0 \leq a \leq n$ and $b\geq 0$ and $i(a,b) \leq j(a,b)$ then $\ler{}{X^a}_{i(a,b)}$ belongs to both $\ler{}{X^{(\tau-1)\wedge n}}$ and $Y^{\kappa-1}$, so that if we define
\[
I_w := \sum_{a=0}^{\infty}\sum_{b=0}^{\infty} w(a,X_a^{\tau \wedge n},b,Y_b^{\kappa}) \mathbbm{1}(i(a,b) \leq j(a,b)) = \sum_{a=0}^{(\tau-1)\wedge n}\sum_{b=0}^{\kappa-1} w(a,X_a,b,Y_b) \mathbbm{1}(i(a,b) \leq j(a,b))
\]
then 
\begin{equation}
\label{eq:Iwinitial}
\P\left(\ler{}{X^{(\tau-1)\wedge n}} \cap Y^{\kappa-1} \neq \emptyset\right) \geq \P(I_w >0).
\end{equation}
To complete the proof, we will apply a second moment analysis to lower bound the right hand side of \eqref{eq:Iwinitial}.


 Let $v\in V_r$. Given $\sA_{a,b} \cap \{X_a=Y_b=v\}$, the continuations $(X_a,X_{a+1},\ldots,X_{\tau-1})$ and $(Y_b,Y_{b+1},\ldots,Y_{\kappa-1})$ are conditionally independent  and have the same conditional distribution. It follows in particular that the conditional distribution of $\{X_a,\ldots,X_{(\tau-1) \wedge n}\}$ is stochastically dominated by the conditional distribution of $\{Y_b,\ldots,Y_{(\kappa-1)}\}$, so that
\[
\P\left(i(a,b) \leq j(a,b) \mid \sA_{a,b}\cap\{X_a=Y_b=v\}\right) \geq \P\left(j(a,b) \leq i(a,b) \mid \sA_{a,b}\cap\{X_a=Y_b=v\}\right)
\]
and hence that
\[
\P\left(i(a,b) \leq j(a,b) \mid \sA_{a,b}\cap\{X_a=Y_b=v\}\right) \geq \frac{1}{2}
\]
for every $0 \leq a \leq n$, $b\geq 0$, and $v\in V_r$. Since $i(a,b)=j(a,b)=0$ whenever the event $\sA_{a,b}$ does not hold, it follows that
\begin{equation}
\label{eq:ijconditioned}
\P\left(i(a,b) \leq j(a,b) \mid X_a^{\tau\wedge n},Y_b^{\kappa}\right) \geq \frac{1}{2}
\end{equation}
almost surely.
%
%
%
%
Conditioning on $X^{\tau \wedge n}_a$ and $Y^\kappa_b$ and applying \eqref{eq:ijconditioned} gives that
\begin{align*}
\E{I_w} &= \sum_{a=0}^{\infty}\sum_{b=0}^{\infty} \E{w(a,X_a^{\tau\wedge n},b,Y_b^\kappa) \P\left(i(a,b) \leq j(a,b) \mid X_a^{\tau \wedge n},Y_b^{\kappa}\right)}
\\
&\geq
\frac{1}{2}\sum_{a=0}^{\infty}\sum_{b=0}^{\infty} \E{w(a,X_a^{\tau\wedge n},b,Y_b^\kappa) } = \frac{1}{2}\E{S_w}.
\end{align*}
On the other hand, we trivially have that $I_w \leq S_w$ almost surely, and hence by Cauchy-Schwarz that
\begin{equation}
\label{eq:Iwfinal}
\P(I_w >0)\geq \frac{\E{I_w}^2}{\E{I_w^2}} \geq \frac{1}{4} \frac{\E{S_w}^2}{\E{S_w^2}} \geq \frac{1}{256}  \P\left(X^{(\tau-1)\wedge n} \cap Y^{\kappa-1} \neq \emptyset\right),
\end{equation}
where we used \eqref{eq:Swinitial} in the final inequality.
The claimed inequality \eqref{eq:256finitevolume} now follows from \eqref{eq:Iwinitial} and \eqref{eq:Iwfinal}.
\end{proof}


\subsection{A lower tail estimate}
\label{subsec:capacity_lower_tail}

The goal of this section is to bound from above the probability that the capacity of the 4d loop-erased random walk is much smaller than its expectation. Note that, since $\cpc{}{\ler{}{X^n}} \leq \cpc{}{X^n}$ for every $n\geq 0$, we have by \cref{cor:expectedlerwcapZ4}, \cref{lem:known}, and the Paley-Zygmund inequality that there exists a positive constant $c$ such that
\[
\pr{\cc{\ler{}{X^n}}\geq \frac{cn}{\log n
}} \geq \frac{\E{\cc{\ler{}{X^n}}}^2}{4\E{\cc{\ler{}{X^n}}^2}} \geq \frac{\E{\cc{\ler{}{X^n}}}^2}{4\E{\cc{X^n}^2}} \geq c
\]
for every $n\geq 2$. Recall the definition of $\ler{\infty}{X^n}$ from~\eqref{eq:definflerw}. The following proposition substantially improves upon the estimate above, and shows that the capacity of $\ler{\infty}{X^n}$ (and hence that of $\ler{}{X^n}$) is polylogarithmically unlikely to be much smaller than its mean.

\begin{prop}\label{prop:firstconcentration}
	Let $X$ be a random walk on $\Z^4$ started at the origin. There exists a positive constant $c$ such that
	\[
	\pr{\cc{\ler{\infty}{X^n}} \leq  \frac{cn}{\log n}} \lesssim_\eps \frac{1}{(\log n)^{1-\epsilon}}.
	\]
	for every $\eps>0$ and $n\geq 2$.
\end{prop}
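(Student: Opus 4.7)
The plan is to lower-bound $\cc{\LE_\infty(X^n)}$ via a chunking argument combined with the reverse subadditivity estimate from \cref{pro:deccap}. Fix $\eps>0$, take $K = \lfloor (\log n)^{1-\eps}\rfloor$ and $m = \lfloor n/K \rfloor$, and for each $1 \leq j \leq K$ define
\[
A_j := X[(j-1)m, jm] \setminus X(jm,\infty),
\]
the set of vertices visited in the $j$th block that are never revisited after the end of the block. Since every $v \in A_j$ has its last visit in $[(j-1)m, jm] \subseteq [0,n]$, the sets $A_1, \ldots, A_K$ are pairwise disjoint (up to possible overlaps at the boundary points $X_{jm}$, which contribute only $O(K)$ to total capacity and hence are negligible at this scale) and each is contained in $\LE_\infty(X^n)$; by translation invariance of the walk, $\cc{A_j} \eqd \cc{\LE_\infty(X^m)}$. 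Iterating \eqref{eq:obvious} together with the bound $\til{\chi}(A \cup B, C) \leq \til{\chi}(A,C) + \til{\chi}(B,C)$ yields
\[
\cc{\LE_\infty(X^n)} \geq \sum_{j=1}^K \cc{A_j} - \sum_{i \neq j} \til{\chi}(A_i, A_j).
\]

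For the main term, I would first show $\E{\cc{\LE_\infty(X^m)}} \gtrsim m/\log m$ by adapting the finite-volume reverse Cauchy--Schwarz argument in the proof of \cref{thm:expectlerwcap}: one replaces the loop-erasure $\LE(X^{(\tau-1)\wedge n})$ by the finite-volume analogue of $\LE_\infty$, namely the set of vertices of $X^n$ not revisited in $X(n,\tau]$, and observes that the stochastic-domination comparison between $X[a,\tau]$ and $Y[b,\kappa]$ given $X_a=Y_b=v$ continues to apply. Combined with the $L^2$ upper bound $\E{\cc{A_j}^2} \leq \E{\cc{X[(j-1)m, jm]}^2} \lesssim (m/\log m)^2$ from monotonicity of capacity and \cref{lem:known}, Paley--Zygmund gives $\pr{\cc{A_j} \geq c\, m/\log m} \geq p$ for some universal constants $c,p > 0$. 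To upgrade this to high-probability concentration of $\sum_j \cc{A_j}$, I would bound the variance of the number of "good" blocks by directly estimating the covariances $\mathrm{Cov}(\cc{A_i},\cc{A_j})$ via the Markov property at the block boundaries, or alternatively run a Doob martingale on $\sigma(X[0,km])$ for $k=0,\ldots,K$ with Azuma-type increment bounds; either route should give $\sum_j \cc{A_j} \gtrsim n/\log n$ with probability at least $1 - O((\log n)^{-(1-\eps)})$.

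For the cross-term, monotonicity of $\til{\chi}$ (\cref{lem:monotonicityoftilchi}) gives $\til{\chi}(A_i,A_j) \leq \til{\chi}(X[(i-1)m,im], X[(j-1)m,jm])$, and \cref{lem:secondmoment} (together with a straightforward extension to non-adjacent pairs of blocks, using the decay of the Green's function with separation) then shows that each cross term has second moment $O(m^2 (\log\log m)^2 / (\log m)^4)$. Cauchy--Schwarz and Markov's inequality over the $O(K^2)$ pairs show that the total cross-term contribution is $o(n/\log n)$ with probability $1 - O((\log n)^{-(1-\eps)})$, which combined with the lower bound on $\sum_j \cc{A_j}$ completes the proof. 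The hard part will be the concentration of the main term $\sum_j \cc{A_j}$ at polylogarithmic precision: the $A_j$ are not independent, since they all depend on the infinite tail $X(jm,\infty)$, so a naive Chebyshev argument starting from the single-block Paley--Zygmund bound only yields constant-probability concentration, and sharpening this to the required $(\log n)^{-(1-\eps)}$ scale is the real technical work and requires careful covariance bookkeeping or a martingale argument exploiting the Markov structure of the walk at block boundaries.
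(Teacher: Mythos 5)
There is a genuine gap, and it is at the heart of your decomposition: the claimed inclusion $A_j \subseteq \LE_\infty(X^n)$ is false. A vertex visited in block $j$ and never revisited after time $jm$ can still be erased from the infinite loop-erasure, because erasure happens whenever the walk later returns to a point appearing \emph{earlier} on the current loop-erased path, not only when it returns to the vertex itself (e.g.\ $0\to a\to v\to a\to b\to\cdots$ with no later return to $\{0,a,v\}$: here $v$ is visited once and never again, yet $\LE(X)=0,a,b,\ldots$). This is not a boundary effect: by \cref{lem:amounterased} only an $\asymp(\log m)^{-1/3}$ fraction of the $\asymp m$ points of $A_j$ survive loop-erasure, so $\bigcup_j A_j$ is essentially the range of the walk rather than a subset of $\LE_\infty(X^n)$, and the inequality $\cc{\LE_\infty(X^n)}\ge\cc{\bigcup_j A_j}\ge\sum_j\cc{A_j}-\text{cross terms}$ has no basis. (Relatedly, $\cc{A_j}\eqd\cc{\LE_\infty(X^m)}$ is not an equality in distribution; only the one-sided containment $\LE_\infty(X^m)$ minus its endpoint $\subseteq X[0,m]\setminus X(m,\infty)$ holds.) The paper's proof confronts exactly this difficulty using cut times: it works with the union of the loop-erasures $\LE(X[I_j])$ of the blocks, and Lawler's cut-time estimate (\cref{lem:Lawlercuttimes}) shows that when each block has cut times in short buffer intervals near its endpoints, $\bigcup_j\LE(X[I_j])$ is contained in $\LE_\infty(X^n)$ up to the buffers and the bad blocks; the capacity of the discrepancy is then controlled by H\"older and Markov, and it is precisely this cut-time term that produces the $(\log n)^{-(1-\eps)}$ in the statement.

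The second gap is the one you flag yourself: concentration of the main term. With only $K\asymp(\log n)^{1-\eps}$ blocks, a per-block Paley--Zygmund bound at constant probability cannot be boosted to failure probability $(\log n)^{-(1-\eps)}$ by any routine covariance or martingale bookkeeping, especially since your $A_j$ all depend on the entire future of the walk and are genuinely dependent. The paper avoids this issue structurally rather than solving it: it takes $\asymp(\log n)^2$ blocks, whose loop-erasure capacities $\cc{\LE(X[I_j])}$ are \emph{independent} (they are functions of disjoint increments), uses $\E{\cc{\LE(X[I_j])}}\asymp \ell/\log\ell$ from \cref{cor:expectedlerwcapZ4} together with the trivial variance bound $\le\ell^2$, and plain Chebyshev already gives failure probability $\lesssim(\log n)^{-3}$; the cross terms are handled by a dyadic application of \cref{pro:deccap} with \cref{lem:monotonicityoftilchi,lem:secondmoment}, exploiting independence of the cross terms at each dyadic scale. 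So your cross-term treatment is broadly in the right spirit, but the set-theoretic comparison with $\LE_\infty(X^n)$ and the concentration of the block sum both need the paper's mechanisms (cut times, and many independent loop-erased blocks) rather than the ones you propose.
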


The proof will apply estimates due to Lawler concerning the cut times (a.k.a.\ loop-free times) of the random walk. Recall that a time $t\geq 0$ is said to be a \textbf{cut time} of the random walk $X$ if $X[0,t]$ and $X(t,\infty)$ are disjoint. Observe that if $0 \leq s \leq t$ are cut times of $X$ then the loop-erasure of $X$ is equal to the concatenation of the loop-erasures of the portions of $X$ before $s$, between $s$ and $t$, and after $t$; this property is very useful to us as it allows us to decorrelate different parts of the loop-erased random walk.

Although a doubly-infinite random walk in $\Z^4$ does not have any cut times almost surely~\cite{ErdosTaylor}, a \emph{singly}-infinite random walk will nevertheless have a reasonably good supply of cut times.
To make use of this, we will use the following estimate of Lawler \cite[Lemma 7.7.4]{Law91} concerning the prevalence of cut times in the four dimensional simple random walk.

\begin{lemma}\!{\rm{{\cite[Lemma~7.7.4]{Law91}}}}
\label{lem:Lawlercuttimes}
\!Let $X$ be simple random walk on $\Z^4$. Then 
\[
\P(\text{there are no cut times between times $n$ and $m$}) \lesssim \frac{\log \log m}{\log m}
\]
for every $3\leq n \leq m$ such that $|n-m| \geq m/(\log m)^6$.
\end{lemma}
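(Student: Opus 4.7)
The plan is to use a second moment argument for the counting variable $N = \sum_{t=n}^{m} \xi_t$, where $\xi_t = \mathbbm{1}\{t \text{ is a cut time of } X\}$. Since $\{$no cut times in $[n,m]\} = \{N=0\}$, it will suffice to show that $\E[N^2] \leq \bigl(1 + O(\log\log m/\log m)\bigr) (\E N)^2$ and then apply Paley--Zygmund:
\[
\P(N=0) \leq 1 - \frac{(\E N)^2}{\E[N^2]} \lesssim \frac{\log\log m}{\log m}.
\]

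First, I would compute the first moment. For each $t$, applying the Markov property at time $t$ and reversing the first $t$ steps, $\xi_t = 1$ is equivalent to two independent simple random walks started at $X_t$---one of length $t$ and one of infinite duration---having disjoint trajectories after time zero. The standard two-sided non-intersection estimate in $\Z^4$ (a close relative of \cref{thm:logpaper}, but for simple rather than loop-erased walks) yields $\P(\xi_t=1)\asymp 1/\log t$. Summing over $t\in [n,m]$ and using $|m-n|\geq m/(\log m)^6$ to ensure $\log t \asymp \log m$ uniformly, we get $\E N \asymp (m-n)/\log m$.

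Second, I would estimate the second moment. For $n\leq s < t\leq m$, the event $\{\xi_s=\xi_t=1\}$ is equivalent to the three arcs $X[0,s]$, $X(s,t]$, $X(t,\infty)$ being pairwise disjoint. Conditioning on $(X_s, X_t)$ and reversing $X[0,s]$ and $X(s,t]$ as appropriate, these become three conditionally independent walks rooted at $X_s$ and $X_t$. A three-walk non-intersection estimate in $\Z^4$ of the form
\[
\P(\xi_s = \xi_t = 1) \,\lesssim\, \frac{1}{\log s}\cdot\frac{1}{\log(t-s)}
\]
---together with the sharper asymptotic that this is in fact $\P(\xi_s=1)\P(\xi_{t-s}=1)\bigl(1+O(\log\log m/\log m)\bigr)$ when $t-s$ is not too small---then yields, upon summation and careful handling of the diagonal and near-diagonal contributions, the estimate $\E[N^2] \leq (\E N)^2\bigl(1 + O(\log\log m/\log m)\bigr)$.

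The main obstacle is precisely this sharp second-moment comparison. A crude use of the three-walk estimate only gives $\E[N^2]\lesssim (\E N)^2$, which via Paley--Zygmund yields a bound on $\P(N=0)$ that is merely bounded away from $1$, rather than vanishing. Getting the $\log\log m/\log m$ correction requires Lawler's refined non-intersection asymptotics at the borderline dimension $d=4$, exploiting the fact that the correlation between the events $\{\xi_s=1\}$ and $\{\xi_t=1\}$ is controlled by a factor of order $\log\log m/\log m$ in the bulk regime $t-s \asymp s \asymp m$. The hypothesis $|m-n|\geq m/(\log m)^6$ is used exactly to ensure that this bulk regime dominates both the first and second moments, so that the lower-order contributions (from near-diagonal pairs and from very short subintervals) are absorbed into the stated error.
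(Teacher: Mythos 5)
You should first note that the paper does not prove this statement at all: it is imported verbatim from \cite[Lemma~7.7.4]{Law91}, so there is no in-paper argument to match, and your plan has to stand on its own. It does not, because it rests entirely on the step you yourself flag as ``the main obstacle'', namely the sharp second-moment bound
\[
\E{N^2}\;\le\;\Bigl(1+O\bigl(\tfrac{\log\log m}{\log m}\bigr)\Bigr)\,\bigl(\E{N}\bigr)^2 .
\]
The inputs you can actually quote --- $\P(\xi_t=1)\asymp 1/\log t$ and the three-arc bound $\P(\xi_s=\xi_t=1)\lesssim (\log s)^{-1}(\log (t-s))^{-1}$ --- are up-to-constants statements, and as you admit they only yield $\E{N^2}\lesssim(\E{N})^2$, hence $\P(N=0)\le 1-c$. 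Upgrading this requires a uniform two-point factorization $\P(\xi_s=\xi_t=1)\le(1+O(\log\log m/\log m))\,\P(\xi_s=1)\P(\xi_t=1)$ over the bulk range $t-s\gtrsim (m-n)/\log m$, together with constant-sharp asymptotics for $\P(\xi_t=1)$ itself; note that at the margin $m-n\asymp m/(\log m)^6$ even the replacement of $\P(\xi_{t-s}=1)$ by $\P(\xi_t=1)$ already costs a factor $1+\Theta(\log\log m/\log m)$, so nothing in this computation can be done ``up to constants''. No estimate of this precision for correlations of cut-time indicators along a single walk is available in Lawler's work (his refined asymptotics concern non-intersection probabilities of independent walks, and not with error terms of this form for this quantity), and establishing it would be at least as hard as the lemma itself. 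So the proposal defers the whole difficulty to an unproven and uncitable estimate; it is plausible heuristically, but it is not a proof.

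For comparison, a route that does work --- and, unlike yours, needs only order-of-magnitude inputs --- is multiscale localization, which is also where the exponent $6$ in the hypothesis and the $\log\log m$ in the bound come from. Split $[n,m]$ into $K\asymp\log\log m$ disjoint blocks of length at least $m/(\log m)^{7}$; your crude second-moment bound shows each block contains a cut time \emph{of that block's segment} with probability bounded below, and these events are independent across blocks, so the probability that every block fails is $e^{-cK}\lesssim 1/\log m$. A local cut time sitting in the middle of its block, with window $h\gtrsim m/(\log m)^{O(1)}$, can fail to be a global cut time only through a self-intersection of the walk across a time gap of length at least $h$, and such long-gap intersections anchored at that time occur with probability $\asymp\log(m/h)/\log m\asymp\log\log m/\log m$ (this is exactly the regime of \cref{thm:AlbeverioZhou}). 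Handling the fact that the first local cut time is a random time takes some care, but only $\asymp$-type estimates, not sharp constants. If you want a self-contained argument rather than the citation, this is the structure to pursue; the sharp-variance route cannot be completed from quotable results.
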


We now apply \cref{lem:secondmoment,lem:known,cor:expectedlerwcapZ4,lem:Lawlercuttimes} to prove \cref{prop:firstconcentration}.

\begin{proof}[Proof of \cref{prop:firstconcentration}]
Following the idea of Lawler~\cite[Theorem~7.7.5]{Law91}, we will compare the capacity of $\ler{\infty}{X^n}$ to the capacity of the union of the loop-erased paths on suitably defined disjoint subintervals of $[0,n]$.
 Since $\cpc{}{\ler{\infty}{X^n}}$ is increasing in $n$, it suffices to prove the claim for $n$ of the form $n=r^4 2^r$ for some integer $r\geq 2$. Fix such an $n$ and $r$.
Set $\ell = r^2 2^r \asymp n/(\log n)^2$ and divide the interval $[0,n]$ into $m= r^2 = n/\ell \asymp (\log n)^2$ subintervals $(I_j)_{j=1}^m$ each of length $\ell$, so that $I_j=[(j-1)\ell,j\ell]$ for each $j=1,\ldots,m$.
We clearly have that
\begin{align}
	&\pr{\cc{\ler{\infty}{X^n}}\leq \frac{n}{C \log n}} \leq  \pr{\cc{\cup_{j=1}^{m}\ler{}{X[I_j]}}\leq \frac{2n}{C \log n}} 
	\nonumber
	\\ 
	&\hspace{4.5cm}+ \pr{\cc{\cup_{j=1}^{m}\ler{}{X[I_j]}}-\cc{\ler{\infty}{X^n}} > \frac{n}{C\log n}}
	\label{eq:firstconcentration_union}
\end{align}
for each $C\geq 1$. We will show that if $C$ is taken to be appropriately large then both terms on the right hand side admit upper bounds of the desired order. The first term will be bounded using \cref{cor:expectedlerwcapZ4} together with ideas taken from \cite{ASS}, while the second will be bounded by a similar method to the proof of \cite[Theorem~7.7.5]{Law91}.

\medskip

We begin by bounding the second term on the right hand side of \eqref{eq:firstconcentration_union}. 
 For each  $1 \leq j \leq m$ we define the \emph{left and right buffer intervals}
\[
L_j = \left[(j-1)\ell, (j-1)\ell + 2^r   \right]  \ \text{ and }\ R_j=\left[ j\ell - 2^r, j \ell    \right],
\]
so that $L_j,R_j \subseteq I_j$ for every $0\leq j \leq m$, noting that $2^r \asymp n/(\log n)^4$.
For each $1\leq j \leq m$ we also define
\[
A_j = \1(\exists \ \text{cut times times in both } L_{j} \text{ and } R_{j} ).
\]
The goal is to compare $\cc{\cup_{j=1}^{m}\ler{}{X[I_j]}}$ to $\cc{\ler{\infty}{X^n}}$. 
We first claim that  
\begin{align}\label{eq:inclusionnotobvious}
	\bigcup_{j=1}^{m} \ler{}{X[I_j]} \subseteq \ler{\infty}{X[0,n]}\cup \left(\bigcup_{j=1}^{m} X[L_j]\cup X[R_j]\right) \cup \left( \cup_{j: A_j=0} X[I_j]  \right).
\end{align}
Indeed, suppose that $X_k\in \ler{}{X[I_j]}$ with $k\in I_j$ and suppose that $A_j=1$, i.e.\ that there exist cut times $s$ and $t$ in $L_j$ and $R_j$ respectively. Then this means that $X[0,s]\cap X(s,\infty)=\emptyset$ and $X[0,t]\cap X(t,\infty)=\emptyset$.
It follows that if $k\in [s,t]$ is such that $X_k \in \LE(X[s,t])$ then $X_k\in \ler{\infty}{X^n}$, while otherwise we have that $X_k\in X[L_j]\cup X[R_j]$ and hence the claim is proved. 
Taking capacities of both sides of~\eqref{eq:inclusionnotobvious} and using subadditivity of capacity we obtain that
\begin{align}\label{eq:leinfty}
	\cc{\cup_{j=1}^{m}\ler{}{X[I_j]}}-\cc{\ler{\infty}{X^n}} \leq 16m2^r + \sum_{j=1}^{m}(1-A_j) \cc{X[I_j]}.
\end{align}
The first term satisfies $16m2^r \lesssim n/(\log n)^2$. For the second term, \cref{lem:Lawlercuttimes} gives that
\[
\pr{A_j=0}\lesssim \frac{\log \log n}{\log n},
\]
and applying H\"older's inequality and \cref{lem:known} we obtain that
\begin{align*}
	\E{\sum_{j=1}^{m}(1-A_j) \cc{X[I_j]}} 
&\leq \sum_{j=1}^m \P(A_j=0)^{1-\frac{1}{2k}} \E{\cpc{}{X[I_j]}^{2k}}^{\frac{1}{2k}}
\\
	&\lesssim_k m \left(\frac{\log \log n}{\log n}\right)^{1-\frac{1}{2k}}\cdot  \frac{\ell}{\log \ell}  \lesssim_k \frac{n}{(\log n)^{2-\frac{3}{k}}}
\end{align*}
for every $k \geq 1$, where we took expectations over the bound
\begin{multline*}\mathrm{Cap}(X[I_j])^{2k} \leq \left(\mathbb{E}\left[\mathrm{Cap}(X[I_j])\right]+\bigl|\mathrm{Cap}(X[I_j])-\mathbb{E} \left[\mathrm{Cap}(X[I_j])\right]\bigr|\right)^{2k} \\\lesssim_k \mathbb{E}\left[\mathrm{Cap}(X[I_j])\right]^{2k} + \bigl|\mathrm{Cap}(X[I_j])-\mathbb{E} \left[\mathrm{Cap}(X[I_j])\right]\bigr|^{2k} \end{multline*} in the first inequality and used that $m \ell = n$ in the final inequality.
Applying Markov's inequality, we get that
\begin{align*}
	\pr{\sum_{j=1}^{m}(1-A_j) \cc{X[I_j]} \geq \frac{n}{\log n}} \lesssim_\eps \frac{1}{(\log n)^{1-\epsilon}}
\end{align*}
for every $\eps > 0$.
Using this together with~\eqref{eq:leinfty} we get 
\begin{align}\label{eq:difference}
	\pr{\cc{\cup_{j=1}^{m}\ler{}{X[I_j]}}-\cc{\ler{\infty}{X[0,n]}} > \frac{n}{C\log n}} \lesssim_\eps \frac{C}{(\log n)^{1-\epsilon}}
\end{align}
for every $\eps>0$ and $C \geq 1$. This completes the analysis of the second term of \eqref{eq:firstconcentration_union}.

\medskip

We now turn to the first term on the right hand side of \eqref{eq:firstconcentration_union}. We adapt the methods of the proof of \cite[Theorem 1.1]{ASS}. It suffices to prove that there exists $C \geq 1$ such that
\begin{align}\label{eq:goalinproof}
\pr{\cc{\cup_{j=1}^{m}\ler{}{X[I_j]}}\leq \frac{2n}{C \log n}}\lesssim \frac{\log \log n}{(\log n)^{2}},
\end{align}
the right hand side being smaller than required by the statement of the lemma.
We will suppose for ease of notation that there exists an integer $L \asymp \log \log n$ such that $2^L=m=n/\ell$, the general case being similar. Applying Proposition~\ref{pro:deccap} we get that
\begin{align*}
\cc{\cup_{j=1}^{m}\ler{}{X[I_j]}}  \geq  \cc{A_1}  + \cc{A_2} -\chi(1,1),
\end{align*}
where   $A_1=\cup_{j=1}^{m/2}\ler{}{X[I_j]}$, $A_2=\cup_{j=\tfrac{m}{2}+1}^{m}\ler{}{X[I_j]}$ and $\chi(1,1)= \chi(A_1,A_2) + \chi(A_2,A_1)$.  Continuing in the same way, by subdividing every union appearing above into two sets and applying Proposition~\ref{pro:deccap} repeatedly in a dyadic fashion, we obtain that
\begin{align}\label{eq:decompole}
	\cc{\cup_{j=1}^{m}\ler{}{X[I_j]}} \geq  \sum_{j=1}^{m} \cc{\ler{}{X[I_j]}} - \sum_{i=1}^{L}\sum_{j=1}^{2^{i-1}} \chi(i,j ),
\end{align}
where $\chi(i,j) = \chi(A_{i,j}^{(1)},A_{i,j}^{(2)})+ \chi(A_{i,j}^{(2)},A_{i,j}^{(1)})$ with
\begin{align*}
A_{i,j}^{(1)}=\bigcup_{k=(2j-2)\tfrac{m}{2^i}+1}^{(2j-1)\tfrac{m}{2^i}}  \ler{}{X[I_k]} \quad \text{ and } \quad  A_{i,j}^{(2)}=\bigcup_{k=(2j-1)\tfrac{m}{2^i}+1}^{2j\tfrac{m}{2^i}}  \ler{}{X[I_k]}.
\end{align*}
 Using that $\ler{}{X[I_k]}\subseteq X[I_k]$ together with Lemmas~\ref{lem:monotonicityoftilchi} and \ref{lem:secondmoment} and the inequality $\chi \leq \tilde \chi$, we obtain that
\begin{align}\label{eq:boundsmoments}
	\E{\chi(i,j)^2} \lesssim n^2 \cdot \frac{(\log \log n)^2}{2^{2i}(\log n)^4} \quad \text{ and } \quad \E{\chi(i,j)} \lesssim n \cdot \frac{\log \log n}{2^{i}(\log n)^2} 
\end{align}
for every $1 \leq i \leq L$. 
For a fixed $i$ the variables $(\chi(i,j))_j$ are independent and identically distributed, and we deduce by Cauchy--Schwarz that
\begin{align}\label{eq:boundonvariance}
	\vr{\sum_{i=1}^{L}\sum_{j=1}^{2^{i-1}} \chi(i,j ) } \leq L\cdot \sum_{i=1}^{L}2^{i-1}\vr{\chi(i,1)} \lesssim n^2 \cdot \frac{(\log \log n)^3}{(\log n)^4},
\end{align}
where we also used that $L\asymp \log \log n$.
Applying~\eqref{eq:decompole} we get that
\begin{multline}\label{eq:proofofgoal}
	\pr{\cc{\cup_{j=1}^{m}\ler{}{X[I_j]}}\leq \frac{2n}{C \log n}} \leq\pr{\sum_{j=1}^{m}\cc{\ler{}{X[I_j]}}\leq \frac{4n}{C \log n}} 
	\\+ \pr{\sum_{i=1}^{L}\sum_{j=1}^{2^{i-1}}\chi(i,j) \geq \frac{2n}{C\log n}} .
\end{multline}
Using the trivial bound $\vr{\cpc{}{\ler{}{X[I_k]}}}\leq |I_k|^2$, \cref{cor:expectedlerwcapZ4}, and the fact that the capacities $\cpc{}{\ler{}{X[I_j]}}$ and $\cpc{}{\ler{}{X[I_k]}}$ are independent when $j\neq k$, Chebyshev's inequality implies that there exists a constant $C\geq 1$ such that
\begin{align*}
	\pr{\sum_{j=1}^{m}\cc{\ler{}{X[I_j]}}\leq \frac{4n}{C \log n}} \lesssim \frac{1}{(\log n)^3}.
\end{align*}
Similarly, using~\eqref{eq:boundsmoments} and~\eqref{eq:boundonvariance} and Chebyshev's inequality again gives that
\begin{align*}
	\pr{\sum_{i=1}^{L}\sum_{j=1}^{2^{i-1}}\chi(i,j) \geq \frac{2n}{C\log n}} \lesssim \frac{(\log \log n)^3}{(\log n)^2}.
\end{align*}
Plugging these two bounds into~\eqref{eq:proofofgoal} proves~\eqref{eq:goalinproof} and this finishes the proof of the lemma. 
\end{proof}


\begin{remark}
We expect that the upper bound of \cref{prop:firstconcentration} is essentially optimal in the sense that
\begin{equation}
\label{eq:heuristic1}
\pr{\cc{\ler{\infty}{X^n}}\leq \frac{n}{\lambda \log n } }\gtrsim_{\lambda,\eps} \frac{1}{(\log n)^{1+\eps}}.
\end{equation}
for every $\lambda \geq 1,\eps>0$ and $n \geq 3$. (It may not even be necessary to include the additional $\eps$ in the power of log, but this is harder to be sure of without writing a detailed proof.) Indeed, writing $\ler{n}{X^{\lfloor \theta n\rfloor}}$ for the part of $\ler{}{X^n}$ that is contributed by the first $\lfloor \theta n\rfloor$ steps of the walk, we expect that 
\begin{equation}
\label{eq:heuristic2}
\P\left(X[n,\infty] \cap \LE_n\left(X^{\lfloor\theta n\rfloor}\right) \neq \emptyset\right) \gtrsim_{\theta,\eps} \frac{1}{(\log n)^{1+\eps}}
\end{equation}
for every $\theta \in [0,1/2]$, $\eps>0$ and $n\geq 2$, as is consistent with the fact that if $X$ and $Y$ are two independent random walks started at  distance roughly $\sqrt{n}$ apart then $Y$ intersects $\LE(X^n)$ with probability of order $1/\log n$ \cite[Theorem 4.3.3]{Law91}.
  (Making this rigorous would require one to control the difference between $\LE_n(X^{\lfloor \theta n\rfloor})$ and $\LE(X^{\lfloor\theta n\rfloor})$ and to argue that the path $\LE(X^{\lfloor \theta n \rfloor})$ is approximately independent of $X_n$ in some sense.) Again, it may be that the additional $\eps$ in the exponent is not necessary, but we have included it to be cautious. If \eqref{eq:heuristic2} were to be proven, one would easily deduce \eqref{eq:heuristic1} from it together with \cref{lem:known}, since if the event on the left hand side of \eqref{eq:heuristic2} holds then $\LE_\infty(X^n)$ is contained in $X^{\lfloor \theta n \rfloor}$, and the latter set is unlikely to have capacity much larger than its mean by \cref{lem:known}. 

 We do not investigate this matter further since we do not require a lower bound of the form \eqref{eq:heuristic1} for the proofs of our main theorems. We shall however prove in \cref{subsec:polygamous_deviants} that stronger concentration holds for $\cpc{}{\ler{}{X^n}}$ if we condition on the event that $\|X_n\|$ is not too small.

\end{remark}

\subsection{An upper tail estimate}
\label{subsec:capacity_upper_tail}

The last result of this section applies \cref{lem:known} together with \cref{lem:amounterased} to estimate the upper tail of the capacity of the first $n$ steps of the loop-erased random walk. (Recall the distinction between $\ler{}{X^n}$ and $\ler{}{X}^n$ as defined in Section~\ref{subsec:LERWbackground} and 
note that an analogous bound for the capacity of $\ler{}{X^n}$ follows trivially from \cref{lem:known}.) 

\begin{lemma}\label{cor:lercap}
Let $X$ be a random walk in $\Z^4$ starting from $0$. There exists a positive constant $C$ such that
\begin{align*}
	\pr{\cc{\ler{}{X}^n}\geq \frac{Cn}{(\log n)^{2/3}}} \lesssim \frac{\log\log n}{(\log n)^{2/3}}
\end{align*}
for every $n \geq 3$.
\end{lemma}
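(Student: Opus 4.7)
The plan is to dominate $\cpc{}{\LE(X)^n}$ by the capacity of a simple random walk segment of deterministic length, and then apply the known concentration estimates for the capacity of random walk from \cref{lem:known} together with the concentration of the loop-erasure time $\ell_n$ from \cref{lem:amounterased}. The starting observation is that, as a set of vertices, $\LE(X)^n = \{X_{\ell_0},\ldots,X_{\ell_n}\} \subseteq X[0,\ell_n]$, so by monotonicity of capacity
\[
\cpc{}{\LE(X)^n} \leq \cpc{}{X^{\ell_n}}.
\]
Thus if we set $m = m(n) := \lfloor 2n(\log n)^{1/3}\rfloor$, on the event $\{\ell_n \leq m\}$ we also have $\cpc{}{\LE(X)^n} \leq \cpc{}{X^m}$.

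Decomposing according to whether $\ell_n \leq m$ gives
\[
\P\!\left(\cpc{}{\LE(X)^n} \geq \frac{Cn}{(\log n)^{2/3}}\right) \leq \P\bigl(\ell_n > m\bigr) + \P\!\left(\cpc{}{X^m} \geq \frac{Cn}{(\log n)^{2/3}}\right).
\]
The first term is bounded by $\lesssim (\log\log n)/(\log n)^{2/3}$ using \cref{lem:amounterased} applied with $\varepsilon = 1$, which is already the bound we want.

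For the second term, \cref{lem:known} gives $\E{\cpc{}{X^m}} \leq C'\cdot m/\log m$, and since $m\asymp n(\log n)^{1/3}$ and $\log m \asymp \log n$, we have $\E{\cpc{}{X^m}} \leq C''\cdot n/(\log n)^{2/3}$ for some absolute constant $C''$. Choosing $C = 2C''$ and applying Chebyshev's inequality together with the second-moment bound from \cref{lem:known}, namely $\E{|\cpc{}{X^m}-\E{\cpc{}{X^m}}|^2}\lesssim m^2/(\log m)^4 \asymp n^2/(\log n)^{10/3}$, yields
\[
\P\!\left(\cpc{}{X^m}\geq \frac{Cn}{(\log n)^{2/3}}\right) \leq \P\!\left(\bigl|\cpc{}{X^m}-\E{\cpc{}{X^m}}\bigr|\geq \frac{C''n}{(\log n)^{2/3}}\right) \lesssim \frac{n^2/(\log n)^{10/3}}{n^2/(\log n)^{4/3}} = \frac{1}{(\log n)^{2}},
\]
which is comfortably smaller than $(\log\log n)/(\log n)^{2/3}$. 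Combining the two terms gives the claim.

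There is no serious obstacle here: the result is essentially a direct consequence of \cref{lem:amounterased} and \cref{lem:known}. The only mild subtlety is choosing the deterministic truncation time $m$ so that $(a)$ the concentration of $\ell_n$ around $n(\log n)^{1/3}$ makes the event $\{\ell_n > m\}$ contribute at the target rate $(\log\log n)/(\log n)^{2/3}$, and $(b)$ the resulting expected capacity $m/\log m$ is of order $n/(\log n)^{2/3}$, so that a Chebyshev bound on $\cpc{}{X^m}$ gives a remainder that is negligible compared to the target rate.
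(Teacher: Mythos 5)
Your proof is correct and takes essentially the same approach as the paper's: decompose on whether $\ell_n$ exceeds a deterministic multiple of $n(\log n)^{1/3}$, bound the rare case via \cref{lem:amounterased}, and bound the remaining case via monotonicity of capacity, $\cpc{}{\LE(X)^n}\le\cpc{}{X^{\ell_n}}\le\cpc{}{X^m}$, together with the Chebyshev bound from \cref{lem:known}. The only cosmetic difference is that the paper phrases the Chebyshev step as $\P(\cpc{}{X^m}\ge 2\E{\cpc{}{X^m}})\lesssim(\log m)^{-2}$ rather than plugging in the explicit threshold, but the computations coincide.
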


\begin{proof}[Proof]
Recall that $\ell_n=\ell_n(X)$ denotes the $n$th time contributing to the loop-erasure of $X$.
  Using~\eqref{eq:equivtnrhom} together with Theorem~\ref{lem:amounterased} give that for a suitable constant $C$ we have 
\begin{align}\label{eq:conctn}
\pr{\ell_n \geq C n (\log n)^{1/3}}  \lesssim \frac{\log \log n}{(\log n)^{2/3}}.
\end{align}
On the other hand, Lemma~\ref{lem:known} and Chebyshev's inequality yield that 
	\begin{align*}
		\pr{\cc{X^n} \geq 2 \E{\cc{X^n}}} \leq \frac{\vr{\cc{X^n}}}{\left(\E{\cc{X^n}}\right)^2} \asymp \frac{1}{(\log n)^2}
	\end{align*}
	for every $n\geq 2$.
Putting these estimates together, we deduce
 that there exists a constant $C'$ such that
\begin{align*}
	&\pr{\cc{\ler{}{X}^n}\geq \frac{C'n}{(\log n)^{2/3}}} \\ &\hspace{1.5cm}\leq \pr{\cc{\ler{}{X}^n}\geq \frac{C'n}{(\log n)^{2/3}}, \ell_n\leq Cn(\log n)^{1/3}} + \pr{\ell_n\geq Cn(\log n)^{1/3}} \\&\hspace{1.5cm}\leq 
	\pr{\cc{X[0,Cn(\log n)^{1/3}]} \geq \frac{C'n}{(\log n)^{2/3}}} + \pr{\ell_n\geq Cn(\log n)^{1/3}} \lesssim 
	\frac{\log \log n}{(\log n)^{2/3}},
\end{align*}
	where we also applied \cref{lem:known} to bound the first term on the last line. \end{proof}

\section{Polylogarithmic deviations for the length and capacity}
\label{subsec:polygamous_deviants}

Unfortunately, for several applications later in the paper, the lower tail estimates of \cref{lem:amounterased,prop:firstconcentration} are not quite strong enough for our arguments to work. 
In this section, we address this shortcoming by studying the probability that the length or the capacity of the loop-erased random walk $\LE(X^t)$ are smaller than their typical value by a polylogarithmic factor. The basic idea, which is inspired by the decorrelation techniques of Masson \cite[Section 4.1]{MR2506124}, is that if $\|X_t\|$ is not too small then the lengths and capacities of the segments of $\LE(X^t)$ near $0$ and near $X_t$ are approximately independent. Thus, we should be able to square
the concentration estimates of \cref{prop:firstconcentration,lem:amounterased} in this case, giving a polylogarithmic improvement to the relevant bounds. To implement these arguments rigorously, it will be convenient to use a geometric random time $T$ of mean $t$ rather than a fixed time $t$. We write $a \wedge b = \min\{a,b\}$.

%
%


\begin{prop}\label{lem:capgeom}
 Let $t\geq 1$, let $T$ be a geometric random variable of mean $t$, and let~$X$ be an independent simple random walk on $\Z^4$. Then there exists a constant $C$ such that 
		\begin{align}
	\prstart{\cpp{\LE\Bigl(X^T\Bigr)}\leq \frac{t}{\lambda \log t} \text{ and } \norm{X_{T}}\geq \sqrt{C \lambda^{-1} t \log \log t}}{0} &\lesssim_{\eps} \frac{1}{(\log t)^{2-\eps}}
\qquad	\text{and}\\
	\prstart{\Bigl|\LE\Bigl(X^T\Bigr)\Bigr|\leq \frac{t}{\lambda (\log t)^{1/3}} \text{ and } \norm{X_{T}}\geq \sqrt{C \lambda^{-1} t \log \log t}}{0} &\lesssim_{\eps} \frac{1}{(\log t)^{4/3-\eps}}.
	\end{align}
	for every $t \geq 3$ and $\lambda \geq \log \log t$.
\end{prop}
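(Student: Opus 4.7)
Both bounds square the single-piece lower-tail estimates of Proposition~\ref{prop:firstconcentration} (for capacity) and Theorem~\ref{lem:amounterased} (for length). The strategy is to use the memoryless property of $T$ together with the displacement condition $\|X_T\| \geq R$ to decompose $X^T$ into two approximately independent walks of length comparable to $t/\lambda$, and then apply the single-piece estimates to each factor, gaining an extra factor of $(\log t)^{-1}$ (respectively $(\log t)^{-2/3}$) on the bound.

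We first dispose of the case where $T$ is smaller than $t_0 := t/\lambda$. Using the Gaussian bound $\P(\|X_k\| \geq R) \lesssim \exp(-c' R^2/k)$ with $R^2 = C\lambda^{-1} t \log\log t$ and summing over $k \leq t_0$ weighted by $\P(T=k) \lesssim 1/t$, we obtain
\[
\P\bigl(T \leq t_0,\, \|X_T\| \geq R\bigr) \;\lesssim\; \frac{t_0}{t}\,\exp\!\left(-\frac{c'R^2}{t_0}\right) \;\lesssim\; (\log t)^{-c'C},
\]
which is smaller than the target bounds provided the constant $C$ is chosen sufficiently large. Thus we may assume $T \geq t_0$ for the remainder of the argument.

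On the event $\{T \geq t_0\}$ we split at the deterministic time $s := t_0/2$. By the strong Markov property and the memoryless property of the geometric distribution, $X^s$ and the shifted tail $Y_j := X_{s+j} - X_s$ (for $0\leq j\leq T-s$) are genuinely independent, with $T-s$ conditional on $T\geq s$ again geometric of mean $t$. Using Lemma~\ref{lem:Lawlercuttimes}, with conditional probability at least $1 - O(\log\log t/\log t)$ there exists a cut time of the infinite walk in the window $[s, 2s]$, which on $\{T \geq 2s\}\supseteq\{T\geq t_0\}$ is automatically a cut time of $X^T$. Taking $\tau$ to be the first such cut time, we have
\[
\LE(X^T) \;=\; \LE(X^\tau) \oplus \LE(X[\tau, T]),
\]
so $\cpp{\LE(X^T)} \geq \cpp{\LE(X^\tau)} \vee \cpp{\LE(X[\tau, T])}$ and $|\LE(X^T)| \geq |\LE(X^\tau)| + |\LE(X[\tau, T])| - 1$. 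Both pieces have length $\asymp t/\lambda$. For $\lambda \geq \log\log t$ and $C$ large enough, the threshold $t/(\lambda \log t)$ is at most a constant multiple of $(t/\lambda)/\log(t/\lambda)$, so Proposition~\ref{prop:firstconcentration} applied to each piece gives a per-piece bound of $O((\log t)^{-1+\eps})$; by the approximate independence of the two pieces, the probability that \emph{both} capacities are at most $t/(\lambda \log t)$ is bounded by the product, absorbing into $O((\log t)^{-2+\eps})$. Likewise, Theorem~\ref{lem:amounterased} gives a per-piece bound of $O(\log\log t/(\log t)^{2/3})$, whose square $O((\log\log t)^2/(\log t)^{4/3})$ is absorbed into $O((\log t)^{-4/3+\eps})$.

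The main obstacle is justifying the approximate independence of $\cpp{\LE(X^\tau)}$ and $\cpp{\LE(X[\tau, T])}$: since $\tau$ depends on the full trajectory $X^T$, it is not a stopping time of $X$ and one cannot invoke strong Markov at $\tau$ directly. The resolution is to apply the strong Markov and memoryless properties at the \emph{deterministic} time $s$, which yield genuine independence of $X^s$ and the shifted tail $Y$; the additional event that a cut time lies in the small window $[s, 2s]$ introduces only a $1-O(\log\log t/\log t)$ correction by Lemma~\ref{lem:Lawlercuttimes}, while the length of the short bridge $X[s,\tau]$ is at most $s$ and its residual effect on the factorisation can be controlled by the same capacity-type bounds (in the spirit of the proof of Proposition~\ref{prop:firstconcentration}) so that it contributes only a lower-order error.
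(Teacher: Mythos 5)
Your route (split the walk at a cut time in a window $[s,2s]$ with $s\asymp t/\lambda$, then square the one-piece bounds of \cref{prop:firstconcentration} and \cref{lem:amounterased}) is genuinely different from the paper's, but it has a fatal quantitative gap: the cut-time step produces an \emph{additive} error that already exceeds the target. By \cref{lem:Lawlercuttimes} the probability that the window $[s,2s]$ contains no cut time is of order $\log\log t/\log t$, and this is essentially sharp; in your scheme this event simply gets added to the bound, and $\log\log t/\log t$ is much larger than both $(\log t)^{-2+\eps}$ and $(\log t)^{-4/3+\eps}$. You cannot discard it, and you also cannot easily intersect it with the small-capacity events to gain another $(\log t)^{-1}$, because "no cut time in $[s,2s]$" is correlated with both pieces of the walk. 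This is exactly the obstruction that caps \cref{prop:firstconcentration} itself at $(\log n)^{-1+\eps}$, and it is why the paper does not use cut times here at all: instead it decorrelates the two ends of $\LE(X^T)$ \emph{exactly}, via the domain Markov property and reversibility of the loop-erasure (\cref{lem:end_segments_decorrelation}), conditioning on $X_T=x$ and paying only a multiplicative constant for the conditioning ratio through the parabolic Harnack inequality (\cref{lem:forget}); the two factors are then each bounded by \cref{prop:firstconcentration}.

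There are two further problems even on the cut-time event. First, after disposing of $\{T\le t/\lambda\}$ you never use the displacement condition again, but your assertion that ``both pieces have length $\asymp t/\lambda$'' is false: given $T\ge t_0$, the second piece has length $T-\tau\ge T-t_0$, which is smaller than $t/\lambda$ with conditional probability $\asymp 1/\lambda$, and $1/\lambda$ may be as large as $1/\log\log t$. On that event the capacity (or length) constraint on the second piece is essentially vacuous, so the ``product'' bound degenerates to roughly $(\log t)^{-1+\eps}/\log\log t$, far short of $(\log t)^{-2+\eps}$. The only rescue is the displacement hypothesis itself, which penalizes \emph{all} values $T=O(t/\lambda)$ by a factor $(\log t)^{-c'C}$, i.e.\ you must keep $\|X_T\|\ge R$ in play (or redo the preliminary step with threshold $A t/\lambda$ for a large constant $A$ tied to $C$) so that the second piece is guaranteed to be long; as written this is a genuine gap. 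Second, the claimed approximate independence of $\cpp{\LE(X^\tau)}$ and $\cpp{\LE(X[\tau,T])}$ is only asserted: $\tau$ is not a stopping time, and the repair you sketch (strong Markov at the deterministic time $s$ plus ``controlling the residual effect of the bridge $X[s,\tau]$'') would require a buffer-and-cross-term analysis in the style of the proof of \cref{prop:firstconcentration}, which is nontrivial and, even if carried out, would not remove the additive $\log\log t/\log t$ term above.
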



\begin{remark}
Proving upper tail estimates for the capacity of $\ler{}{X^t}$ is a much easier task than proving lower tail estimates since it suffices to prove an analogous estimate on the capacity of the \emph{simple} random walk. Indeed, adapting~\cite[Proposition~4.1]{ASS16+} to the discrete setting would immediately yield an upper tail estimate of the form $\exp(-c t^\kappa)$ for some constants $c,\kappa>0$.
\end{remark}

This proposition has the following corollary. Although this corollary is only a very modest improvement of \cref{prop:firstconcentration,lem:amounterased}, and indeed is only an improvement at all when $\alpha>1/2$ or $\alpha > 1/3$ as appropriate, it will nevertheless suffice for all our later applications, \cref{pro:o(n)} being chief among these.

\begin{corollary}
\label{cor:capgeom}
 Let~$X$ be a simple random walk on $\Z^4$. Then for every $0< \alpha \leq 1$, $C<\infty$, and $\eps>0$ we have that
		\[
\frac{1}{n}\sum_{m=2}^{n}	\prstart{\cpp{\LE\bigl(X^m\bigr)}\leq \frac{Cm}{(\log m)^{1+\alpha}} }{0} \lesssim_{\alpha,C,\eps} \frac{1}{(\log n)^{2\alpha-\eps}} 
	\]
	for every $n \geq 2$. Similarly, for every $0< \alpha \leq 2/3$, $C<\infty$, and $\eps>0$ we have that
			\[
	\frac{1}{n}\sum_{m=2}^{n}\prstart{\bigl|\LE\bigl(X^m\bigr)\bigr|\leq \frac{Cm}{(\log m)^{1/3+\alpha}}}{0} \lesssim_{\alpha,C,\eps} \frac{1}{(\log n)^{2\alpha-\eps}} 
	\]
	for every $n \geq 2$.
\end{corollary}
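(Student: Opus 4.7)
The strategy is to bound the Ces\`aro average by the probability of a suitable event at a geometric random time and then invoke \cref{lem:capgeom}. I present the argument for the capacity estimate; the length bound follows by the same plan using the second inequality of \cref{lem:capgeom} together with \cref{lem:amounterased} in place of \cref{prop:firstconcentration}.

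Write $A_m := \{\cpp{\LE(X^m)} \leq Cm/(\log m)^{1+\alpha}\}$ and let $T$ be an independent geometric random variable of mean $n$. Since $\prstart{T=m}{} \geq e^{-1}/n$ for every $m \in [2,n]$, we have $\tfrac{1}{n}\sum_{m=2}^n \prstart{A_m}{0} \leq e\cdot\prstart{A_T,\ T\leq n}{0}$. Discarding the event $\{T < n/(\log n)^K\}$ for large $K$ loses only $O((\log n)^{-K})$, and on the remaining range $\log T\asymp \log n$, so it suffices to bound $\prstart{A'_T}{0}$ with $A'_T := \{\cpp{\LE(X^T)} \leq C'n/(\log n)^{1+\alpha}\}$ for a constant $C' = C'(C,K)$. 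Applying \cref{lem:capgeom} with $\lambda := (\log n)^\alpha/C'$ (legal since $\lambda \geq \log\log n$ for $n$ large) immediately yields
\[
\prstart{A'_T,\ \norm{X_T}\geq r}{0} \lesssim_\eps \frac{1}{(\log n)^{2-\eps}} \leq \frac{1}{(\log n)^{2\alpha-\eps}},
\qquad r := \sqrt{CC' n \log\log n/(\log n)^\alpha}.
\]

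It remains to control $\prstart{A'_T,\ \norm{X_T}<r}{0}$. A Green's-function computation for the walk killed at rate $1/n$ gives $\prstart{\norm{X_T}<r}{0} \asymp r^2/n \asymp (\log\log n)/(\log n)^\alpha$; used alone this yields only $(\log n)^{-(\alpha - \eps)}$, short of the target by a factor of $(\log n)^{\alpha-\eps}$. To recover this factor we repeat the cut-time decoupling used in the proof of \cref{lem:capgeom}. By \cref{lem:Lawlercuttimes}, with conditional probability $1-O((\log\log n)/\log n)$ there is a cut time $s\in[T/3,2T/3]$, and on this event $\LE_\infty(X^s)$ is an initial segment of $\LE(X^T)$, so $A'_T$ forces $\cpp{\LE_\infty(X^s)} \leq C'n/(\log n)^{1+\alpha}$. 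Because $X[0,s]$ and $X[s,T]$ are conditionally independent given $X_s$ (on the cut-time event), the capacity event above depends essentially only on the first half while $\{\norm{X_T}<r\}$ depends essentially only on the second, so the two events are approximately conditionally independent. Applying \cref{prop:firstconcentration} to the first half therefore gives
\[
\prstart{A'_T,\ \norm{X_T}<r}{0} \lesssim_\eps \frac{\log\log n}{(\log n)^\alpha}\cdot\frac{1}{(\log n)^{1-\eps}} \lesssim_\eps \frac{1}{(\log n)^{\alpha+1-2\eps}} \leq \frac{1}{(\log n)^{2\alpha-\eps}},
\]
where the final inequality uses $\alpha \leq 1$. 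Combining this with the previous display completes the proof.

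The main technical obstacle is making the conditional independence argument in the last step rigorous: one must verify that the cut-time estimate of \cref{lem:Lawlercuttimes} is not materially weakened by the conditioning on $\{\norm{X_T}<r\}$, and that the first-half bound from \cref{prop:firstconcentration} continues to hold (uniformly in the conditioning on $X_s = x$ for typical $x$) when the walk is viewed as a bridge from $0$ to a random endpoint. Both can be handled by Green's-function and local-CLT estimates in the spirit of the decorrelation techniques of Masson cited before the proposition, with some additional care required to sum over the endpoint $X_s$.
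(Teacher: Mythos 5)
There is a genuine gap, and it is not merely technical: the intermediate bound you claim for the small-displacement term is false, so the cut-time recovery cannot possibly close the argument. The problem is the reduction step where you replace the $T$-dependent threshold $CT/(\log T)^{1+\alpha}$ by the uniform threshold $C'n/(\log n)^{1+\alpha}$ on the range $T\in[n/(\log n)^K,n]$. Near the bottom of this range the event $A'_T$ is far easier than $A_T$: if $\beta$ is slightly larger than $\alpha$ and $T\asymp n/(\log n)^\beta$, then $\cpp{\LE(X^T)}\le\cpp{X^T}$ concentrates around $T/\log T\asymp n/(\log n)^{1+\beta}\ll C'n/(\log n)^{1+\alpha}$, so $A'_T$ holds with probability bounded below, and moreover $\norm{X_T}\asymp\sqrt{T}$ is already below $r=\sqrt{CC'n\log\log n/(\log n)^\alpha}$. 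Such $T$ occur with probability $\asymp(\log n)^{-\beta}$, so in fact $\prstart{A'_T,\norm{X_T}<r}{0}\gtrsim(\log n)^{-\beta}$ for any $\beta>\alpha$, i.e.\ essentially $\gtrsim(\log n)^{-\alpha}$. This exceeds your claimed bound $(\log\log n)(\log n)^{-(1+\alpha-2\eps)}$ whenever $\eps<1/2$, and exceeds the target $(\log n)^{-(2\alpha-\eps)}$ whenever $\eps<\alpha$. The cut-time decoupling cannot rescue this: for $s=T/3\lesssim n/(\log n)^\alpha$ the threshold $C'n/(\log n)^{1+\alpha}$ is \emph{not} of the form $cs/\log s$ with $c$ small — it is of the same order as or larger than the typical capacity at time $s$ — so \cref{prop:firstconcentration} gives a probability of order one here, not $(\log n)^{-(1-\eps)}$.

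The paper's proof avoids this entirely by never collapsing to a single geometric-time probability with a uniform threshold. Instead, for each $t$ it keeps the geometric weight $(t/(t+1))^m/(t+1)$ and takes the union bound only over $m\in[t,2t]$. On this range the weight is $\asymp 1/t$, the threshold $m/(\log m)^{1+\alpha}\asymp t/(\log t)^{1+\alpha}$ matches uniformly, and the local CLT gives $\P(\norm{X_m}\le r)\lesssim r^4/m^2\asymp(\log\log t)^2/(\log t)^{2\alpha}$ — the full $2\alpha$ power, immediately. A dyadic sum over $t=2^k$ finishes. The moral is that the exponent $2\alpha$ arises as $(r^2/m)^2$ with $m\asymp t$ fixed at the scale of interest, whereas your computation $\prstart{\norm{X_T}<r}{0}\asymp r^2/n$ mixes in scales $T\lesssim r^2$ that wash out half the power; no decorrelation argument can manufacture the lost factor because it has genuinely been given away.
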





\begin{proof}[Proof of \cref{cor:capgeom} given \cref{lem:capgeom}]
We will prove the claim concerning the capacity, the proof of the claim concerning the length being similar. 
It suffices to prove the claim in the case $C=1$, the general case following by decreasing $\alpha$ by some arbitrarily small amount. 
Fix $0<\alpha \leq 1$ and $\eps>0$. Proposition~\ref{lem:capgeom} yields that
\begin{equation*}
	\sum_{m=0}^{\infty} \left( \frac{t}{t+1} \right)^m \frac{1}{t+1}\, \prstart{\cpp{\lr{X^m}}\leq \frac{t}{(\log t)^{1+\alpha}},\ \norm{X_m}\geq \sqrt{\frac{t \log \log t}{(\log t)^\alpha}} }{0} \\
	\lesssim_{\alpha,\eps} \frac{1}{(\log t)^{2-\eps}}
\end{equation*}
for every $t \geq 3$. On the other hand, the local limit theorem implies that $\P(\|X_m\| \leq r) \lesssim m^{-2} r^4$ for every $m,r \geq 1$. We deduce by a union bound that
\begin{equation*}
	\sum_{m=t}^{2t}\left( \frac{t}{t+1} \right)^m \frac{1}{t+1} \, 
	\prstart{
	\cpp{\lr{X^m}}\leq \frac{t}{(\log t)^{1+\alpha}}
	}{0}
	\lesssim_{\alpha,\eps} \frac{1}{(\log t)^{2 - \eps}} + \frac{(\log \log t)^2}{(\log t)^{2\alpha}}
	 \lesssim_{\alpha,\eps} \frac{1}{(\log t)^{2\alpha - \eps}}
\end{equation*}
for every $t \geq 3$.
Using that $t/(\log t)^{1+\alpha}$ is increasing for large $t$ and that the term $(t/(t+1))^m$ is of order $1$ for $m\in [t,2t]$, we deduce that
\begin{equation*}
	\sum_{m=t}^{2t}
	\prstart{
	\cpp{\lr{X^m}}\leq \frac{m}{(\log m)^{1+\alpha}}
	}{0}
	\lesssim_{\alpha,\eps} \frac{t}{(\log t)^{2\alpha-\eps}}
\end{equation*}
for every $t\geq 2$.
%
%
 Summing this estimate over all $t$ of the form $2^k$ between $1$ and $n$ yields that
\begin{equation*}
\sum_{m=2}^{n}  \prstart{\cpp{\ler{}{X^m}}\leq \frac{m}{(\log m)^{1+\alpha}}}{0}  \,\lesssim_{\alpha,\eps}\, \sum_{k=1}^{\lfloor \log_2(n)\rfloor} \frac{2^k}{k^{2\alpha-\eps}} \,\lesssim_{\alpha,\eps}\, \frac{n}{(\log n)^{2\alpha-\eps}} 
\end{equation*}
as required. 
\end{proof}

We begin working towards the proof of \cref{lem:capgeom} by proving the following lemma, which is a consequence of the domain Markov property of the loop-erased random walk and describes how the initial and final segments of the loop-erased random walk are correlated.
Recall that we write $B(x,r)$ for the $\|\cdot\|_2$-ball of radius $r$ around $x$ in $\Z^d$, and write $\partial B(x,r)$ for the internal vertex boundary of $B(x,r)$, i.e., the set of $y\in B(x,r)$ that have a neighbour outside of $B(x,r)$. For each path $\gamma=[\gamma_0,\ldots, \gamma_k]$ we write $\gamma^\leftarrow=[\gamma_k,\ldots, \gamma_0]$ for the time reversal of $\gamma$. (Here it is convenient to follow different indexing conventions for time-reversals than in \cref{sec:AldousBroder}.)

\begin{lemma}
\label{lem:end_segments_decorrelation}
Let $d\geq 1$, let $T$ be a geometric random variable with mean $t>0 $, and let $X$ be an independent random walk on $\Z^d$. Fix $r\geq 1$, let $\kappa=\kappa_r$ be the first time $\LE(X^{T})$ visits $\partial B(X_0,r)$, let $\sigma=\sigma_r$ be the last time $\LE(X^{T})$ visits $\partial B(X_{T},r)$, and let $\rho=|\LE(X^T)|$. (We define $\kappa$ and $\sigma$ to be equal to $\rho$ and $0$ respectively on the events that the relevant sets are not hit.) Then for each $x \in \Z^d$ with $\|x\|_2> 2r$ we have that
\begin{multline*}
\P_0\Bigl(\LE(X^T)[0,\kappa]=\omega \text{ and } \LE(X^T)[\sigma,\rho] = \eta 
\;\Big |\; X_T = x \Bigr)
\\=
\P_0\Bigl(\LE(X^T)[0,\kappa]=\omega\Bigr) \P_x\Bigl(\LE(X^T)[0,\kappa] = \eta^\leftarrow
\;\Big |\; X_T = 0 \Bigr) \frac{\prcond{X_{T}=\eta_0, \tau_{\eta} = T}{\tau_{\omega}^+>T}{\omega_k}}{\prstart{X_{T}=\eta_0, \tau_{\eta} =T}{0}}
\end{multline*}
for every path $\omega=[\omega_0,\ldots,\omega_k]$ from $0$ to $\partial B(0,r)$ and every path $\eta=[\eta_0,\ldots,\eta_m]$ from $\partial B(x,r)$ to $x$, where $\tau_\eta$ denotes the first time the walk visits $\{\eta_0,\ldots,\eta_m\}$ and $\tau_\omega^+$ denotes the first positive time the walk visits $\{\omega_0,\ldots,\omega_k\}$.
\end{lemma}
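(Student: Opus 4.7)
The plan is to prove this identity by a direct walk decomposition followed by summing over the possible middle loop-erased paths joining $\omega_k$ to $\eta_m$. My starting point is the standard formula, derived via excursion decomposition at each loop-erased vertex (as in the proof of Wilson's algorithm, cf.\ Lawler's book): for any self-avoiding path $\alpha=[\alpha_0,\ldots,\alpha_\ell]$ with $\alpha_0=0$ and $\alpha_\ell=z$,
\[
\P_0\bigl(\LE(X^T)=\alpha,\,X_T=z\bigr)=p\Bigl(\frac{1-p}{2d}\Bigr)^{\!\ell}\prod_{i=0}^{\ell}g_T^{A_i^c}(\alpha_i,\alpha_i),
\]
where $p=1/(t+1)$, $A_i=\{\alpha_0,\ldots,\alpha_{i-1}\}$, and $g_T^{A^c}(y,y)=\sum_n(1-p)^n\P_y(X_n=y,\,\tau_A>n)$ is the Green's function of the walk killed at the geometric time $T$ and absorbed on entering $A$. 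An analogous formula holds with $0$ and $z$ replaced by any fixed starting point and, under suitable modifications, with an additional constraint of the form $\tau_B^+>T$, which forces the first excursion to be trivial.

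Next, I would write the event in the lemma as a disjoint union over all middle paths $\beta=[\omega_k=\beta_0,\beta_1,\ldots,\beta_{J-1},\beta_J=\eta_m]$ (with $\beta_1,\ldots,\beta_{J-1}$ avoiding $\omega\cup\eta$), so that $\alpha$ is the concatenation $\omega * \beta * \eta^\leftarrow$. Applying the formula to each such $\alpha$, the product $\prod_{i=0}^{\ell}g_T^{A_i^c}(\alpha_i,\alpha_i)$ factorises as $F_1(\omega)\cdot F_2(\omega,\beta)\cdot F_3(\omega,\beta,\eta)$, where $F_1$ involves the excursion Green's functions at $\omega_0,\ldots,\omega_{k-1}$ avoiding $\{\omega_0,\ldots,\omega_{i-1}\}$, $F_2$ involves the excursions along $\beta$ avoiding $\omega$ plus the initial part of $\beta$, and $F_3$ involves the excursions at $\eta_{m-1},\ldots,\eta_0$ avoiding $\omega\cup B\cup H_{>j}$. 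Summing $F_2\cdot F_3$ over $\beta$, the key identity is that $F_2\cdot F_3$ is exactly the walk--decomposition expression for the probability that a walk started at $\omega_k$, conditioned on $\tau_\omega^+>T$, has loop-erasure $[\omega_k,\beta_1,\ldots,\beta_{J-1},\eta_m,\eta_{m-1},\ldots,\eta_0]$ and ends at $\eta_0=x$ at time $T$. Summing this over $\beta$ collapses the middle into the marginal probability $\P_{\omega_k}(X_T=\eta_m,\,\tau_\eta=T\mid\tau_\omega^+>T)$ multiplied by the LERW-decomposition factor at the $\eta$-side, which (by a second application of the formula in reversed direction, combined with the reversibility identity $\P_0(X_T=x)=\P_x(X_T=0)$ and the equality $\LE(X^T)^\leftarrow\stackrel{d}{=}\LE(Y^T)$ for the time-reversed walk $Y$) is precisely $\P_x(\LE(X^T)[0,\kappa]=\eta^\leftarrow\mid X_T=0)$ up to explicit normalising constants.

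The leftover factor $F_1$ together with an auxiliary Green's function at $\omega_k$ and with the initial factor $\P_0(X_T=\eta_m,\tau_\eta=T)$ in the denominator reassembles into the marginal $\P_0(\LE(X^T)[0,\kappa]=\omega)$ on one side and the denominator of $R$ on the other, producing the ratio exactly as stated. The main technical obstacle will be the bookkeeping in the last step: both $F_2$ and $F_3$ depend on $\beta$ through the avoidance constraints (the excursion at $\eta_j$ must avoid $\omega\cup B\cup H_{>j}$, not just $\omega\cup H_{>j}$), so one must verify that the dependence on the interior vertices of $\beta$ combines consistently between the two blocks to reproduce the walk--decomposition formula for the conditioned walk from $\omega_k$, with no residual cross-terms. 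Once this identification is made, the claim follows by cancelling the common factor $\P_0(X_T=x)$ using reversibility and rearranging.
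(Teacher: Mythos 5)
Your opening reduction is sound: expanding the event over the full loop-erasure $\omega*\beta*\eta^\leftarrow$ via the excursion product formula and grouping the factors attached to $\omega_0,\ldots,\omega_k$ does reproduce $\P_0(\LE(X^T)[0,\kappa]=\omega)$ times the probability that a walk started at $\omega_k$ and constrained by $\tau_\omega^+>T$ has loop-erasure $\beta*\eta^\leftarrow$ — this is just a hands-on derivation of the first application of the domain Markov property (\cref{thm:domain}) in the paper's proof. The genuine gap is the step you defer to ``bookkeeping'': the claim that summing $F_2\cdot F_3$ over the middle path $\beta$ collapses into $\prcond{X_T=\eta_m,\tau_\eta=T}{\tau_\omega^+>T}{\omega_k}$ times a $\beta$-independent $\eta$-side factor identifiable with $\prcond{\LE(X^T)[0,\kappa]=\eta^\leftarrow}{X_T=0}{x}$. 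In the forward direction this sum simply does not factor: the Green's-function factor attached to each $\eta_i$ carries avoidance of $\omega\cup\{\beta_1,\ldots,\beta_{J-1}\}\cup\{\eta_{i+1},\ldots,\eta_m\}$, so every $\eta$-side factor depends on the middle path (and on $\omega$), and no algebraic identity lets you pull it outside the sum over $\beta$. The paper circumvents exactly this by applying the reversibility of LERW in its \emph{constrained} form, $\prstart{\LE(X^T)=\gamma,\,\tau_A^+>T}{\gamma_0}=\prstart{\LE(X^T)=\gamma^\leftarrow,\,\tau_A\geq T}{\gamma_{\mathrm{end}}}$ (\cref{thm:reversibility} and \eqref{eq:reversibility_geometric}, proved via Lawler's bijection), to the whole path $\gamma=\beta*\eta^\leftarrow$ with $A=\omega$ \emph{before} summing; only after this reversal does the terminal-segment event become an initial-segment event for a walk started at $x$, so that the sum over $\beta$ collapses by a second use of the domain Markov property, and a final time reversal of the simple walk produces the stated ratio. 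The reversibility facts you permit yourself ($\P_0(X_T=x)=\P_x(X_T=0)$ and the unconstrained identity $\LE(X^T)^\leftarrow\eqd\LE(Y^T)$) are too weak for this: you need the version that transports the $\tau_\omega$-constraint through the reversal, and without it your ``key identity'' is asserted rather than proved.

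A further warning if you do push the expansion through: the residual dependence you flag is real and does not obviously cancel. The constraint that the walk avoid $\omega$ up to time $T$ also constrains the loops erased while the terminal segment $\eta$ is being produced (after reversal, the excursion factors at the $\eta_i$ are Green's functions killed off $\omega$ as well as off the earlier $\eta$-vertices), and this interaction between $\omega$ and the $\eta$-side excursions is precisely what the careful domain-Markov/reversibility bookkeeping has to absorb; it is not a cross-term that disappears by inspection. So while the brute-force route is not hopeless, making it rigorous amounts to re-proving the domain Markov property and the constrained reversibility inside your expansion — at which point you are reproducing the paper's argument in expanded form — and as written your proposal leaves the one step that constitutes the actual content of the lemma unjustified.
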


We now recall the \emph{domain Markov property} of loop erased random walk, which was first used implicitly by
  Lawler \cite[Proposition 7.4.1]{Law91}. (Note that the domain Markov property holds more generally for the loop erasure of any Markov chain \cite[Lemma 2.4]{MR2506124} and hence for the  walk killed at a geometric time as in our setup below.)

\begin{theorem}[Domain Markov property]\label{thm:domain}
	Let $d\geq 1$, let $T$ be a geometric random variable with mean $t>0$ and let $X$ be an independent  simple random walk on $\Z^d$. Then for every two self-avoiding paths $\omega=[\omega_0,\ldots, \omega_k]$ and $\eta=[\eta_0,\ldots,\eta_m]$ with $\omega_k=\eta_0$ and $\omega\cap \eta=\{\omega_k\}$ we have 
	\[
	\prcond{\LE(X^T)[0,m+k]=\omega\oplus \eta}{\LE(X^T)[0,k]=\omega}{0} = \prcond{\LE(X^T)[0,m]=\eta}{\tau_{\omega}^+>T}{\omega_k},
	\]
where $\omega\oplus \eta$ denotes the concatenation of $\omega$ and $\eta$. 
\end{theorem}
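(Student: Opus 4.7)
The plan is to reduce both sides of the claimed equality to closed-form expressions via an explicit decomposition of the walk $X^T$ into loops and connecting steps, and then to match them using a local identity from excursion theory. Write $q := t/(t+1)$ for the per-step survival probability of the geometric killing, and introduce
\[
\bar{G}_A(x,y;q) := \sum_{n\geq 0} q^n\, \mathbb{P}_x\!\left(X_n = y,\ X[0,n]\cap A = \emptyset\right).
\]
The main computational lemma is the explicit formula: for any self-avoiding nearest-neighbour path $\gamma = [\gamma_0,\ldots,\gamma_j]$ with $\gamma_0 = x$,
\[
\mathbb{P}_x\!\left(\LE(X^T)[0,j] = \gamma\right) = \frac{q^j}{(2d)^j}\, \prod_{i=0}^{j-1} \bar{G}_{A_i}(\gamma_i,\gamma_i;q)\cdot \mathbb{P}_{\gamma_j}\!\left(\tau_{A_j}^+ > T\right),
\]
where $A_i := \{\gamma_0,\ldots,\gamma_{i-1}\}$. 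This follows from a unique ``loop/step'' decomposition of the walk: the event $\LE(X^T)[0,j] = \gamma$ forces $X^T$ to split into a loop at $\gamma_0$ avoiding $A_0 = \emptyset$, one deterministic step to $\gamma_1$, a loop at $\gamma_1$ avoiding $A_1$, and so on, ending with a walk from $\gamma_j$ avoiding $A_j$ run until the geometric killing time. The avoidance constraints are forced by the defining property that $\ell_i - 1$ is the \emph{last} visit to $\gamma_{i-1}$ in $[0,T]$: any later visit to a vertex $\gamma_{i'}$ with $i' < i$ would contradict the corresponding ``last visit'' condition. Summing the geometric factors over loop lengths produces the Green's functions $\bar{G}_{A_i}$, and the identity $\lambda \sum_{n\geq 0} q^n \mathbb{P}_{\gamma_j}(X[0,n]\cap A_j = \emptyset) = \mathbb{P}_{\gamma_j}(\tau_{A_j}^+ > T)$ absorbs the tail factor, where $\lambda = 1 - q$.

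Applying this formula to $\omega$ and to $\omega\oplus\eta$, the hypothesis $\omega\cap\eta = \{\omega_k\}$ ensures that the ``previously visited'' set appearing at position $k+j$ in the formula for $\omega\oplus\eta$ is exactly $B_j := \omega \cup \{\eta_1,\ldots,\eta_{j-1}\}$. After cancellation, the ratio is
\[
\mathbb{P}_0\!\left(\LE(X^T)[0,k+m] = \omega\oplus\eta \,\Big|\, \LE(X^T)[0,k] = \omega\right) = \frac{q^m}{(2d)^m}\cdot \frac{\bar{G}_{A_k}(\omega_k,\omega_k;q)\, \prod_{j=1}^{m-1}\bar{G}_{B_j}(\eta_j,\eta_j;q)\, \mathbb{P}_{\eta_m}(\tau_{B_m}^+ > T)}{\mathbb{P}_{\omega_k}(\tau_{A_k}^+ > T)},
\]
with $A_k := \omega\setminus\{\omega_k\}$. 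For the right-hand side of the theorem, the same loop/step decomposition under the constraint $\tau_\omega^+ > T$ is simpler: since the walk started at $\omega_k$ must avoid $\omega$ at every positive time, the initial loop at $\eta_0 = \omega_k$ is forced to be trivial, and each subsequent loop at $\eta_j$ must avoid $\omega\cup\{\eta_1,\ldots,\eta_{j-1}\} = B_j$. This yields
\[
\mathbb{P}_{\omega_k}\!\left(\LE(X^T)[0,m] = \eta\,\Big|\,\tau_\omega^+ > T\right) = \frac{q^m}{(2d)^m}\cdot \frac{\prod_{j=1}^{m-1}\bar{G}_{B_j}(\eta_j,\eta_j;q)\, \mathbb{P}_{\eta_m}(\tau_{B_m}^+ > T)}{\mathbb{P}_{\omega_k}(\tau_\omega^+ > T)}.
\]

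Comparing the two expressions, the theorem reduces to the algebraic identity
\[
\bar{G}_{A_k}(\omega_k,\omega_k;q)\cdot \mathbb{P}_{\omega_k}(\tau_\omega^+ > T) = \mathbb{P}_{\omega_k}(\tau_{A_k}^+ > T),
\]
which I would prove by decomposing the walk started at $\omega_k$ into successive excursions from $\omega_k$. Let $p_1$ be the probability that such an excursion returns to $\omega_k$ before being killed or before hitting $A_k$, $p_2$ the probability of being killed first, and $p_3$ the probability of hitting $A_k$ first, so that $p_1 + p_2 + p_3 = 1$ and $p_2 = \mathbb{P}_{\omega_k}(\tau_\omega^+ > T)$ directly. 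The number of visits to $\omega_k$ in $[0,T\wedge(\tau_{A_k}-1)]$ is then $1 + \mathrm{Geom}(p_2+p_3)$, giving $\bar{G}_{A_k}(\omega_k,\omega_k;q) = 1/(p_2+p_3)$; similarly, summing over the type of the terminating excursion gives $\mathbb{P}_{\omega_k}(\tau_{A_k}^+ > T) = \sum_{n\geq 0}p_1^n p_2 = p_2/(p_2+p_3)$, and the identity follows. The main technical obstacle is not any essential mathematical difficulty but rather the careful bookkeeping of the loop/step decomposition in the first step — verifying uniqueness of the decomposition, pinning down the precise avoidance sets $A_i$, and correctly tracking the factors of $q$, $\lambda$, and $(2d)^{-1}$ through the geometric summations.
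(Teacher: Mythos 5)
Your proposal is correct, and in fact the paper never proves this statement at all: it cites Lawler's Proposition 7.4.1 and Masson's Lemma 2.4, remarking that the domain Markov property holds for the loop-erasure of any Markov chain and hence for the geometrically killed walk. What you have written is essentially a self-contained reconstruction of that standard argument, adapted to the killing: the product formula
\[
\prstart{\LE(X^T)[0,j]=\gamma}{x}=\frac{q^j}{(2d)^j}\prod_{i=0}^{j-1}\bar G_{A_i}(\gamma_i,\gamma_i;q)\,\prstart{\tau^+_{A_j}>T}{\gamma_j}
\]
is the killed-walk analogue of the classical Green's-function representation of the law of a loop-erased path, the loop/step decomposition is indeed a bijection (the avoidance sets $A_i$ are forced exactly as you say, and conversely any concatenation of loops avoiding the $A_i$ reproduces the last-visit structure), and your reduction identity $\bar G_{A_k}(\omega_k,\omega_k;q)\prstart{\tau^+_\omega>T}{\omega_k}=\prstart{\tau^+_{A_k}>T}{\omega_k}$ is a correct first-return/last-exit identity, proved correctly by your excursion decomposition using memorylessness of $T$. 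Two cosmetic remarks: the hypothesis $\omega\cap\eta=\{\omega_k\}$ is not what identifies the avoidance set at position $k+j$ (that is $B_j$ by definition); rather it is what guarantees that $\omega\oplus\eta$ is self-avoiding so the product formula applies, and that the connecting steps in the conditioned computation never re-enter $\omega$. Also note that the conditional probability on the left is legitimately the ratio of the two formulas because $\{\LE(X^T)[0,k+m]=\omega\oplus\eta\}\subseteq\{\LE(X^T)[0,k]=\omega\}$. With those points made explicit, your argument is a complete proof, and arguably more informative than the paper's citation, at the cost of the bookkeeping you acknowledge; it is, however, the same route as the cited literature rather than a genuinely new one.
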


We next state the reversibility property of LERW. This is a classical result but we include the proof (which follows using Lawler's bijection~\cite[Lemma 7.2.1]{Law91}) since our setup is slightly different to that appearing in the literature.  

\begin{theorem}[Reversibility of LERW]\label{thm:reversibility} 
Let $X$ be a simple random walk on $\Z^d$.
 	Let $n\geq 0$, let $\eta=[\eta_0,\ldots, \eta_m]$ be a self-avoiding path in $\Z^d$, and let $A\subseteq \Z^d$. Then 
\[
\prstart{\ler{}{X^n}=\eta,\, \tau_A^+>n}{\eta_0} = 
 \prstart{\ler{}{X^n}=\eta^\leftarrow,\, \tau_A\geq n}{\eta_m}.
\]
\end{theorem}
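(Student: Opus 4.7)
My plan is to prove this via a bijective argument adapting Lawler's loop decomposition (Lemma~7.2.1 of~\cite{Law91}). The key input is that any finite walk $w = [w_0, \ldots, w_n]$ with $\LE(w) = \eta = [\eta_0, \ldots, \eta_m]$ admits a canonical decomposition into the \emph{backbone} $\eta$ together with a tuple of loops $(L_0, L_1, \ldots, L_m)$, where $L_i := w[\ell_i(w), \ell_{i+1}(w) - 1]$ is a walk from $\eta_i$ to $\eta_i$ (with $L_m := w[\ell_m(w), n]$). Directly from the definition of $\ell_i(w)$, a short check shows that each $L_i$ avoids $\eta_j$ for every $j \neq i$: if $L_i$ visited some $\eta_j$ with $j < i$ this would contradict the maximality defining $\ell_{j+1}(w)$, while if $L_i$ visited some $\eta_j$ with $j > i$ then $\eta_j$ would appear in $\LE(w)$ strictly before $\eta_{i+1}$, a contradiction. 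Conversely, any tuple $(\eta; L_0, \ldots, L_m)$ of loops satisfying these avoidance constraints reassembles into a unique walk of the appropriate length.

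I would then define the \emph{reversal bijection} $\Phi$ sending $w$ to the walk $w' := \Phi(w)$ whose backbone is $\eta^\leftarrow$ and whose loop tuple is $(L_m^\leftarrow, L_{m-1}^\leftarrow, \ldots, L_0^\leftarrow)$. Since each $L_i^\leftarrow$ is still a loop at $\eta_i$ avoiding the other $\eta_j$'s, this reassembly is legitimate and produces a walk of length $n$ from $\eta_m$ to $\eta_0$ whose own Lawler decomposition is exactly the prescribed one; in particular $\LE(w') = \eta^\leftarrow$. The map $\Phi$ is manifestly an involution, hence a bijection between walks of length $n$ from $\eta_0$ with $\LE = \eta$ and walks of length $n$ from $\eta_m$ with $\LE = \eta^\leftarrow$.

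The critical feature of $\Phi$ is that $w$ and $w'$ visit exactly the same multiset of vertices of $\Z^d$ (merely in a different order), while satisfying $w_0 = \eta_0 = w'_n$. The event $\{\tau_A^+ > n\}$ for $w$ is $\{w_1, \ldots, w_n\} \cap A = \emptyset$, while $\{\tau_A \geq n\}$ for $w'$ is $\{w'_0, \ldots, w'_{n-1}\} \cap A = \emptyset$. These two conditions agree, because the two multisets $\{w_1, \ldots, w_n\}$ and $\{w'_0, \ldots, w'_{n-1}\}$ are each obtained from the common vertex multiset of $w = w'$ by removing a single copy of $\eta_0$ (the copy at position $0$ of $w$ versus the copy at position $n$ of $w'$). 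Hence $\Phi$ interchanges the two events bijectively. Since simple random walk on $\Z^d$ assigns probability $(2d)^{-n}$ to every path of length $n$, summing this common probability over the bijection yields the claimed identity.

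The main delicate point I expect is the endpoint bookkeeping that compensates the $\tau_A^+$-vs-$\tau_A$ asymmetry: the fact that $\tau_A^+$ uses the strict time $t \geq 1$ while $\tau_A$ allows $t=0$ is precisely what lets us remove position $0$ on one side and position $n$ on the other, thanks to $w_0 = w'_n = \eta_0$. Verifying the loop avoidance properties is standard but needs care in following the definitions, as does keeping track of the degenerate cases $L_0 = [\eta_0]$ or $L_m = [\eta_m]$ in which the corresponding loop is trivial.
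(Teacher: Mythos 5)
There is a genuine gap, and it sits exactly at the point you flag as ``standard but needs care'': the claimed avoidance property of the loops is false in one direction. With $\ell_i=\ell_i(w)$ and $L_i=w[\ell_i,\ell_{i+1}-1]$, maximality of the last-visit times does show that $L_i$ avoids $\eta_j$ for $j<i$, but $L_i$ may perfectly well visit $\eta_j$ with $j>i$ --- such visits are simply erased later, when the walk returns to $\eta_i$, so they do not force $\eta_j$ to ``appear in $\LE(w)$ before $\eta_{i+1}$''. For instance $w=[0,1,0,1,2]$ in $\Z$ has $\LE(w)=[0,1,2]$ while $L_0=[0,1,0]$ visits $\eta_1$. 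Because of this, your map $\Phi$ is not even well defined into the target set: the reassembled walk $w'$ need not satisfy $\LE(w')=\eta^\leftarrow$, and its own canonical decomposition need not be the prescribed one. A concrete counterexample in $\Z^2$: take $\eta=[(0,0),(1,0),(2,0)]$ and let $w$ consist of the loop $L_0=[(0,0),(0,1),(1,1),(2,1),(2,0),(2,1),(1,1),(0,1),(0,0)]$ followed by the two steps to $(1,0)$ and $(2,0)$; then $\LE(w)=\eta$, but the reassembled walk $w'=[(2,0),(1,0),(0,0),(0,1),(1,1),(2,1),(2,0),(2,1),(1,1),(0,1),(0,0)]$ has $\LE(w')=[(2,0),(2,1),(1,1),(0,1),(0,0)]\neq\eta^\leftarrow$, because the last visit of $w'$ to $(2,0)$ happens inside the reattached reversed loop. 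In effect, tuples of loops with the two-sided avoidance you require parametrize only a strict subset of the walks with $\LE(w)=\eta$, so the counting identity does not follow from your construction.

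This is precisely the subtlety that Lawler's bijection is designed to overcome: the measure-preserving bijection $f^n$ of \cite[Lemma~7.2.1]{Law91}, which satisfies $\LE(\lambda)=\LE^R(f^n\lambda)$ and preserves the multiset of crossed edges, is a more intricate rearrangement of loops than ``reverse each loop and reattach in reverse order'', and the paper's proof simply invokes that lemma before doing the time-reversal of the walk itself. Your final bookkeeping step --- matching $\{\tau_A^+>n\}$ for the forward walk with $\{\tau_A\geq n\}$ for the reversed walk by removing one copy of $\eta_0$ at time $0$ versus time $n$, using that the vertex multiset is preserved --- is correct and coincides with what the paper does once the bijection is granted. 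So the fix is to replace your naive loop-reversal bijection by Lawler's lemma (cited, or with its actual construction reproduced); the rest of your argument then goes through.
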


If $T$ is a geometric random time of mean $t\geq 0$ independent of $X$, it follows by summing over the possible values of $T$ that
\begin{equation}
\label{eq:reversibility_geometric}
\prstart{\ler{}{X^T}=\eta,\, \tau_A^+>T}{\eta_0} = 
 \prstart{\ler{}{X^T}=\eta^\leftarrow,\, \tau_A\geq T}{\eta_m}
\end{equation}
for every self-avoiding path $\eta=[\eta_0,\ldots, \eta_m]$ and $A\subseteq \Z^d$.

\begin{proof}[Proof of \cref{thm:reversibility}]
Fix a self-avoiding path $\eta =[\eta_0,\ldots,\eta_m]$ in $\Z^d$.
For each finite path $\lambda$ in $\Z^d$ we define the reverse loop-erasure $\LE^R(\lambda)$ of $\lambda$ to be
\[
\LE^R(\lambda)=(\LE(\lambda^\leftarrow))^\leftarrow.
\]
For each $n\geq 0$, let $\Gamma_n$ be the set of paths in $\Z^d$ of length $n$ that start from $\eta_0$. Lawler in~\cite[Lemma~7.2.1]{Law91} shows that for each $n$ there exists a bijection $f^n:\Omega_n\to \Omega_n$ such that
\[
\LE(\lambda) = \LE^R(f^n\lambda)
\]
  and that $\lambda$ and $f^n\lambda$ cross the same edges with the same multiplicities for each $\lambda\in \Omega_n$. In particular, the bijection $f^n$ is measure-preserving in the sense that
  $\prstart{X^n=\lambda}{\eta_0} = \prstart{X^n=f^n\lambda}{\eta_0}$ for every $\lambda \in \Omega_n$.
Since we also have that $X^n$ visits $A$ if and only if $f^n X^n$ does, it follows that
\begin{multline*}
	\prstart{\LE(X^n)=\eta, \tau_A^+>n}{\eta_0} = \prstart{\LE^R(f^nX^n)=\eta, \tau_{A}^+>n}{\eta_0}
  = \prstart{\LE^R(X^n)=\eta, \tau_A^+>n}{\eta_0}\\
   = \prstart{\LE((X^n)^\leftarrow)=\eta^\leftarrow, \tau_A^+>n}{\eta_0}
  =\prstart{\LE(X^n)=\eta^\leftarrow ,\tau_A\geq n, X_n=\eta_0}{\eta_m}
\end{multline*}
as required.
\end{proof}

\begin{proof}[Proof of \cref{lem:end_segments_decorrelation}]
Fix $d\geq 1$, $t>0$, $r\geq 1$, and $x\in \Z^d$ with $\|x\|_2 > 2r$.
 The domain Markov property of the LERW~(\cref{thm:domain}) implies that
\begin{equation}
	\prcond{\LE(X^T)[\sigma, \rho] = \eta}{\LE(X^T)[0,\kappa] = \omega}{0} =
	\prcond{\LE(X^T)[\sigma, \rho] = \eta}{\tau^+_\omega > T}{\omega_k}.
\end{equation}
(Note that if $\LE(X^T)[\sigma, \rho] = \eta$ then $X_T=x$.)
Using the reversibility property of LERW as in \eqref{eq:reversibility_geometric} together with the observation that the first hitting time of a set for a forward path becomes the last hitting time of the same set for the reversed path we deduce that 
\begin{multline}
	\prcond{\LE(X^T)[\sigma, \rho] = \eta}{\LE(X^T)[0,\kappa] = \omega}{0} \\= \frac{1}{\P_{\omega_k}(\tau^+_\omega>T)}
	\prstart{X_T= \omega_k, \tau_\omega = T, \text{ and } \LE(X^T)[0, \kappa] = \eta^\leftarrow}{x}.
\end{multline}
Applying the domain Markov property (\cref{thm:domain}) a second time, we deduce that
\begin{multline}
	\prcond{\LE(X^T)[\sigma, \rho] = \eta}{\LE(X^T)[0,\kappa] = \omega}{0} \\= \frac{1}{\P_{\omega_k}(\tau^+_\omega>T)}
	\prstart{\LE(X^T)[0, \kappa] = \eta^\leftarrow}{x} \prcond{X_T= \omega_k, \tau_\omega = T }{\tau_\eta^+ > T}{\eta_0}
\end{multline}
and hence that
\begin{multline}
\P_0\Bigl(\LE(X^T)[0,\kappa]=\omega \text{ and } \LE(X^T)[\sigma,\rho] = \eta 
\Bigr)
\\=
\P_0\Bigl(\LE(X^T)[0,\kappa]=\omega\Bigr) \P_x\Bigl(\LE(X^T)[0,\kappa] = \eta^\leftarrow 
 \Bigr) 
\frac{\prstart{X_T= \omega_k, \tau_\omega = T, \tau_\eta^+>T }{\eta_0}}
{\prstart{\tau_\omega^+>T}{\omega_k}\prstart{\tau_\eta^+>T}{\eta_0}}.
\label{eq:symmetric_decorrelation}
\end{multline}
This equation can be thought of as a more symmetric form of the claimed equality. To deduce the claim from it, we use the domain Markov property again to write
\[
\prstart{\LE(X^T)[0,\kappa]=\eta^\leftarrow, X_T = 0}{x} = \prstart{\LE(X^T)[0,\kappa]=\eta^\leftarrow}{x} \prcond{X_T = 0}{\tau^+_\eta>T}{\eta_0}.
\]
Substituting this equality into \eqref{eq:symmetric_decorrelation}, rearranging, and using that $\P_0(X_T=x)=\P_x(X_T=0)$ yields that
\begin{multline}
\prcond{\LE(X^T)[0,\kappa]=\omega \text{ and } \LE(X^T)[\sigma,\rho] = \eta }{X_T=x}{0}= \P_0\Bigl(\LE(X^T)[0,\kappa]=\omega\Bigr)
\\
 \cdot \prcond{\LE(X^T)[0,\kappa] = \eta^\leftarrow 
 }{X_T = 0}{x} 
\frac{\prstart{X_T= \omega_k, \tau_\omega = T, \tau_\eta^+>T }{\eta_0}}
{\prstart{\tau_\omega^+>T}{\omega_k}\prstart{X_T=0, \tau^+_\eta>T}{\eta_0}}.
\label{eq:unsymmetrized}
\end{multline}
Applying a further time-reversal (for the simple random walk rather than the loop erasure) to both terms involving walks started at $\eta_0$, we deduce that
\begin{multline*}
\frac{\prstart{X_T= \omega_k, \tau_\omega = T, \tau_\eta^+>T }{\eta_0}}
{\prstart{\tau_\omega^+>T}{\omega_k}\prstart{X_T=0, \tau^+_\eta>T}{\eta_0}}= \frac{\prstart{X_T= \eta_0, \tau_\omega^+ > T, \tau_\eta = T }{\omega_k}}
{\prstart{\tau_\omega^+>T}{\omega_k}\prstart{X_T=\eta_0, \tau_\eta = T}{0}}
\\=
\frac{\prcond{X_T= \eta_0, \tau_\eta = T }{\tau_\omega^+ > T}{\omega_k}}
{\prstart{X_T=\eta_0, \tau_\eta = T}{0}},
\end{multline*}
and substituting this equality into \eqref{eq:unsymmetrized} yields the claim.
\end{proof}

In order to apply this lemma, we will need to bound the ratio of probabilities that appears on the right hand side in cases of interest.
We begin by proving the following basic decorrelation estimate for random walk killed at a geometric random time.

\begin{lemma}\label{lem:forget}
	Let $d\geq 3$, let $T$ be a geometric random variable of mean $t \geq 1$, and let $X$ be an independent random walk on $\Z^d$. Let $r\geq 1$, and let $x,y \in \Z^d$ and $A,D \subseteq \Z^d$ be such that
	$x \in \partial B(0,r)$, $x \in A \subseteq B(0,r)$, $\|y\|_2 \geq 10 r$, and $d(0,D) \geq 10r$.
	 Then we have 
	\[
	\prcond{X_{T}=y, \tau_D>T}{\tau_A^+>T}{x} \lesssim_d \left(\frac{t}{t+1} \right)^{-4r^2} \prstart{X_{T}=y, \tau_D>T}{0}.
	\]
\end{lemma}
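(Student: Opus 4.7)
The plan is to perform two successive Markov decompositions and then a burn-in path-switching argument for the walk killed at the geometric time $T$. The first decomposition is the one-step Markov property at time~$1$. Since $x \in A$, the event $\{\tau_A^+ > T\}$ forces the walk's first step to go to a neighbour $z \sim x$ with $z \notin A$; and since every neighbour of $x$ has norm at most $r+1$ while $d(0,D) \geq 10r$, every such $z$ automatically lies outside $D$. Combining this with the memoryless property of~$T$, one obtains
\[
\prcond{X_T = y, \tau_D > T}{\tau_A^+ > T}{x} \;\leq\; \max_{z \sim x,\, z \notin A} \prcond{X_T = y, \tau_D > T}{\tau_A > T}{z},
\]
so it suffices to prove the desired bound for each such neighbour $z$, which satisfies $\|z\|_2 \leq r+1$.

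For these reduced starting points the second decomposition is via the first exit time $\sigma = \tau_{B(0,2r)^c}$. Since $\|y\|_2 \geq 10r > 2r$ the walk must exit $B(0,2r)$ in order to reach $y$, and since $D \cap B(0,2r) = \emptyset$ the constraint $\tau_D > T$ is automatically preserved on $[0,\sigma]$. Applying the strong Markov property at $\sigma$ to both the numerator (with equality) and the denominator (as a lower bound obtained by restricting to $\sigma \leq T$), and then pointwise bounding the ratio by the supremum over the exit location, reduces the problem to showing
\[
\prstart{X_T = y, \tau_D > T}{w} \;\lesssim_d\; q^{-4r^2}\, \prstart{\tau_A > T}{w} \cdot \prstart{X_T = y, \tau_D > T}{0}
\]
for every $w$ on the outer vertex boundary of $B(0,2r)$. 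The factor $\prstart{\tau_A > T}{w}$ is bounded below by a dimension-dependent constant: since $A \subseteq B(0,r)$ and $\|w\|_2 > 2r$, the classical escape estimate for simple random walk in $d \geq 3$ gives $\prstart{\tau_A > T}{w} \geq \prstart{\tau_{B(0,r)} = \infty}{w} \geq 1 - (r/\|w\|_2)^{d-2} \geq 1 - 2^{-(d-2)} \gtrsim_d 1$.

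The main obstacle is the remaining Harnack-type comparison $\prstart{X_T = y, \tau_D > T}{w} \lesssim_d q^{-4r^2}\, \prstart{X_T = y, \tau_D > T}{0}$ for~$w$ at distance $\asymp 2r$ from the origin. My plan is to prove this by a burn-in argument at the deterministic time~$4r^2$. Since $B(0,4r)$ is disjoint from $\{y\} \cup D$ and the event $\{T \geq 4r^2\}$ has probability~$q^{4r^2}$, the Markov property at time~$4r^2$ together with the memoryless property of~$T$ gives
\[
\prstart{X_T = y, \tau_D > T}{0} \;\geq\; q^{4r^2} \sum_{w'} \prstart{X_{4r^2} = w',\, X[0,4r^2] \subseteq B(0,4r)}{0} \cdot \prstart{X_T = y, \tau_D > T}{w'}.
\]
Restricting the sum to $w' \in B(w,2r)$, the local central limit theorem guarantees that the aggregated weight is bounded below by a positive $d$-dependent constant (the Gaussian profile at time~$4r^2$ has standard deviation $\asymp 2r$ centred at~$0$, so it assigns constant mass to a ball of radius~$2r$ around any point~$w$ with $\|w\|_2 \asymp 2r$). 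An elliptic Harnack comparison on $B(w, 2r)$, valid since the map $w' \mapsto \prstart{X_T = y, \tau_D > T}{w'}$ is $q$-harmonic on $B(w,4r) \subseteq \Z^d \setminus (\{y\} \cup D)$, then allows the infimum over $w'$ to be replaced by a constant multiple of the value at~$w$. The most delicate point is that the Harnack constant for $q$-harmonic functions grows like $\exp(O(r/\sqrt{t}))$ in the killing-dominated regime $t \lesssim r^2$; since $r/\sqrt{t} \leq r^2/t$ whenever $t \leq r^2$, this factor is always dominated by the slack $q^{-4r^2} \asymp \exp(O(r^2/t))$ on the right-hand side, so the resulting bound matches the lemma's claim.
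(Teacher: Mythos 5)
Your reductions are sound and in fact run parallel to the paper's own proof: after your first-step decomposition (harmless, though not actually needed), the strong Markov property at the exit time of $B(0,2r)$ together with the uniform escape estimate $\prstart{\tau_{B(0,r)}=\infty}{w}\gtrsim_d 1$ for $\|w\|_2\geq 2r$ reduces the lemma to the comparison $\prstart{X_{T}=y,\tau_D>T}{w}\lesssim_d (t/(t+1))^{-4r^2}\,\prstart{X_{T}=y,\tau_D>T}{0}$ for $w$ at distance about $2r$ from the origin, which is exactly the comparison the paper needs after \eqref{eq:prePHI}. Where you genuinely diverge is in how this comparison is proved. The paper applies the parabolic Harnack inequality \eqref{eq:PHI} to the space-time function $u(x,n)=(t/(t+1))^{-n}\prstart{X_{T}=y,\tau_D>T}{x}$, which yields the factor $(t/(t+1))^{-4r^2}$ in a single stroke (it is precisely the cost of the time shift between the two windows in the PHI, the spatial comparison costing only a $d$-dependent constant). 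You instead separate space from time: a deterministic burn-in of length $4r^2$ plus an elliptic Harnack inequality for $q$-harmonic functions, $q=t/(t+1)$. Both of your auxiliary ingredients are true but only asserted: the lower bound $\sum_{w'\in B(w,2r)}\prstart{X_{4r^2}=w',\,X[0,4r^2]\subseteq B(0,4r)}{0}\gtrsim_d 1$ requires handling the confinement event as well as the endpoint (an invariance-principle or killed-walk local CLT argument, not the plain LCLT you cite), and the elliptic Harnack inequality with constant $\exp(O_d(r/\sqrt{t}))$ for massive harmonic functions needs a chaining argument over balls of radius $\asymp\sqrt{t}$ (or killed Green's function estimates); neither is as off-the-shelf as the parabolic Harnack inequality the paper invokes.

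The one genuine flaw is quantitative and sits in your final sentence. Your chain of inequalities produces $(t/(t+1))^{-4r^2}\exp(O_d(r/\sqrt{t}))$: the Harnack factor \emph{multiplies} the burn-in factor, it is not absorbed by it, and when $t\ll r^2$ it is unbounded. Noting that $\exp(O(r/\sqrt{t}))\leq\exp(O(r^2/t))\asymp (t/(t+1))^{-O(r^2)}$ only shows you have proved the lemma with $4r^2$ replaced by $Cr^2$ for some larger absolute constant $C$; this would in fact suffice for every application in the paper (the lemma is only used in the regime $r^2=O(t)$), but it is not the stated inequality. The repair is easy: take the burn-in of length $3r^2$, say, and observe that either $r/\sqrt{t}\leq 2C_d$, in which case the Harnack factor is $O_d(1)$, or $r/\sqrt{t}>2C_d$, in which case $\exp(C_d r/\sqrt{t})\leq\exp(r^2/(2t))\leq (t/(t+1))^{-r^2}$ since $\log(1+1/t)\geq 1/(2t)$ for $t\geq1$; either way the total cost is $\lesssim_d (t/(t+1))^{-4r^2}$. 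Alternatively, the paper's one-shot application of the parabolic Harnack inequality to $(t/(t+1))^{-n}\prstart{X_{T}=y,\tau_D>T}{x}$ avoids the issue (and the two unproved ingredients) altogether.
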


We will only apply this estimate in the regime $r^2=O(t)$, where $\left(t/(t+1) \right)^{-4r^2}=O(1)$.

\medskip


The proof of this lemma will rely upon the \emph{parabolic Harnack inequality}  \cite[Theorem 3.3.5]{KumagaiBook}, which was originally formulated in the graphical context by Delmotte \cite{MR1666463}. Recall that a space-time function $u:\Z^d \times \Z \to \R$ is said to be \textbf{parabolic} on a space-time region $A \subseteq \Z^d \times \Z$ if 
\[
u(x,n+1) = \frac{1}{2d}\sum_{y \sim x}u(y,n)
\]
for every $(x,n) \in A$. Parabolicity is a space-time analogue of harmonicity;  every harmonic function can be thought of as a parabolic function that does not depend on its time coordinate. The parabolic Harnack inequality for $\Z^d$ states that there exists a constant $C=C(d)$ such that if $R \geq 1$ and $u:\Z^d \times \Z \to \R$ is parabolic on $B(0,2R) \times [0,4 R^2]$, then
\begin{equation}
\label{eq:PHI}
\max_{\substack{x\in B(0,R)\\ R^2 \leq n \leq 2R^2}} u(x,n) \leq 
C \min_{\substack{x\in B(0,R)\\ 3R^2 \leq n \leq 4R^2}} (u(x,n)+u(x,n+1)).
\end{equation}
The parabolic Harnack inequality (PHI) can be thought of as a space-time analogue of the elliptic Harnack inequality (EHI), with which the reader may be more familiar and which is used to prove decorrelation inequalities for LERW in the work of Masson \cite{MR2506124}. 

Before using the parabolic Harnack inequality to prove \cref{lem:forget}, let us first give a simple and illustrative application of the PHI that we shall use several times below. Let $T$ be a geometric random variable of mean $t$ and let $X$ be an independent random walk on $\Z^d$. Let $z\in \R^d$ be such that $z\notin B(0,2r)$. Then the function $u: \Z^d \times \Z \to \R$ defined by
\[
u(x,n) = \left( \frac{t}{t+1}\right)^{-n} \P_x(X_{T}=z) = \left( \frac{t}{t+1}\right)^{-n} \sum_{m=0}^\infty \frac{1}{t+1}\left(\frac{t}{t+1} \right)^m \P_x(X_{m}=z)
\]
is parabolic on $B(0,2r) \times \Z$, and it follows that
\begin{multline}
\left(\frac{t}{t+1}\right)^{-2r^2}\max_{x\in B(0,r)}\P_x\left(X_{T}=z \right) = \max_{\substack{x\in B(0,r)\\ r^2 \leq n \leq 2r^2}} u(x,n) \\\lesssim \min_{\substack{x\in B(0,r)\\ 3r^2 \leq n \leq 4r^2}} (u(x,n)+u(x,n+1) )
\lesssim_d \left(\frac{t}{t+1}\right)^{-3r^2}\min_{x\in B(0,r)}\P_x\left(X_{T}=z \right) 
\label{eq:simplePHI}
\end{multline}
for every $r\geq 1$.  The proof of \cref{lem:forget} will ultimately rely on a similar idea.

\begin{proof}[Proof of \cref{lem:forget}]
Fix $r,t\geq 1$.
Let $Q=B(0,2r)$ and write $\tau_{A,Q}$ for the first hitting time of $A$ after $\tau_{\partial Q}$. By the strong Markov property we get 
\begin{align}\label{eq:ineqden}
	\prstart{\tau_A^+>T}{x} \geq \prstart{\tau_{\partial Q}<\tau_A^+, \tau_{\partial Q}<T, \tau_{A,Q}>T }{x} \gtrsim_d \prstart{\tau_{\partial Q}<\tau_A^+, \tau_{\partial Q}<T}{x},
\end{align}
where the final inequality follows from the fact that, since $d\geq 3$, the probability that a simple random walk started at $\partial Q=\partial B(0,2r)$ never hits $B(0,r)$ is at least a positive constant.

Since $d(0,y)\geq 10r$, it follows that on the event $X_{T}=y$ we have $\tau_{\partial Q}<T$. By the strong Markov property applied to the time $\tau_{\partial Q}$ and the memoryless property of $T$ we obtain that
\begin{align}
&\prstart{X_{T}=y, \tau_D>T,\tau_A^+>T}{x} = \sum_{z\in \partial Q}\prstart{X_{T}=y, \tau_D>T, \tau_A^+>T, X_{\tau_{\partial Q}}=z, \tau_{\partial Q}<T}{x}	
\nonumber\\
&\hspace{3cm}= \sum_{z\in \partial Q} \prstart{X_{T}=y,\tau_D>T, \tau_A^+>T}{z}\prstart{X_{\tau_{\partial Q}}=z, \tau_{\partial Q}<T, \tau_{\partial Q}<\tau_A^+}{x} 
\nonumber\\
&\hspace{3cm}\leq \sum_{z\in \partial Q} \prstart{X_{T}=y,\tau_D>T}{z}\prstart{X_{\tau_{\partial Q}}=z, \tau_{\partial Q}<T, \tau_{\partial Q}<\tau_A^+}{x}.
\label{eq:prePHI}
\end{align}
%
Applying the parabolic Harnack inequality~\eqref{eq:PHI} to the function
\begin{multline*}
u(n,x) = \left(\frac{t}{t+1} \right)^{-n} \prstart{X_{T}=y, \tau_D>T}{x} \\= \left(\frac{t}{t+1} \right)^{-n} \sum_{m = 0}^\infty  \frac{1}{t+1}\left(\frac{t}{t+1} \right)^{m}\prstart{X_{m}=y, \tau_D>m}{x}
\end{multline*}
which is parabolic on $B(0,10r-1) \times \Z$, we obtain that
\begin{multline*}
\left(\frac{t}{t+1}\right)^{-8 r^2} \max_{z\in B(0,2r)}\prstart{X_{T}=y,\tau_D>T}{z} 
\\
= \max_{\substack{z\in B(0,2r)\\ 4r^2 \leq n \leq 8r^2}} u(z,n)
\lesssim \min_{\substack{z\in B(0,2r)\\ 12 r^2 \leq n \leq 16r^2}} (u(z,n)+u(z,n+1))
\\ \lesssim_d \left(\frac{t}{t+1}\right)^{-12 r^2} \min_{z\in B(0,2r)}\prstart{X_{T}=y,\tau_D>T}{z}
\end{multline*}
for every $r\geq 1$ and $t\geq 2$. Substituting this estimate back into \eqref{eq:prePHI} we obtain that
\begin{align*}
\prstart{X_{T}=y, \tau_D>T,\tau_A^+>T}{x} \lesssim_d \left(\frac{t}{t+1}\right)^{-4 r^2}\prstart{X_{T}=y,\tau_D>T}{0}\prstart{\tau_{\partial Q}<T, \tau_{\partial Q}<\tau_A^+}{x}.
\end{align*}
This together with~\eqref{eq:ineqden} completes the proof of the claim. 
\end{proof}

\begin{proof}[Proof of \cref{lem:capgeom}] We will prove the claim concerning the capacity; the proof of the claim concerning the length is very similar but uses \cref{lem:amounterased} in place of \cref{prop:firstconcentration}. 
We will prove the stronger claim that there exists a constant $C$ such that
\begin{equation}
\prcond{\cpp{\LE\Bigl(X^T\Bigr)}\leq \frac{t}{\lambda \log t}}{X_T=x}{0} \lesssim_{\eps} \frac{1}{(\log t)^{2-\eps}}
\end{equation}
for every $\eps>0$, $\lambda \geq \log \log t$, and $x \in \Z^4$ with $\|x\| \geq \sqrt{C \lambda^{-1} t \log \log t}$.
We first set up some relevant notation.
For each $r \geq 1$, let $\tau_r$ be the first time that the random walk visits $\partial B(X_0,r)$. It is a standard consequence of Azuma's inequality \cite[Proposition 2.1.2(b)]{LawLim} that there exists a positive constant $c$ such that
\begin{equation}
\label{eq:Azuma}
\P(\tau_r \leq n) \leq \exp\left[-\frac{cr^2}{n}\right]
\end{equation}
for every $r,n \geq 1$. Let $C_1$ be the constant from \cref{prop:firstconcentration} and define $C_2 = 4C_1/c$.
 Fix $t\geq 2$ and $\lambda \geq \log \log t$. To lighten notation, we let $L =\ler{}{X[0,T]}$ and write $\rho = |L|$ so that $L_\rho=X_{T}$.
We set 
\[r= \left \lceil \sqrt{C_2 \lambda^{-1} t \log \log t} \right \rceil \qquad \text{ and } \qquad R= 2r,   \]
so that $r \leq R \lesssim \sqrt{t}$. 
As in \cref{lem:end_segments_decorrelation}, we define
  $\kappa=\kappa_r$ to be the first hitting time of $\partial B(X_0,r)$ by the loop-erased random walk $L$ and define
   $\sigma$ to be the last hitting time of $\partial B(X_{T},r)$ by $L$. We define these times to be equal to $\rho$ and $0$ respectively on  the events that the relevant sets are not hit (this case will be irrelevant to us).
   Fix $x$ with $\|x\|_2 \geq 100 R$ and
    write $K=t/(\lambda \log t)$.
For each $y\in \Z^4$, let $\Omega_y$ be the set of paths starting at $y$ and ending in $\partial B(y,r)$ that have capacity at most $K$. Then we have by \cref{lem:end_segments_decorrelation} that
\begin{align*}\label{eq:boundbyb1b2}
&\prcond{\cpp{L}\leq K}{X_T=x}{0}\\
&\hspace{1.8cm}\leq 
	\prcond{\cpp{L[0,\kappa]} \leq K \text{ and } \cpp{L[\sigma,\rho]} \leq K}{X_T=x}{0}\nonumber
	\\
	&\hspace{1.8cm}=\sum_{\omega \in \Omega_0} \sum_{\eta \in \Omega_x} \P_0(L[0,\kappa]=\omega)\P_x(L[0,\kappa] = \eta \mid X_T=0) \frac{\prcond{X_{T}=\eta_m, \tau_{\eta} = T}{\tau_{\omega}^+>T}{\omega_k}}{\prstart{X_{T}=\eta_m, \tau_{\eta} =T}{0}}
	\\
	&\hspace{1.8cm}\leq  \prstart{\cpp{L[0,\kappa]} \leq K, L_\kappa \in \partial B(0,r)}{0} \prcond{\cpp{L[0,\kappa]} \leq K}{X_T=0}{x}
	\\
	&\hspace{1.8cm}\hspace{6.6cm} \cdot \sup_{\omega \in \Omega_0} \sup_{\eta \in \Omega_x}\frac{\prcond{X_{T}=\eta_m, \tau_{\eta} = T}{\tau_{\omega}^+>T}{\omega_k}}{\prstart{X_{T}=\eta_m, \tau_{\eta} =T}{0}},
\end{align*}
where we take $k$ and $m$ to be the lengths of $\omega$ and $\eta$ respectively.
Since $r \lesssim \sqrt{t}$, \cref{lem:forget} yields that
\[
\sup_{\omega \in \Omega_0} \sup_{\eta \in \Omega_x}\frac{\prcond{X_{T}=\eta_m, \tau_{\eta} = T}{\tau_{\omega}^+>T}{\omega_k}}{\prstart{X_{T}=\eta_m, \tau_{\eta} =T}{0}} \lesssim \left(\frac{t}{t+1}\right)^{-4r^2} \lesssim 1
\]
 and hence that
\begin{multline}\label{eq:boundbyb1b2}
\prcond{\cpp{L}\leq K}{X_T=x}{0} 
	\\\lesssim  \prstart{\cpp{L[0,\kappa]} \leq K,\, L_\kappa \in \partial B(0,r)}{0} \prcond{\cpp{L[0,\kappa]} \leq K}{X_T=0}{x}.
\end{multline}
To complete the proof, it therefore suffices to prove that
\begin{align}
\label{eq:secondprobgoal}
	\prstart{\cpp{L[0,\kappa] } \leq K,\, L_\kappa \in \partial B(0,r) }{0} &\lesssim_{\eps} \frac{1}{(\log t)^{1-\eps}}\quad \text{ and }\\
\label{eq:firstprobgoal}
	\prcond{\cpp{L[0,\kappa]}\leq K}{X_{T}=0}{x} &\lesssim_{\eps} \frac{1}{(\log t)^{1-\eps}}
\end{align}
for every $\eps>0$.

We begin by proving \eqref{eq:secondprobgoal}, which is easier. Recall that $\tau_r$ is the first time that $X$ visits $\partial B(X_0,r)$. (Note the distinction between $\tau_r$ and $\kappa$, the latter of which was defined in terms of the loop-erasure of $X$.) On the event that $L_\kappa \in \partial B(0,r)$ we have that $\tau_r \leq T$ and hence that $\LE_\infty(X^{n})$ is an initial segment of $L[0,\kappa]$ for every $n \leq \tau_r$.  
By \cref{prop:firstconcentration} and \eqref{eq:Azuma}, if we define $n= \lfloor 2 C_1 \lambda^{-1} t \rfloor$ then we have that
	\begin{align*}
\prstart{\cpp{L[0,\kappa] } \leq K, L_\kappa\in \partial \B(X_0,r)}{x}&\leq \prstart{\cpp{\ler{\infty}{X^n}} \leq K}{x} + \P_x(\tau_r \leq n)\\
& \lesssim_\eps \frac{1}{(\log t)^{1-\eps}} + \exp\left[-c \frac{r^2}{n}\right] \leq \frac{1}{(\log t)^{1-\eps}} + \frac{1}{\log t}
	\end{align*}
	for every $\eps>0$ by definition of $n,r$ and the constant $C_2$, completing the proof of \eqref{eq:secondprobgoal}.

We  now prove \eqref{eq:firstprobgoal}. Let $\tau_r$ and $\tau_R$ be the first times that $X$ visits $\partial B(x,r)$ and $\partial B(x,R)$ respectively. 
On the event $X_T=0$ we must have that $\tau_r \leq \tau_R \leq T$ and hence that $\LE_\infty(X^{\tau_r})$ is an initial segment of $L[0,\kappa]=\LE(X^T)[0,\kappa]$. 
Let  $\til{\tau}$ be the first time that $X$ hits $X[0,\tau_r]$ after reaching $\partial B(X_0,R)$, setting $\til\tau=\infty$ if this never happens. 
We use the union bound
\begin{multline}\label{eq:capKler}
\prstart{\cpp{\ler{\infty}{X^{\tau_r}}}\leq K, X_{T}=0}{x} \leq \prstart{\cpp{\ler{\infty}{X^{\tau_r}}}\leq K, X_{T}=0, \widetilde \tau > T}{x} \\
+ \prstart{X_{T}=0, \widetilde \tau \leq T}{x}.
\end{multline}
To bound the second term, we use the strong Markov property at time $\til{\tau}$, the memoryless property of the geometric distribution, and the parabolic Harnack inequality as in~\eqref{eq:simplePHI} to obtain that
\begin{align}\label{eq:strongmemoryharnach}
	\prstart{\widetilde \tau \leq T, X_{T}=0}{x}  = 
  \estart{
  \prstart{X_{T}=0}{X_{\til{\tau}_r}}
\mathbbm{1}\left(\widetilde \tau \leq T\right)
  }{x}  \asymp \prstart{X_{T}=0}{x} \prstart{\widetilde \tau \leq T}{x}.
\end{align}
 Let $S$ be an independent simple random walk on $\Z^4$ started at some vertex $y$ and let $\P_{x,y}$ denote the joint law of $X$ and $S$. Letting $m:= \lceil r^2\log \log r /2 c\rceil$, we have by \eqref{eq:Azuma} that
 \begin{align}\label{eq:boundfromtaur}
 	\begin{split}
 	\prstart{\widetilde{\tau} \leq T}{x} &\leq \max_{y\in \partial B(x,R)}\prstart{X[0,\tau_r]\cap S[0,\infty)\neq \emptyset}{x,y} \\ 
 	&\lesssim 
  \frac{1}{(\log t)^2}
   + \max_{y\in \partial B(x,R)}\prstart{X[0,m]\cap S[0,\infty)\neq \emptyset}{x,y}.
 	\end{split}
 \end{align}
Applying the non-intersection estimate \cite[Theorem~4.3.3]{Law91} yields that 
\[
\max_{y\in \partial B(x,R)}\prstart{X[0,m]\cap S[0,\infty)\neq \emptyset}{x,y} \lesssim \frac{\log \log t}{\log t},
\]
and it follows from this together with \eqref{eq:boundfromtaur} and \eqref{eq:strongmemoryharnach} that
\begin{align}
\label{eq:seconprobleinfty2}
	\prstart{\widetilde \tau \leq T,X_{T}=0}{x} \lesssim  \frac{\log \log t}{\log t}\cdot  \prstart{X_{T}=0}{x}
\end{align}
as required.
We now bound the first term on the right of~\eqref{eq:capKler}. We write $\ler{\tau_R}{X^{\tau_r}}$ for the part of $\ler{}{X^{\tau_R}}$ that is contributed by the first $\tau_r$ steps of the walk. Note that on the event $\{\widetilde \tau>T\}$, we have that $\ler{\infty}{X^{\tau_r}} =  \ler{\tau_R}{X^{\tau_r}}$, and hence we obtain
  \begin{align*}
  	&\prstart{\cpp{\ler{\infty}{X^{\tau_r}}}\leq K, X_{T}=0, \widetilde \tau > T}{x}
  	\leq \prstart{\cpp{\ler{\tau_R}{X^{\tau_r}}}\leq K, X_{T}=0, \tau_R<T}{x} \\
  	&= 
    \estart{\prstart{X_T=0}{X_{\tau_R}} 
    \mathbbm{1}\left(
    \cpp{\ler{\tau_R}{X^{\tau_r}}}
    \leq K
    \right)
    }{x}\lesssim \prstart{X_T=0}{x} \prstart{\cpp{\ler{\infty}{X^{\tau_r}}}\leq K}{x},
  \end{align*}
  where for the equality we used the strong Markov property applied to $\tau_R$ and the memoryless property of $T$ and for the last inequality we used the parabolic Harnack inequality again as in~\eqref{eq:simplePHI}.
Defining $n= \lceil 2 C_1 t/\lambda\rceil$ as above, we can once again apply \cref{prop:firstconcentration} and \eqref{eq:Azuma} to deduce that
\begin{equation}\label{eq:seconprobleinfty}
	\prstart{\cpp{\ler{\infty}{X^{\tau_r}}}\leq K}{x}  \leq \prstart{\cpp{\ler{\infty}{X^n}} \leq K}{x} + \prstart{\tau_{r}\leq n}{x}
	\lesssim_\eps \frac{1}{(\log t)^{1-\eps}}
\end{equation}
for every $\eps>0$. Substituting \eqref{eq:seconprobleinfty2} and \eqref{eq:seconprobleinfty} into \eqref{eq:capKler} completes the proof. \qedhere
\end{proof}

\part{The uniform spanning tree}
\label{part:UST}

\section{Background}
\label{sec:UST_background}

In this section we recall background on uniform spanning trees that will be applied in the remainder of the paper.

\medskip
\textbf{Orientations, the future, and the past.} Let $d\geq 2$. It is often convenient to think of both the uniform spanning forest of $\Z^d$ and the $0$-wired uniform spanning forest of $\Z^d$ as \emph{oriented} forests: Since every infinite tree is one-ended in both models \cite{USFBenLyPeSc,LMS08}, there is a unique orientation of each infinite tree such that every vertex has exactly one oriented edge emanating from it. Similarly, we orient the tree containing the origin in the $0$-wired uniform spanning forest towards the origin, so that every vertex other than the origin has a unique oriented edge emanating from it. (Note that it is also possible to sensibly define the oriented wired uniform spanning forest without knowing that every tree is one-ended \cite[Section 5]{USFBenLyPeSc}. This is important for several of the proofs of one-endedness, but will not concern us here.) This orientation leads to the following very natural equivalent definition of the past of a vertex: A vertex $u$ is in the \textbf{future} of a vertex $v$ if the unique oriented path emanating from $v$ passes through $u$; $u$ is in the \textbf{past} of $v$ if $v$ is in the future of $u$.

When considering the uniform spanning tree $\fT$ of $\Z^4$, we write $\fP(x,n)$ for the set of vertices in the past $\fP(x)$ of $x$ in $\fT$ that have intrinsic distance at most $n\geq 0$ from $x$, and write $\partial \fP(x,n)$ for the set of vertices in $\fP(x)$ that have intrinsic distance exactly $n$ from $x$. Similarly, when considering the $0$-wired uniform spanning forest $\fF_0$ of $\Z^4$, we write $\fP_0(x,n)$ for the set of vertices in the past $\fP_0(x)$ of $x$ in $\fF_0$ that have intrinsic distance at most $n\geq 0$ from $x$, and write $\partial \fP_0(x,n)$ for the set of vertices in $\fP_0(x)$ that have intrinsic distance exactly $n$ from $x$. Note that the past $\fP_0(0)$ of the origin in $\fF_0$ is equal to the entire component $\fT_0$ of $0$; it is useful to use the notation $\fP_0(0)$ anyway to strengthen the analogy between the two models.

\medskip

\textbf{The stochastic domination property.} 
The $v$-WUSF and WUSF are related by the following very useful stochastic domination property, which is proven in \cite[Lemma 2.1]{1804.04120}.
%
 We denote by $\operatorname{past}_F(v)$ the past of $v$ in the oriented forest $F$, which need not be spanning. We write $\fT_v$ for the tree containing $v$ in the $v$-WUSF $\F_v$, and write $\Gamma(u,\infty)$ and $\Gamma_v(u,\infty)$ for the future of $u$ in the WUSF $\F$ and $v$-WUSF $\F_v$ respectively.

\begin{lemma}[Stochastic Domination] 
\label{lem:domination}
Let $G$ be an infinite transient network,  let $\F$ be an oriented wired uniform spanning forest of $G$, and for each vertex $v$ of $G$ let $\F_v$ be an oriented $v$-wired uniform spanning forest of $G$.
Let $K$ be a finite set of vertices of $G$, and define $F(K) =\bigcup_{u\in K} \Gamma(u,\infty)$ and $F_v(K) =\bigcup_{u\in K} \Gamma_v(u,\infty)$. Then for every $u\in K$ and every increasing event $\sA \subseteq \{0,1\}^E$ we have that 
\begin{align}\P\Bigl( \operatorname{past}_{\F \setminus F(K)}(u) \in \sA \mid F(K) \Bigr) &\leq \P\bigl(\fT_u \in \sA \bigr)\qquad \text{and}
\label{eq:dom1}\\
\P\Bigl( \operatorname{past}_{\F_v\setminus F_v(K)}(u) \in \sA \mid F_v(K) \Bigr) &\leq \P\bigl(\fT_u \in \sA \bigr).
\label{eq:dom2}
\end{align}
\end{lemma}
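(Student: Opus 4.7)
The plan is to establish both~\eqref{eq:dom1} and~\eqref{eq:dom2} in parallel by adapting the coupling argument of Lyons--Morris--Schramm \cite[Proposition~3.1]{LMS08}, which proves the unconditional version of this stochastic domination (that $\fT_v$ dominates $\fP(v)$ in distribution) via Wilson's algorithm. I will describe the argument for~\eqref{eq:dom1}; the proof of~\eqref{eq:dom2} is essentially the same, with all random walks involved additionally stopped upon hitting $v$, which does not affect the monotonicity.

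First, I would sample $\F$ using Wilson's algorithm rooted at infinity, processing the vertices of $K$ in some fixed order before any other vertex. This produces $F(K) = \bigcup_{u\in K}\Gamma(u,\infty)$ as the union of the loop-erased traces of random walks from the vertices of $K$. Conditional on $F(K)$, the rest of $\F$ is generated by continuing Wilson's algorithm from the remaining vertices in some fixed order $(w_n)$, each walk being stopped at the first hit of the current forest; the set $\operatorname{past}_{\F\setminus F(K)}(u)$ then consists of exactly those $w_n$ whose loop-erasure attaches, possibly through a chain of previously-added loop-erasures, to $u$ rather than to any other vertex of $F(K)$. To sample $\F_u$ in parallel, I would use Wilson's algorithm for the $u$-WUSF rooted at $\{u\}\cup\{\infty\}$, processing the same vertices $(w_n)$ in the same order, each walk being stopped at $u$, at infinity, or at the current forest; the tree $\fT_u$ is then the union of loop-erasures that attach to $u$ rather than escape to infinity.

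The key coupling is to run the two versions of Wilson's algorithm using the same independent simple random walks $(X^{w_n})_{n\geq 1}$ for the vertices of $V\setminus K$, while using independent fresh random walks for the vertices of $K\setminus\{u\}$ in the second run, so that $\fT_u$ and $F(K)$ are independent under the coupling and may be compared directly with the unconditional right-hand side of~\eqref{eq:dom1}. I would then prove by induction on $n$ that, after processing $w_1,\ldots,w_n$ in both runs, the subtree of $u$ in the first run is contained as an oriented subgraph in the subtree of $u$ in the second run. The inductive step relies on the fact that $u\in F(K)$: a walk $X^{w_n}$ that attaches $w_n$ to the subtree of $u$ in the first run must first reach a point of that subtree; by the inductive hypothesis the same point (or an earlier point along $X^{w_n}$) lies in the subtree of $u$ in the second run, so the same walk attaches $w_n$ there with the same loop-erasure.

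Since $\sA$ is increasing in the subgraph ordering, containment of the $u$-subtrees under the coupling implies the event-level implication $\operatorname{past}_{\F\setminus F(K)}(u) \in \sA \Rightarrow \fT_u \in \sA$, and integrating against the conditional law yields~\eqref{eq:dom1}. The main subtlety, and the bulk of the technical work, is verifying the inductive step rigorously: although the $u$-subtrees are comparable, the overall running forests differ because the second run contains additional loop-erased traces corresponding to vertices that, in the first run, attach onto $F(K)\setminus\{u\}$, and these extra traces could in principle change the first-hit times of later walks. The point is that these extra traces are disjoint from the $u$-subtree in the second run (they lie in components of $\F_u$ escaping to infinity), so they cannot affect when future walks first enter the $u$-subtree; making this bookkeeping fully precise is routine but delicate and constitutes the heart of the proof.
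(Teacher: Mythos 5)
The coupling at the heart of your argument does not work, and the step you set aside as ``routine but delicate'' is exactly where it breaks. Run the two versions of Wilson's algorithm with the same walks $(X^{w_n})$ as you propose, and consider a vertex $w_j$ whose walk is stopped, in the first run, upon hitting $F(K)$ at a point that does not lie in the $u$-subtree (for instance on $\Gamma(v,\infty)$ for some other $v\in K$). In the second run there is no $F(K)$ to stop it, so the same walk continues and, with positive probability, never meets the current forest; its infinite loop-erasure is then added to $\F_u$ as a component disjoint from $\fT_u$, and it contains vertices that do not belong to the first run's forest at all (they are visited only after the first run's stopping time). A later shared walk $X^{w_m}$ can hit this foreign trace strictly before it hits anything in the first run's forest, and then, in the first run (where it is not stopped there), make its first hit of the forest at $u$. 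Under your coupling $w_m$ then lies in $\operatorname{past}_{\F\setminus F(K)}(u)$ but is attached in the second run to a component of $\F_u$ other than $\fT_u$; since components never merge in Wilson's algorithm, $w_m\notin\fT_u$, so the inductive containment is false. Your justification that the extra traces ``cannot affect when future walks first enter the $u$-subtree'' overlooks that a walk is killed upon hitting \emph{any} component of the current forest, not only the component of $u$. There is a second, independent problem: even when both runs attach $w_m$ on the $u$-side, the two runs stop the walk at different times, and loop-erasure is not monotone under extending a walk, so the edges contributed in the first run need not all appear in $\fT_u$; since $\sA$ is an increasing event on $\{0,1\}^E$, vertex-level containment would not suffice anyway. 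Reordering the vertices, or processing in the second run only those vertices that joined the past in the first run, does not obviously repair this, since the second run must still produce the unconditional law of $\fT_u$ while remaining coupled to the first.

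For comparison, the paper does not prove the lemma itself but quotes it from \cite[Lemma 2.1]{1804.04120}, which adapts \cite[Proposition 3.1]{LMS08}, and those arguments avoid walk-by-walk couplings altogether. In essence one passes to a finite exhaustion $G_n^*$, notes that conditioning on the futures of $K$ is the same as conditioning the uniform spanning tree to contain the fixed edge set $F(K)$, applies the spatial Markov property (so that the conditional law of the rest of the tree is the UST of the graph with $F(K)$ contracted), and then invokes the negative-association/stochastic-monotonicity properties of uniform spanning trees -- contracting additional vertices into the root, equivalently adding edges, pushes the law of the remaining tree downward in the stochastic order -- before taking limits; the $v$-wired case is handled the same way with $v$ also identified with the root. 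That route is insensitive to the non-monotonicity of loop-erasure and to the absorption-by-foreign-components phenomenon that defeats the direct Wilson coupling, and it is the missing idea in your proposal.
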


This lemma often plays a similar role to that played by the BK inequality in percolation, enabling us to ignore the interactions between different parts of the tree.

\medskip

\textbf{Wilson's algorithm}
 \cite{Wilson96} establishes a powerful and direct probabilistic relationship between the uniform spanning tree and loop-erased random walk that is of central importance to most modern work on uniform spanning trees. This algorithm was generalized to infinite transient graphs by Benjamini, Lyons, Peres, and Schramm \cite{USFBenLyPeSc}; this generalization is known as \emph{Wilson's algorithm rooted at infinity}. 


We now briefly recall Wilson's algorithm rooted at infinity, referring the reader to  \cite[Chapters 4 and 10]{LP:book} for further background.
Suppose that $G$ is an infinite transient graph and let $v_1,v_2,\ldots$ be an enumeration of its vertices. We set $\F^{0}$ to be the empty forest, which contains no vertices or edges. Having defined $\F^n$ for some $n\geq 0$, we next define $\F^{n+1}$ as follows: run a random walk started from $v_{n+1}$ until the first time it hits $\F^n$, running it indefinitely if it never hits $\F^n$. Loop erase the path obtained and add the edges of the loop erasure to $\F^n$ in order to obtain $\F^{n+1}$. Finally define $\F=\bigcup_{n\geq 0} \F^n$. Then the random forest $\F$ has the law of the WUSF of $G$ \cite[Theorem 5.1]{USFBenLyPeSc}. If in addition we orient each edge of $\F$ in the direction it was crossed by the loop erasure used in the algorithm, then we obtain a sample of the oriented WUSF. It follows in particular that the future of the origin is distributed as a loop-erased random walk.   
Alternatively, if we set $\F^0$ to be the forest with vertex set $\{v\}$ and no edges (rather than the empty forest as before), we obtain a sample of the oriented $v$-wired uniform spanning forest \cite{JarRed08}.

\subsection{The interlacement Aldous--Broder algorithm}
\label{sec:AldousBroder}

The \textbf{random interlacement process} was introduced by Sznitman \cite{Sznitman} as a means to describe the ``local picture'' left by the trace of a simple random walk on a torus, and was generalized to arbitrary transient graphs by Teixeira \cite{Teix09}. It was shown in \cite{hutchcroft2015interlacements} that the random interlacement process can be used to generalize the \emph{Aldous--Broder} algorithm to infinite transient graphs. The resulting interlacement Aldous--Broder algorithm is of pivotal importance both to the computation of the scaling exponents for the USF in high dimensions \cite{1804.04120} and to our computation of the logarithmic corrections in four dimensions.

In order to formally define the random interlacement process on a transient graph $G$, first recall that for each $-\infty \leq n \leq m \leq \infty$ we set $L(n,m)$ to be the graph with vertex set $\{i \in \Z : n \leq i\leq m\}$ and edge set $\{\{i,i+1\}: n\leq i \leq m-1\}$. We now define $\cW(n,m)$ to be the set of multigraph homomorphisms from $L(n,m)$ to $G$ that are transient, i.e.\ for which the preimage of each vertex of $G$ is finite. We define the set $\cW$ to be
\[ \cW:= \bigcup\left\{\cW(n,m): -\infty \leq n \leq m \leq \infty\right\}.\]
The set $\cW$ is equipped with a natural Polish topology that makes the local times at vertices and first and last hitting times of finite sets continuous, see \cite[Section 3.2]{hutchcroft2015interlacements} for definitions. (Note that this topology is not the product topology.)
For each $k\in \N$, the \textbf{time shift}  $\theta_k:\cW\to \cW$ is defined so that for each $w\in \cW(n,m)$, the shifted path $\theta_k(w)\in \cW(n-k,m-k)$ is given by
\[ \theta_k(w)(i)=w(i+k) \qquad \text{ and } \qquad  \theta_k(w)(i,i+1)= w(i+k,i+k+1).\]
 Next define the space~$\cW^*$ to be the quotient 
\[\cW^* = \cW / \sim\,, \text{ where } w_1\sim w_2 \text{ if and only if } w_1 = \theta_k (w_2) \text{ for some k}.\]
Let $\pi : \cW \to \cW^*$ be the associated quotient function. We equip the set $\cW^*$ with the quotient topology and associated Borel $\sigma$-algebra, and call elements of $\cW^*$ \textbf{trajectories}. 

In order to define the intensity measure of the interlacement process, we first introduce some more notation. For $w \in \cW(n,m)$ we write $w^\leftarrow \in \cW(-m,-n)$ for the reversal of $w$, defined by 
$w^\leftarrow(i)=w(-i)$ for all $-m \leq i \leq -n$ and $w^\leftarrow(i,i+1)=w(-i,-i-1)$ for all $-m \leq i \leq -n-1$. For a subset $\sA \subseteq \cW$ we write $\sA^\leftarrow$ for the set
\[\sA^\leftarrow:= \{w \in \cW: w^\leftarrow \in \sA \}.\]
For each set $K \subseteq V$, we write $\cW_K(n,m)$ for the set of $w\in \cW(n,m)$ that hit $K$, i.e.\ those $w$ for which there exists $n\leq i\leq m$ such that $w(i)\in K$. We also define $\cW_K=\bigcup \{\cW_K(n,m): -\infty \leq n \leq m \leq \infty\}$ and 
define a measure $Q_K$ on $\cW$, supported on $\{w\in \cW: w(0)\in K\} \subseteq \cW_K$, by setting 
\begin{multline*}
Q_K\left( \{w \in \cW: w|_{(-\infty,0]} \in \sA,\, w(0) = u \text{ and } w|_{[0,\infty)} \in \sB \}\right)\\= 
 c(u)\prstart{X \in \sA^\leftarrow \text{ and } \tau_K^+ =\infty}{u} \prstart{X \in \sB}{u}
\end{multline*}
for each $u\in K$ and each two Borel subsets $\sA,\sB\subseteq \cW$, 
where $X$ is a simple random walk and $\tau_K^+ = \inf\{t\geq 1: X_t\in K\}$.
 For each set $K\subseteq V$ we denote by $\cW^*_K:=\pi(\cW_K)$ the set of trajectories that visit $K$.
It follows from the work of Sznitman~\cite{Sznitman} and Teixeira~\cite{Teix09} that there exists a unique $\sigma$-finite measure $Q^\ast$ on $\cW^*$ such that for every Borel set $\sA \subseteq \cW^*$ and every finite $K\subset V$,
\begin{equation}\label{eq:ildefn} Q^*(\sA \cap \cW^*_K) =  Q_K\left(\pi^{-1}(\sA)\right). \end{equation}
The measure $Q^*$ is called the \textbf{interlacement intensity measure}. We can now define the  \textbf{random interlacement process} $\sI$ as the Poisson point process on $\cW^*\times \R$ with intensity measure $Q^* \otimes \Lambda$, where $\Lambda$ is the Lebesgue measure on $\R$. (Equivalently, it is also possible to construct the interlacement process by taking a limit of the random walk on an exhaustion with wired boundary conditions, see~\cite[Proposition 3.3]{hutchcroft2015interlacements} for details.) For each $t\in\R$ and $A \subseteq \R$, we write $\sI_t$ for the set of $w\in \cW^*$ such that $(w,t)\in\sI$, and  write $\sI_{A}$ for the intersection of $\sI$ with $\cW^* \times A$. 

\medskip

We now describe the interlacement Aldous--Broder algorithm. Let $G=(V,E)$ be an infinite transient network and let $\sI$ be the interlacement process on $G$.
 For every~$t\in \R$ and each vertex $v$ of $G$, let $\sigma_t(v)=\sigma_t(v,\sI)$ be the smallest time greater than or equal to $t$ such that there exists a trajectory $W_{\sigma_t(v)} \in \sI_{\sigma_t(v)}$ hitting $v$, and note that the trajectory $W_{\sigma_t(v)}$ is unique for every $t\in \R$ and $v\in V$ almost surely. We define $e_t(v)=e_t(v,\sI)$ to be the oriented edge of $G$ that is traversed by the trajectory $W_{\sigma_t(v)}$ as it enters $v$ for the first time, and define 
\[
\AB_t(\sI):=\Bigl\{ e_t(v,\sI)^\leftarrow: v \in V \Bigr\}.
\]
Theorem 1.1 of \cite{hutchcroft2015interlacements} states that $\AB_t(\sI)$ has the law of the oriented wired uniform spanning forest of $G$ for every $t\in \R$. Moreover, \cite[Proposition 4.2]{hutchcroft2015interlacements} states that $\langle \AB_t(\sI)\rangle_{t\in \R}$ is a stationary, ergodic, mixing, stochastically continuous  Markov process. 

\medskip

\textbf{$v$-wired variants.}
In \cite[Section 3.1]{1804.04120}, a $v$-wired version of the random interlacement process and Aldous--Broder algorithm was developed, which we now describe.
Let $G$ be a (not necessarily transient) network and let $v$ be a vertex of $G$. Let $X$ be a simple random walk on $G$ that is not necessarily started from $v$. Recall that we write $\tau_v$ for the first time that $X$ visits $v$ and  $\tau^+_K$ for the first positive time that $X$ visits $K$. 
Recall also that we write $X^T$ for the random walk stopped at the (possibly random and/or infinite) time $T$, which is considered to be an element of $\cW(0,T)$. In particular, if $X$ is started at $v$ then $X^{\tau_v}$ is the path of length zero at $v$. Analogously to before, for every finite set $K \subset V$ we define a measure $Q_{v,K}$ on $\cW$ via $Q_{v,K}(\{w \in \cW : w(0)\notin K\})=0$,
\begin{multline*}
Q_{v,K}\left( \{w \in \cW: w|_{(-\infty,0]} \in \sA,\, w(0) = u \text{ and } w|_{[0,\infty)} \in \sB \}\right)\\= 
 c(u)\bP_u\big( X^{\tau_v} \in \sA^\leftarrow \text{ and } \tau_K^+ >\tau_v \big) \bP_u\big(X^{\tau_v} \in \sB \big) 
\end{multline*}
for every $u\in K\setminus \{v\}$ and every two Borel sets $\sA,\sB \subseteq \cW$, and 
\begin{multline*}
Q_{v,K}\left( \{w \in \cW: w|_{(-\infty,0]} \in \sA,\, w(0) = v \text{ and } w|_{[0,\infty)} \in \sB \}\right)\\=
c(v)\mathbbm{1}(w_0 \in \sA^\leftarrow) \bP_v\big(X^{\tau_v} \in \sB \big)
+c(v)\bP_v\big( X^{\tau_v} \in \sA^\leftarrow \text{ and } \tau_K^+ = \infty \big) \mathbbm{1}(w_0 \in \sB)
\end{multline*}
for every two Borel sets $\sA,\sB \subseteq \cW$ if $v\in K$, 
where we write $w_0\in \cW(0,0)$ for the path of length zero at $v$. It is proven in \cite[Corollary 3.2]{1804.04120} that there exists a unique locally finite measure $Q^*_v$ on $\cW^*$, known as the \textbf{$v$-wired interlacement intensity measure}, satisfying the consistency condition
 \begin{equation}
 \label{eq:rootedintensitydef}
 Q^*_v(\sA \cap \cW^*_K) = Q_{v,K}(\pi^{-1}(\sA))
 \end{equation}
for every finite set $K \subset V$ and every Borel set $\sA \subseteq \cW^*$. Analogously to before, we define the \textbf{$v$-wired interlacement process} $\sI_v$ to be the Poisson point process on $\cW^*\times \R$ with intensity measure $Q_v^* \otimes \Lambda$, where $\Lambda$ is the Lebesgue measure on $\R$. 
The process $\sI_v$ may contain trajectories that are either doubly infinite, singly infinite and ending at $v$, singly infinite and starting at $v$, or finite and both starting and ending at $v$. The analogue of the Aldous--Broder algorithm in this context is that if $\sI_v$ is a $v$-wired interlacement process on an infinite network $G$ then
\[
\AB_{v,t}(\sI_v):=\left\{ e_t\left(u,\sI_v\right)^\leftarrow : u \in V\setminus \{v\}\right\}
\]
is distributed as the oriented $v$-wired uniform spanning forest of $G$ for each $t\in \R$ \cite[Proposition 3.3]{1804.04120}; this proposition also states that $\langle \AB_{v,t}(\sI_v) \rangle_{t\in \R}$ is a stationary, ergodic, mixing, stochastically continuous Markov process.

\subsection{Capacity and evolution of the past}

 Recall that if $K$ is a finite set of vertices of $G$, then 
\[
\cpc{}{K}= Q^*(\cW^*_K)=\sum_{v\in K} c(v)\bP_v(\tau^+_K=\infty).
\]
Similarly, we define the \textbf{$v$-wired capacity} of a finite set $K$ via
\[
\cpc{v}{K} = Q_v^*(\cW^*_K) = \sum_{u\in K \setminus v} c(u)\bP_u(\tau^+_K>\tau_v) + c(v)\mathbbm{1}(v\in K)\left(1+\prstart{\tau^+_K=\infty}{v}\right),
\]
where we use the convention that $\tau^+_K>\tau_v$ in the case where they are both equal to $\infty$. Note that if $v\notin K$ then $\cpc{v}{K}$ is the effective conductance between $K$ and $\{\infty,v\}$.
It follows from the definition of the interlacement process that for every $a<b$, the number of trajectories
in $\sI_{[a,b]}$ that hit $K$ is a Poisson random variable with parameter $|a-b| \cpc{}{K}$, while the number of trajectories in $\sI_{v,[a,b]}$ that hit $K$ is a Poisson random variable with parameter $|a-b| \cpc{v}{K}$.
It is proven in \cite[Eq. 3.3]{1804.04120} that 
\begin{align*}
\cpc{}{K} \leq \cpc{v}{K} &\leq \cpc{}{K} + 3c(v)
\end{align*}
for every $K$ and $v$, so that the capacity and $v$-wired capacity coincide up to an additive constant in networks with bounded vertex conductances.

We now recall~\cite[Lemmas 3.4 and 3.5]{1804.04120}, 
which describe how the past of a vertex evolves in time under the dynamics induced by the interlacement Aldous--Broder algorithm. The idea is that, when running time backwards, the past $\fP_{-t}(v)$ of $v$ in $\F_{-t}$ can only become larger at time $-t$ if $v$ is hit by a trajectory at time $-t$. At all other times, it decreases as the past of $v$ is hit by other trajectories. For a set $A\subseteq \R$, we denote by $\cI_{A}$ the set of vertices that are hit by some trajectory in $\sI_{A}$ and we also write $\fP_t(v)$ for the past of $v$ in the forest $\F_t$.

\begin{lemma}
\label{lem:PastDynamics}
Let $G=(V,E)$ be a transient network, let $\sI$ be the interlacement process on $G$, and let $\langle \F_t\rangle_{t\in \R}=\langle \AB_t(\sI) \rangle_{t\in \R}$. Let $v$ be a vertex of $G$, and let $s<t$. If $v \notin \cI_{[s,t)}$, then $\fP_s(v)$ is equal to the component of $v$ in the subgraph of $\fP_t(v)$ induced by $V \setminus \cI_{[s,t)}$.
\end{lemma}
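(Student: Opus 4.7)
The plan rests on two elementary observations about the interlacement Aldous--Broder algorithm. First, along any oriented path $u_0 \to u_1 \to \cdots$ in $\F_s$, the arrival times satisfy $\sigma_s(u_0) \geq \sigma_s(u_1) \geq \cdots$: indeed, the outgoing edge from $u_i$ in $\F_s$ is $e_s(u_i)^\leftarrow = (u_i, u_{i+1})$, so the trajectory $W_{\sigma_s(u_i)}$ enters $u_i$ \emph{from} $u_{i+1}$ and therefore hits $u_{i+1}$ as well, whence $\sigma_s(u_{i+1}) \leq \sigma_s(u_i)$ by minimality in the definition of $\sigma_s(u_{i+1})$. Second, for any vertex $u$ the outgoing edges in $\F_s$ and $\F_t$ coincide if and only if $\sigma_s(u) = \sigma_t(u)$, which almost surely holds if and only if $u \notin \cI_{[s,t)}$: if some trajectory of arrival time in $[s,t)$ hits $u$ then $\sigma_s(u) < t \leq \sigma_t(u)$, while otherwise $\sigma_s(u) = \sigma_t(u)$.

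To prove $\fP_s(v) \subseteq C$, where $C$ denotes the component of $v$ in $\fP_t(v) \cap (V \setminus \cI_{[s,t)})$, I would fix $u \in \fP_s(v)$ and consider its future path $u = u_0 \to u_1 \to \cdots \to u_k = v$ in $\F_s$. The first observation yields $\sigma_s(u_i) \geq \sigma_s(v)$ for every $0 \leq i \leq k$, and since $v \notin \cI_{[s,t)}$ the second observation gives $\sigma_s(v) = \sigma_t(v) > t$. Hence $\sigma_s(u_i) > t$ for every $i$, so each $u_i \notin \cI_{[s,t)}$ and its outgoing edge agrees in $\F_s$ and $\F_t$. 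The same sequence is therefore also the future path of $u$ in $\F_t$, placing $u$ in $\fP_t(v)$, and the whole path lies in $V \setminus \cI_{[s,t)}$ and connects $u$ to $v$ in the induced subgraph.

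For the reverse inclusion $C \subseteq \fP_s(v)$, suppose $u \in C$. Since $\fP_t(v)$ is a tree, the unique path from $u$ to $v$ inside $\fP_t(v)$ is precisely the future path $u = u_0 \to \cdots \to u_k = v$ of $u$ in $\F_t$, and this path must avoid $\cI_{[s,t)}$ (otherwise $u$ would be cut off from $v$ in the induced subgraph). By the second observation the outgoing edges of each $u_i$ agree in $\F_s$ and $\F_t$, so the same sequence is also the future path of $u$ in $\F_s$ and reaches $v$, giving $u \in \fP_s(v)$. The only mildly subtle points are the $\sigma_s$-monotonicity along future paths and the precise characterization of when an outgoing edge changes with $t$; once these are in hand, both inclusions follow by tracing definitions, so there is no real obstacle here beyond bookkeeping.
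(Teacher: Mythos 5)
Your proof is correct and uses what is surely the same basic mechanism as the cited reference: (i) first-arrival times $\sigma_s(\cdot)$ are non-increasing along the future path because the trajectory entering $u_i$ must first pass through $u_{i+1}$, and (ii) the outgoing edge of $u$ is unchanged between times $s$ and $t$ whenever $u \notin \cI_{[s,t)}$, since then $\sigma_s(u)=\sigma_t(u)$. The one thing I would tidy is your second observation: you assert a biconditional ``outgoing edges agree iff $\sigma_s(u)=\sigma_t(u)$,'' but the forward direction is not literally true (two different trajectories could enter $u$ by the same edge); fortunately you only ever invoke the direction ``$u\notin\cI_{[s,t)}\Rightarrow\sigma_s(u)=\sigma_t(u)\Rightarrow e_s(u)=e_t(u)$,'' which is correct, so the argument as used is sound.
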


Similarly, for the $v$-wired case, for $A\subseteq \R$ we denote by $\cI_{v,A}$ the set of vertices that are hit by some trajectory in $\sI_{v,A}$, and by $\fP_{v,t}(u)$ the past of $u$ in the forest $\F_{v,t}$. 

\begin{lemma}
\label{lem:vPastDynamics}
Let $G=(V,E)$ be a network, let $v$ be a vertex of $G$, let $\sI_v$ be the $v$-wired interlacement process on $G$, and let $\langle \F_{v,t}\rangle_{t\in \R}=\langle \AB_{v,t}(\sI_v) \rangle_{t\in \R}$. Let $u$ be a vertex of $G$, and let $s<t$. If $u \notin \cI_{v,[s,t)}$, then $\fP_{v,s}(u)$ is equal to the component of $u$ in the subgraph of $\fP_{v,t}(u)$ induced by $V \setminus \cI_{v,[s,t)}$.
\end{lemma}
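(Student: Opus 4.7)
The plan is to carry out essentially the same argument used to prove \cref{lem:PastDynamics}, with the random interlacement process $\sI$ replaced throughout by the $v$-wired interlacement process $\sI_v$. The cornerstone I would establish at the outset is a pointwise time-invariance principle for outgoing edges: for every vertex $w \in V \setminus \{v\}$, the edge $e_t(w,\sI_v)$ is the reversed entry edge of the first trajectory of $\sI_v$ with arrival time $\geq t$ that hits $w$, so as $t$ decreases to $s$ the outgoing edge of $w$ in $\AB_{v,\cdot}(\sI_v)$ stays fixed precisely when no trajectory with arrival time in $[s,t)$ hits $w$, that is, precisely when $w \notin \cI_{v,[s,t)}$.

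The forward inclusion is the main content of the lemma. I would take $w \in \fP_{v,s}(u)$ and trace its future in $\F_{v,s}$, obtaining $w = w_0 \to w_1 \to \cdots \to w_k = u$, and then argue by induction on $i$ that every $w_i$ lies outside $\cI_{v,[s,t)}$. The inductive step runs by contradiction: if $w_i \in \cI_{v,[s,t)}$ and $w_i \neq v$, then the first trajectory $W$ of $\sI_v$ to hit $w_i$ after time $s$ has arrival time in $[s,t)$, and by the very definition of $e_s(w_i,\sI_v)$ the vertex that $W$ visits immediately before its first entry to $w_i$ is $w_{i+1}$; consequently $w_{i+1}$ is hit by $W$ and therefore lies in $\cI_{v,[s,t)}$. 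Iterating propagates this to $w_k = u \in \cI_{v,[s,t)}$, contradicting the hypothesis. Once every $w_i$ is known to lie outside $\cI_{v,[s,t)}$, the time-invariance principle forces the same sequence of edges to be the future of $w$ in $\F_{v,t}$ as well, so $w \in \fP_{v,t}(u)$ and the connecting path lies in $V \setminus \cI_{v,[s,t)}$, placing $w$ in the specified connected component.

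For the reverse inclusion, any path in the subgraph of $\fP_{v,t}(u)$ induced by $V \setminus \cI_{v,[s,t)}$ that connects $w$ to $u$ must coincide with the unique simple path from $w$ to $u$ in the subtree $\fP_{v,t}(u)$, which is the initial segment of the future of $w$ in $\F_{v,t}$; since every vertex along this segment lies outside $\cI_{v,[s,t)}$, applying the time-invariance principle once more shows that the same sequence of edges realises the future of $w$ in $\F_{v,s}$, giving $w \in \fP_{v,s}(u)$.

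I do not anticipate any substantive obstacle. The only novelty compared to \cref{lem:PastDynamics} is that $\sI_v$ contains trajectories which either start or terminate at $v$, but since $\AB_{v,t}(\sI_v)$ assigns outgoing edges only to vertices in $V \setminus \{v\}$, every future path in $\F_{v,t}$ terminates upon reaching $v$, so the target $u$ is reached strictly before $v$ when $u \neq v$, and exactly at $v$ when $u = v$. In either scenario the predecessor of $w_i$ along $W$ invoked in the inductive step is well-defined (as $w_i \neq v$ for every $i<k$), and the remaining routine bookkeeping closely mirrors the proof of the unwired version.
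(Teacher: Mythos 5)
Your proof is correct, and it is essentially the standard argument: the paper itself does not reprove this lemma but recalls it from \cite[Lemma 3.5]{1804.04120}, and your propagation of first-arrival trajectories along the future path (together with the observation that entry edges are unchanged at vertices not hit during $[s,t)$) is exactly the mechanism used there and echoed in this paper's proof of \cref{lem:exceptional_time}. Your handling of the $v$-wired peculiarities (no outgoing edge at $v$, trajectories starting or ending only at $v$, so entry edges are well defined at every $w_i\neq v$) is the right and only extra point to check.
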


\subsection{A further Aldous--Broder variant}
\label{subsec:AB_variant}

It will be convenient for us to introduce a further variant of the Aldous--Broder algorithm first considered in the proof of \cite[Proposition 7.1]{hutchcroft2015interlacements}. Let $G$ be a transient graph, let $\sI$ be the interlacement process on $G$, and let $X$ be a simple random walk started from some vertex $v$ of $G$. The set $\sI\cup X:= \sI \cup \{(X,0)\}$ may be thought of as a point process on $\cW^* \times \R$; we will now briefly argue that 
\[\AB_0(\sI \cup X)
:= \{e_0\left(u,\sI \cup X\right)^\leftarrow : u \neq v \}
\] is distributed as the wired uniform spanning forest of $G$ but where the component of $v$ is oriented towards $v$ rather than towards infinity. (In particular, if we flip the orientations in the unique infinite path starting at $v$, then the resulting oriented forest is distributed as the oriented WUSF.) Here, as usual, $e_0\left(u,\sI \cup X\right)$ denotes the oriented edge that is crossed as $u$ is visited either by $X$ or by a trajectory of $\sI$ for the first time after time zero; Since $X$ is considered to arrive at time zero, it has priority over all the trajectories of $\sI$ when computing the forest. 
 In an exhaustion by finite graphs, this construction corresponds to first running a random walk from $v$ until hitting the distinguished boundary vertex, and then decomposing the rest of the walk into excursions from the boundary vertex. The details of the proof are very similar to those of \cite[Theorem 1.1]{hutchcroft2015interlacements}.

This equality of distributions implies various related equalities describing the tree obtained by applying the Aldous--Broder algorithm to a single walk. (These relations may also be proven directly.) Let $d\geq 3$. Given an infinite path $\gamma$ starting at the origin in $\Z^d$ and a vertex $u\in \gamma \setminus \{0\}$, define $e(u,\gamma)$ to be the edge that is crossed by $\gamma$ as it enters $u$ for the first time.
We define $\AB(\gamma)=\{e(u,\gamma)^\leftarrow :u\in \gamma \setminus \{0\}\}$, which is an infinite tree oriented towards the origin. If $X$ is a simple random walk started at the origin and $\sI$ is an independent random interlacement process on $\Z^d$ then $\AB(X)$ is contained in $\AB_0(\sI \cup X)$ since $X$ is given priority when computing $\AB_0(\sI \cup X)$. Since every component of the uniform spanning forest of $\Z^d$ is one-ended almost surely, $\AB_0(\sI \cup X)$ contains a unique infinite path starting at the origin, which is oriented towards the origin and distributed as a loop-erased random walk by Wilson's algorithm. Since $\AB(X)$ is infinite and contained in $\AB_0(\sI \cup X)$, it follows that $\AB(X)$ is one-ended and contains the same infinite path; indeed, this path is the unique infinite path in $\AB(X)$ starting from the origin. Note that this path is not the loop-erasure of $X$, but is rather constructed via an infinite-volume version of the reverse loop-erasure operation $\LE^R$ discussed in the proof of \cref{thm:reversibility}. Indeed, if we define $\LE^R(X)$ to be this path, then for each $n\geq 0$ the path connecting $0$ and $X_n$ is equal to $\LE^R(X^n):=\LE((X^n)^\leftarrow)^\leftarrow$ and $\LE^R(X)$ is equal to the limit of the paths $\LE^R(X^n)$ as $n\to\infty$.

Now suppose that $Y$ is a random walk on $\Z^d$ that is conditioned not to return to the origin after time zero. Since the law of $Y$ is absolutely continuous with respect to that of $X$, $\AB(Y)$ also contains a unique infinite path starting at $0$ almost surely, which we denote by $\LE^R(Y)$, and the law of $\LE^R(Y)$ is absolutely continuous with respect to that of $\LE^R(X)$. We claim that in fact $\LE^R(Y)$ is distributed exactly as a loop-erased random walk, so that these two paths have the same distribution. Indeed, for each $r\geq 1$ let $G_r^*$ be obtained from $\Z^d$ by contracting everything outside of $\Lambda_r$ into a single vertex $\partial_r$, and let $Y^r$ be a random walk on $G_r^*$ started at the origin, stopped when it first hits $\partial_r$, and conditioned to hit $\partial_r$ before returning to the origin. Let $W^r$ be a random walk on $G_n^*$ started at $\partial_r$, stopped when it first hits the origin, and conditioned to hit the origin before returning to $\partial_r$. By reversibility, $Y^r$ and the time-reversal of $W^r$ have the same distribution. Moreover, if we define $\widetilde W_r$ to be an unconditioned random walk on $G_n^*$ started at $\partial_r$ and stopped when it first hits the origin, then $\LE(W^r)$ and $\LE(\widetilde W_r)$ clearly have the same distribution, and by Wilson's algorithm both paths are distributed as the path connecting $0$ to $\partial_r$ in the uniform spanning tree of $\Z^d$. Since $\LE^R(Y^r)$ is equal in distribution to the time-reversal of $\LE(W^r)$, the claim follows by taking the limit as $r\to\infty$.

\section{The intrinsic radius}
\label{sec:intrinsic}

The main goal of this section is to prove the parts of \cref{thm:wusf,thm:wusfo} concerning the tail of the intrinsic radius for both the UST and $v$-WUSF. These results will then be applied in the computation of the logarithmic corrections for the volume and extrinsic radius in the following sections. We begin by establishing the relevant lower bounds for both the intrinsic and extrinsic radius in \cref{sec:lower}. 
In \cref{subsec:weakl1} we study the expected volume of an intrinsic ball in the $0$-wired uniform spanning forest, introducing the \emph{weak $L^1$ technique} which we will make repeated use of throughout the rest of the paper.
Finally, in \cref{subsec:intrinsicupper} we apply the results of \cref{part:LERW,subsec:weakl1} to prove the upper bounds on the intrinsic radius.

\subsection{Lower bounds on the intrinsic and extrinsic radius}\label{sec:lower}

In this section we prove the lower bounds on the tails of the intrinsic and extrinsic radii that are stated in \cref{thm:wusf,thm:extrinsic,thm:wusfo}. In fact, we will prove the following stronger statements concerning the joint probability of a large intrinsic and extrinsic radius. 

\begin{prop}\label{pro:lowerbounds}
Let $\fP=\fP(0)$ be the past of the origin in the uniform spanning tree of $\Z^4$. Then
\begin{align*}
 \pr{\radint(\fP) \geq n \text{ and } \radext(\fP) \geq \sqrt{n}(\log n)^{1/6} } \gtrsim \frac{(\log n)^{1/3}}{n}
\end{align*}
 for every $n\geq 2$.
\end{prop}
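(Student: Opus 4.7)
The plan is to implement rigorously the explicit strategy sketched in Section~\ref{subsec:about_the_proof}, working within the interlacement Aldous--Broder framework of Section~\ref{sec:AldousBroder}. Fix $n\geq 2$ and set $\eps = c_0(\log n)^{2/3}/n$ for a small constant $c_0>0$ to be chosen. Let $\sA_n$ be the event that exactly one trajectory $W$ of $\sI_{[0,\eps]}$ hits the origin; since such trajectories arrive as a Poisson process of rate $\cpc{}{\{0\}}\asymp 1$, we have $\P(\sA_n)\asymp\eps$. Choosing the representative of $W$ that places time $0$ at the last visit of $W$ to the origin, the forward part $W^+$ and the reversal $(W^-)^\leftarrow$ of the backward part are, conditionally on $\sA_n$, independent and each distributed as a simple random walk from $0$ conditioned on $\tau_0^+=\infty$. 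By Section~\ref{subsec:AB_variant}, the tree $\AB(W^+)$ contains a unique infinite path $\gamma=\LE^R(W^+)$ emanating from $0$ and distributed as an infinite loop-erased random walk; let $\gamma^n$ denote its first $n$ steps.

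Consider the four events
\[
E_1=\{W^-\cap\gamma^n=\{0\}\}, \qquad E_2=\{\cpc{}{\gamma^n}\leq Cn/(\log n)^{2/3}\},
\]
\[
E_3=\{\text{no trajectory of }\sI_{[0,\eps]}\setminus\{W\}\text{ hits }\gamma^n\}, \qquad E_4=\{\|\gamma_n\|\geq c_1\sqrt{n}(\log n)^{1/6}\},
\]
for suitable constants $c_1,C>0$. Unwinding the static definition of $\AB_0(\sI)$, one checks that on $\sA_n\cap E_1\cap E_3$ the oriented edge $e_0(v)$ at each $v\in\gamma^n\setminus\{0\}$ is the edge that $W^+$ uses to first enter $v$, so that the parent chain from any $\gamma^n_i$ to $0$ in $\fT=\AB_0(\sI)$ coincides with $\gamma^i$ reversed and $\gamma^n\subseteq\fP$; combined with $E_4$, this gives $\radint(\fP)\geq n$ and $\radext(\fP)\geq\sqrt n(\log n)^{1/6}$, so it remains to show $\P(\sA_n\cap E_1\cap E_2\cap E_3\cap E_4)\gtrsim (\log n)^{1/3}/n$.

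Conditional on $W^+$ and hence on $\gamma$, event $E_1$ depends only on the independent walk $(W^-)^\leftarrow$; since conditioning on $\tau_0^+=\infty$ costs only a bounded multiplicative factor and $0\in\gamma^n$, $\P(E_1\mid\sA_n,\gamma)$ is comparable to $\P_0(X[1,\infty)\cap\gamma^n=\emptyset)$ for an independent SRW $X$. Using Theorem~\ref{thm:logpaper} together with the containment $\LE(Y')^n\subseteq\LE((Y')^m)$, valid whenever $\rho_m(Y')\geq n$, and the concentration of $\rho_m$ around $m/(\log m)^{1/3}$ from Theorem~\ref{lem:amounterased}, this probability is $\gtrsim 1/(\log n)^{1/3}$ on a subevent of $\gamma$ of positive probability. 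Event $E_2$ depends only on $\gamma$ and has probability bounded below by a positive constant by Lemma~\ref{cor:lercap} for $C$ large. Given $\gamma$ and $E_2$, the trajectories of $\sI_{[0,\eps]}\setminus\{W\}$ hit $\gamma^n$ as an independent Poisson process of rate $\cpc{}{\gamma^n}$, so $\P(E_3\mid\gamma,E_2)\geq\exp(-\eps C n/(\log n)^{2/3})=e^{-c_0C}\gtrsim 1$. Finally, writing $\gamma_n=Y'_{\ell_n}$ for the SRW $Y'$ underlying $\gamma$, Theorem~\ref{lem:amounterased} combined with a standard lower bound on the simultaneous displacement of $Y'$ across the logarithmic range of times where $\ell_n$ concentrates yields $\P(E_4)\gtrsim 1$. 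Multiplying these bounds gives the claimed $(\log n)^{1/3}/n$.

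The main technical obstacle is disentangling the four events cleanly: $E_1$ (through $\gamma$), $E_2$, and $E_4$ are all ultimately functions of the single walk $W^+$, so an honest proof must show that a positive proportion of realizations of $\gamma$ simultaneously satisfy $E_2$, admit a non-intersection probability of order $(\log n)^{-1/3}$ for the independent copy of $W^-$, and place $\gamma_n$ at Euclidean distance $\gtrsim\sqrt n(\log n)^{1/6}$ from the origin. A secondary, essentially bookkeeping, task is the direct verification of $\gamma^n\subseteq\fP$ on $\sA_n\cap E_1\cap E_3$ from the static definition of $\AB_0(\sI)$, rather than by running the Aldous--Broder dynamics backward through $[0,\eps]$.
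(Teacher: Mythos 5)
Your setup is the same as the paper's: condition on a unique interlacement trajectory $W$ hitting $0$ in $[0,\eps]$ with $\eps\asymp(\log n)^{2/3}/n$, extract the spine $\gamma=\LE^R(W^+)$ (distributed as an infinite LERW by \cref{subsec:AB_variant}), and require the backward part and all other trajectories of $\sI_{[0,\eps]}$ to miss $\gamma^n$ while $\cpc{}{\gamma^n}\lesssim n/(\log n)^{2/3}$ and $\|\gamma_n\|\geq \sqrt{n}(\log n)^{1/6}$. The static verification that $\gamma^n\subseteq\fP$ on $\sA_n\cap E_1\cap E_3$, the Poisson thinning bound $\P(E_3\mid\gamma)\geq e^{-\eps\cpc{}{\gamma^n}}$, and the choice of $\eps$ all match the paper and are fine.

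The genuine gap is exactly the point you flag at the end and then leave open: you cannot ``multiply these bounds''. The events $E_1$ (through $\gamma$), $E_2$ and $E_4$ are all driven by the single path $\gamma$, and the marginal statements $\P(E_1\mid\gamma)\gtrsim(\log n)^{-1/3}$ on some positive-probability set of $\gamma$'s, $\P(E_2)\gtrsim 1$, $\P(E_4)\gtrsim 1$ give no control of $\E\bigl[\1(E_2\cap E_4)\,\P(E_1\mid\gamma)\bigr]$: a priori the good set for the non-intersection probability could be essentially disjoint from $E_4$. Note also the asymmetry between the two constraints: $E_2$ \emph{can} be handled by subtraction, but only because \cref{cor:lercap} gives $\P(E_2^c)\lesssim\log\log n/(\log n)^{2/3}=o((\log n)^{-1/3})$ (strictly stronger than the ``probability bounded below by a constant'' you quote), whereas $E_4$ cannot be handled this way at all since $\P(E_4^c)$ is of constant order. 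What is needed is the genuinely joint estimate
\begin{equation*}
\P\bigl(S(0,\infty)\cap\LE(Y)^n=\emptyset \ \text{ and } \ \|\LE(Y)_n\|\geq\lambda n^{1/2}(\log n)^{1/6}\bigr)\gtrsim_\lambda \frac{1}{(\log n)^{1/3}},
\end{equation*}
which is precisely \eqref{eq:nonintersectiondisplacement} of \cref{lem:displacement}, and this is not a routine consequence of \cref{thm:logpaper} plus \cref{lem:amounterased}: its proof splits the underlying walk at times of order $n(\log n)^{1/3}$, removes the intersections of $S$ with the later stretch via Lawler's two-walk estimate (\cref{thm:AlbeverioZhou}), and adds the displacement by a CLT argument on the increments beyond time $m_0$. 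With \eqref{eq:nonintersectiondisplacement} in hand, the paper concludes via $\P(E_1\cap E_2\cap E_4\mid\cdots)\geq\P(E_1\cap E_4\mid\cdots)-\P(E_2^c)\gtrsim(\log n)^{-1/3}$; your proposal stops just short of this step, so as written it does not yield the claimed lower bound.

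Two minor points: rooting $W$ at its \emph{last} visit to $0$, the reversed backward part is an unconditioned simple random walk (it may revisit $0$ a geometric number of times), not a walk conditioned on $\tau_0^+=\infty$; this is harmless for your argument (the paper sidesteps it by rooting at the first visit), since in either case the probability of avoiding $\gamma^n\setminus\{0\}$ is at least the unconditioned non-intersection probability. Also, take $c_1\geq 1$ in $E_4$ (as \cref{lem:displacement} permits for any $\lambda$) so that the extrinsic radius bound matches the statement literally.
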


\begin{prop}\label{pro:lowerbounds0}
Let $\fT_0$ be the component of the origin in the $0$-wired uniform spanning tree of $\Z^4$. Then
\begin{align*}
 \pr{\radint(\fT_0) \geq n \text{ and } \radext(\fT_0) \geq \sqrt{n}(\log n)^{1/6} } \gtrsim \frac{(\log n)^{2/3}}{n}
\end{align*}
 for every $n\geq 2$.
\end{prop}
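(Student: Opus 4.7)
The plan is to sample the $0$-WUSF via the $0$-wired interlacement Aldous--Broder algorithm of \cref{sec:AldousBroder} and to exhibit an explicit event of probability of order $(\log n)^{2/3}/n$ on which $\fT_0$ contains the first $n$ vertices of an infinite loop-erased random walk spine whose $n$-th vertex lies at distance at least $\sqrt n(\log n)^{1/6}$ from the origin. Compared with the UST case \cref{pro:lowerbounds}, the absence of a non-intersection event between the spine and the negative part of the trajectory creating it is precisely what removes the $(\log n)^{-1/3}$ factor.

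Set $\epsilon=c(\log n)^{2/3}/n$ for a small constant $c>0$, and let $\sA_n$ be the event that (i)~exactly one trajectory $W\in\sI_{0,(0,\epsilon]}$ hits $0$, (ii)~$W$ starts at $0$ (the first summand in the formula for $Q_{0,\{0\}}$), and (iii)~the positive part of $W$ never returns to $0$. Each of (i)--(iii) contributes a factor of order $\epsilon$ or a positive constant (using the transience of SRW on $\Z^4$ and the explicit form of the $0$-wired intensity $Q_0^*$), so that $\P(\sA_n)\asymp\epsilon$. On $\sA_n$, the positive part $X$ of $W$ is an SRW from $0$ conditioned never to return, and by \cref{subsec:AB_variant} the unique infinite path $\gamma:=\LE^R(X)$ in $\AB(X)$ starting at the origin is distributed exactly as an infinite LERW. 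Write $\gamma^n=(\gamma_0,\ldots,\gamma_n)$.

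Now let $\sB_n=\{\cpc{}{\gamma^n}\le Cn/(\log n)^{2/3}\}$; by \cref{cor:lercap} we have $\P(\sB_n\mid\sA_n)\ge 1-O(\log\log n/(\log n)^{2/3})$. Conditional on $\sA_n\cap\sB_n$ and $W$, the other trajectories in $\sI_{0,(0,\epsilon]}$ form an independent Poisson process whose expected count of those intersecting $\gamma^n$ is at most $\epsilon\cpc{0}{\gamma^n}\le\epsilon(\cpc{}{\gamma^n}+3c(0))\le Cc+o(1)$, using the comparison $\cpc{0}{\cdot}\le\cpc{}{\cdot}+3c(0)$ recorded in \cref{sec:AldousBroder}. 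Letting $\sD_n$ be the event that no such other trajectory meets $\gamma^n$, we obtain $\P(\sD_n\mid\sA_n\cap\sB_n,W)\ge e^{-Cc}$, which is bounded away from $0$ when $c$ is small. On $\sA_n\cap\sD_n$, the trajectory $W$ is the first to reach each $\gamma_k$ after time $0$ and enters $\gamma_k$ for the first time via the spine edge from $\gamma_{k-1}$, so the oriented edge $e_0(\gamma_k,\sI_0)^\leftarrow$ agrees with the spine and $\gamma^n\subseteq\fP_{0,0}(0)=\fT_0$, giving $\radint(\fT_0)\ge n$.

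For the extrinsic radius I would invoke \cref{lem:amounterased} to get $\ell_n\asymp n(\log n)^{1/3}$ with probability $1-o(1)$, so that $\gamma_n=X_{\ell_n}$ has $\|\gamma_n\|\ge\sqrt n(\log n)^{1/6}$ with probability bounded below by a positive constant (by a standard Gaussian lower-tail estimate for SRW on $\Z^4$ at a deterministic time of order $n(\log n)^{1/3}$); call this event $\sC_n$. Since $\sC_n$ is $W$-measurable while $\sD_n$ depends only on the rest of $\sI_0$ once $W$ is given, the four events combine with the correct independence to yield
\[\P\bigl(\radint(\fT_0)\ge n,\ \radext(\fT_0)\ge\sqrt n(\log n)^{1/6}\bigr)\,\ge\,\P(\sA_n\cap\sB_n\cap\sC_n\cap\sD_n)\,\gtrsim\,\epsilon\,\asymp\,\frac{(\log n)^{2/3}}{n}.\]
The most delicate step will be the careful management of the various conditionings---in particular, verifying that the bounded Radon--Nikodym derivative between the law of the conditioned walk $X$ and the unconditioned SRW allows \cref{cor:lercap,lem:amounterased} to be applied to $\gamma$, and that the constant-probability lower bound on $\|\gamma_n\|$ survives the replacement of $\ell_n$ by its typical value.
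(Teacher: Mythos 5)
Your proposal is correct and follows essentially the same strategy as the paper's own proof of \cref{pro:lowerbounds0}: sample the $0$-WUSF via the $0$-wired interlacement Aldous--Broder algorithm, look for a single trajectory $W$ arriving in a window of length $\eps\asymp n^{-1}(\log n)^{2/3}$, extract the spine $\eta=\LE^R(W)$ from $\AB(W)$, and demand that no other trajectory in the same window hits $\eta^n$, paying $e^{-\eps\cpc{}{\eta^n}}$ and controlling the capacity with \cref{cor:lercap}. You also correctly identify the structural reason the exponent differs from \cref{pro:lowerbounds}: the trajectory starting at $0$ has no negative part to avoid $\eta^n$, so the $(\log n)^{-1/3}$ non-intersection factor does not appear.

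Two small points worth noting. First, the displacement event: you write $\gamma_n=X_{\ell_n}$, but $\gamma=\LE^R(X)$ is a reverse loop-erasure, not $\LE(X)$, so $\gamma_n$ is not literally $X_{\ell_n}$. Since \cref{subsec:AB_variant} shows that $\LE^R$ of a walk conditioned never to return is \emph{equal in distribution} to an infinite LERW (not merely absolutely continuous with respect to one), the cleaner route is to invoke the displacement bound for LERW directly, as the paper does via the first inequality of \eqref{eq:nonintersection_n_outside}, rather than pass through $\ell_n$ and a CLT for the underlying walk. Second, and relatedly, the Radon--Nikodym worry you raise at the end is unnecessary: because the distributional identity is exact, \cref{cor:lercap} and the displacement estimate apply verbatim to $\gamma$.
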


Before proving the above propositions, we state and prove a lemma on the correlation between the displacement of the loop-erased random walk at time $n$ and the non-intersection event with an independent simple random walk. The upper bound of \eqref{eq:nonintersection_n_outside} is not needed for our present purposes but is included for later use in the proof of \cref{lem:exceptional_time}.

\begin{lemma}
\label{lem:displacement}
Let $X$ and $Y$ be independent random walks on $\Z^4$, both started from the origin. Then for every $\lambda >0$ there exists $N_\lambda < \infty$ such that 
\begin{equation}
\label{eq:nonintersectiondisplacement}
\pr{X(0,\infty)\cap \LE(Y)^n=\emptyset,\, \|\LE(Y)_n\|\geq \lambda n^{1/2}(\log n)^{1/6}} \gtrsim_\lambda \frac{1}{(\log n)^{1/3}}
\end{equation}
for every $n\geq N_\lambda$. Moreover, we have that
\begin{equation}
\label{eq:nonintersection_n_outside}
\pr{
\|\LE(Y)_n\|\geq \lambda n^{1/2}(\log n)^{1/6}
} \gtrsim_\lambda 1 
\quad\text{and}\quad
\pr{X(0,\infty)\cap \LE(Y)^n=\emptyset} \asymp \frac{1}{(\log n)^{1/3}}
\end{equation}
for every $n\geq 2$.
\end{lemma}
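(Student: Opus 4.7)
The plan is to prove the three bounds in the lemma in turn. For the non-intersection bound $\pr{X(0,\infty)\cap \LE(Y)^n=\emptyset} \asymp (\log n)^{-1/3}$, I exploit the identity $\LE(Y)^n = \LE(Y^{\ell_n})$, which holds because the infinite-walk loop-erasure times $\ell_0,\ldots,\ell_n$ coincide with the initial loop-erasure times of $Y^M$ whenever $M \geq \ell_n$. Combined with the concentration of $\ell_n$ around $n(\log n)^{1/3}$ from \cref{lem:amounterased}, the lower bound follows by taking $M = \lceil 2n(\log n)^{1/3} \rceil$ and using the inclusion $\LE(Y)^n \subseteq \LE(Y^M)$ on $\{\ell_n \leq M\}$ together with \cref{thm:logpaper}; the matching upper bound is obtained by conditioning on the value of $\ell_n$ and invoking a comparison (based on the quantitative structure behind \cref{thm:logpaper}) between the conditional and unconditional non-intersection probabilities, which differ only by a bounded factor since $\log \ell_n \asymp \log n$ on the typical event. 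For the displacement bound, I use $\LE(Y)_n = Y_{\ell_n}$ and restrict to the high-probability event $F = \{\ell_n \in [n(\log n)^{1/3}/2,\, 2n(\log n)^{1/3}]\}$. A Paley--Zygmund argument for $\|Y_{\ell_n}\|^2 \mathbf{1}_F$ based on $\E{\|Y_m\|^2} = m$ and $\E{\|Y_m\|^4} \lesssim m^2$ (applied after partitioning on the value of $\ell_n$) yields $\|\LE(Y)_n\| \gtrsim \sqrt{n}(\log n)^{1/6}$ with probability bounded below, and the $\lambda$-dependence is recovered by inflating the constants.

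The joint event \eqref{eq:nonintersectiondisplacement} is the main claim. The strategy is to locate a cut time $T$ of $Y$ in a short window around $n(\log n)^{1/3}$, which exists with high probability by \cref{lem:Lawlercuttimes}, and then decompose $Y$ at $T$ into two conditionally independent pieces given $Y_T$. For the pre-$T$ portion I apply \cref{thm:logpaper} to the loop-erasure $\LE(Y^{T})$ to obtain non-intersection with $X$ with probability $\gtrsim (\log n)^{-1/3}$. For the post-$T$ portion, a standard central limit theorem argument shows that $\|Y_T\|$ is of order $\sqrt{n}(\log n)^{1/6}$ with probability bounded below. Since the cut-time window has width $\lesssim n/(\log n)^{17/3}$, the positions $Y_T$ and $Y_{\ell_n} = \LE(Y)_n$ differ by at most $o(\sqrt{n}(\log n)^{1/6})$ in norm by a further application of Azuma's inequality, so the displacement bound transfers to $\LE(Y)_n$.

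The main obstacle is handling the coupling between the cut time $T$, the loop-erasure time $\ell_n$, and the non-intersection event, which are intertwined rather than independent. The cleanest resolution is to first condition on the existence of a cut time $T$ in the desired window, then on $Y_T$, and use the conditional independence of the pre- and post-$T$ walks (which follows from the cut-time property) to decouple the non-intersection estimate from the displacement estimate. A secondary technical point is to verify that $\ell_n$ lies within the cut-time window around $T$ with positive probability: this follows by choosing the window of $T$ centered near $n(\log n)^{1/3}$ and wide enough that \cref{lem:amounterased} forces $\ell_n$ into it with probability $1 - o(1)$, while still narrow enough that $Y_T$ is close to $Y_{\ell_n}$ at the scale $\sqrt{n}(\log n)^{1/6}$.
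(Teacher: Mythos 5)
Your decomposition at a cut time $T$ does not actually achieve the decoupling you need. The two events you wish to combine — $X(0,\infty)$ avoiding $\LE(Y^T)$, and $\|Y_T\|$ being large — are both functions of $Y^T$ and $X$, not of the post-$T$ walk, so the conditional independence of $Y[0,T]$ and $Y[T+1,\infty)$ given $Y_T$ buys you nothing here. What you would actually need is a conditional version of \cref{thm:logpaper} giving $\P(X(0,\infty)\cap\LE(Y^T)=\emptyset\mid Y_T=y)\gtrsim(\log n)^{-1/3}$ uniformly in $y$ at scale $\sqrt{n}(\log n)^{1/6}$, which is not something \cref{thm:logpaper} supplies. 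The paper's mechanism for decoupling is different and more elementary: it uses times $m_0<m_1<m_2$ all $\asymp n(\log n)^{1/3}$ with $m_1-m_0\asymp n(\log n)^{1/3}$, observes that the increments $(Y_m-Y_{m_0})_{m\geq m_0}$ are \emph{unconditionally} independent of the event $\{X(0,\infty)\cap\LE(Y^{m_0})=\emptyset\}$, and then applies \cref{thm:AlbeverioZhou} to discard the middle stretch $Y[m_0,m_2]$ of the loop-erasure. The displacement is then read off from the independent increment $Y_{m_1}-Y_{m_0}$ via the CLT.

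A second genuine problem is the transfer of the displacement from $Y_T$ to $Y_{\ell_n}=\LE(Y)_n$. You claim that making the cut-time window narrow controls $\|Y_T-Y_{\ell_n}\|$, but the relevant gap is $|T-\ell_n|$, not the width of the window in which $T$ sits. \cref{lem:amounterased} pins $\ell_n$ down only to within a constant relative factor of $n(\log n)^{1/3}$ with failure probability $\lesssim_\eps\log\log n/(\log n)^{2/3}$, so $|T-\ell_n|$ can be of order $\eps\,n(\log n)^{1/3}$ for fixed $\eps>0$; then $\|Y_T-Y_{\ell_n}\|$ fluctuates at scale $\sqrt{\eps}\cdot\sqrt{n}(\log n)^{1/6}$, which is \emph{not} $o(\sqrt{n}(\log n)^{1/6})$. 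You also cannot shrink $\eps$ with $n$, because the implicit constant in \cref{lem:amounterased} blows up as $\eps\to 0$. There is a further tension: to guarantee the containment $\LE(Y)^n\subseteq\LE(Y^T)$ you need $T\geq\ell_n$, which pushes $T$ to the right of the $\ell_n$-window, making $|T-\ell_n|$ even less controlled. The paper avoids all of this by bounding $\|Y_{\ell_n}\|\geq\inf_{m\geq m_1}\|Y_m\|$, an event that does not reference $\ell_n$ at all and whose probability is estimated via the CLT applied to the independent increment.

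Finally, your upper bound for $\P(X(0,\infty)\cap\LE(Y)^n=\emptyset)$ is not rigorous as stated. Conditioning on $\ell_n=m$ changes the law of $Y^m$ in a way that correlates with the non-intersection event (the event $\{\ell_n=m\}$ is a rather delicate last-exit-type condition), so the assertion that the conditional and unconditional non-intersection probabilities differ ``only by a bounded factor'' requires substantial justification that you do not supply. The paper's argument is cleaner: write a union bound over whether $X(0,k)$ (with $k=\lfloor n^{1/8}\rfloor$) avoids all of $\LE(Y)$ or hits $\LE(Y)[n,\infty)$; the first is $\lesssim(\log n)^{-1/3}$ by \cref{thm:logpaper}, and the second is negligible because $X(0,k)\subseteq B(0,k)$ while $\LE(Y)[n,\infty)$ is usually far from the origin by \cref{lem:amounterased} and transience.
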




The proof of this lemma will use the following result on intersections of \emph{simple} random walks due to Lawler. (See also Albeverio and Zhou~\cite[Theorem~1.1]{AZ} for more general results.) We write $\sim$ to mean that the ratio of the two sides tends to $1$ in the appropriate limit.

\begin{theorem}[\cite{Law91}, Theorem 4.3.6]
\label{thm:AlbeverioZhou}
Let $X$ and $Y$ be independent random walks on $\Z^4$ both started at the origin. Let 
$0<a<b<\infty$. Then
\[
\pr{X(0,\infty)\cap Y[an,bn]\neq \emptyset}\sim \frac{\log (b/a)}{2\log n}
\]
 as $n\to\infty$.
\end{theorem}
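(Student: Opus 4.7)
The plan is to apply the second moment method to the number of intersection pairs
$$N = \#\bigl\{(i,j) : i\geq 0,\ an\leq j\leq bn,\ X_i=Y_j\bigr\},$$
which satisfies $\{N\geq 1\} = \{X(0,\infty)\cap Y[an,bn]\neq\emptyset\}$. The target identity is $\P(N\geq 1) = \E{N}/\E{N\mid N\geq 1}$, so it suffices to prove $\E{N}\to \tfrac{1}{2}\log(b/a)$ and $\E{N\mid N\geq 1}\sim\log n$ as $n\to\infty$.

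For the first moment, I would write
$$\E{N} = \sum_{z\in\Z^4} \bG(0,z)\sum_{j=\lceil an\rceil}^{\lfloor bn\rfloor} p_j(0,z),$$
and pass to the continuum using the local central limit theorem together with the Green's function asymptotic $\bG(z)\sim c_4 \|z\|^{-2}$, observing that the dominant contribution comes from scale $\|z\|\asymp\sqrt n$. Swapping the order of summation and passing to spherical coordinates on $\R^4$ reduces the resulting Gaussian integral to a constant multiple of $\int_{an}^{bn}s^{-1}\,\ud s = \log(b/a)$, and substituting the sharp leading constants from the LCLT and \eqref{Green.bound} identifies the prefactor as $\tfrac{1}{2}$.

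For the conditional expectation, I would carry out a careful second moment expansion: decompose $\E{N^2}$ according to the orderings of $(i_1,i_2)$ and $(j_1,j_2)$ and use the strong Markov property for both walks to express each contribution as a sum involving Green's functions and truncated Green's functions. The crucial 4d input is the logarithmically divergent sum
$$\sum_{w\in\Lambda_{\sqrt n}} \bG(0,w)^2 \;\asymp\; \int_1^{\sqrt n} r^{-1}\,\ud r \;\asymp\; \log n,$$
which after matching constants yields $\E{N^2}\sim \log(b/a)\log n$; combined with the first moment this gives $\E{N\mid N\geq 1}\sim\log n$, and dividing gives the claim. The main obstacle is that the up-to-constants version $\P(N\geq 1)\asymp\log(b/a)/\log n$ is immediate from the first moment bound and Cauchy--Schwarz, but obtaining the sharp coefficient $\tfrac{1}{2}$ requires tracking the exact leading constants in both the LCLT and the 4d Green's function expansion and matching sharp upper and lower bounds in the second moment analysis, which is essentially the content of Lawler's argument in \cite[Chapter 4]{Law91}.
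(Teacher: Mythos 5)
This statement is cited verbatim from Lawler's monograph \cite[Theorem 4.3.6]{Law91} and is not proven in the present paper, so there is no internal proof to compare against; your task amounts to reproving Lawler's theorem.

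Your overall framing (the identity $\P(N\geq 1)=\E{N}/\E{N\mid N\geq 1}$ plus Green's function and local CLT asymptotics) is the right starting point, but there are two problems. The first is quantitative: with the sharp constants for simple random walk on $\Z^4$, namely $\bG(z)\sim \tfrac{2}{\pi^2}\|z\|^{-2}$ and the LCLT density $\bar p_j(z)\sim\tfrac{4}{\pi^2 j^2}e^{-2\|z\|^2/j}$, the spherical-coordinate computation you outline gives $\E{N}\to\tfrac{4}{\pi^2}\log(b/a)$, not $\tfrac{1}{2}\log(b/a)$, and correspondingly $\E{N\mid N\geq 1}\sim\tfrac{8}{\pi^2}\log n$, not $\log n$. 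The two errors happen to cancel in the ratio, which is why the final answer comes out right, but the intermediate statements are false.

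The second and more fundamental problem is the step ``$\E{N^2}\sim\log(b/a)\log n$, combined with the first moment, gives $\E{N\mid N\geq 1}\sim\log n$.'' This does not follow. The quantity $\E{N^2}/\E{N}$ equals $\E{N^2\mid N\geq 1}/\E{N\mid N\geq 1}$, which coincides with $\E{N\mid N\geq 1}$ only if $N$, conditioned on being positive, is asymptotically deterministic. That is not the case here: after normalising by $\log n$, the conditional law of $N$ given $N\geq 1$ converges to a nondegenerate limit, so $\E{N^2\mid N\geq 1}/\E{N\mid N\geq 1}^2$ tends to a constant strictly greater than one. Equivalently, Paley--Zygmund gives only the one-sided bound $\P(N\geq 1)\geq\E{N}^2/\E{N^2}$, and even with exact constants this is off from the true asymptotic by a constant factor. (Indeed, if you feed your own stated values $\E{N}\to\tfrac12\log(b/a)$ and $\E{N^2}\sim\log(b/a)\log n$ into $\E{N^2}/\E{N}$, you would get $\E{N\mid N\geq 1}\sim 2\log n$ and hence $\P(N\geq 1)\sim\tfrac{\log(b/a)}{4\log n}$, which misses the theorem by a factor of two.) Pinning down the correct constant requires controlling the limiting conditional distribution of $N/\log n$, which Lawler does through a substantially more involved argument (higher moments and a renewal-type analysis of intersections on a logarithmic time scale), not merely by computing the second moment sharply. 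Your final paragraph gestures at this difficulty but understates it: the obstacle is not just bookkeeping of leading constants, but that the two-moment method is intrinsically one-sided here.
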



%
Let us now use this estimate to prove \cref{lem:displacement}.

\begin{proof}[Proof of Lemma~\ref{lem:displacement}]
We start by proving the first estimate, \eqref{eq:nonintersectiondisplacement}.
Define $r=r(n)=n^{1/2}(\log n)^{1/6}$ and let
\[m_0 =\left\lfloor\frac{n(\log n)^{1/3}}{4}\right\rfloor, \quad m_1 =\left\lfloor\frac{n(\log n)^{1/3}}{2}\right\rfloor, \quad \text{ and } \quad m_2 =\left\lfloor 2n(\log n)^{1/3}\right\rfloor.\]
For each $m\geq 0$, let $\rho_m=\rho_m(Y)$ be maximal such that $\ell_{\rho_m} \leq m$ as defined in \cref{subsec:LERWbackground}. If $n \leq \rho_{m_2}$ then $\LE(Y)^n$ is an initial segment of $\LE(Y^{m_2})$, so that
\begin{multline}
\label{eq:displacement1}
\pr{X(0,\infty)\cap \LE(Y)^n=\emptyset,\, \|\LE(Y)_n\|\geq \lambda r}
\\\geq 
\pr{X(0,\infty)\cap \LE(Y^{m_2})=\emptyset,\, \rho_{m_1} \leq n \leq \rho_{m_2}, \text{ and }  \inf_{m \geq m_1} \|Y_m\|\geq \lambda r}
\end{multline}
and hence by \cref{lem:amounterased} there exists a constant $C_1$ such that
\begin{multline}
\label{eq:displacement2}
\pr{X(0,\infty)\cap \LE(Y)^n=\emptyset,\, \|\LE(Y)_n\|\geq \lambda r}
\\\geq 
\pr{X(0,\infty)\cap \LE(Y^{m_2})=\emptyset \text{ and }  \inf_{m \geq m_1} \|Y_m\|\geq \lambda r} - C_1\frac{\log \log n}{(\log n)^{2/3}}.
\end{multline}
For the first term, we have by \cref{thm:AlbeverioZhou} that there exists a constant $C_2$ such that
\begin{align}
\nonumber
&\pr{X(0,\infty)\cap \LE(Y^{m_2})=\emptyset \text{ and }  \inf_{m \geq m_1} \|Y_m\|\geq \lambda r} 
\\
\nonumber
&\hspace{1.15cm}\geq 
\pr{X(0,\infty)\cap \LE(Y^{m_0})=\emptyset,\, X(0,\infty) \cap Y[m_0,m_2] = \emptyset,\, \text{ and }  \inf_{m \geq m_1} \|Y_m\|\geq \lambda r}
\\
&\hspace{1.15cm}\geq
\pr{X(0,\infty)\cap \LE(Y^{m_0})=\emptyset\, \text{ and }  \inf_{m \geq m_1} \|Y_m\|\geq \lambda r} - C_2 \frac{1}{\log n}.
\label{eq:displacement3}
\end{align}
 Since $|m_1-m_0|\asymp r^2$ 
and the random variables $(Y_m-Y_{m_0})_{m \geq m_0}$ are independent of the event $\{X(0,\infty)\cap \LE(Y^{m_0})=\emptyset\}$, it is a consequence of the central limit theorem that there exists $N_\lambda < \infty$ such that
\begin{multline}
\label{eq:displacement4}
\pr{X(0,\infty)\cap \LE(Y^{m_0})=\emptyset\, \text{ and }  \inf_{m \geq m_1} \|Y_m\|\geq \lambda r} \\
\gtrsim \pr{X(0,\infty)\cap \LE(Y^{m_0})=\emptyset\, \text{ and }   \|Y_{m_1}\|\geq 2\lambda r} \\\gtrsim_\lambda\pr{X(0,\infty)\cap \LE(Y^{m_0})=\emptyset} 
\end{multline}
for every $n\geq N_\lambda$.
Finally, it follows from \cref{thm:logpaper} that
\begin{align}
\pr{X(0,\infty)\cap \LE(Y^{m_0})=\emptyset} 
&\gtrsim \frac{1}{(\log n)^{1/3}}
\label{eq:displacement5}
\end{align}
 The claim follows by putting together the estimates of \cref{eq:displacement1,eq:displacement2,eq:displacement3,eq:displacement4,eq:displacement5}. 

The first inequality of \eqref{eq:nonintersection_n_outside} follows by a similar proof to that of \eqref{eq:nonintersectiondisplacement}, while the lower bound of the second inequality is an immediate corollary of \eqref{eq:nonintersectiondisplacement}.
 It remains to prove the upper bound of~\eqref{eq:nonintersection_n_outside}. Let $k=\lfloor n^{1/8} \rfloor$, 
  we have by a union bound that
	\begin{align*}
		\pr{X(0,\infty)\cap \LE(Y)^n=\emptyset}&\leq \pr{X(0,k)\cap \LE(Y)=\emptyset} + \pr{X(0,k)\cap \LE(Y)[n,\infty)\neq \emptyset}.
	\end{align*}
	Using Theorem~\ref{thm:logpaper} we get that the first term on the right hand side is of order $(\log n)^{-1/3}$.
To bound the second term we observe that $X(0,k)$ is trivially contained in the ball $B(0,k)$ and hence that if we define $m=\lfloor n^{1/2} \rfloor$ then
	\begin{align*}
		\pr{X(0,k)\cap \LE(Y)[n,\infty)\neq \emptyset}&\lesssim  \pr{\ell_n(Y)<m} + \pr{Y[m,\infty)\cap B(0,k)\neq \emptyset}\\
		&\lesssim  \frac{\log\log n}{(\log n)^{2/3}} + \frac{k^2}{m} = o\left(\frac{1}{(\log n)^{1/3}} \right),
	\end{align*}
where we used \cref{lem:amounterased} to bound the first term and a standard random walk calculation (see \cite[Lemma 4.4]{1804.04120}) to bound the second.
\end{proof}

\begin{proof}[Proof of Proposition~\ref{pro:lowerbounds}]
To help the exposition, we first show the lower bound for the intrinsic radius, which is simpler, and then show how the proof can be modified to yield the lower bound for the extrinsic radius.
Fix $n\geq 3$.
Let $\eps>0$ and let $A_\epsilon$ be the event that $0$ is hit by a unique trajectory $W$ of the interlacement process in the interval $[0,\epsilon]$. We parameterize $W$ in such a way that it visits the origin for the first time at time zero, and set $X=W[0,\infty)$. 
As discussed in \cref{subsec:AB_variant}, applying the Aldous--Broder algorithm to $X$ yields a unique infinite path $\eta=\LE^R(X)$ starting at $0$ that is distributed as a loop-erased random walk.
 Moreover, the path $W(-\infty,0]$ in reverse time has the law of a simple random walk started from $0$ conditioned to never return to~$0$. Let $B$ be the event that $W(-\infty,0)$ does not intersect~$\eta^n=\eta[0,n]$ and that no other trajectory of the interlacement process arriving in $[0,\epsilon]$ hits $\eta^n$.
 Let $C_1$ be the constant from \cref{cor:lercap}. By the splitting property for Poisson processes we have that
\begin{align}\label{eq:lowerbound}\begin{split}
\pr{\radint(\fP)\geq n}&\geq \pr{A_\epsilon \cap B} 
	\\&\geq \epsilon\ \cc{\{0\}} e^{-\epsilon \cc{\{0\}}} \E{\mathbbm{1}(W(-\infty,0)\cap \eta^n=\emptyset) e^{-\epsilon \cc{\eta^n}}}\\
	&\gtrsim \epsilon  e^{-2C_1\epsilon n/(\log n)^{2/3} } \pr{W(-\infty,0)\cap \eta^n=\emptyset, \cc{\eta^n} \leq \frac{C_1 n}{(\log n)^{2/3}}}.
\end{split}
\end{align}
Let $S$ be a simple random walk in $\Z^4$ started from $0$ independent of $\eta$ and let $\tau_0^+$ be its first return time to $0$. It follows from the strong Markov property and~\eqref{eq:nonintersection_n_outside} of \cref{lem:displacement} that
\begin{multline}\label{eq:nonintseta}
	\pr{W(-\infty,0)\cap \eta^n=\emptyset} = \prcond{S(0,\infty)\cap \eta^n=\emptyset}{\tau_0^+=\infty}{0} \\
  \geq \prstart{S(0,\infty)\cap \eta^n=\emptyset}{0} \asymp\frac{1}{(\log n)^{1/3}}.
\end{multline}
 Combining this with \cref{cor:lercap} we obtain that there exist positive constants $C_2$ and $c_1$ such that
\begin{align*}
	\pr{W(-\infty,0)\cap \eta^n=\emptyset, \cc{\eta^n} \leq \frac{C_1 n}{(\log n)^{2/3}}} \geq \frac{C_2}{(\log n)^{1/3}} - \frac{c_1\log \log n}{(\log n)^{2/3}}
\end{align*}
and hence that
\begin{align}\label{eq:conccap}
	\pr{W(-\infty,0)\cap \eta^n=\emptyset, \cc{\eta^n} \leq \frac{C_1 n}{(\log n)^{2/3}}} \gtrsim \frac{1}{(\log n)^{1/3}}.
\end{align}
Taking $\epsilon=(\log n)^{2/3}/n$ and substituting \eqref{eq:conccap}  into~\eqref{eq:lowerbound}  yields the desired lower bound.

We now prove the joint lower bound on the intrinsic and extrinsic radii. Fix $n\geq 3$, let $\eps = (\log n)^{2/3}/n$, and let $A_\eps$, $B$, $W$, $\eta$, and $C_1$ be as above. Let $r=r(n) = n^{1/2} (\log n)^{1/6}$ and let $D$ be the event that $\|\eta_n\|_1 \geq  r$. Then we have by similar reasoning to above that
\begin{align*}
 &\pr{\radint(\fP)\geq  n \text{ and } \radext(\fP) \geq r} \geq \pr{A_\epsilon \cap B \cap D} 
	\nonumber
	\\&\hspace{3cm}\geq \epsilon\ \cc{\{0\}} e^{-\epsilon \cc{\{0\}}} \E{\1(W(-\infty,0)\cap \eta^n=\emptyset, \|\eta_n\|_1\geq  r) e^{-\epsilon \cc{\eta^n}}}
	\nonumber
	\\
	&\hspace{3cm}\gtrsim \frac{(\log n)^{2/3}}{n} \pr{W(-\infty,0)\cap \eta^n=\emptyset,\, \|\eta_n\|_1\geq  r,\, \cc{\eta^n} \leq \frac{C n}{(\log n)^{2/3}}}.
	\label{eq:lowerbounddisplacement}
\end{align*}
Putting this estimate together with  \cref{cor:lercap} and \cref{lem:displacement} and arguing as above yields that there exist positive constants~$c_2$, $C_3$, and $N$ such that
\begin{align*}
 &\pr{\radint(\fP)\geq  n \text{ and } \radext(\fP) \geq r}
\\&\hspace{1cm}\geq c_2\frac{(\log n)^{2/3}}{n} \pr{W(-\infty,0)\cap \eta^n=\emptyset,\, \|\eta_n\|_1\geq  r} - C_3 \frac{(\log n)^{2/3}}{n} \cdot \frac{\log \log n}{(\log n)^{2/3}}
	\\
	&\hspace{1cm}\geq c_2\frac{(\log n)^{2/3}}{n} \pr{S(0,\infty)\cap \eta^n=\emptyset,\, \|\eta_n\|_1\geq  r} \hspace{1.16em}- C_3 \frac{(\log n)^{2/3}}{n} \cdot \frac{\log \log n}{(\log n)^{2/3}}  
	   \gtrsim \frac{(\log n)^{1/3}}{n}
\end{align*}
for every $n \geq N$, where $S$ is a simple random walk in $\Z^4$ started from $0$ independent of $\eta$. 
\end{proof}

\begin{proof}[Proof of \cref{pro:lowerbounds0}]
The proof is similar to but easier than that of \cref{pro:lowerbounds} and we omit some details. Let $\eps = n^{-1} (\log n)^{2/3}$ and let $A_\epsilon$ be the event that $0$ is hit by a unique trajectory starting at $0$ of the $0$-wired interlacement process in the interval $[0,\epsilon]$ and that this trajectory is infinite. Let $W$ be this trajectory, which is distributed as a random walk started at $0$ and conditioned never to return. As discussed in \cref{subsec:AB_variant}, applying the Aldous--Broder algorithm to $W$ yields a unique infinite path $\eta=\LE^R(W)$ starting at $0$ that has the law of a loop-erased random walk in $\Z^4$.  Let $B$ be the event that the first $n$ steps of $\eta$ are not hit by any other trajectory of the interlacement process during $[0,\epsilon]$, let $r=n^{1/2}(\log n)^{1/6}$, and let $D$ be the event that $\|\eta_n\|_1 \geq r$. Since $\cpc{}{\eta^n}$ and $\cpc{0}{\eta^n}$ coincide up to an additive constant, we deduce by a similar argument to above that
\begin{align*}
	\pr{\radint(\fT_0)\geq n \text{ and } \radext(\fT_0) \geq r} &\geq \pr{A_\epsilon\cap B \cap D} 
  \gtrsim \epsilon \E{\mathbbm{1}(\|\eta_n\|_1 \geq r)e^{-\epsilon\cpc{}{\eta^n}}}
	\\&\gtrsim   \P(\|\eta_n\|_1 \geq r) \frac{(\log n)^{2/3}}{n} - o\left(\frac{(\log n)^{2/3}}{n}\right) \gtrsim \frac{(\log n)^{2/3}}{n}
\end{align*}
where the second term in the last line accounts for the event that $\eta^n$ has small capacity and the final inequality follows from \eqref{eq:nonintersection_n_outside}. 
\end{proof}

\subsection{Intrinsic balls and the weak $L^1$ method}
\label{subsec:weakl1}


In this section we estimate the expected number of points in the past of the origin in the UST and $0$-WUSF that lie inside an intrinsic ball. Similar estimates in the high-dimensional case are given in \cite[Section 6]{1804.04120}. Let us first note the following easy fact.

\begin{lemma}
\label{lemma:WUSF_ball}
Let $d\geq 2$ and let $\mathfrak{P}(0,n)$ be the intrinsic ball of radius $n$ around $0$ in the past of $0$ in the uniform spanning tree of $\Z^d$. Then
\[\E{|\partial\fP(0,n)|} = 1\] for every $n\geq 0$.
\end{lemma}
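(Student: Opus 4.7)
The plan is to prove this via a one-line mass-transport argument. Writing out $\E{|\partial \fP(0,n)|} = \sum_{y \in \Z^d} \P(y \in \partial \fP(0,n))$ and then using translation invariance to shift each summand, the whole sum collapses into a probability that equals $1$ trivially because every tree in the uniform spanning forest is one-ended almost surely when $d\geq 2$.

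To make this precise, I would first record the equivalent reformulation of $\partial \fP(0,n)$ in terms of the orientation. Since $\fT$ is one-ended a.s., for each vertex $v$ there is a well-defined oriented future path $\Gamma_v = (\Gamma_v(0), \Gamma_v(1), \ldots)$ emanating from $v$, and $y \in \partial \fP(0,n)$ holds if and only if $\Gamma_y(n) = 0$. Now for each fixed $y \in \Z^d$, translating the uniform spanning tree of $\Z^d$ by $-y$ gives another uniform spanning tree of $\Z^d$, so
\begin{equation*}
\P\bigl(y \in \partial \fP(0,n)\bigr) = \P\bigl(\Gamma_y(n) = 0\bigr) = \P\bigl(\Gamma_0(n) = -y\bigr).
\end{equation*}

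Summing over $y \in \Z^d$ and interchanging sum and expectation gives
\begin{equation*}
\E{|\partial \fP(0,n)|} = \sum_{y \in \Z^d} \P\bigl(\Gamma_0(n) = -y\bigr) = 1,
\end{equation*}
where the final equality uses that $\Gamma_0$ is a.s.\ an infinite simple path, so $\Gamma_0(n)$ is a well-defined $\Z^d$-valued random variable whose distribution is a probability measure on $\Z^d$. There is no real obstacle: the entire argument is just translation invariance plus one-endedness of the USF, both of which are already recorded in the background section of the paper.
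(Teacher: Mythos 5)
Your proposal is correct and is essentially the paper's own argument: the paper applies the mass-transport principle with $F(x,y)=\P(y$ is the $n$th vertex in the future of $x)$, which is exactly your translation-invariance computation $\P(y\in\partial\fP(0,n))=\P(\Gamma_0(n)=-y)$ summed over $y$, using one-endedness so that each vertex has precisely one vertex $n$ steps in its future.
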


\begin{proof}
This is an immediate consequence of the \textbf{mass-transport principle} for $\Z^d$, which states that if $d\geq 1$ and $F:\Z^d \times \Z^d \to [0,\infty)$ is \textbf{diagonally invariant} in the sense that $F(x-z,y-z)=F(x,y)$ for every $x,y,z\in \Z^d$ then
\begin{equation}
\label{eq:MTP}
\sum_{x\in \Z^d}F(0,x)=\sum_{x\in \Z^d} F(x,0).
\end{equation}
The claim follows by considering the diagonally-invariant function $F(x,y)=\P(y$ is the $n$th vertex in the future of $x)$ and using the fact that every vertex has exactly $1$ vertex $n$ steps in its future. See \cite[Chapter 8]{LP:book} for further background on the mass-transport principle.
\end{proof}

Estimating the analogous quantity for the $0$-WUSF is a more subtle problem. We will use Lawler's concentration estimates on the number of points erased (\cref{lem:amounterased}) as well as our related polylogarithmic deviation bound of \cref{cor:capgeom}.

\begin{prop}
\label{prop:WUSFo_ball}
Let $\fP_0(0,n)$ be the intrinsic ball of radius $n$ around $0$ in the $0$--wired uniform spanning forest of $\Z^4$. Then
\[
 \E{|\fP_0(0,n)|}\asymp n (\log n)^{1/3} 
\] for every $n\geq 2$.
\end{prop}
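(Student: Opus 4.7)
The plan is to apply Wilson's algorithm for the $0$-WUSF with the ordering $(0,v,v_2,\ldots)$ to derive a clean identity for $\E{|\fP_0(0,n)|}$ in terms of simple random walk. A vertex $v\neq 0$ lies in $\fT_0$ at intrinsic distance at most $n$ from $0$ if and only if the random walk $X$ from $v$ hits $0$ in finite time with loop-erasure of length at most $n$, so $\E{|\fP_0(0,n)|} = 1 + \sum_{v\neq 0}\prstart{\tau_0<\infty,\,|\LE(X^{\tau_0})|\leq n}{v}$. Time-reversing the walk and applying Lawler's length-preserving bijection between walks and their reversals (\cref{thm:reversibility}), together with the observation that the bijection preserves the event $\{\tau_0^+ > N\}$, yields
\[
\E{|\fP_0(0,n)|} \;=\; 1 + \sum_{N \geq 1} \prstart{\tau_0^+ > N,\, |\LE(X^N)| \leq n}{0}.
\]

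For the upper bound I would split the sum at the threshold $N \asymp n(\log n)^{1/3}$. The range $N \leq C n (\log n)^{1/3}$ contributes at most $O(n(\log n)^{1/3})$ via the trivial bound $\prstart{\cdot}{0} \leq 1$. For the tail $N > C n (\log n)^{1/3}$, I would decompose dyadically into blocks $N \in [2^K C n (\log n)^{1/3}, 2^{K+1} C n (\log n)^{1/3}]$ with $K \geq 0$ and apply \cref{cor:capgeom} with $\alpha = K (\log 2)/\log \log N$; this yields a per-block contribution of order $2^{-K} n (\log n)^{1/3 + \epsilon}$ for any $\epsilon > 0$, summable over $K$ to $O_\epsilon(n(\log n)^{1/3 + \epsilon})$.

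For the lower bound I would restrict to a good range $N \in [cn(\log n)^{1/3}, 2cn(\log n)^{1/3}]$ for a small constant $c > 0$. The key structural observation is that $\LE_\infty(X^N) = \LE(X)^{\rho_N}$ is always an initial segment of $\LE(X^N)$: vertices in $\LE_\infty(X^N)$ are, by definition, not revisited after their kept time in the infinite walk, hence not revisited in $(\ell_i, N]$ for any $i$ either. Consequently $|\LE(X^N)| = \rho_N + |\LE(X[\eta_N, N])|$, where $\eta_N$ is the largest kept time at most $N$. In the good range \cref{lem:amounterased} gives $\rho_N \leq n/2$ with high probability, and cut-time estimates (\cref{lem:Lawlercuttimes}) control the gap $N - \eta_N$ so that $|\LE(X[\eta_N, N])| \leq n/2$ holds with high probability too. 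Combined with $\prstart{\tau_0^+ > N}{0} \geq \prstart{\tau_0^+ = \infty}{0} > 0$ by transience of $\Z^4$, each term in the good range contributes a positive constant to the sum, for a total of $\gtrsim n(\log n)^{1/3}$.

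The main obstacle will be removing the $(\log n)^{\epsilon}$ slack in the upper bound to match the precise $(\log n)^{1/3}$ factor of the lower bound, since the deviation estimates of \cref{cor:capgeom} carry an intrinsic $(\log n)^\epsilon$ correction that compounds through the dyadic decomposition. Most likely this requires either a direct application of \cref{lem:capgeom} with a more careful bookkeeping of constants across the dyadic scales, or an argument that exploits the condition $\tau_0^+ > N$ to gain additional polynomial decay unavailable from the capacity/length estimates alone. A secondary difficulty is the rigorous control of $|\LE(X[\eta_N, N])|$ in the lower bound, which requires a sharp quantitative estimate on the gap $N - \eta_N$ between the current time and the last kept time in four dimensions, combining \cref{lem:amounterased} with the cut-time estimates in an essential way.
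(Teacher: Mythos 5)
Your reduction via Wilson's algorithm and \cref{thm:reversibility} to the identity $\E{|\fP_0(0,n)|}=1+\sum_{N\geq 1}\P_0\big(\tau_0^+>N,\,|\LE(X^N)|\leq n\big)$ is correct, and your lower bound is essentially the paper's own argument (time reversal plus \cref{lem:amounterased} and the cut-time estimate \cref{lem:Lawlercuttimes}), so that half is fine. The upper bound, however, has a genuine gap, and it is not the one you identify. Because you work with the exact identity there is no a priori cutoff on $N$, and the constraint $\tau_0^+>N$ supplies no decay at all: by transience $\P_0(\tau_0^+>N)\to\P_0(\tau_0^+=\infty)>0$ (you invoke exactly this in your lower bound), so all decay must come from $\P_0(|\LE(X^N)|\leq n)$. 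For $n\log n\lesssim N\lesssim n^2$ the available tools give only polylogarithmic decay for this probability — \cref{cor:capgeom} caps out at $(\log N)^{-4/3+\eps}$ since it requires $\alpha\leq 2/3$, and the displacement bound is trivial in this range — so the tail of your sum cannot be shown to be $O(n(\log n)^{1/3})$; naively it contributes on the order of $n^2$ up to polylogarithmic factors. The paper avoids this entirely by truncating \emph{before} reversing: \cref{lem:weakl1} (the weak $L^1$ bound, conditional on the loop-erasure) shows that on $\{|\LE(X^{\tau_0})|\leq n\}$ one has $\tau_0\leq C n\log n$ with conditional probability at least $1/2$, so only times $N\lesssim n\log n$ ever enter the sum, at the cost of a factor $2$. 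This truncation is the missing idea; the hope that the condition $\tau_0^+>N$ yields ``additional polynomial decay'' is false as stated.

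Moreover, even on the truncated range your dyadic scheme loses a factor $(\log n)^{\eps}$ precisely at the dominant scale $N\asymp n(\log n)^{1/3}$, and this cannot be repaired by bookkeeping: \cref{cor:capgeom} intrinsically carries the $\eps$-loss, its implicit constant depends on $\alpha$ (which you let vary with the block index $K$), and for large $K$ your choice $\alpha=K\log 2/\log\log N$ leaves the admissible range $\alpha\leq 2/3$ of the length estimate. The paper's resolution is a different split: for $2n(\log n)^{1/3}\leq N\leq n(\log n)^{8/9}$ use \cref{lem:amounterased} directly (the event $|\LE(X^N)|\leq n$ forces $\rho_N\leq n$, an event of probability $\lesssim \log\log N/(\log N)^{2/3}$, with no $\eps$-loss), so this range contributes $O(n(\log n)^{2/9}\log\log n)=o(n(\log n)^{1/3})$; then for $n(\log n)^{8/9}\leq N\leq Cn\log n$ apply \cref{cor:capgeom} once with a fixed $\alpha=5/9$, where the total $n\log n\cdot(\log n)^{-1}=O(n)$ is harmless. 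With the \cref{lem:weakl1} truncation and this split your identity does give the sharp bound; without them it does not.
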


Note in particular that  $\E{|\fP_0(0,n)|}$ and $\E{|\fP(0,n)|}$ differ in order by a factor of $(\log n)^{1/3}$. This is in contrast to the high-dimensional case, where these two quantities are always of the same order \cite[Lemma 6.6]{1804.04120}.

\medskip

\textbf{The weak $L^1$ method.}
The proof of \cref{prop:WUSFo_ball} will use the following bound on the conditional distribution of the length of a random walk given its loop-erasure, which is a 4d analogue of \cite[Lemma 4.3]{1804.04120}. 
We apply this estimate many times throughout the paper.

\begin{lemma}
\label{lem:weakl1}
Let $x\in \Z^4 \setminus \{0\}$, let $X$ be a random walk on $\Z^4$ started at $x$, and let $\gamma$ be a simple path connecting $x$ to the origin.  Then 
\[
\P_x\left(\tau_0 \geq n \mid \tau_0 < \infty \text{ and } \LE(X^{\tau_0}) = \gamma\right) \lesssim \frac{|\gamma| \log (|\gamma|+1)}{n}
\]
for every $n\geq 1$.
\end{lemma}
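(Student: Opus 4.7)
Plan: In four dimensions the conditional expectation $\mathbb{E}_x[\tau_0 \mid \LE(X^{\tau_0}) = \gamma]$ is typically \emph{infinite}, a consequence of the heavy $\sim t^{-2}$ tail of 4D random walk return times (contrast this with the situation for $d \geq 5$, where the return time has a summable tail and the analogous Lemma~4.3 of~\cite{1804.04120} is proved by a direct first-moment estimate). This rules out applying Markov's inequality directly to $\tau_0$. The plan is instead to adopt the \emph{weak $L^1$} strategy of bounding the \emph{truncated} first moment and using the elementary inequality
\[
\mathbb{P}_x\bigl(\tau_0 \geq n \bigm| \tau_0<\infty,\ \LE(X^{\tau_0}) = \gamma\bigr) \;\leq\; \frac{1}{n}\,\mathbb{E}_x\bigl[\tau_0 \wedge n \bigm| \tau_0<\infty,\ \LE(X^{\tau_0}) = \gamma\bigr].
\]

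Write $\gamma = (\gamma_0,\ldots,\gamma_m)$ with $m=|\gamma|$, $\gamma_0=x$, $\gamma_m=0$, and set $A_i := \gamma \setminus \{\gamma_i\}$. The key structural input is the classical loop decomposition underlying Wilson's algorithm: conditional on $\LE(X^{\tau_0}) = \gamma$, the walk $X^{\tau_0}$ is reconstructed by, at each vertex $\gamma_i$ for $0 \leq i \leq m-1$, performing a geometric number of i.i.d.\ loops (random walk excursions from $\gamma_i$ to $\gamma_i$ in $\mathbb{Z}^4 \setminus A_i$) and then taking the single skeleton step from $\gamma_i$ to $\gamma_{i+1}$. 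Writing $\tau_0 = m + \sum_{i=0}^{m-1} T_i$, where $T_i$ is the total length of the loops at $\gamma_i$, the $T_i$ are conditionally independent, and the mean number of loops at each $\gamma_i$ equals $\bG_{A_i}(\gamma_i,\gamma_i) - 1 \leq \bG(0,0) - 1 = O(1)$. It therefore suffices to prove that the truncated mean of a single conditioned loop length at $\gamma_i$ is $O(\log(|\gamma|+1))$, uniformly in $i$ and in the truncation level.

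The truncated-mean estimate for a single loop is the main step and is obtained by splitting the loop length at a threshold $t_\star$ polynomial in $|\gamma|$ (say $t_\star \asymp |\gamma|^2$). For $t \leq t_\star$, the four-dimensional local CLT yields $\mathbb{P}_{\gamma_i}(\tau^+_{\gamma_i} = t) \lesssim t^{-2}$, so the contribution to the first moment is bounded by $\sum_{t=1}^{t_\star} t \cdot t^{-2} \lesssim \log(t_\star) \lesssim \log(|\gamma|+1)$. For $t > t_\star$, one exploits the requirement that the loop avoid $A_i$: the capacity bound $\cc{A_i} \gtrsim |\gamma|/\log|\gamma|$ together with standard Green's function/last-exit estimates imply that $\mathbb{P}_{\gamma_i}(\tau^+_{\gamma_i} = t,\ \tau^+_{\gamma_i} < \tau_{A_i})$ decays fast enough in $t$ that the tail contribution is $O(1)$. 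Summing over the $m$ vertices of $\gamma$ then yields $\mathbb{E}_x[\tau_0 \wedge n \mid \LE = \gamma] \lesssim |\gamma|\log(|\gamma|+1)$, and Markov's inequality completes the proof. The main technical obstacle is the $t > t_\star$ regime: one must quantify the avoidance probability for walks killed on $\gamma$ on scales much larger than the natural diffusive scale of $\gamma$, with enough precision to ensure that the resulting bound is uniform in the truncation level $n$; this reduces to careful capacity/heat-kernel estimates for a walk killed on an arbitrary self-avoiding path of length $m$.
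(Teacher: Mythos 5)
Your proposed strategy---Markov's inequality applied to the truncated first moment $\E_x[\tau_0 \wedge n \mid \cdots]$---cannot actually give the stated bound, and the single-loop estimate you rely on is false. The issue is already visible at $|\gamma|=1$: the conditional law of a single excursion from $\gamma_0$ to $\gamma_0$ avoiding $\{0\}$ has tail $\asymp t^{-1}$ as $t\to\infty$ (at large times the walk is far from $\{0\}$ and the avoidance constraint is asymptotically vacuous, so the constrained first-return probability is still $\asymp t^{-2}$). Consequently the truncated mean of a single loop is $\asymp \log n$, \emph{not} $O(\log(|\gamma|+1))$ uniformly in the truncation level, and $n^{-1}\E_x[\tau_0\wedge n\mid\cdots]$ grows like $|\gamma|\log n / n$ rather than $|\gamma|\log(|\gamma|+1)/n$. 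This is an intrinsic defect of the truncated-first-moment route, not a technical gap: since $\tau_0$ genuinely has a $1/t$-type tail, $\E_x[\tau_0\wedge n\mid\cdots]$ must grow logarithmically in $n$, so Markov on the truncated mean always loses a spurious $\log n$. In addition, the capacity lower bound $\cc{A_i}\gtrsim|\gamma|/\log|\gamma|$ that you invoke for the $t>t_\star$ regime is simply false for arbitrary simple paths: $\gamma$ may be folded into a box of side $|\gamma|^{1/4}$, in which case its capacity is $O(|\gamma|^{1/2})$. The lemma must hold for every simple path, not only for typical loop-erased walks.

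The missing ingredient is Vershynin's weak triangle inequality for weak-$L^1$ quasi-norms, which is exactly how the paper proceeds. One observes (by the same loop decomposition you set up, and the local CLT bound $p_m(0,0)\lesssim m^{-2}$) that the conditionally independent increments $Z_k := \ell_{k+1}(X^{\tau_0})-\ell_k(X^{\tau_0})$ each satisfy $\P(Z_k \geq m \mid \cdots)\lesssim 1/m$, i.e.\ $\|Z_k\|_{1,w}=O(1)$ uniformly in $\gamma$ and $k$. Vershynin's inequality
\[
\Bigl\|\sum_{k=1}^{|\gamma|} Z_k\Bigr\|_{1,w} \leq 2e\log(|\gamma|)\sum_{k=1}^{|\gamma|}\|Z_k\|_{1,w}
\]
then produces the $|\gamma|\log(|\gamma|+1)$ factor directly and uniformly in $n$, with no truncation and no need for any geometric input about $\gamma$. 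The point is that combining $m$ independent weak-$L^1$ variables costs only $\log m$ (a union over dyadic scales), whereas summing truncated first moments costs $\log n$; these are incomparable and only the former yields the lemma.
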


Note that while this lemma does \emph{not} give the correct order of the logarithmic correction for typical values of the loop-erasure, the fact that it holds no matter what we condition the loop-erasure to be will be very useful. (Moreover, the estimate is of the correct order when we condition the loop-erasure to be a straight line.) We explore the conditional distribution of the random walk given its loop-erasure in more detail in \cref{sec:typicaltime}.

The proof of \cref{lem:weakl1} will use some simple facts about \emph{weak $L^1$ norms} of random variables. Recall that if $Z$ is a real-valued random variable then the weak $L^1$ norm $\|Z\|_{1,w}$ of $Z$ is defined by
\[
\|Z\|_{1,w} = \sup_{t>0} t\P(|Z| \geq t) = \inf\left\{ \lambda \geq 0 : \P(|Z| \geq t) \leq \frac{\lambda}{t} \text{ for all $t>0$}\right\} \in [0,\infty].
\]
Despite its name, the weak $L^1$ norm is not a norm and does not satisfy the triangle inequality. Vershynin \cite{VershyninweakL1} observed that it does however satisfy the following weakened form of the triangle inequality: If $n\geq 2$ and $Z_1,\ldots,Z_n$ are real-valued random variables then
\begin{equation}
\label{eq:weakl1triangle}
\Bigl\| \sum_{i=1}^n Z_i\Bigr\|_{1,w} \leq 2e \log n \sum_{i=1}^n \left\|  Z_i\right\|_{1,w}.
\end{equation}

\begin{proof}[Proof of \cref{lem:weakl1}] 
As in the proof of~\cite[Lemmas 5.1 and 5.3]{1804.04120}, the definition of the loop-erasure implies that the  random variables $(\ell_{k+1}(X^{\tau_0})-\ell_{k}(X^{\tau_0}) )_{k=0}^{|\gamma|-1}$ are independent conditional on the event that $\tau_0<\infty$ and $\LE(X^{\tau_0})=\gamma$. 
The proof of the same lemma also implies that
			\vspace{0.2em}
			\begin{align}
			\vspace{0.2em}
				\P_u\left(\ell_{k+1}(X^{\tau_0})-\ell_k(X^{\tau_0}) =m+1 \mid \tau_0 < \infty,\, \LE(X^{\tau_0})=\gamma \right) &\leq 
\P_{\gamma_k}(X_m=\gamma_k, X^m \cap \gamma^{m-1} = \emptyset)
\nonumber\\
        &\leq p_m(0,0)
			\end{align}
			for every $0 \leq k \leq |\gamma|-1$ and $m\geq 0$, where $p_m(0,0)$ denotes the $m$-step return probability of simple random walk on $\Z^4$. It follows that there exists a constant $C$ such that
\begin{equation}
			\vspace{0.2em}
				\P_x\left(\ell_{k+1}(X^{\tau_0})-\ell_k(X^{\tau_0}) \geq m \mid \tau_0 < \infty,\, \LE(X^{\tau_0})=\gamma \right) \leq \frac{C}{m}
			\end{equation}
			for every $0\leq k \leq |\gamma|-1$ and $m\geq 1$. 
			 Thus, we may apply the weak triangle inequality for weak $L^1$ norms \eqref{eq:weakl1triangle} to obtain that
			\begin{multline*}
\P_x\left(\tau_0 \geq m \mid \tau_0 < \infty,\, \LE(X^{\tau_0})=\gamma \right) \\= \P_x\left( \sum_{k=0}^{|\gamma|-1} \left[\ell_{k+1}(X^{\tau_0})-\ell_k(X^{\tau_0}) \right]\geq m \mid \tau_0 < \infty,\, \LE(X^{\tau_0})=\gamma \right) \leq \frac{2e C |\gamma|\log(|\gamma|+1)}{m}
			\end{multline*}
			for every $m\geq 1$ as claimed. (We have written $\log(|\gamma|+1)$ rather than $\log |\gamma|$ here to deal with the case $|\gamma|=1$, in which case this inequality holds since $2e \log 2 \geq 1$.)
\end{proof}


\begin{proof}[Proof of \cref{prop:WUSFo_ball}]
Consider generating the $0$-WUSF using Wilson's algorithm, starting with a random walk $X$ started at the vertex $v$. This vertex $v$ will belong to $\fP_0(0,n)$ if and only if $\tau_0<\infty$ and $|\LE(X^{\tau_0})| \leq n$. Summing over $v$, we obtain that
\[
\E{|\fP_0(0,n)|} = \sum_{v\in \Z^4}  \P_v(\tau_0 <\infty \text{ and } |\LE(X^{\tau_0})|\leq n).
\]
We begin with the upper bound, which is the only direction we will actually use in the proofs of our main theorems. First note that \cref{lem:weakl1} implies that there exists a constant $C$ such that 
\[\P_v(\tau_0 \leq C n \log n \text{ and } |\LE(X^{\tau_0})|\leq n) \geq \frac{1}{2} \P_v(\tau_0 <\infty \text{ and } |\LE(X^{\tau_0})|\leq n)\]
for every $v\in \Z^4$ and $n\geq 2$, and hence that
\[
\E{|\fP_0(0,n)|} \leq 2 \sum_{v\in \Z^4} \sum_{m=0}^{\lceil C n \log n \rceil} \P_v(X_m =0 \text{ and } |\LE(X^m)|\leq n) 
\]
for every $n\geq 2$. Applying the mass-transport principle we obtain that
\begin{align*}
\E{|\fP_0(0,n)|} \leq 2 \sum_{v\in \Z^4} \sum_{m=0 }^{\lceil C n \log n \rceil} \P_0(X_m =v \text{ and } |\LE(X^m)|\leq n)
&=2 \sum_{m=0 }^{\lceil C n \log n \rceil} \P_0(|\LE(X^m)|\leq n)
\end{align*}
for every $n\geq 2$.
 To proceed define $m_1 = \lceil 2 n (\log n)^{1/3} \rceil$, $m_2 = \lceil n (\log n)^{8/9} \rceil$, and $m_3 = \lceil C n \log n \rceil$. (The number $8/9$ could be replaced by any number strictly between $1$ and $2/3$.) We break the above bound into pieces
\begin{align*}
\E{|\fP_0(0,n)|} &\lesssim \sum_{m=0 }^{m_1} \P_0(|\LE(X^m)|\leq n)
&&+
\sum_{m=m_1}^{m_2} \P_0(|\LE(X^m)|\leq n)
&&+ \sum_{m=m_2 }^{m_3} \P_0(|\LE(X^m)|\leq n).
\\
\intertext{We can use \cref{lem:amounterased} and \cref{cor:capgeom} with $\alpha=5/9$ and $\eps=1/9$ to bound the second and third terms respectively and obtain that}
\E{|\fP_0(0,n)|} &\lesssim
\hspace{1cm} n (\log n)^{1/3} &&+ \hspace{0.2cm} n (\log n)^{8/9} \cdot \frac{\log \log n}{(\log n)^{2/3}} 
&&+ \hspace{0.4cm} n \log n \cdot (\log n)^{-1}
\\&\lesssim\hspace{1cm} n (\log n)^{1/3}
\end{align*}
for every $n\geq 2$ as claimed.

We now turn to the lower bound. Since this bound is not actually needed for the proofs of our main theorems, we will omit some details. Let $M= \lceil n (\log n)^{1/3}/2 \rceil$. We can write
\begin{align*}
\E{|\fP_0(0,n)|} 
&\geq 
\sum_{v\in \Z^4} \sum_{m=0}^{M}  \P_v(\tau_0 = m \text{ and } |\LE(X^{m})|\leq n)
\\
&\geq 
\sum_{v\in \Z^4} \sum_{m=0}^{M}  \P_v(\tau_0 = m) - \sum_{v\in \Z^4} \sum_{m=0}^{M}\P_v(X_m=0 \text{ and } |\LE(X^m)| \geq n).
\end{align*}
For the first term, we have by time-reversal that
\begin{multline}
\label{eq:wusf0balllower1}
\sum_{v\in \Z^4} \sum_{m=0}^{M}  \P_v(\tau_0 = m) = 
\sum_{v\in \Z^4} \sum_{m=0}^{M}  \P_0(\tau_0^+ > m, X_m = v) \\\geq  \P_0(\tau^+_0 = \infty) n (\log n)^{1/3} \gtrsim n (\log n)^{1/3}
\end{multline}
as required.  On the other hand, for the second term, we have by the mass-transport principle that
\begin{align}
\sum_{v\in \Z^4} \sum_{m=0}^{M} 
\P_v(X_m=0 \text{ and } |\LE(X^m)| \geq n) &=
\sum_{v\in \Z^4} \sum_{m=0}^{M} 
\nonumber
\P_0(X_m=v \text{ and } |\LE(X^m)| \geq n) \\
&= \sum_{m=0}^{M} 
\P_0(|\LE(X^m)| \geq n).
\label{eq:wusf0balllower2}
 \end{align}
\cref{lem:amounterased} and \cref{lem:Lawlercuttimes} imply that this sum is $o(M)=o(n (\log n)^{1/3})$ as $n\to\infty$, and the claimed lower bound follows. Here is a brief sketch of how this estimate can be proven: for each $\eps>0$ and each $\eps M \leq m \leq M$, there is a cut time for the random walk between $m-m/(\log m)^6$ and $m$ with high probability. On this event we have that $|\LE(X^m)| \leq |\LE_\infty(X^m)|+m/(\log m)^6$, which is strictly less than $n$ with high probability by \cref{lem:amounterased}. The claim follows since $\eps>0$ was arbitrary. Once this estimate is in place, the claim follows by comparing the estimates \eqref{eq:wusf0balllower1} and \eqref{eq:wusf0balllower2}.
\end{proof}

\subsection{Upper bounds on the intrinsic radius}
\label{subsec:intrinsicupper}

In this section we prove the upper bound on the tail of the intrinsic diameter of the past for both the UST and $0$-WUSF.

\begin{prop}
\label{prop:unrootedupper}
 Let $\fP=\fP(0)$ be the past of the origin in the uniform spanning tree of $\Z^4$. Then 
\[
\P(\radint(\fP)\geq n) \lesssim \frac{(\log n)^{1/3}}{n}
\]
for every $n\geq 2$.
\end{prop}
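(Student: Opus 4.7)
The plan is to establish the inductive inequality
\begin{equation}
Q(2n) \,\leq\, \frac{C(\log n)^{1/3}}{n} + \frac{1}{4}\,Q(n), \qquad Q(n) := \P\bigl(\partial \fP(0,n) \neq \emptyset\bigr),
\label{eq:plan_induction_intrinsic}
\end{equation}
for all sufficiently large $n$, following the inductive scheme of Kozma and Nachmias \cite{Armexpon}. Writing $a_k := 2^k Q(2^k)$, iteration of \eqref{eq:plan_induction_intrinsic} yields $a_{k+1} \leq 2Ck^{1/3} + \tfrac{1}{2}a_k$, hence $a_k \lesssim k^{1/3}$ and therefore $Q(n) \lesssim (\log n)^{1/3}/n$. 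To prove \eqref{eq:plan_induction_intrinsic} I work inside the interlacement Aldous--Broder process $\langle\fF_t\rangle_{t\in\R}$ and split according to whether a trajectory visits $0$ during the short window $[0,\eps]$, where $\eps := C'(\log n)^{2/3}/n$ and $C'$ is a large constant chosen at the end.

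\emph{Late arrival.} On $\{\sigma_0(0)>\eps\}$ the origin is not hit in $[0,\eps)$, so \cref{lem:PastDynamics} combined with stationarity of $\langle\fF_t\rangle$ couples $\fP(0)$ to an independent copy $\widetilde\fP(0) \eqd \fP(0)$ (a function of $\sI$ outside of $[-\eps,0)$) so that $\fP(0)$ is the component of $0$ in $\widetilde\fP(0) \setminus \cI_{[-\eps,0)}$. The intrinsic path in $\widetilde\fP(0)$ from $0$ to any vertex of $\partial\fP(0,2n)$ therefore avoids $\cI_{[-\eps,0)}$, and the splitting property of Poisson processes gives the non-hitting probability as $\exp(-\eps\,\cpc{}{P})$ given such a path $P$. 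Splitting this length-$2n$ path at its midpoint $y\in\partial\fP(0,n)$ and applying the mass-transport principle \eqref{eq:MTP} reduces the bound to
\begin{equation*}
\P\bigl(\partial\fP(0,2n)\neq\emptyset,\,\sigma_0(0)>\eps\bigr) \,\leq\, \E{\mathbbm{1}\bigl(\partial\fP(0,n)\neq\emptyset\bigr)\exp\bigl(-\eps\,\cpc{}{\Gamma^n}\bigr)},
\end{equation*}
where $\Gamma^n$ is the $n$-step initial segment of the future ray of $0$: the constraint $\{0\in\partial\fP(y,n)\}$ forces $y$ to be the unique vertex $\Gamma_n$ at past-distance $n$ from $0$ along that ray, and the path from $\Gamma_n$ down to $0$ in its own past is the reversal of $\Gamma^n$. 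Splitting on $\{\cpc{}{\Gamma^n}\geq cn/(\log n)^{2/3}\}$ and choosing $C'$ large enough bounds the main contribution by $\tfrac{1}{4}Q(n)$, while the residual error $\cE(n):=\P(\cpc{}{\Gamma^n}<cn/(\log n)^{2/3})$ must be shown to be $o((\log n)^{1/3}/n)$; this is achieved by combining \cref{cor:capgeom} with the $0$-WUSF intrinsic radius bound of \cref{thm:wusfo}, and is the content of \cref{pro:o(n)}.

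\emph{Early arrival.} Since $\P(\sigma_0(0)\leq\eps)\asymp\eps$ and $\{\partial\fP(0,2n)\neq\emptyset\}\subseteq\{\partial\fP(0,n)\neq\emptyset\}$, the task reduces to proving
\begin{equation*}
\P\bigl(\partial\fP(0,n)\neq\emptyset \,\big|\, \sigma_0(0)\leq\eps\bigr) \,\lesssim\, \frac{1}{(\log n)^{1/3}}.
\end{equation*}
On $\{\sigma_0(0)\leq\eps\}$ a unique interlacement trajectory $W$ passes through $0$, and the Aldous--Broder variant of \cref{subsec:AB_variant} applied to $W$ exhibits the past of $0$ in $\fF_0$ as an infinite spine (distributed as a loop-erased random walk, built from the forward half of $W$) together with bushes hanging off it, the bushes being stochastically dominated by independent $0$-WUSF components via \cref{lem:domination}. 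The event $\partial\fP(0,n)\neq\emptyset$ therefore forces either the first $n$ steps of the spine to avoid both the reversed half of $W$ and the negligibly many other trajectories arriving in $[0,\eps]$ --- contributing $\asymp(\log n)^{-1/3}$ by \cref{thm:logpaper} --- or else a bush to reach intrinsic distance $\gtrsim n$, the latter contribution being controlled using the $(\log n)^{2/3+o(1)}/n$ upper bound of \cref{thm:wusfo} for the $0$-WUSF.

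Combining the two cases with $\eps = C'(\log n)^{2/3}/n$ yields \eqref{eq:plan_induction_intrinsic}, establishing the proposition. The principal technical obstacle is the early arrival estimate (corresponding to \cref{lem:exceptional_time}): its proof requires prior establishment of the $0$-WUSF intrinsic radius upper bound of \cref{thm:wusfo} --- itself proved by a simpler variant of the same inductive scheme --- together with the polylogarithmic capacity concentration of \cref{cor:capgeom} rather than the weaker \cref{prop:firstconcentration}, since naive stochastic domination introduces a spurious $(\log n)^{1/3}$ factor that would overwhelm the gain from the non-intersection estimate.
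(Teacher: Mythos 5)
Your proposal follows essentially the same route as the paper's proof: the same Kozma--Nachmias-style induction $Q(2n)\leq C(\log n)^{1/3}/n+\tfrac14 Q(n)$, the same split on $\sigma_0(0)\lessgtr\eps$ with $\eps\asymp(\log n)^{2/3}/n$, the same mass-transport reduction of the late-arrival term to the future ray with a large/small capacity dichotomy, and the same early-arrival lemma bounding $\P(\partial\fP(0,n)\neq\emptyset\mid\sigma_0\leq\eps)\lesssim(\log n)^{-1/3}$ via the spine of the hitting trajectory, \cref{thm:logpaper}, stochastic domination, and the previously established $0$-WUSF radius bound. One small imprecision: the residual error in the late-arrival step is not $\P(\cpc{}{\Gamma^n}<cn/(\log n)^{2/3})$ alone (which is only polylogarithmically small) but that small-capacity event jointly with $\partial\fP(y,n)\neq\emptyset$, which after stochastic domination becomes $Q_0(n)$ times the small-capacity probability --- exactly the combination of \cref{prop:rootedupper} with the capacity lower-tail bounds that you invoke, so the mechanism you describe is the correct one.
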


\begin{prop}
\label{prop:rootedupper}
Let $\fP_0=\fP_0(0)$ be the past of the origin in the $0$-wired uniform spanning forest of $\Z^4$. Then 
\[
\P(\radint(\fP_0) \geq n) \lesssim \frac{(\log n)^{2/3} \log \log n}{n}
\]
for every $n\geq 3$.
\end{prop}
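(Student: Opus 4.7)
My plan is to prove the proposition by establishing an inductive inequality of the form
\[
Q_0(2n) \leq \frac{C_0 (\log n)^{2/3} \log \log n}{n} + \frac{1}{4}\, Q_0(n),
\]
where $Q_0(n) := \pr{\radint(\fP_0) \geq n} = \pr{\partial \fP_0(0,n) \neq \emptyset}$, and then iterating. The constant $1/4$ is inessential; any constant strictly less than $1/2$ suffices. This scheme is the same as the one sketched in \cref{subsec:about_the_proof} for the UST (\cref{prop:unrootedupper}), but in simpler form: the $0$-wired setting does not collect an extra non-intersection factor of $(\log n)^{-1/3}$, which is precisely what makes the $0$-WUSF tail larger than the UST tail by a factor of $(\log n)^{1/3}$.

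To establish the inductive inequality I would work with the $0$-wired interlacement Aldous--Broder algorithm of \cref{sec:AldousBroder} and exploit its dynamics via \cref{lem:vPastDynamics}. Let $\sigma$ denote the first time in $(0,\infty)$ that the vertex $0$ is hit by a trajectory of the $0$-wired interlacement process $\sI_0$; since $\cpc{0}{\{0\}} = O(1)$, $\sigma$ is exponential with rate of order $1$. Fix $\eps = C_1 (\log n)^{2/3}/n$ for a large constant $C_1$ and split
\[
Q_0(2n) \leq \pr{\partial \fP_0(0, 2n) \neq \emptyset,\, \sigma \leq \eps} + \pr{\partial \fP_0(0, 2n) \neq \emptyset,\, \sigma > \eps}.
\]
For the early-arrival term I would prove a $0$-wired analogue of \cref{lem:exceptional_time} stating that $\pr{\partial \fP_0(0,2n) \neq \emptyset \mid \sigma \leq \eps} = O(1)$ (there is no non-intersection factor here), so this term contributes $O(\eps) = O((\log n)^{2/3}/n)$. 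For the late-arrival term, if $\partial \fP_0(0,2n) \neq \emptyset$ then there exists $y \in \partial \fP_0(0,n)$ whose subtree past extends at least $n$ further; the path $\Gamma^n$ from such a $y$ to $0$ is the first $n$ steps of the future of $y$ in $\fF_0$, which by Wilson's algorithm rooted at $0$ is distributed as an LERW of length $n$. By \cref{lem:vPastDynamics} (applied backward in time), on $\{\sigma > \eps\}$ the path $\Gamma^n$ must not be hit by any trajectory of $\sI_0$ during a time interval of length $\eps$, which conditionally on $\Gamma^n$ contributes a factor $e^{-\eps\,\cpc{0}{\Gamma^n}}$ by the independence of the interlacement process across disjoint time intervals. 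Combining Markov's inequality with a mass-transport argument carried out in the translation-invariant WUSF and then passed back to the $0$-WUSF via the stochastic domination of \cref{lem:domination}, I would arrive at a bound of the form
\[
\pr{\partial \fP_0(0,2n) \neq \emptyset,\, \sigma > \eps} \leq \E{\mathbbm{1}\!\left(\partial \fP_0(0,n) \neq \emptyset\right) e^{-\eps\,\cpc{0}{\Gamma^n}}} + \cE(n).
\]
Restricting to the event $\cpc{0}{\Gamma^n} \geq c n/(\log n)^{2/3}$, which holds with high probability by \cref{cor:capgeom}, turns this into $e^{-c C_1} Q_0(n) + \cE(n)$; taking $C_1$ large forces $e^{-c C_1} < 1/4$.

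The main technical obstacle will be controlling the error $\cE(n)$ coming from the small-capacity event for $\Gamma^n$. Our sharpest available tool is \cref{cor:capgeom}, which incurs an arbitrarily small polynomial loss $(\log n)^{\eps}$ in the deviation probability, so threading this through the induction is delicate: one needs a $0$-WUSF analogue of \cref{pro:o(n)} showing $\cE(n) = o((\log n)^{2/3}/n)$, obtained by first applying the weak $L^1$ method of \cref{lem:weakl1} to convert the intrinsic-distance event into a walk-length event and then invoking \cref{cor:capgeom}. The $\log \log n$ factor in the final bound is a by-product of these estimates and is ultimately traceable to the $\log\log n/(\log n)^{2/3}$ residue in \cref{lem:amounterased}. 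Consistent with the remark following \cref{thm:wusfo}, I expect this $\log \log n$ can be removed with more refined concentration but would not attempt to do so here.
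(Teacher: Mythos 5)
Your scheme has the right overall shape -- an inductive decomposition on the arrival time of the origin -- but the key step fails, and in a way that changes the logarithmic correction. You bound the late-arrival term by
\[
\pr{\partial \fP_0(0,2n) \neq \emptyset,\, \sigma > \eps} \leq \E{\mathbbm{1}\left(\partial \fP_0(0,n) \neq \emptyset\right) e^{-\eps\,\cpc{0}{\Gamma^n}}} + \cE(n),
\]
justified by ``a mass-transport argument carried out in the translation-invariant WUSF and then passed back to the $0$-WUSF via the stochastic domination of \cref{lem:domination}.'' This is not a valid maneuver. The mass-transport identity used in the proof of \cref{prop:unrootedupper} -- namely $\sum_{u}\pr{u\in\partial\fP(0,n),\,\partial\fP(u,n)\neq\emptyset}=Q(n)$, which collapses the count of boundary vertices to the single vertex at distance $n$ in the future of $0$ -- relies squarely on translation-invariance of the WUSF, and the $0$-WUSF is not translation-invariant (the origin is wired to infinity, so its future is trivial and there is no dual vertex to transport to). Stochastic domination does not let you import mass-transport identities across models. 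What you actually get, after the union bound over $y\in\partial\fP_0(0,n)$, the time-shift, and \cref{lem:domination}, is a factor of $\E{|\partial\fP_0(0,n)|}$ (or its Cesaro proxy $\E{|\fP_0(0,2n)|}/n$), and by \cref{prop:WUSFo_ball} this is of order $(\log n)^{1/3}$, not $O(1)$. With your choice $\eps=C_1(\log n)^{2/3}/n$ the exponential factor is merely a constant $e^{-cC_1}$, so the coefficient in front of $Q_0(n)$ is $e^{-cC_1}(\log n)^{1/3}\to\infty$, not $1/4$, and the induction does not close.

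The paper handles exactly this obstruction differently and this is where the $\log\log n$ in the statement is actually born. Instead of mass-transport it uses a Markov inequality over the $\asymp n$ geodesic vertices of $\fP_0(0,2n)\setminus\fP_0(0,n)$ (inducting from $Q_0(3n)$ rather than $Q_0(2n)$), which reduces matters to controlling $n^{-1}\E{|\fP_0(0,2n)|}\asymp(\log n)^{1/3}$. To beat this growing factor one takes $\delta=2C_1(\log n)^{2/3}\log\log n/n$, so that the good-capacity exponential factor is $e^{-2\log\log n}=(\log n)^{-2}$, turning the coefficient of $Q_0(n)$ into $(\log n)^{1/3-2}=o(1)$. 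The $\log\log n$ in the final bound therefore comes from the choice of $\delta$, which directly sets the fixed-cost term $\pr{\sigma_0\leq\delta}=O(\delta)$; it is not a by-product of $\cE(n)$ or of the $\log\log n/(\log n)^{2/3}$ residue in \cref{lem:amounterased} as you speculate. Indeed your own bookkeeping is inconsistent: with your $\eps$ lacking the $\log\log n$ the early-arrival term only gives $O((\log n)^{2/3}/n)$, so the $\log\log n$ in your claimed inductive inequality is unaccounted for. Your treatment of $\cE(n)$ via \cref{lem:weakl1} and \cref{cor:capgeom} is on the right track and parallels \cref{pro:o(n)}, but it cannot save the broken mass-transport step.
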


As discussed in the introduction, the proofs of both upper bounds follow the same basic strategy as the proofs of \cite[Theorems 1.2 and 7.1]{1804.04120}, but with many of the steps becoming much more delicate to implement. 
We begin by proving \cref{prop:rootedupper}, which is the 4d analogue of \cite[Lemma 7.7]{1804.04120} and will be used in the proof of \cref{prop:unrootedupper}. The proof of this estimate will rely on the results of \cref{sec:capacity} via the following technical lemma, whose proof is deferred until after we have completed the proof of \cref{prop:rootedupper}.

\begin{lemma}[Bad points]
\label{pro:o(n)}
Let $X$ be simple random walk on $\Z^4$ started from the origin.
 There exists a positive constant $C$ such that 
	\[
	\lim_{n\to\infty}\frac{1}{n}
	\E{\#\left\{t\leq A n\log n: |\lr{X^t}|\in [n,2n] \, \text{ and } \, \cpp{\lr{X^t}}\leq \frac{1}{C}\cdot \frac{n}{(\log n)^{2/3}}\right\}} =0
		\] 
		for every constant $A>0$.
\end{lemma}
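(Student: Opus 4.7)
Set $K_n = \frac{1}{C} \cdot \frac{n}{(\log n)^{2/3}}$ and $T = An\log n$. The plan is to show $\E{N_T} = \sum_{t=0}^T \pr{E_t} = o(n)$, where $E_t$ denotes the event in the lemma statement. Two elementary monotonicity inequalities drive the proof: $\cpc{}{\LE(X^t)} \geq \cpc{}{\LE_\infty(X^t)} = \cpc{}{\LE(X)^{\rho_t}}$ (since $\LE_\infty(X^t) \subseteq \LE(X^t)$ and capacity is monotone) and $|\LE(X^t)| \geq \rho_t$, so $E_t$ forces $\rho_t \leq 2n$ and $\cpc{}{\LE(X)^{\rho_t}} \leq K_n$. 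Write $N_T \leq N_1 + N_2$ according to whether (Case 1) $\rho_t \geq n/2$ or (Case 2) $\rho_t < n/2$ combined with $|\LE(X^t)| \geq n$; the latter forces an unusually long unerased ``tail loop'' at $\gamma_{\rho_t}$.

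In Case 1, the monotonicity of $k \mapsto \cpc{}{\LE(X)^k}$ together with $\rho_t \geq n/2$ and $\cpc{}{\LE(X)^{\rho_t}} \leq K_n$ yields $\cpc{}{\LE(X)^{\lceil n/2 \rceil}} \leq K_n$. Since $\{t : \rho_t \in [n/2, 2n]\} = [\ell_{n/2}, \ell_{2n+1})$, we have
\[
N_1 \leq (\ell_{2n+1} - \ell_{n/2}) \cdot \mathbf{1}\bigl\{\cpc{}{\LE(X)^{n/2}} \leq K_n\bigr\}.
\]
Given the infinite loop erasure $\gamma = \LE(X)$, the loops attached at each vertex of $\gamma$ are conditionally independent, and those at indices $\geq n/2$ are independent of $\gamma[0, n/2]$; so conditionally on $\gamma$, the sum $\ell_{2n+1} - \ell_{n/2}$ is independent of the capacity indicator. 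The key probability estimate is
\[
\pr{\cpc{}{\LE(X)^{n/2}} \leq K_n} \lesssim \frac{\log\log n}{(\log n)^{2/3}},
\]
which I derive by choosing $m_0 \asymp n(\log n)^{1/3}$ with $C$ large enough that $K_n \leq cm_0/\log m_0$ (where $c$ is the constant of \cref{prop:firstconcentration}): on the event $\{\ell_{n/2} \geq m_0\}$, the small-capacity event implies $\cpc{}{\LE_\infty(X^{m_0})} \leq K_n$ by monotonicity of $m \mapsto \cpc{}{\LE_\infty(X^m)}$, so \cref{prop:firstconcentration} handles the main term and \cref{lem:amounterased} handles $\pr{\ell_{n/2} < m_0}$. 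Combining with the asymptotic $\E{\ell_{2n+1} - \ell_{n/2}} \asymp n(\log n)^{1/3}$ (a consequence of \cref{lem:amounterased}) gives $\E{N_1} \lesssim n \log\log n / (\log n)^{1/3} = o(n)$.

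In Case 2, on the relevant event the walk $X[\eta_t, t]$, where $\eta_t = \ell_{\rho_t}$, has loop-erasure of length at least $n/2$, while the interval $(\eta_t, t]$ contains no cut times of $X$ (any such cut time would create a new $\ell$-time $\leq t$, contradicting $\ell_{\rho_t + 1} > t$). Equivalently, the ``loop'' at $\gamma_{\rho_t}$ attains an intermediate loop-erasure of size at least $n/2$. Via the $v$-wired Aldous--Broder construction of \cref{sec:AldousBroder} applied at $v = \gamma_{\rho_t}$ and translation invariance of $\Z^4$, this event is governed by the intrinsic radius of $\fT_0$ in the $0$-WUSF, whose tail is bounded by \cref{prop:rootedupper}. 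A Cauchy--Schwarz argument combining this probability bound with control on the second moment of the loop-length at each $\gamma_k$ then gives $\E{N_2} = o(n)$.

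The main obstacle is the $o(n)$ bound for Case 2: one must make the $v$-wired Aldous--Broder identification precise for the dynamic tail-loop event and combine it with second-moment bounds on the loop lengths at vertices of the infinite LERW to close the gap left by the cut-time estimate alone. A further delicate point in Case 1 is the rigorous use of the conditional-independence argument to extract $\E{(\ell_{2n+1} - \ell_{n/2})\,\mathbf{1}\{A\}} \lesssim n(\log n)^{1/3}\cdot\pr{A}$, avoiding a lossy Cauchy--Schwarz that would destroy the $o(n)$ bound.
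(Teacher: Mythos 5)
Your decomposition into $\rho_t \in [n/2,2n]$ versus $\rho_t < n/2$ is genuinely different from the paper's proof, which instead partitions the time range $\{3,\dots,An\log n\}$ into $[3,t_1]\cup[t_1,t_2]\cup[t_2,t_3]$ with $t_1\asymp n(\log n)^{1/3}$, $t_2\asymp n(\log n)^{8/9}$, and bounds each piece directly via \cref{lem:amounterased}, \cref{prop:firstconcentration}, and \cref{cor:capgeom} respectively. Your probability estimate $\pr{\cpc{}{\LE(X)^{n/2}}\leq K_n}\lesssim \log\log n/(\log n)^{2/3}$ is correct and derived soundly from \cref{prop:firstconcentration} and \cref{lem:amounterased}. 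However, the two points you flag as ``delicate'' are actually genuine gaps, not matters of presentation.

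The first gap is the factorization $\E{(\ell_{2n+1}-\ell_{n/2})\mathbf{1}\{A\}} \lesssim n(\log n)^{1/3}\pr{A}$. Conditioning on $\gamma=\LE(X)$ gives $\E{(\ell_{2n+1}-\ell_{n/2})\mathbf{1}\{A\}} = \E{\E{\ell_{2n+1}-\ell_{n/2}\mid\gamma}\,\mathbf{1}\{A\}}$, and $\mathbf{1}\{A\}$ is $\gamma^{n/2}$-measurable, but $\E{\ell_{2n+1}-\ell_{n/2}\mid\gamma}$ is a function of $\gamma[n/2,2n]$, which is \emph{not} independent of $\gamma^{n/2}$ (the LERW has only a domain-Markov property, so the continuation is conditioned on avoiding the initial segment). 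There is no a priori negative correlation that lets you decouple the two factors. Worse, the conditional loop length $\ell_{k+1}-\ell_k$ given $\gamma$ has only weak-$L^1$ tails ($\pr{\ell_{k+1}-\ell_k\geq m\mid\gamma}\lesssim 1/m$, as used in \cref{lem:weakl1}), so $\E{\ell_{k+1}-\ell_k\mid\gamma}$ is not bounded uniformly in $\gamma$, and even the unconditional claim $\E{\ell_{2n+1}-\ell_{n/2}}\asymp n(\log n)^{1/3}$ does not follow from \cref{lem:amounterased}, which gives concentration in probability only and provides no control on the large-deviation tail needed to pass to expectations. Without these ingredients, the naive replacement $\ell_{2n+1}-\ell_{n/2}\leq An\log n$ gives $\E{N_1}\lesssim n(\log n)^{1/3}\log\log n$, which is far from $o(n)$.

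The second gap is Case~2, which you describe only at the level of a sketch: the claimed reduction to the $0$-WUSF intrinsic-radius tail via the $v$-wired Aldous--Broder correspondence, and the subsequent Cauchy--Schwarz with a second-moment bound on loop lengths, are not worked out and are not a straightforward adaptation of anything in the paper. Note that the paper's own route avoids both of these difficulties precisely because it never needs to pass to expectations of $\ell$: the key input for the large-$t$ range $t\in[t_2,t_3]$ is \cref{cor:capgeom}, the polylogarithmic deviation estimate for the capacity of $\LE(X^t)$ on Ces\`aro average, which delivers a per-$t$ probability bound strong enough to sum directly. If you want to pursue your decomposition you would need an analogue of \cref{cor:capgeom} for the conditional expectation of $\ell$, and that is at least as hard as the bound the paper proves.
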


\begin{proof}[Proof of \cref{prop:rootedupper} given \cref{pro:o(n)}]
	Let $Q_0(n)  = \pr{\radint(\fP_0) \geq n} = \pr{\partial\fP_0(0,n) \neq \emptyset}$ for each $n\geq 1$. It suffices to prove that
	\begin{align}
	\label{eq:Q00}
		Q_0(3n) \lesssim \frac{(\log n)^{2/3}\log \log n}{n} +  o(1) Q_0(n)
	\end{align}
	for every $n\geq 2$.
Indeed, this inequality implies by induction on $k$ that \[Q_0(3^k)\lesssim \frac{k^{2/3}\log k}{3^k}\]
for every $k\geq 2$, and the claim follows since $Q_0(n)$ is decreasing in $n$.

Fix $n\geq 2$, let $C_1$ be the constant from \cref{pro:o(n)} and let 
\[
\delta = \frac{2C_1 (\log n)^{2/3} \log \log n}{n}.
\]
	Let $\sI_0$ be the $0$-rooted interlacement process on $\Z^4$ and let $\F_{0,t}=\AB_t(\sI_0)$ for each $t\in \R$. For each $t\in \R$ and $u\in \Z^d$, we write $\fP_{0,t}(u,n)$ for the set of vertices that lie in the past of $u$ in $\F_{0,t}$ and have intrinsic distance at most $n$ from $u$, and write $\partial \fP_{0,t}(u,n)= \fP_{0,t}(u,n) \setminus \fP_{0,t}(u,n-1)$.
	Let $\sigma_0$ be the first time that $0$ is hit by a trajectory of the interlacement process, so that 
	\begin{align*}
		Q_0(3n) = \pr{ \partial \fP_{0,0}(0,3n) \neq \emptyset,\, \sigma_0 \leq \delta} + \pr{ \partial \fP_{0,0}(0,3n) \neq \emptyset,\,\sigma_0 > \delta}.
	\end{align*}
	The first of these terms is trivially bounded from above by $\pr{\sigma_0 \leq \delta}=O(\delta)$. Thus, to prove \eqref{eq:Q00}, it suffices to prove that
\begin{equation}
\label{eq:Q01}
\pr{ \partial \fP_{0,0}(0,3n) \neq \emptyset,\,\sigma_0 > \delta} \lesssim o(1) Q_0(n).
\end{equation}
Observe that on the event whose probability we wish to estimate there must be at least $n$ points in $\fP_{0,0}(0,2n)\setminus \fP_{0,0}(0,n)$ that lie on a geodesic from $0$ to $\partial \fP_{0,0}(0,3n)$,  so that Markov's inequality implies that
	\begin{multline}\label{eq:markovdiamint}
		\pr{ \partial \fP_{0,0}(0,3n) \neq \emptyset,\,\sigma_0 > \delta} \\
		 \leq \frac{1}{n} \sum_{u \in \Z^4} \pr{u\in \fP_{0,0}(0,2n)\setminus \fP_{0,0}(0,n), \partial\fP_{0,0}(u,n) \neq \emptyset, \sigma_0>\delta}.
	\end{multline}
	By time stationarity, we can shift time to $-\delta$ and ask that no trajectory hit $0$ during~$[-\delta,0]$. Moreover, on this event, the past of $0$ at time $-\delta$ is equal to the component of the past of $0$ at time $0$ induced by $\Z^4\setminus \mathcal{I}_{[-\delta,0]}$ by \cref{lem:vPastDynamics}. 
  Writing $\Gamma_{0,0}(x,y)$ for the unique path between $x$ and $y$ in $\fF_{0,0}$, we therefore have that
	\begin{align*}
		&\pr{ \partial \fP_{0,0}(0,3n)\neq \emptyset,\,\sigma_0 > \delta}
		\\&\hspace{1cm}\leq \frac{1}{n}\sum_{u\in \Z^4}\pr{u\in \fP_{0,0}(0,2n)\setminus \fP_{0,0}(0,n), \partial \fP_{0,0}(u,n)\neq \emptyset, \Gamma_{0,0}(u,0)\cap \mathcal{I}_{[-\delta,0]}=\emptyset}
		\\
		&\hspace{1cm}\leq 
		\frac{Q_0(n)}{n}\sum_{u\in \Z^4}\pr{u\in \fP_{0,0}(0,2n)\setminus \fP_{0,0}(0,n),\Gamma_{0,0}(u,0)\cap \mathcal{I}_{[-\delta,0]}=\emptyset},
	\end{align*}
	where we used the stochastic domination property (\cref{lem:domination}) in the last line.

	To prove \eqref{eq:Q01} it therefore suffices to prove that
	\begin{equation}
	\label{eq:badpoints1}
\lim_{n\to\infty}\frac{1}{n}\sum_{u\in \Z^4}\pr{u\in \fP_{0,0}(0,2n)\setminus \fP_{0,0}(0,n),\Gamma_{0,0}(u,0)\cap \mathcal{I}_{[-\delta,0]}=\emptyset} =0.
	\end{equation}
	Since this estimate will not use any dynamic arguments, we write $\fP_0=\fP_{0,0}$ and $\Gamma_0(x,y)=\Gamma_{0,0}(x,y)$ to lighten notation. We decompose each term into two terms according to whether $\cc{\Gamma_0(u,0)}$ is larger or smaller than~$\tfrac{n}{C_1(\log n)^{2/3}}$. Since $\cpc{}{\Gamma_0(u,0)}$ and $\cpc{0}{\Gamma_0(u,0)}$ coincide up to an additive constant and $\cI_{[-\delta,0]}$ is independent of $\fF_{0,0}$, we obtain in the first case that
	\begin{multline*}
		\pr{u\in \fP_0(0,2n)\setminus \fP_0(0,n),  \Gamma_0(u,0)\cap \mathcal{I}_{[-\delta,0]}=\emptyset,\cc{\Gamma_{0}(u,0)}\geq  \frac{n}{C_1(\log n)^{2/3}}} \\
\lesssim e^{-\delta n/C_1(\log n)^{2/3}}\pr{u\in \fP_0(0,2n)\setminus \fP_0(0,n) } = \frac{1}{(\log n)^2}\pr{u\in \fP_0(0,2n)\setminus \fP_0(0,n) },
	\end{multline*}
 so that summing over $u$ and applying \cref{prop:WUSFo_ball} we deduce that
	\begin{multline}
		\frac{1}{n}\sum_{u\in \Z^4}\pr{u\in \fP_0(0,2n)\setminus \fP_0(0,n), \Gamma_0(u,0)\cap \mathcal{I}_{[-\delta,0]}=\emptyset,\cc{\Gamma_0(u,0)}\geq  \frac{n}{C_1(\log n)^{2/3}}} \\
    \lesssim \frac{1}{n(\log n)^2} \E{|\fP_0(0,2n)|} \lesssim \frac{1}{(\log n)^{5/3}} =o(1)
\label{eq:Q02}
	\end{multline}
	as required.
 We now treat the second term, in which the capacity of the path connecting $u$ to the origin is small. That is, we bound
\begin{align*}
	\frac{1}{n}  \sum_{u\in \Z^4}  \pr{u\in \fP_0(0,2n)\setminus \fP_0(0,n), \cc{\Gamma_0(u,0)}\leq \frac{n}{C_1(\log n)^{2/3}}}.
\end{align*}
 Sampling the $0$-WUSF using Wilson's algorithm starting with $u$ and applying \cref{lem:weakl1}, we obtain that there exists a constant $C_3$ such that
\begin{align*}
	&\pr{u\in \fP_0(0,2n)\setminus \fP_0(0,n), \cc{\Gamma_0(u,0)}\leq \frac{n}{C(\log n)^{2/3}}} \\
	&= \prstart{\tau_0<\infty,\ n\leq |\ler{}{X^{\tau_0}} |\leq 2n, \cc{\ler{}{X^{\tau_0}}}\leq \frac{n}{C_1(\log n)^{2/3}}}{u}\\
	&\leq 2 \prstart{\tau_0 \leq C_3 n\log n,\ n\leq |\ler{}{X^{\tau_0}} |\leq 2n, \cc{\ler{}{X^{\tau_0}}}\leq \frac{n}{C_1(\log n)^{2/3}}}{u} \\
	&=2 \sum_{t=0}^{\lfloor C_3 n \log n \rfloor} \prstart{\tau^+_0 > t, X_t = u, \ n\leq |\LE(X^t) |\leq 2n, \cc{\LE(X^t)}\leq \frac{n}{C_1(\log n)^{2/3}}}{0},
\end{align*}
where we applied \cref{lem:weakl1} for the inequality and the reversibility of random walk and loop-erased random walk on $\Z^4$ for the last equality.
	Since $C_1$ was taken to be the constant from \cref{pro:o(n)}, we can now sum this estimate over $u \in \Z^4$ and apply that lemma to deduce that
	\begin{multline}
		 \frac{1}{n}\sum_{u\in \Z^4}  \pr{u\in \fP_0(0,2n)\setminus \fP_0(0,n), \cc{\Gamma_0(u,0)}\leq \frac{n}{C_1(\log n)^{2/3}}} \\
		 \leq \frac{2}{n}\E{\#\left\{t\leq C_3 n\log n: |\lr{X^t}|\in [n,2n] \, \text{ and } \, \cpp{\lr{X^t}}\leq \frac{1}{C_1}\cdot \frac{n}{(\log n)^{2/3}}\right\}}\\=o(1)
		 \label{eq:Q03}
	\end{multline}
as required. Putting together \eqref{eq:Q02} and \eqref{eq:Q03} yields \eqref{eq:badpoints1}, completing the proof.
\end{proof}

We now owe the reader the proof of \cref{pro:o(n)}.

\begin{proof}[Proof of \cref{pro:o(n)}]
Fix $A < \infty$ and let $C$ be the constant from \cref{prop:firstconcentration}. 
As in the proof of \cref{prop:WUSFo_ball}, we will break the sum we wish to bound into three pieces. Let $t_1=\lceil n (\log n)^{1/3} /2 \rceil$, $t_2 = \lceil n (\log n)^{8/9} \rceil$, and $t_3 = \lceil A n \log n \rceil$. (Here $8/9$ could safely be replaced by any number strictly between $5/6$ and $1$.) Then we have that
\begin{multline*}
\E{\#\left\{t\leq A n\log n: |\lr{X^t}|\in [n,2n] \, \text{ and } \, \cpp{\lr{X^t}}\leq \frac{1}{4C}\cdot \frac{n}{(\log n)^{2/3}}\right\}}
\\\leq 3+\sum_{t=3}^{t_1} \P\left(|\lr{X^t}| \geq n\right) + \sum_{t=t_1}^{t_2} \P\left(\cpp{\lr{X^t}}\leq \frac{1}{4C}\cdot \frac{n}{(\log n)^{2/3}} \right) \\+ 
\sum_{t=t_2}^{t_3} \P\left(\cpp{\lr{X^t}}\leq \frac{1}{4C}\cdot \frac{n}{(\log n)^{2/3}} \right).
\end{multline*}
For the first sum, Theorem~\ref{lem:amounterased} implies that 
\[
\sum_{t=3}^{t_1} \P\left(|\lr{X^t}| \geq n\right) \lesssim \sum_{t=3}^{t_1} \frac{\log \log t}{(\log t)^{2/3}} \lesssim \frac{t_1  \log \log t_1}{(\log t_1)^{2/3}} \lesssim \frac{n  \log \log n}{(\log n)^{1/3}} = o(n)
\]
as required. Similarly, for the second term, 
 \cref{prop:firstconcentration} with $\eps=1/18$ implies that 
\begin{align*}
	\sum_{t=t_1}^{t_2}	\pr{\cpp{\ler{}{X^t}}\leq \frac{1}{4C}\cdot \frac{n}{(\log n)^{2/3}}}
	&\lesssim \sum_{t=t_1}^{t_2}	\pr{\cpp{\ler{}{X^t}}\leq \frac{1}{C}\cdot \frac{t}{\log t}}\\
	 &\lesssim \sum_{t=t_1}^{t_2} \frac{1}{(\log t)^{17/18}} \lesssim \frac{n(\log n)^{16/18}}{(\log n)^{17/18}} =o(n)
\end{align*}
as required.
Finally, for the third sum, applying \cref{cor:capgeom} with $\alpha=5/9$ and $\eps= 1/18$ yields that
\begin{multline*}
\sum_{t=t_2}^{t_3}  \prstart{\cpp{\ler{}{X^t}}\leq \frac{1}{4C}\cdot \frac{n}{(\log n)^{2/3}}}{0} 
\\\lesssim \sum_{t=t_2}^{t_3}  \prstart{\cpp{\ler{}{X^t}}\leq \frac{1}{2C}\cdot \frac{t}{(\log t)^{1+5/9}}}{0}
 \lesssim \frac{A n \log n }{(\log n)^{19/18}} = o(n)
\end{multline*}
as required. \qedhere
\end{proof}

\begin{remark}
Using \cref{prop:firstconcentration} but not the more difficult estimate of \cref{cor:capgeom} in this proof leads to a proof that the expectation in question is $O(n (\log n)^\eps)$ for every $\eps>0$. Note however that this does \emph{not} suffice to prove that the tail of the intrinsic radius is, say, $O(n^{-1}(\log n)^{1/3+\eps})$ via our strategy. Indeed, any upper bound on this expectation that is larger than $n$ is completely useless as an input to our inductive scheme above.
\end{remark}

Our next goal is to apply \cref{prop:rootedupper} to prove \cref{prop:unrootedupper}.  We define
\[Q(n)  = \pr{\radint(\fP) \geq n} = \pr{\partial \fP(0,n) \neq \emptyset}\] for each $n\geq 0$.
Let $\sI$ be the interlacement process on $\Z^4$ and let $\fT=\AB_0(\sI)$ be the associated sample of the uniform spanning tree at time zero.
Let $\sigma_0$ be the first time that $0$ is hit by a trajectory of the interlacement process.  As above, we will use the decomposition 
	\begin{align}
		Q(2n) = \pr{\partial\fP(0,2n) \neq \emptyset, \sigma_0> \delta} + \pr{\partial\fP(0,2n) \neq \emptyset, \sigma_0 \leq  \delta},
	\end{align}
	for some appropriately chosen $\delta>0$. 	 Unlike in our analysis of the $0$-WUSF, however, bounding the second term by $\pr{\sigma_0\leq  \delta} = O(\delta)$ is no longer sharp, and a more delicate analysis will be necessary.

\begin{lemma}
\label{lem:exceptional_time}
 The bound
\[\pr{\partial\fP(0,n) \neq \emptyset \mid \sigma_0 \leq  \delta} \lesssim \frac{1}{(\log n)^{1/3}}\]
holds for every $n\geq 2$ and $0<\delta \leq (\log n)^{-1/3}$.
\end{lemma}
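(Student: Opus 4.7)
The plan combines the stochastic domination property (\cref{lem:domination}) with the non-intersection estimate of \cref{thm:logpaper}, using the fact that on $\{\sigma_0 \leq \delta\}$ there is a trajectory of the interlacement process hitting $0$ near time $0$ whose backward part acts as an obstruction that the past of $0$ must avoid, supplying the factor $(\log n)^{-1/3}$.

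First, by Poisson thinning, the event $\{\sigma_0 \leq \delta\}$ coincides, up to an event of probability $O(\delta^2) \lesssim \delta(\log n)^{-1/3}$ (which is absorbed in the target bound), with the event $\sA_\delta$ that exactly one trajectory $W$ of $\sI$ with time stamp in $(0,\delta]$ hits $0$. Conditional on $\sA_\delta$, I would parameterize $W$ so that $W(0) = 0$ is its first visit to $0$. By the explicit form of the intensity $Q_{\{0\}}$ appearing in \eqref{eq:ildefn}, the forward part $W^+ := W[0,\infty)$ is distributed as a simple random walk from $0$ and the reversed backward part $W^- := W(-\infty, 0]^\leftarrow$ is distributed as an independent simple random walk from $0$ conditioned to never return; moreover $W$ is independent of $\sI' := \sI \setminus \{W\}$.

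Next, using the interlacement Aldous--Broder algorithm at time $0$ together with the variant from \cref{subsec:AB_variant} applied with $X = W^+$, one can identify on $\sA_\delta$ the past $\fP(0)$ in the UST with the component of $0$ in a $0$-WUSF generated from $W^+$ and $\sI'$, subject to the constraint that the geodesic in that component from any vertex of $\fP(0)\setminus\{0\}$ to $0$ is vertex-disjoint from $W^-$. The stochastic domination property (\cref{lem:domination}) then ensures this component is dominated by an independent copy $\fT_0$ of the component of $0$ in the $0$-WUSF of $\Z^4$.

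On the event $\{\partial \fP(0, n) \neq \emptyset\}$, we obtain a vertex $v \in \partial \fT_0(0, n)$ whose geodesic $\gamma_v$ in $\fT_0$ from $v$ to $0$ has length $n$ and is disjoint from $W^-$ except at $0$. By Wilson's algorithm for the $0$-WUSF, $\gamma_v$ is distributed as a loop-erased random walk of length $n$. Summing over candidate vertices $v$ via a mass-transport / first-moment argument and appealing to \cref{thm:logpaper} --- which gives that the probability of an infinite simple random walk not intersecting an independent LERW of length $n$ is of order $(\log n)^{-1/3}$ --- yields the bound $\P(\partial \fP(0, n) \neq \emptyset \mid \sA_\delta) \lesssim (\log n)^{-1/3}$.

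The main obstacle lies in the structural step: setting up the coupling rigorously so that both the stochastic domination by $\fT_0$ and the non-intersection constraint with $W^-$ emerge simultaneously and combine cleanly with \cref{thm:logpaper}. This requires a careful analysis of the IAB construction near $0$ to track how $W$ interacts with the remaining trajectories in $\sI'$, using the dynamics described in \cref{lem:PastDynamics,lem:vPastDynamics} to pass between the past at time $0$ and the past at time $\sigma_0$, and exploiting that the $W^-$ obstruction is independent of the other randomness used to build $\fT_0$.
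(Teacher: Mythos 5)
There is a genuine gap, and it sits exactly where your proposal waves it through: the final first-moment step. After you reduce (correctly in spirit) to the statement that every vertex of $\fP(0)$ must have its tree-geodesic to $0$ disjoint from the backward walk $W^-$, you propose to bound $\P(\partial\fP(0,n)\neq\emptyset\mid\sA_\delta)$ by summing over candidate vertices $v$ at intrinsic distance $n$ and applying \cref{thm:logpaper} to the geodesic $\gamma_v$. This cannot give $(\log n)^{-1/3}$: the expected number of vertices at intrinsic distance exactly $n$ from $0$ in the four-dimensional UST grows polynomially in $n$ (compare \cref{prop:extrinsic_volume} and the discussion of ball volumes), so even if each fixed $v$ contributed a factor $(\log n)^{-1/3}$ for the avoidance of $W^-$, the union/first-moment bound is of order $n^{1+o(1)}(\log n)^{-1/3}$, which is useless. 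The reason the true answer is $(\log n)^{-1/3}$ is not a union bound over distant endpoints but a bottleneck effect near the origin, and establishing that bottleneck is the real content of the lemma. The paper extracts the factor $(\log n)^{-1/3}$ solely from the event that $W^-$ avoids a \emph{short} initial segment $Z^{m_1}$, $m_1=\lceil n^{1/32}\rceil$, of the spine $Z=\LE^R(W^+)$ of the single-walk tree $\AB(W^+)$ (using \cref{lem:displacement}), and then shows that every other way for $\partial\fP(0,n)$ to be nonempty has probability $o((\log n)^{-1/3})$: the spine segment may not yet be ``frozen'' into all paths to $0$ (controlled via cut times, \cref{lem:Lawlercuttimes}, and a weak-$L^1$ bound on $\ell^{\leftarrow}_{m_1}$), and the past may be large without using the spine at all (controlled by a union bound over only $O(n^{1/16})$ early walk vertices, a shift-invariance argument, \cref{lem:domination}, and \cref{prop:rootedupper}). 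Your proposal contains no mechanism that localizes the non-intersection requirement to a short spine segment, and without it the quantitative conclusion does not follow.

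Two secondary problems. First, the object you couple to is misidentified: applying the construction of \cref{subsec:AB_variant} to $W^+$ and $\sI'$ yields the UST of $\Z^4$ re-oriented towards $0$ (so the ``component of $0$'' is everything), not a $0$-WUSF; and invoking \cref{lem:domination} to dominate this by an independent $\fT_0$ while simultaneously keeping the exact law of the geodesics needed for the avoidance probability is not a legitimate use of that lemma (in the paper, stochastic domination is used only to kill a negligible error event, never to produce the $(\log n)^{-1/3}$ factor). Second, your Poisson-thinning reduction leaves $W$ arriving at a random time in $(0,\delta]$, so trajectories of $\sI'$ arriving earlier still have priority over $W$ when entry edges are computed; the paper handles this with the Palm-conditioning plus a monotonicity-in-$\delta$ argument showing the past only shrinks as the arrival time of $W$ increases (on the event that $0$ is not hit by $\sI$ in $[0,\delta_0]$), which lets one work at arrival time exactly $0$. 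That gap is fixable, but the first-moment step is not.
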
	

The condition that $\delta \leq (\log n)^{-1/3}$ is not really necessary, but is included to simplify the proof. We will apply the lemma with $\delta$ of order $n^{-1}(\log n)^{2/3} \ll (\log n)^{-1/3}$.


\begin{proof}[Proof of \cref{lem:exceptional_time}]
We first describe a coupling of the conditional law of $\sI$ given $\sigma_0 \leq \delta$ with a slightly simpler model.
Let $W$ be a doubly-infinite random walk started at $0$ and conditioned not to return to $0$ at any positive time, independent of the interlacement process $\sI$. (Note that in the definition of the interlacements process, the trajectory $W$ is conditioned to hit $0$ for the first time at time $0$. However, conditioning~$W$ to hit $0$ for the \emph{last} time at time zero gives exactly the same tree by the definition of the Aldous Broder algorithm, which only depends on the trajectories modulo time shifts.)
Fix $n \geq 2$ and let $\delta_0=(\log n)^{-1/3} \lesssim 1$. 
For each $\delta \geq 0$, let $\sI^\delta$ be the point process $\sI^\delta=\sI \cup \{(W,\delta)\}$, let $\fT^\delta=\AB_0(\sI^\delta)$, and let $\partial \fP^\delta(0,n)$ be the set of points in the past of $0$ in $\fT^\delta$ having intrinsic distance exactly $n$ from $0$.
By the standard theory of Poisson point processes, the conditional distribution of $\sI$ given $\sigma_0 = \delta$ (in the Palm probability sense) is equal to the conditional distribution of $\sI^\delta$ given that $0\notin \cI_{[0,\delta]}$, i.e., that $0$ is not hit by $\sI$ in time $[0,\delta]$. 
Thus, we have that
\begin{align*}
\pr{\partial\fP(0,n) \neq \emptyset \mid \sigma_0 \leq  \delta} &\leq \sup_{0\leq \eps \leq \delta} \pr{\partial\fP^\eps(0,n) \neq \emptyset \mid 0 \notin \cI_{[0,\eps]}}
\lesssim 
\sup_{0\leq \eps \leq \delta} \pr{\partial\fP^\eps(0,n) \neq \emptyset}
\\ &\leq \sup_{0\leq \eps \leq \delta} \pr{\partial\fP^\eps(0,n) \neq \emptyset,\, 0 \notin \cI_{[0,\delta_0]}} + \pr{0 \in \cI_{[0,\delta_0]}}
\end{align*}
for every $0<\delta \leq \delta_0$.
Since the second term is $O(\delta_0)=O((\log n)^{-1/3})$ by definition of the interlacement process, it suffices to prove that
%
\begin{equation}
\P(\partial \fP^\delta(0,n) \neq \emptyset,\, 0 \notin \cI_{[0,\delta_0]} ) \lesssim \frac{1}{(\log n)^{1/3}}
\label{eq:notzeroyet}
\end{equation}
for every $0\leq \delta \leq \delta_0$. 

We now claim that the past $\fP^\delta=\fP^\delta(0)$ of $0$ in $\fT^\delta$ is a decreasing function of $0\leq \delta \leq \delta_0$ on the event $0\notin \cI_{[0,\delta_0]}$ in the sense that $\fP^\delta(0)$ is a subgraph of $\fP^\eps(0)$ whenever $0\leq \eps \leq \delta$. 
The proof of this claim is similar to that of \cite[Lemma 3.4]{1804.04120} (our \cref{lem:PastDynamics}). For each $x\in \Z^4$ and $0 \leq \delta \leq \delta_0$, let $\sigma^\delta_0(x)$ be the first time after time zero that $x$ is hit by a trajectory of $\sI^\delta$. Suppose $x$ belongs to the past of $0$ in $\fT^\delta$, and let $x_1,\ldots,x_n$ be the path connecting $x$ to $0$ in $\fT^\delta$. On the event $\{0\notin \cI_{[0,\delta_0]}\}$, it follows by definition of the Aldous--Broder algorithm that the sequence $(\sigma^\delta_0(x_i))_{i=1}^n$ is decreasing and that $\sigma^\delta_0(x_n)=\sigma^\delta_0(0)=\delta$. In particular, every vertex in this path  is either hit for the first time in $\sI^\delta_{[0,\infty)}$ by the trajectory $W$ or by a trajectory of $\sI$ that arrives after time $\delta$. It follows that the first entry edge for each vertex in this path is the same for the two point processes $\sI^\delta$ and $\sI^\eps$, and hence that the path is also contained in $\fP^\eps(0)$ as required. Thanks to this monotonicity, it suffices for us to prove \eqref{eq:notzeroyet} in the case $\delta=0$, i.e., that
\begin{equation}
\label{eq:zero_only}
\P(\partial \fP^0(0,n) \neq \emptyset,\, 0 \notin \cI_{[0,\delta_0]} ) \leq \P(\partial \fP^0(0,n) \neq \emptyset ) \lesssim \frac{1}{(\log n)^{1/3}}
\end{equation}
for every $n\geq 2$.

Let $W^+=W[0,\infty)$ and $W^-=(W(-\infty,0])^\leftarrow$ be the forward and backward parts of the doubly-infinite walk $W$, so that $W^+$ is distributed as a random walk conditioned not to return to the origin and $W^-$ is a simple random walk independent of $W^+$.
As discussed in \cref{subsec:AB_variant}, the oriented tree 
$\AB(W^+)$ contains a unique, backwards-oriented infinite path $Z=\LE^R(W^+)$ that is distributed as an infinite loop-erased random walk started at $0$. Again, be careful to note that $Z$ is \emph{not} the loop-erasure of $W^+$, but is rather a sort of infinite backwards loop-erasure. Indeed, for each $k\geq 0$, the path connecting $W^+_k$ to $0$ in $\AB(W^+)$ is equal to the loop-erasure of the \emph{reversal} of  $(W^+)^k$.
 We call the infinite path $Z$ the \textbf{spine} of $\AB(W^+)$. One way for the past $\fP^0$ to be large is for the walk $W^-(0,\infty)$ to avoid a long initial segment of the spine $Z$; we will argue that this is the \emph{only} way for the past $\fP^0$ to be large up to events of negligibly small probability.

   Let $m_1 = \lceil n^{1/32} \rceil$ and $m_2 = \lceil n^{1/16} \rceil$, so  
    that $\log m_1 \asymp \log m_2 \asymp \log n$. Let $A_1$ be the event that $W^-(0,\infty) \cap Z^{m_1} \neq \emptyset$ and let $A_2$ be the event that
$W^-(0,\infty)$ intersects the path from $W^+_m$ to $0$ in $\AB(W^+)$ for every $m \geq 2m_2$. Then we have by a union bound that 
\begin{align*}
	\pr{\partial \fP^0(0,n)\neq \emptyset} &\leq 
	\pr{A_1^c} + \pr{A_1 \setminus A_2}
	 + \pr{\partial \fP^0(0,n)\neq \emptyset, A_2}.
\end{align*}
Since $Z$ is distributed as a loop-erased random walk, the estimate \eqref{eq:nonintersection_n_outside} of \cref{lem:displacement} implies that $\pr{A_1^c} \asymp (\log n)^{-1/3}$, and
 it therefore suffices to prove that the second two terms above are both $o((\log n)^{-1/3})$; we will see that both estimates hold with a lot of room.

We begin by bounding $\P(A_1 \setminus A_2)$. For each $k \geq 0$, let $\ell_k^\leftarrow$ be maximal such that $W^+_{\ell_k^\leftarrow}=Z_k$. A similar, time-reversed version of the argument used to prove \cite[Lemma 5.1]{1804.04120} and our Lemma~\ref{lem:weakl1} yields that the increments $(\ell_k^\leftarrow-\ell_{k-1}^\leftarrow)_{k\geq 1}$ are conditionally independent given $Z$ and satisfy
\[
\P(\ell_k^\leftarrow-\ell_{k-1}^\leftarrow = t+1 \mid Z) \leq p_t(0,0) \lesssim \frac{1}{t^2}
\]
for every $t,k\geq 1$. Applying the weak triangle inequality for weak-$L^1$ random variables \eqref{eq:weakl1triangle} we deduce that
\begin{equation}
\label{eq:backwardsweakl1}
\P(\ell_{m_1}^\leftarrow \geq m_2 ) \lesssim \frac{m_1 \log m_1}{m_2}.
\end{equation}
 Now observe that if $r \geq 0$ is a cut-time of $W^+$ then $W^+_r=Z_k$ for some $k\geq 0$ and the path connecting $0$ to $W^+_m$ in $\AB(W^+)$ has $Z^k$ as an initial segment for every $m \geq r$. As such, if $\ell_{m_1}^\leftarrow \leq m_2$ and there is a cut-time between $m_2$ and $2m_2$ then $Z^{m_1}$ is an initial segment of the path connecting $0$ and $W^+_m$ in $\AB(W^+)$ for every $m\geq 2m_2$. So in this case, if $W^-(0,\infty)$ intersects~$Z^{m_1}$, then the event $A_2$ must also be satisfied. 
 Applying \eqref{eq:backwardsweakl1} and \cref{lem:Lawlercuttimes}, we deduce that
 \begin{multline}
\P(A_1 \setminus A_2) \leq \P(\ell_{m_1}^\leftarrow \geq m_2 ) + \P(\text{there is no cut-time between $m_2$ and $2m_2$})
\\
\lesssim \frac{m_1 \log m_1}{m_2} + \frac{\log \log m_2}{\log m_2} = o\left(\frac{1}{(\log n)^{1/3}}\right)
 \end{multline}
 as required.

It remains only to bound the probability that
$\partial \fP^0(0,n)\neq \emptyset$ and $A_2$ holds.
The definition of the Aldous--Broder algorithm ensures that $W^+[2m_2,\infty)$ is disjoint from $\fP^0(0)$ on the event $A_2$. As such, if $A_2$ holds and $\partial \fP^0(0,n)\neq \emptyset$ then there must exist $0\leq m \leq 2m_2$ such that $\fP^0(W_m)$ contains a path of length at least $\bar n := n-2m_2$ that is disjoint from $W(m,\infty)$. In particular, this $m$ satisfies $W_m \neq W_{r}$ for every $r > m$. 
Letting $\radint(\fP^0(W_m) \setminus W(m,\infty))$ be the maximal intrinsic distance between $W_m$ and a point in the connected component of $W_m$ in $\fP^0(W_m) \setminus W(m,\infty)$, it follows by a union bound that
\begin{equation*}
\P(\partial \fP^0(0,n)\neq \emptyset, A_2 ) 
\leq \sum_{m=0}^{2m_2}  
\pr{\text{$\radint\left(\fP^0(W_m) \setminus W(m,\infty)\right)\geq \bar n$, $W_m \notin W(m,\infty)$}}.
\end{equation*}
We now claim that
\begin{multline}
\label{eq:index_shifting}
\pr{\text{$\radint\left(\fP^0(W_m) \setminus W(m,\infty)\right)\geq \bar n$, $W_m \notin W(m,\infty)$}} 
\\
=\pr{\text{$\radint\left(\fP^0(W_0) \setminus W(0,\infty)\right)\geq \bar n$},\, W_{-m} \notin W(-m,\infty)}
\end{multline}
for each $m\geq 0$.
To this end, let $\widetilde \P$ be the law of the pair of random variables $(\sI,W)$ defined as above but where $W$ is an unconditioned simple bi-infinite random walk. That is, the law of the pair $(\sI,W)$ as we introduced them previously can be obtained from this modified law $\tilde \P$ by conditioning that $W_0 \notin W(0,\infty)$. This modified model is now stationary under shifting the index of the random walk $W$, so that
\begin{align*}
&\pr{\text{$\radint\left(\fP^0(W_m) \setminus W(m,\infty)\right)\geq \bar n$, $W_m \notin W(m,\infty)$}}
\\
&\hspace{2.5cm}=
\widetilde \P\left(\text{$\radint\left(\fP^0(W_m) \setminus W(m,\infty)\right)\geq \bar n$, $W_m \notin W(m,\infty)$} \;\Big|\; W_0 \notin W(0,\infty)\right)
\\
&\hspace{2.5cm}=
\widetilde \P\left(\text{$\radint\left(\fP^0(W_0) \setminus W(0,\infty)\right)\geq \bar n$, $W_0 \notin W(0,\infty)$} \;\Big|\; W_{-m} \notin W(-m,\infty)\right)\\
&\hspace{2.5cm}=
\widetilde \P\left(\text{$\radint\left(\fP^0(W_0) \setminus W(0,\infty)\right)\geq \bar n$, $W_{-m} \notin W(-m,\infty)$} \;\Big|\; W_{0} \notin W(0,\infty)\right)\\
&\hspace{2.5cm}=
 \P\left(\text{$\radint\left(\fP^0(W_0) \setminus W(0,\infty)\right)\geq \bar n$, $W_{-m} \notin W(-m,\infty)$} \right)
\end{align*}
as claimed, where we used the fact that the two events $\{W_0 \notin W(0,\infty)\}$ and $\{W_{-m} \notin W(-m,\infty)\}$ have the same probability under the measure $\tilde \P$ in the third equality.
We deduce by summing over $m$ in \eqref{eq:index_shifting} that
\begin{equation}
\label{eq:m2union}
\P\left(\partial \fP^0(0,n)\neq \emptyset, A_2 \right) 
\lesssim m_2   
\pr{\text{$\radint(\fP^0(0) \setminus W(0,\infty)) \geq \bar n$}}.
\end{equation}

Now, if we define $\sI^- = \sI \cup \{(W^-,0)\}$ to be the union of $\sI$ with the negative part of $W$, define $\fT^-=\AB(\sI^-)$, and define $\fT'$ by flipping the orientations on the unique infinite path in $\fT^-$ emanating from $0$, the resulting oriented tree $\fT'$ has the distribution of the oriented UST as discussed in~\cref{subsec:AB_variant}. Let $\kappa$ be the last time $W^-$ visits the origin, and observe that if $x$ belongs to 
the connected component of $0$ in $\fP^0(0)\setminus W(0,\infty)$ then $x$ must belong to the past of at least one of the vertices of $W[-\kappa,0]$ in $\fT'$. Indeed, if we let $x=x_0,\ldots,x_n=0$ be the vertices of the path connecting $x$ to $0$, we have by definition of the Aldous--Broder algorithm that 
$x_i \notin W(-\infty,-\kappa)$ for every $0\leq i \leq n$, and moreover that
there exists $0\leq k \leq n$ such that $x_i \notin W(-\infty,\infty)$ for every $0 \leq i < k$ and $x_i \in W[-\kappa,0]$ for every $k \leq i \leq n$. It follows easily from the definitions that $x$ belongs to the past of $x_k \in W[-\kappa,0]$ in $\fT'$ as claimed. Moreover, similar considerations imply that if $x$ has intrinsic distance $\bar n$ from $0$ then there exists $\kappa \leq i \leq 0$ such that $x$ lies in the past of $W_i$ in $\fT'$ and has intrinsic distance at least $\bar n - \kappa$ from $W_i$ in $\fT'$.
Since $\fT'$ is distributed as the uniform spanning tree of $\Z^4$, we obtain the crude bound
\begin{align*}
\pr{\text{$\radint(\fP^0(0) \setminus W(0,\infty)) \geq \bar n$}} &\leq \P(\kappa \geq n^{1/8}) + \P\left(\exists x\in \Lambda_{\lceil n^{1/8}\rceil} \text{ with } \partial \fP(x,\bar n - \lfloor n^{1/8} \rfloor)\neq \emptyset\right)
\\&\lesssim n^{-1/8} + n^{1/2} Q_0\left(\bar n - \lfloor n^{1/8} \rfloor\right) \lesssim n^{-1/8}
\end{align*}
 where we used the stochastic domination property (\cref{lem:domination}) in the second inequality and \cref{prop:rootedupper} in the third, noting that $\bar n - \lfloor n^{1/8} \rfloor \sim n$ as $n\to\infty$. Substituting this estimate into \eqref{eq:m2union} yields that
 \[
\P\left(\partial \fP^0(0,n)\neq \emptyset, A_2 \right)  \lesssim n^{1/16} \cdot n^{-1/8} = o\left( \frac{1}{(\log n)^{1/3}}\right)
 \]
 as required. \qedhere
\end{proof}

We are now ready to complete the proof of \cref{prop:unrootedupper}.

\begin{proof}[Proof of \cref{prop:unrootedupper}]
	Let $Q(n)  =  \pr{\radint(\fP)\geq n} = \pr{\partial \fP(0,n) \neq \emptyset}$ for each $n\geq 0$. By a similar inductive argument to that used in the proof of \cref{prop:unrootedupper}, it suffices to prove that there exists a constant $C$ such that
	\begin{equation}
	\label{eq:unrooted_induction}
	Q(2n) \leq \frac{C (\log n)^{1/3}}{n} + \frac{1}{4} Q(n)
	\end{equation}
	for every $n\geq 2$. (Here $1/4$ could safely be replaced by any number strictly smaller than $1/2$.) Let $\sI$ be the interlacement process on $\Z^4$ and let $\fT_{t}=\AB_t(\sI)$ for each $t\in \R$. For each $t\in \R$ and $u\in \Z^d$, we write $\fP_{t}(u,n)$ for the set of vertices that lie in the past of $u$ in $\fT_{t}$ and have intrinsic distance at most $n$ from $u$, and write $\partial \fP_{t}(u,n)= \fP_{t}(u,n) \setminus \fP_{t}(u,n-1)$. (We note that the notation $\fP_0$ inside this proof will always refer to this dynamic definition with $t=0$ and it should not be confused with the past of $0$ in the $0$-WUSF.)
	Let $C_1$ be the constant from \cref{prop:firstconcentration}.
	Fix $n\geq 1$ and let 
	\[
	\delta = \frac{C_1 (\log n)^{2/3}}{n} \cdot \log 4.
	\] 
	Let $\sigma_0$ be the first time that $0$ is hit by a trajectory of the interlacement process.  
  We consider the decomposition
	\begin{align}
		Q(2n) = \pr{\partial\fP_0(0,2n) \neq \emptyset, \sigma_0> \delta} + \pr{\partial\fP_0(0,2n) \neq \emptyset, \sigma_0 \leq  \delta}.
	\end{align}
	\cref{lem:exceptional_time} yields the bound
	\[
	\pr{\partial\fP_0(0,2n) \neq \emptyset, \sigma_0 \leq  \delta} \lesssim \frac{(\log n)^{1/3}}{n},
	\]
	so that it suffices to prove that
	\begin{align}
\pr{\partial\fP_0(0,2n) \neq \emptyset, \sigma_0> \delta} &\leq \frac{1}{4}Q(n)+o\left(\frac{(\log n)^{1/3}}{n}\right) 
\label{eq:unrootedinduction1}
	\end{align}
	for every $n\geq 1$.
	To this end, we consider the union bound
	\begin{align*}
		 \pr{\partial\fP_0(0,2n) \neq \emptyset, \sigma_0> \delta} \leq  \sum_{u} \pr{u\in \partial\fP_0(0,n), \partial \fP_0(u,n) \neq \emptyset, \sigma_0> \delta}.
	\end{align*}
	By time stationarity, we can shift time to~$-\delta$ and ask that no trajectory hit $0$ during~$[-\delta,0]$. On this event we have by \cref{lem:PastDynamics} that the past of $0$ at time $-\delta$ is the same as the component of the subgraph of the past of $0$ at time $0$ induced by $\Z^4\setminus \mathcal{I}_{[-\delta,0]}$, where $\mathcal{I}_{[-\delta,0]}$ denotes the set of vertices visited by the interlacement process between times $-\delta$ and $0$. Writing $\Gamma_0(x,y)$ for the unique path between $x$ and $y$ in $\fT_0$, we deduce that 
	\begin{multline*}
		\pr{u\in \partial\fP_0(0,n), \partial \fP_0(u,n) \neq \emptyset, \sigma_0> \delta}\\
		\leq \pr{u\in \partial\fP_0(0,n), \partial \fP_0(u,n) \neq \emptyset, \Gamma_0(u,0)\cap \mathcal{I}_{[-\delta,0]}=\emptyset}. 
	\end{multline*}
	We decompose the above probability according to whether $\cc{\Gamma_0(u,0)}$ is larger or smaller than $ \tfrac{n}{C_1(\log n)^{2/3}}$. In the first case we have by definition of the interlacement intensity measure and the fact that $\cI_{[-\delta,0]}$ is independent of $\fT_0$ that
	\begin{multline*}
		\pr{u\in \partial\fP_0(0,n), \partial \fP_0(u,n) \neq \emptyset, \Gamma_0(u,0)\cap \mathcal{I}_{[-\delta,0]}=\emptyset,\cc{\Gamma_0(u,0)}\geq  \frac{n}{C_1(\log n)^{2/3}}} \\
		\leq e^{-\delta n/(C_1(\log n)^{2/3})}\pr{u\in \partial\fP_0(0,n), \partial \fP_0(u,n) \neq \emptyset} = \frac{1}{4} \pr{u\in \partial\fP_0(0,n), \partial \fP_0(u,n) \neq \emptyset},
	\end{multline*}
    where $\delta$ was chosen so that the final equality holds.
	The mass-transport principle implies that
	\begin{multline*}
		\sum_{u\in\Z^4} \pr{u\in \partial\fP_0(0,n), \partial \fP_0(u,n) \neq \emptyset} \\=
		\sum_{u\in \Z^4} \pr{u \text{ is in the future of $0$ at distance $n$ and } \partial \fP_0(0,n) \neq \emptyset }= Q(n),
	\end{multline*}
	where the last equality follows from the fact that there is a unique vertex at distance $n$ in the future of $0$.
	The term when the capacity is large can therefore be bounded
	\begin{multline}
	\label{eq:largecapterm}
	\sum_{u\in \Z^4} \pr{u\in \partial\fP_0(0,n), \partial \fP_0(u,n) \neq \emptyset, \Gamma_0(u,0)\cap \mathcal{I}_{[-\delta,0]}=\emptyset,\cc{\Gamma_0(u,0)}\geq  \frac{n}{C_1(\log n)^{2/3}}}\\ \leq  \frac{1}{4} Q(n).
	\end{multline}
	We next look at the term when the capacity is small, which for each $u\in \Z^4$ can be  upper bounded using Lemma~\ref{lem:domination} by
	\begin{multline*}
		\pr{u\in \partial\fP_0(0,n), \partial \fP_0(u,n) \neq \emptyset, \cc{\Gamma_0(u,0)}\leq  C \frac{n}{(\log n)^{2/3}}} \\\leq 
		Q_0(n)\pr{u\in \partial\fP_0(0,n),\cc{\Gamma_0(u,0)}\leq  \frac{n}{C(\log n)^{2/3}}},
	\end{multline*}
	where, as in the proof of \cref{prop:rootedupper}, $Q_0(n)$ is the probability that the component of the origin in the $0$-WUSF has intrinsic radius at least $n$.
	A similar mass-transport argument to above gives that 
\begin{multline*}
		\sum_{u\in \Z^4} \pr{u\in \partial\fP_0(0,n), \partial \fP_0(u,n) \neq \emptyset, \cc{\Gamma_0(u,0)}\leq  \frac{n}{C(\log n)^{2/3}}} \\\leq 
		Q_0(n) \prstart{\cc{\LE(X)^n}\leq   \frac{n}{C(\log n)^{2/3}}}{0},
	\end{multline*}
	where $X$ is a simple random walk, and applying \cref{prop:rootedupper} and \cref{prop:firstconcentration} yields that
\begin{multline}\sum_{u\in \Z^4} \pr{u\in \partial\fP_0(0,n), \partial \fP_0(u,n) \neq \emptyset, \cc{\Gamma_0(u,0)}\leq   \frac{n}{C(\log n)^{2/3}}}
	\\\lesssim \frac{(\log n)^{2/3} \log \log n}{n} \cdot \frac{\log\log n}{(\log n)^{2/3}} =
	\frac{(\log \log n)^2}{n} = o\left(\frac{(\log n)^{1/3}}{n}\right).
	\label{eq:smallcapterm}
\end{multline}
The claimed inequality \eqref{eq:unrootedinduction1} follows by summing the estimates \eqref{eq:largecapterm} and \eqref{eq:smallcapterm}. \qedhere




\end{proof}


\section{Connections inside a box and the volume of the past}
\label{sec:volume}

In this section we study the geometry of the restriction of the 4d UST to a box and use the resulting estimates together with our results on the tail of the intrinsic diameter of the past to 
study the tail of the \emph{volume} of the past. 
We begin by proving the relevant upper bounds on the tail of the volume, which are straightforward consequences of \cref{prop:unrootedupper,prop:rootedupper}.

\begin{prop}
\label{prop:volume_upper}
Let $\fP=\fP(0)$ and $\fP_0=\fP_0(0)$ be the past of the origin in the uniform spanning tree and $0$-wired uniform spanning forest of $\Z^4$ respectively. Then
\begin{align}
\P(|\fP| \geq n) &\lesssim \frac{(\log n)^{1/6}}{n^{1/2}}  &\text{ and}
\label{eq:volume_unrooted_upper}
\\
\P(|\fP_0| \geq n) &\lesssim \frac{(\log n)^{1/2}\log \log n}{n^{1/2}}&
\label{eq:volume_rooted_upper}
\end{align}
for every $n \geq 3$.
\end{prop}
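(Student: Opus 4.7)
The plan is to deduce both estimates from a standard truncated first moment argument, using the tail bounds on the intrinsic radius already established in \cref{prop:unrootedupper,prop:rootedupper} together with the expected-ball-size estimates of \cref{lemma:WUSF_ball,prop:WUSFo_ball}. For any threshold $r \geq 1$, the identity $\{|\fP| \geq n\} \subseteq \{\radint(\fP) \geq r\} \cup \{|\fP(0,r)| \geq n\}$ and Markov's inequality give
\[
\P(|\fP| \geq n) \leq \P(\radint(\fP) \geq r) + \frac{\E{|\fP(0,r)|}}{n},
\]
and similarly for $\fP_0$ in place of $\fP$.

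For \eqref{eq:volume_unrooted_upper}, I will apply \cref{lemma:WUSF_ball}, which gives $\E{|\partial\fP(0,k)|} = 1$ for every $k\geq 0$ and hence $\E{|\fP(0,r)|} = r+1 \lesssim r$, together with \cref{prop:unrootedupper} to obtain
\[
\P(|\fP| \geq n) \lesssim \frac{(\log r)^{1/3}}{r} + \frac{r}{n}.
\]
Choosing $r = \lceil \sqrt{n(\log n)^{1/3}}\, \rceil$ balances the two terms (using $\log r \asymp \log n$), and both are of order $(\log n)^{1/6}/n^{1/2}$, which is the desired bound.

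For \eqref{eq:volume_rooted_upper}, the same scheme applies with \cref{prop:WUSFo_ball} contributing the polylogarithmic correction $\E{|\fP_0(0,r)|} \lesssim r(\log r)^{1/3}$ and \cref{prop:rootedupper} contributing $\P(\radint(\fP_0)\geq r) \lesssim (\log r)^{2/3}\log\log r / r$, giving
\[
\P(|\fP_0| \geq n) \lesssim \frac{(\log r)^{2/3}\log\log r}{r} + \frac{r(\log r)^{1/3}}{n}.
\]
Taking again $r = \lceil \sqrt{n(\log n)^{1/3}}\, \rceil$ produces $(\log n)^{1/2}\log\log n / n^{1/2}$ from the first term and $(\log n)^{1/2}/n^{1/2}$ from the second, so the larger of the two gives the stated bound.

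I do not expect any genuine obstacle here: this proposition really is a short combinatorial deduction from inputs already in hand, and all of the hard analysis has been absorbed into the intrinsic radius estimates of \cref{prop:unrootedupper,prop:rootedupper} and the expected-ball-size estimate of \cref{prop:WUSFo_ball} (which themselves rely on the LERW capacity deviation estimates from \cref{part:LERW}). The only care needed is to track the correct power of $\log r$ in each factor and to choose $r$ at the scale $\sqrt{n(\log n)^{1/3}}$ that makes both terms comparable; a different scale for $r$ would lose a polylog factor. The matching lower bound \eqref{exponent:vol} in \cref{thm:wusf} is not addressed by this argument and will be handled separately by a second moment method in \cref{sec:volume}.
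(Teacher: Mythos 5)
Your proposal is correct and follows exactly the paper's own proof: the same truncated first moment decomposition via Markov's inequality, the same inputs (\cref{prop:unrootedupper,prop:rootedupper,lemma:WUSF_ball,prop:WUSFo_ball}), and the same choice of truncation scale $r \asymp n^{1/2}(\log n)^{1/6}$, with the bookkeeping of logarithmic factors carried out correctly in both cases.
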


\begin{proof}
We first prove \eqref{eq:volume_unrooted_upper}. We have by a union bound and Markov's inequality that
\[
\P(|\fP(0)| \geq n) 
\leq \P(\partial \fP(0,r) \neq \emptyset) + \P(|\fP(0,r)|\geq n)
\leq \P(\partial \fP(0,r) \neq \emptyset) + \frac{1}{n} \E{|\fP(0,r)|}
\]
for every $n,r \geq 1$. Applying \cref{prop:unrootedupper,lemma:WUSF_ball} yields that
\[
\P(|\fP(0)| \geq n) 
\lesssim \frac{(\log r)^{1/3}}{r} + \frac{r}{n}
\]
for every $n,r \geq 2$, and the claim follows by taking $r= \lceil n^{1/2} (\log n)^{1/6}\rceil$.
%
For \eqref{eq:volume_rooted_upper}, arguing in a similar way but using \cref{prop:rootedupper,prop:WUSFo_ball} in place of \cref{prop:unrootedupper,lemma:WUSF_ball} yields that
\[
\P(|\fP_0(0)| \geq n) 
\leq \P(\partial \fP_0(0,r) \neq \emptyset) + \frac{1}{n} \E{|\fP_0(0,r)|}
\lesssim \frac{(\log r)^{2/3} \log \log r}{r} + \frac{r (\log r)^{1/3}}{n}
\]
for every $n,r\geq 3$, and the claim follows by taking $r=\lceil n^{1/2} (\log n)^{1/6}\rceil$ as before.
\end{proof}



In the remainder of the section, we prove a matching lower bound for \eqref{eq:volume_unrooted_upper}. We expect that a lower bound matching \eqref{eq:volume_rooted_upper} up to subpolylogarithmic terms can be proven using similar methods but do not pursue this here.

\begin{prop}
\label{prop:volume_lower}
Let $\fP=\fP(0)$ be the past of the origin in the uniform spanning tree  of $\Z^4$. Then
\begin{align}
\P(|\fP| \geq n) &\gtrsim \frac{(\log n)^{1/6}}{n^{1/2}}
\label{eq:volume_unrooted_lower}
\end{align}
for every $n \geq 2$.
\end{prop}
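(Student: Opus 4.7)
The plan is to apply a truncated second moment method to $Y = |\fP(0,R)|$ conditional on the event $E_R = \{\radint(\fP) \geq R\}$, where $R = c\sqrt{n}(\log n)^{1/6}$ for a constant $c$ to be chosen. By \cref{pro:lowerbounds}, we have $\pr{E_R} \gtrsim (\log n)^{1/6}/\sqrt{n}$, which already matches the target lower bound. It therefore suffices to show that conditional on $E_R$, the volume $Y$ is at least a constant multiple of $n$ with probability bounded below.

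The first step is a conditional first-moment lower bound, $\E{Y \1(E_R)} \gtrsim n \cdot \pr{E_R}$. Using the mass-transport principle applied to the diagonally-invariant function $F(x,y) = \1(y \in \partial\fP(x,k),\, \radint(\fP(x)) \geq R)$ yields the identity
\[
\E{|\partial\fP(0,k)| \1(E_R)} = \pr{\radint(\fP(\Gamma_k)) \geq R},
\]
where $\Gamma_k$ is the $k$-th vertex on the infinite spine emanating from $0$. The past $\fP(\Gamma_k)$ consists of $\{\Gamma_0,\ldots,\Gamma_k\}$ together with the off-spine subtrees at each $\Gamma_j$ for $0\leq j\leq k$. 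These off-spine subtrees, conditional on the spine, are essentially independent and distributed like the past of a typical vertex in a weighted fashion related to $\fP_0$. The heuristic is that conditional on $E_R$, the generation sizes $|\partial\fP(0,k)|$ exhibit Yaglom-type behaviour with conditional mean of order $k/(\log k)^{1/3}$ for $k\leq R/2$, so summing gives $\E{Y \1(E_R)} \asymp R^2/(\log R)^{1/3} \cdot \pr{E_R} \asymp n \cdot \pr{E_R}$. Making this rigorous will require the upper bound on $\pr{\radint(\fP_0) \geq r}$ from \cref{prop:rootedupper} to bound the probability that enough off-spine branches survive to depth $\geq R$, combined with a Poissonian/independence argument for the off-spine subtrees.

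The second and hardest step is the conditional second-moment upper bound, $\E{Y^2 \1(E_R)} \lesssim n^2 \cdot \pr{E_R}$. Writing
\[
\E{Y^2 \1(E_R)} = \sum_{u,v} \pr{u,v \in \fP(0,R),\, E_R},
\]
the idea is to sample by Wilson's algorithm rooted at $0$, $u$, $v$. Almost surely there is a branching vertex $w$ where the future paths from $u$ and $v$ merge before hitting $0$; the intrinsic constraint then forces $|u\to w|+|w\to 0|\leq R$ and $|v\to w|+|w\to 0|\leq R$. Bounding $\pr{u,v\in\fP(0,R)}$ by decomposing over the location of $w$ and applying the estimates on loop-erased random walk from \cref{part:LERW}, together with careful estimates on the geometry of $\fT$ restricted to a box of side $\asymp\sqrt{n}(\log n)^{1/6}$ (the 'estimates of independent interest' promised in \cref{subsec:about_the_proof}), should yield the desired bound after summation. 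The polylogarithmic capacity lower bound of \cref{prop:firstconcentration,lem:capgeom} is expected to provide the key decorrelation estimate controlling the probability that a second random walk merges with an existing loop-erased trajectory of given capacity.

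With the two moment bounds in hand, Paley-Zygmund gives $\pr{Y \geq \alpha n \mid E_R} \gtrsim 1$ for some constant $\alpha > 0$, hence $\pr{|\fP| \geq \alpha n} \geq \pr{Y\geq \alpha n} \gtrsim \pr{E_R} \gtrsim (\log n)^{1/6}/\sqrt{n}$; absorbing $\alpha$ into the constant yields \eqref{eq:volume_unrooted_lower}. The main obstacle will be the second moment bound: one must control the coupled LERW merge geometry sharply enough not to lose the logarithmic corrections, since passing through a stochastic-domination bound by $\fT_0$ would cost an extra $(\log n)^{1/3}$ factor by the ball-size asymptotics of \cref{prop:WUSFo_ball}, and hence be insufficient.
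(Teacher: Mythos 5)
Your high-level plan (a second-moment/Paley--Zygmund argument at scale $R\asymp\sqrt{n}(\log n)^{1/6}$) is the right flavour, but the route you propose through the conditional second moment is exactly the ``rather technical matter'' the paper explicitly declares it cannot handle and deliberately circumvents. Your bound $\E{Y^2\1(E_R)}\lesssim n^2\,\pr{E_R}$ for $Y=|\fP(0,R)|$ is, after unwinding the parameters, equivalent to the unconditional estimate $\E{|\fP(0,R)|^2}\lesssim R^3/(\log R)^{1/3}$, which is precisely what the paper says it does not know how to prove (the $\1(E_R)$ factor buys essentially nothing, since the dominant contributions to the second moment come from configurations where $|\fP(0,R)|$ is already large and $E_R$ holds anyway). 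Your sketch of the second-moment proof via Wilson's algorithm from $0,u,v$ and branch points is the naive coalescence computation; making it sharp enough not to lose polylogarithmic factors is the whole difficulty, and you correctly observe that stochastic domination is $(\log n)^{1/3}$-lossy, but you offer no concrete mechanism to avoid that loss.

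The paper's actual proof sidesteps the conditional framework entirely and instead works unconditionally with the random variable $Z_r=|\{x\in\fP(v_0,n)\setminus\fP(v_0,\lfloor cn/2\rfloor):\Gamma(v_0,x)\subseteq\Lambda_r\}|$, counting points in a mid-range intrinsic annulus whose connecting path stays in a box of side $\asymp\sqrt{n}(\log n)^{1/6}$. The decisive step is \cref{lem:past_total_comparison}, a mass-transport identity which converts $\sum_v\E{|\{x\in\fP(v):\Gamma(v,x)\subseteq\Lambda_r,\ |\Gamma(v,x)|\leq n\}|^2}$ into $(n+1)$ times a \emph{first} moment of a box-restricted tree-ball count, which is then estimated by \cref{prop:extrinsic_volume} (itself a consequence of the random-walk intersection estimate \cref{thm:sumofinters}). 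This is what kills the second-moment computation without any conditioning and without ever doing the three-source Wilson coalescence analysis. Your conditional first-moment step is also on shaky ground: the claim that $|\partial\fP(0,k)|$ has ``Yaglom-type'' conditional mean $\asymp k/(\log k)^{1/3}$ given $E_R$ is plausible heuristically but would itself require a nontrivial argument (indeed, an unconditional version of such conditional-limit control is closer to the subject of future work in this area than to an off-the-shelf tool); the paper avoids this entirely by doing an unconditional first-moment lower bound for $Z_r$ in \cref{prop:extrinsic_past_first}, which follows from a two-line mass-transport and Donsker calculation. In short: the decisive ideas -- working with an extrinsically-restricted count, and \cref{lem:past_total_comparison} -- are absent from your proposal, and the steps you leave open are precisely the ones that are known to be hard.
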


The most natural way to prove this lower bound would be to first prove the second moment estimate
\[
\E{|\fP(0,r)|^2} \lesssim \frac{r^3}{(\log r)^{1/3}}
\]
for every $r\geq 2$.
Indeed, letting $Z_r = |\fP(0,2r) \setminus \fP(0,r)|$, one would then be able to use \cref{lemma:WUSF_ball}, \cref{prop:unrootedupper}, and the Paley-Zygmund inequality to deduce that that there exists a constant $c$ such that
\[
\P\left(|\fP(0)| \geq \frac{c r^2}{(\log r)^{1/3}}\right) \geq \P\left(Z_r \geq \frac{1}{2} \E{Z_r \mid Z_r >0} \right) \geq \frac{\E{Z_r}^2}{4\E{ Z_r^2}} \gtrsim \frac{(\log r)^{1/3}}{r}
\]
for every $r\geq 2$, so that the claim would follow by taking $r= \lceil n^{1/2} (\log n)^{1/6}\rceil$. Unfortunately, estimating the second moment of $|\fP(0,r)|$ seems to be a rather technical matter. To get around this, we will instead perform a similar calculation using the number of points that lie in both an intrinsic annulus and an extrinsic annulus of the appropriate scale.





To proceed further, we study the local geometry of the 4d UST inside a box. The resulting estimates are also of independent interest. Recall that we write $\Lambda_r = [-r,r]^4 \cap \Z^4$ for the box of radius $r$ in $\Z^4$. For each $x,y \in \Z^4$ we write $\Gamma(x,y)$ for the path connecting them in the uniform spanning tree $\fT$. Our next main goal is to estimate the number of points that are connected to the origin inside the box of radius $r$.

\begin{prop}
\label{prop:extrinsic_volume}
Let $\fT$ be the uniform spanning tree of $\Z^4$. Then
\[
\E{ |\{x \in \Lambda_r : \Gamma(0,x) \subseteq \Lambda_r \}|} \asymp \frac{r^4}{\log r}
\]
for every $r\geq 2$. 
\end{prop}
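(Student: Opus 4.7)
The plan is to write $\E{|\{x \in \Lambda_r : \Gamma(0,x) \subseteq \Lambda_r\}|} = \sum_{x \in \Lambda_r} \P(\Gamma(0,x) \subseteq \Lambda_r)$ and sample $\fT$ by Wilson's algorithm rooted at infinity: first run a random walk $X$ from $0$ to produce the future $\gamma := \LE(X)$ (a LERW from $0$ to $\infty$); then for each $x \in \Lambda_r$ run an independent walk $Y$ from $x$ stopped at $\tau := \tau_\gamma$, producing the branch $\gamma_x := \LE(Y^\tau)$. The event $\{\Gamma(0,x) \subseteq \Lambda_r\}$ amounts to $Y_\tau \in \gamma \cap \Lambda_r$ (with the initial segment of $\gamma$ up to $Y_\tau$ staying in $\Lambda_r$) together with $\gamma_x \subseteq \Lambda_r$.

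For the \emph{upper bound}, drop the loop-erasure requirement and bound $\P(\Gamma(0,x) \subseteq \Lambda_r) \leq \P_x(\tau_{\gamma \cap \Lambda_r} < \infty)$ after conditioning on $\gamma$. The last-exit decomposition \eqref{eq:last_exit} gives
\[
\sum_{x \in \Lambda_r} \P_x(\tau_{\gamma \cap \Lambda_r} < \infty) = \sum_{a \in \gamma \cap \Lambda_r} \P_a(\tau_{\gamma \cap \Lambda_r}^+ = \infty) \sum_{x \in \Lambda_r} \bG(x,a) \lesssim r^2 \cdot \cpc{}{\gamma \cap \Lambda_r},
\]
using that $\sum_{x \in \Lambda_r} \bG(x,a) = \E_a[\#\{n : X_n \in \Lambda_r\}] \lesssim r^2$ uniformly in $a \in \Lambda_r$. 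Taking expectations over $\gamma$ and using the monotonicity $\cpc{}{\gamma \cap \Lambda_r} \leq \cpc{}{X^{\tau_{\partial \Lambda_r}}}$ together with \cref{lem:known} applied to the walk run for its exit time $\tau_{\partial \Lambda_r}\asymp r^2$, we obtain $\E[\cpc{}{\gamma \cap \Lambda_r}] \lesssim r^2/\log r$, yielding the desired $r^4/\log r$ upper bound.

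For the \emph{lower bound}, condition on the positive-probability event $G$ that $\gamma[0,m] \subseteq \Lambda_{r/2}$ for some $m \asymp r^2/(\log r)^{1/3}$ and that $\cpc{}{\gamma[0,m]} \gtrsim r^2/\log r$. The displacement control comes from \cref{lem:displacement} combined with \cref{lem:amounterased}, while the capacity lower bound on $G$ is obtained by combining \cref{cor:expectedlerwcapZ4} with a Paley--Zygmund argument using the second-moment estimate of \cref{lem:known}. On $G$, set $A := \gamma[0,m] \subseteq \Lambda_{r/2}$. For $x$ in an annular shell inside $\Lambda_r \setminus \Lambda_{3r/4}$ (of which there are $\gtrsim r^4$), the walk $Y$ from $x$ reaches $\Lambda_{r/2}$ before leaving $\Lambda_r$ with probability $\gtrsim 1$, and then an electrical/last-exit argument in the killed domain $\Lambda_r$ shows $\P_x(\tau_A < \tau_{\partial \Lambda_r}) \gtrsim \cpc{}{A}/r^2 \gtrsim 1/\log r$. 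Since $Y$ being confined to $\Lambda_r$ automatically forces $\gamma_x = \LE(Y^\tau) \subseteq \Lambda_r$, we obtain $\P(\Gamma(0,x) \subseteq \Lambda_r \mid G) \gtrsim 1/\log r$ for each $x$ in the shell, summing to $\gtrsim r^4/\log r$.

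The main obstacle is the lower bound, and specifically the simultaneous control of displacement and capacity of $\gamma$: each is known individually (\cref{lem:displacement}, \cref{prop:firstconcentration}), but their joint positive-probability version requires a decorrelation argument along the lines of \cref{subsec:polygamous_deviants}. A secondary subtlety is to verify in both directions that most of the capacity mass of $\gamma \cap \Lambda_r$ sits at points well inside $\Lambda_r$, so that the last-exit sums $\sum_{x \in \Lambda_r} \bG(x,a)$ are genuinely of order $r^2$ on the capacity-carrying vertices, rather than being depressed near the boundary.
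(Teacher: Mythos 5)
Your route is genuinely different from the paper's (the paper gets the upper bound from the two-walk intersection estimate of \cref{thm:sumofinters}, and extracts the lower bound indirectly from the Paley--Zygmund argument for \cref{prop:volume_lower} via \cref{lem:past_total_comparison} and the volume-tail bound \cref{prop:volume_upper}), but as written both halves of your argument have gaps. In the upper bound, the ``monotonicity'' step $\cpc{}{\gamma\cap\Lambda_r}\leq\cpc{}{X^{\tau_{\partial\Lambda_r}}}$ is false: $\gamma=\LE(X)$ is the \emph{infinite} loop-erasure, and it routinely contains vertices of $\Lambda_r$ that $X$ first visits only after $\tau_{\partial\Lambda_r}$ (the walk re-enters the box with probability bounded below, and points of those later excursions can survive the loop-erasure), so $\gamma\cap\Lambda_r\not\subseteq X[0,\tau_{\partial\Lambda_r}]$. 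The conclusion $\E{\cpc{}{\gamma\cap\Lambda_r}}\lesssim r^2/\log r$ is still true, but you need to bound instead $\cpc{}{X[0,\infty)\cap\Lambda_r}$ by decomposing the walk into its (geometrically many) excursions between $\Lambda_r$ and $\partial\Lambda_{2r}$, applying subadditivity of capacity and \cref{lem:known} to each excursion (with a little care because the excursion durations are random); this extra argument is missing.

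In the lower bound the serious issue is not the one you flag. The joint event $G$ is in fact easy: confinement of $X[0,Cr^2]$ in $\Lambda_{r/2}$ has probability bounded below, while by \cref{lem:amounterased} and \cref{prop:firstconcentration} the events $\{\ell_m>Cr^2\}$ and $\{\cpc{}{\LE_\infty(X^{Cr^2})}<cr^2/\log r\}$ have probability $o(1)$, so a union bound already gives $\P(G)\gtrsim 1$ --- no decorrelation in the spirit of \cref{subsec:polygamous_deviants} is needed (your proposed Paley--Zygmund route would not suffice, since it only gives a constant-probability capacity bound that you cannot intersect with confinement by a union bound). The real gap is the final step: $\{\tau_A<\tau_{\partial\Lambda_r}\}$ does \emph{not} imply $\Gamma(0,x)\subseteq\Lambda_r$. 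The branch from $x$ attaches to $\gamma$ at the \emph{first} hit of the whole infinite path $\gamma$, and before reaching $A=\gamma[0,m]$ the walk $Y$ may first hit a point $\gamma_j$ with $j$ beyond the time $\gamma$ first leaves $\Lambda_r$; then $\Gamma(0,x)\supseteq\gamma[0,j]\not\subseteq\Lambda_r$. This cannot be dismissed as lower order: the later portion of $\gamma$ re-enters $\Lambda_r$ with probability bounded below and its trace there carries capacity of the same order $r^2/\log r$ as $A$, so the two hitting probabilities are comparable. A fix is available but needs an extra ingredient: also require (with probability bounded below, via the strong Markov property at the walk level) that $X$ never returns to $\Lambda_{7r/8}$ after first exiting $\Lambda_{15r/16}$, which forces every $\gamma_j\in\Lambda_{7r/8}$ to have $\gamma[0,j]\subseteq\Lambda_r$, and then demand that $Y$ hits $A$ before exiting $\Lambda_{7r/8}$ (the killed-Green's-function bound still gives probability $\gtrsim\cpc{}{A}/r^2$ for the $\gtrsim r^4$ points $x$ in a shell at distance $\gtrsim r$ from $\partial\Lambda_r$). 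Without some argument of this kind the stated bound $\P(\Gamma(0,x)\subseteq\Lambda_r\mid G)\gtrsim 1/\log r$ is unjustified.
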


The upper bound of \cref{prop:extrinsic_volume} follows as an immediate corollary of the following estimate concerning intersections of random walks, which is a space-parameterised version of Lawler's time-parameterised estimate~\cite[Theorem~3.3.2]{Law91} and follows by a similar proof.  We defer the proof of this theorem to Section~\ref{sec:spaceintersections}.

\begin{theorem}\label{thm:sumofinters}
	Let $r\geq 1$ and let $X$ and $Y$ be two independent simple random walks in $\Z^4$, where~$X$ starts at a uniform random point $X_0$ of $\Lambda_r$ and $Y$ starts at the origin. Then 
	\[
	\pr{X \cap Y \cap \Lambda_r \neq \emptyset} \asymp \frac{1}{\log r}.
	\]
\end{theorem}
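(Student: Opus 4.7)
The plan is to adapt the strategy of Lawler's time-parameterised intersection estimate to the space-restricted setting, performing a second moment analysis on the number of intersection points $N := |X \cap Y \cap \Lambda_r|$. Since $\{N > 0\} = \{X \cap Y \cap \Lambda_r \neq \emptyset\}$, the lower bound will follow from Paley--Zygmund once $\E{N} \asymp 1$ and $\E{N^2} \asymp \log r$ are established. The matching upper bound will require an additional reduction to Lawler's time-parameterised estimate by truncating both walks at their first exit time from the enlarged box $\Lambda_{r^2}$.

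For the first moment, using independence of $X$ and $Y$, the identity $\prstart{\tau_z < \infty}{x} = \bG(x,z)/\bG(0,0)$, and the Green's function estimate \eqref{Green.bound}, averaging over $X_0$ uniform in $\Lambda_r$ gives
\[
\E{N} \asymp \frac{1}{r^4} \sum_{x, z \in \Lambda_r} \frac{1}{(\|x-z\|^2+1)(\|z\|^2+1)} \asymp 1,
\]
since both $\sum_{x \in \Lambda_r}(\|x-z\|^2+1)^{-1}$ for $z \in \Lambda_r$ and $\sum_{z \in \Lambda_r}(\|z\|^2+1)^{-1}$ evaluate to $\asymp r^2$. For the second moment, the strong Markov property yields the bound $\P(z_1, z_2 \in X) \leq \bG(z_1, z_2)[\bG(X_0, z_1) + \bG(X_0, z_2)]/\bG(0,0)^2$ for $z_1 \neq z_2$, with an analogous bound for $Y$. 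Multiplying out and exploiting the symmetry in $(z_1, z_2)$, the dominant contribution becomes
\[
\E{N^2} \lesssim \sum_{z_1 \in \Lambda_r} \P(z_1 \in X)\,\P(z_1 \in Y) \sum_{z_2 \in \Lambda_r} \bG(z_1, z_2)^2 \asymp \log r \cdot \E{N} \asymp \log r,
\]
where the logarithm arises from the key inner sum $\sum_{z_2 \in \Lambda_r}(\|z_1 - z_2\|^2+1)^{-2} \asymp \log r$ for $z_1 \in \Lambda_r$. The Paley--Zygmund inequality then gives $\pr{N > 0} \geq \E{N}^2/\E{N^2} \gtrsim 1/\log r$.

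For the upper bound, let $T_X$ and $T_Y$ denote the first exit times of $X$ and $Y$ from $\Lambda_{r^2}$. By \eqref{Green.bound}, the probability either walk returns to $\Lambda_r$ after exiting $\Lambda_{r^2}$ is $\lesssim r^2/r^4 = 1/r^2$, so
\[
\pr{X \cap Y \cap \Lambda_r \neq \emptyset} \leq \pr{X[0, T_X] \cap Y[0, T_Y] \neq \emptyset} + O(r^{-2}).
\]
Standard exit time estimates give that $T_X, T_Y$ are $\asymp r^4$ with exponentially decaying tails, and a straightforward adaptation of Lawler's time-parameterised estimate (\cite[Theorem 3.3.2]{Law91}) to walks of length $\asymp n$ starting at points at distance $\lesssim \sqrt{n}$ apart bounds the first term on the right-hand side by $\lesssim 1/\log(r^4) \asymp 1/\log r$. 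The main technical obstacle is this upper bound, since the second moment calculations alone only give the lower bound; the truncation scheme is robust but requires care in controlling the return probabilities and in invoking Lawler's estimate uniformly in the starting positions of the two walks.
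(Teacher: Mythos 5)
Your first- and second-moment computations and the Paley--Zygmund step are fine, and they reproduce what the paper does for the lower bound (its Lemma on $\E{I_r}\asymp 1$, $\E{I_r^2}\asymp\log r$ followed by Cauchy--Schwarz). The gap is in the upper bound, and it is fatal as written. After truncating at the exit times of $\Lambda_{r^2}$ you bound $\pr{X\cap Y\cap\Lambda_r\neq\emptyset}$ by $\pr{X[0,T_X]\cap Y[0,T_Y]\neq\emptyset}+O(r^{-2})$, i.e.\ you discard the restriction that the intersection occur \emph{inside} $\Lambda_r$. But that restriction is the entire content of the theorem: two walks of duration $\asymp r^4$ started at distance $\lesssim r$ apart intersect somewhere in $\Lambda_{r^2}$ with probability of constant order, not $\lesssim 1/\log r$. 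Indeed the expected number of such intersections is $\sum_{\|z\|\leq r^2}\bG(X_0,z)\bG(0,z)\asymp\log r$ (the sum over $r\leq\|z\|\leq r^2$ contributes $\asymp\|z\|^{-4}$ per point), and in $\Z^4$ two independent infinite walks intersect almost surely, so the unrestricted intersection probability even tends to $1$. Relatedly, the statement you invoke --- that walks of length $n$ started at distance $\lesssim\sqrt n$ apart intersect with probability $\lesssim 1/\log n$ --- is false: it holds when the distance is of order exactly $\sqrt n$ (as in Lawler's Theorem 3.3.2 / Theorem 4.3.6, where the second walk is viewed in a late time window), and the probability only increases as the starting distance shrinks. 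In your reduction the starting distance is $\lesssim r=n^{1/4}\ll\sqrt n=r^2$, so the hypothesis fails badly and the conclusion is not available.

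There is no purely "time-parameterised plus truncation" shortcut here: truncating at scale $r^2$ loses the spatial restriction as above, while truncating at scale $Cr$ makes the truncation error constant order (a walk on $\partial\Lambda_{Cr}$ returns to $\Lambda_r$ with probability $\asymp C^{-2}$), so one must genuinely control intersections restricted to $\Lambda_r$ over the walks' many returns. This is exactly why the paper redoes Lawler's argument in a space-restricted form: it introduces $\til D_r=\sum_j \1(Y_j\in\Lambda_r)\bG(Y_j)$, shows via Lawler's estimates on $D_n$ that a time $j$ is "bad" (small conditional expected future intersections inside $\Lambda_r$) with probability $\lesssim 1/\log r$, and then splits on whether the first intersection time is good or bad, using $\pr{\tau<\infty,\ \sigma \text{ good}}\leq \E{I_{2r}}/\E{I_{2r}\mid \tau<\infty,\ \sigma\text{ good}}\lesssim 1/\log r$. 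To repair your proof you would need to carry out an argument of this type (or an excursion decomposition at scale $r$ with uniform control of the restricted intersection probability over starting points), rather than appeal to the time-parameterised estimate off the shelf.
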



We next derive some complementary estimates that will be used to prove both \cref{prop:volume_lower} and the lower bound of \cref{prop:extrinsic_volume}. The next lemma gives a rather general relationship between the first moment of the volume of an intrinsic ball in the entire forest and the second moment of the volume of an intrinsic ball in the past.

\begin{lemma} 
\label{lem:past_total_comparison}
Let $d\geq 1$ and let $\F$ be the uniform spanning forest of $\Z^d$. For each $v\in \Z^d$ let $\fT(v)$ be the component of $v$ and let $\fP(v)$ be the past of $v$. Then
\begin{multline*}
\sum_{v\in \Lambda_r}\E{|\{x \in \fP(v) : \Gamma(v,x) \subseteq \Lambda_r \text{ and } |\Gamma(v,x)| \leq n\}|^2} \\\leq (n+1) 
\sum_{v\in \Lambda_r}\mathbb{E} |\{x \in \fT(v) : \Gamma(v,x) \subseteq \Lambda_{r} \text{ and } |\Gamma(v,x)| \leq 2n\}| 
\end{multline*}
for every $r,n\geq 1$.
\end{lemma}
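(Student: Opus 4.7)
The plan is to expand the square on the left-hand side as a double sum over ordered pairs $(x_1,x_2)$, swap the order of summation, and bound the resulting count of admissible root vertices $v$ by a purely tree-geometric argument. The proof is essentially deterministic once a realization of $\fT$ is fixed.

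Concretely, I would rewrite the left-hand side as
\[
\mathrm{LHS} \;=\; \E{\sum_{x_1,x_2} N(x_1,x_2)}, \qquad N(x_1,x_2) \;:=\; \#\{v\in\Lambda_r : A(v,x_1,x_2)\text{ holds}\},
\]
where $A(v,x_1,x_2)$ abbreviates the event $\{x_1,x_2\in\fP(v),\ \Gamma(v,x_i)\subseteq\Lambda_r,\ |\Gamma(v,x_i)|\le n\text{ for }i=1,2\}$. The key geometric observation is that for any fixed realization of $\fT$ and any pair $(x_1,x_2)$ admitting at least one valid $v$, one has $N(x_1,x_2)\le n+1$ deterministically: indeed $x_1\in\fP(v)$ forces $v$ to lie on the unique future ray $\Gamma(x_1,\infty)$ (well-defined by one-endedness), and the constraint $|\Gamma(v,x_1)|\le n$ then restricts $v$ to an initial segment of that ray of length at most $n$.

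A symmetric use of the tree structure shows that existence of such a $v$ already implies the event $B(x_1,x_2):=\{x_2\in\fT(x_1),\ \Gamma(x_1,x_2)\subseteq\Lambda_r,\ |\Gamma(x_1,x_2)|\le 2n\}$: the unique tree path $\Gamma(x_1,x_2)$ is contained in $\Gamma(v,x_1)\cup\Gamma(v,x_2)\subseteq\Lambda_r$ and has length at most $|\Gamma(v,x_1)|+|\Gamma(v,x_2)|\le 2n$. Combining the two yields
\[
\mathrm{LHS} \;\le\; (n+1)\,\E{\sum_{x_1\in\Z^d} \bigl|\{x_2\in\fT(x_1) : \Gamma(x_1,x_2)\subseteq\Lambda_r,\ |\Gamma(x_1,x_2)|\le 2n\}\bigr|},
\]
and the containment $\Gamma(x_1,x_1)\subseteq\Lambda_r$ restricts the outer sum to $x_1\in\Lambda_r$, so relabeling $x_1\to v$ recovers the right-hand side. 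There is no substantial obstacle here; the only cases warranting a moment's thought are coincidences such as $x_1=x_2$ or $v$ equaling the meeting vertex of the two future rays from $x_1$ and $x_2$, both of which trivially preserve the bound $N(x_1,x_2)\le n+1$.
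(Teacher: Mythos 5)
Your proposal is correct and follows essentially the same route as the paper: expand the square, exchange the order of summation, bound the number of admissible roots $v$ for a fixed pair by $n+1$ (the paper does this via the meeting vertex $z$ and the length $c$ of the segment from $z$ to $v$, you do it directly by noting $v$ lies within distance $n$ along the future ray of $x_1$), and observe that existence of such a $v$ forces the two points to be connected within $\Lambda_r$ at tree distance at most $2n$. The only difference is bookkeeping, so no further comparison is needed.
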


\begin{proof}
Fix $r,n \geq 1$. For each $v,x,y,z\in \Lambda_r$ let $A(v,x,y,z)$ be the event that $x$ and $y$ both lie in the past of $v$, that the paths connecting $x$ and $y$ to $v$ are both contained in $\Lambda_r$, and that these two paths meet for the first time at $z$. 
Moreover, for each $v,x,y,z\in \Lambda_r$ and $a,b,c \geq 0$ let $A(v,x,y,z;a,b,c)$ be the event that $A(v,x,y,z)$ holds and that the paths connecting $x$ to $z$, $y$ to $z$, and $z$ to $v$ have lengths $a$, $b$, and $c$ respectively. 
Then we can write
\begin{multline}
\sum_{v\in \Lambda_r} \E{|\{x \in \fP(v) : \Gamma(v,x) \subseteq \Lambda_r \text{ and } |\Gamma(v,x)| \leq n\}|^2}
\\= \sum_{v,x,y,z \in \Lambda_r} \sum_{a=0}^n \sum_{b=0}^n \sum_{c=0}^{n-a \vee b} \P(A(v,x,y,z;a,b,c)).
\end{multline}
Let $B(x,y,z;a,b)$ be the event that the futures of $x$ and $y$ coalesce for the first time at $z$, that the path from $x$ to $y$ is contained in $\Lambda_r$, and that the paths connecting $x$ to $z$ and $y$ to $z$ have lengths $a$ and $b$ respectively. Then $B(x,y,z;a,b)$ contains the disjoint union $\bigcup_{v\in \Lambda_r} A(v,x,y,z;a,b,c)$ for each $c\geq 0$, so that
\begin{multline}
\label{eq:past2ndcomparison}
\sum_{v\in \Lambda_r} \E{|\{x \in \fP(v) : \Gamma(v,x) \subseteq \Lambda_r \text{ and } |\Gamma(v,x)| \leq n\}|^2}
\\\leq (n+1)\sum_{x,y,z \in \Lambda_r} \sum_{a=0}^n \sum_{b=0}^n \P(B(x,y,z;a,b)).
\end{multline}
On the other hand, if $x$ belongs to the set $\{x \in \fT(v) : \Gamma(v,x) \subseteq \Lambda_{r} \text{ and } |\Gamma(v,x)| \leq 2n \}$ and $z$ denotes the point at which the futures of $v$ and $x$ coalesce then $v$ and $x$ both belong to the past of $z$ and the paths connecting $v$ to $z$ and $x$ to $z$ have total length at most $2n$, so that
\begin{multline}
\label{eq:total1stcomparison}
\sum_{v\in \Lambda_r} \E{|\{x \in \fT(v) : \Gamma(v,x) \subseteq \Lambda_r \text{ and } |\Gamma(v,x)| \leq 2n\}|}
\\= \sum_{v,x,z \in \Lambda_r} \sum_{a=0}^{2n} \sum_{b=0}^{2n-a}  \P(B(v,x,z;a,b)) \geq
\sum_{x,y,z \in \Lambda_r} \sum_{a=0}^n \sum_{b=0}^n \P(B(x,y,z;a,b)).
\end{multline}
Comparing \eqref{eq:past2ndcomparison} and \eqref{eq:total1stcomparison} yields the claim.
\end{proof}

The last ingredient needed to complete the proofs of \cref{prop:volume_lower,prop:extrinsic_volume} is as follows.

\begin{prop}
\label{prop:extrinsic_past_first}
Let $\F$ be the uniform spanning tree of $\Z^4$. Then
\[
\frac{1}{|\Lambda_{r}|}\sum_{v\in \Lambda_{r}}\mathbb{E} |\{x \in \fP(v) : \Gamma(v,x) \subseteq \Lambda_{r} \text{ and } |\Gamma(v,x)| \leq n\}| \asymp n
\]
for every $n \geq 2$ and $r \geq n^{1/2} (\log n)^{1/6}$. 
\end{prop}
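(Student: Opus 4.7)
The upper bound is immediate from \cref{lemma:WUSF_ball}: since the set whose cardinality we are averaging is contained in the intrinsic ball $\fP(v,n)$, whose expected size equals $n+1$, we obtain
\[
\frac{1}{|\Lambda_r|}\sum_{v\in\Lambda_r}\E{|\{x\in\fP(v):\Gamma(v,x)\subseteq\Lambda_r,\,|\Gamma(v,x)|\leq n\}|}\leq n+1.
\]
The real content is the lower bound, which I will prove by a mass-transport identity combined with a confinement estimate for the infinite loop-erased random walk.

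For each $x\in\Z^4$ let $(F_k(x))_{k\geq 0}$ denote the future of $x$ in $\fT$, and set $T(x)=\inf\{k\geq 0:F_k(x)\notin\Lambda_r\}$ (with $T(x)=0$ if $x\notin\Lambda_r$). The relation $x\in\fP(v)$ with $|\Gamma(v,x)|=k$ is equivalent to $v=F_k(x)$, and $\Gamma(v,x)\subseteq\Lambda_r$ is then equivalent to $F_0(x),\ldots,F_k(x)\in\Lambda_r$. Summing over $v\in\Lambda_r$ and $0\leq k\leq n$ gives the pointwise identity
\[
\sum_{v\in\Lambda_r}\mathbbm{1}\bigl(x\in\fP(v),\,|\Gamma(v,x)|\leq n,\,\Gamma(v,x)\subseteq\Lambda_r\bigr)=\min(T(x),n+1),
\]
and exchanging the order of summation produces
\[
\sum_{v\in\Lambda_r}\E{|\{x\in\fP(v):\Gamma(v,x)\subseteq\Lambda_r,\,|\Gamma(v,x)|\leq n\}|}=\sum_{x\in\Lambda_r}\E{\min(T(x),n+1)}.
\]
By Wilson's algorithm rooted at infinity, the future of $x$ is distributed as an infinite loop-erased random walk started at $x$, so the task reduces to showing that $\P(T(x)\geq n)\geq c$ for some positive constant $c$, uniformly in $x\in\Lambda_{r/2}$, $n\geq 2$, and $r\geq n^{1/2}(\log n)^{1/6}$.

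To this end, let $Y$ be a simple random walk started at $x$ and set $m=\lceil 2n(\log n)^{1/3}\rceil$. By \cref{lem:amounterased}, $\P(\ell_n(Y)\leq m)\to 1$ as $n\to\infty$, and on this event the first $n$ vertices of $\LE(Y)$ form a subset of $Y[0,m]$. By translation invariance and the standard Donsker confinement estimate for simple random walk, there exists a positive continuous function $\phi:(0,\infty)\to(0,\infty)$ such that
\[
\P\bigl(Y[0,m]\subseteq x+\Lambda_{r/2}\bigr)\geq \phi\bigl(r/(2\sqrt{m})\bigr)
\]
for all sufficiently large $n$. Under the hypothesis $r\geq n^{1/2}(\log n)^{1/6}$ and the choice of $m$ above, $r/(2\sqrt m)$ is bounded below by $1/\sqrt{8}$, so this probability is at least some positive constant $c_0$. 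Since $x\in\Lambda_{r/2}$ implies $x+\Lambda_{r/2}\subseteq\Lambda_r$, on the intersection of the two events $\LE(Y)^n$ is contained in $\Lambda_r$ and hence $T(x)\geq n$. Therefore $\P(T(x)\geq n)\geq c_0/2$ for all sufficiently large $n$; finitely many small values of $n$ are absorbed into the implicit constant.

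Combining the two displays above yields
\[
\sum_{x\in\Lambda_r}\E{\min(T(x),n+1)}\geq n\sum_{x\in\Lambda_{r/2}}\P(T(x)\geq n)\gtrsim n|\Lambda_{r/2}|\asymp n|\Lambda_r|,
\]
and dividing by $|\Lambda_r|$ completes the proof. The main obstacle is the tight calibration of scales: we need $m$ large enough that $\ell_n\leq m$ with positive probability (which forces $m\gtrsim n(\log n)^{1/3}$) but also small enough that an $m$-step simple random walk is confined to a box of radius of order $r$ with positive probability (which forces $r\gtrsim\sqrt m$). These two conditions are simultaneously satisfiable precisely when $r\gtrsim n^{1/2}(\log n)^{1/6}$, which is the hypothesis of the proposition and matches the characteristic extrinsic size of an intrinsic ball of radius $n$ in the four-dimensional UST.
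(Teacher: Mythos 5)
Your proposal is correct and follows essentially the same route as the paper: exchange the order of summation (your pointwise identity with $\min(T(x),n+1)$ is the paper's $\sum_{x\in\Lambda_r}\sum_{m=0}^n\prstart{\LE(X)^m\subseteq\Lambda_r}{x}$ in disguise), identify the future of $x$ with an infinite loop-erased random walk via Wilson's algorithm rooted at infinity, and for $x\in\Lambda_{r/2}$ combine \cref{lem:amounterased} at scale $m\asymp n(\log n)^{1/3}$ with a Donsker-type confinement estimate, which is exactly where the hypothesis $r\geq n^{1/2}(\log n)^{1/6}$ enters. The only cosmetic differences are your use of \cref{lemma:WUSF_ball} for the upper bound, where the paper simply bounds each probability by $1$.
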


\begin{proof}
Exchanging the order of summation gives that
\begin{align*}
&\sum_{v\in \Lambda_{r}}\mathbb{E} |\{x \in \fP(v) : \Gamma(v,x) \subseteq \Lambda_{r} \text{ and } |\Gamma(v,x)| \leq n\}|
\\
&\hspace{3cm}=
\sum_{x\in \Lambda_{r}}
\sum_{v\in \Lambda_{r}} \P(\text{$v$ is in the future of $x$,  $\Gamma(v,x) \subseteq \Lambda_{r}$, and $|\Gamma(x,v)| \leq n$})
\\
&\hspace{3cm}= \sum_{x \in \Lambda_{r}} \sum_{m=0}^n \prstart{\LE(X)^m \subseteq \Lambda_{r}}{x}.
\end{align*}
This sum is trivially bounded by $(n+1)|\Lambda_{r}|$. On the other hand, letting $k=k(n)= \lceil 2 n (\log n)^{1/3} \rceil$ and recalling the definition of $\ell_m$ from Section~\ref{subsec:LERWbackground} we have that
\begin{multline*}
\sum_{v\in \Lambda_{r}}\mathbb{E} |\{x \in \fP(v) : \Gamma(v,x) \subseteq \Lambda_{r} \text{ and } |\Gamma(v,x)| \leq n\}| \geq 
\sum_{x \in \Lambda_{\lfloor r/2 \rfloor}} \sum_{m=0}^n \prstart{\LE(X)^m \subseteq \Lambda_{r}}{x}
\\
 \geq \sum_{x \in \Lambda_{\lfloor r/2 \rfloor}} \sum_{m=0}^n \left[1-\prstart{\ell_m \geq k}{x}-\prstart{ \max_{0 \leq i \leq k} \|X_i\|_\infty \geq r/2 }{x} \right].
\end{multline*}
\cref{lem:amounterased} implies that $\prstart{\ell_m \geq k}{x} \leq \prstart{\ell_n \geq k}{x} = o(1)$ for each $m\leq n$ as $n \to \infty$, while the assumption that $r \geq n^{1/2} (\log n)^{1/6}$ implies by Donsker's invariance principle that the probability $\prstart{ \max_{0 \leq i \leq k} \|X_i\|_\infty \geq r/2 }{x}$ is bounded away from $1$ uniformly in $n$, and the claim follows. 
\end{proof}

\begin{proof}[Proof of \cref{prop:volume_lower} and \cref{prop:extrinsic_volume}]
If we sample $\fT$ using Wilson's algorithm starting with the vertices $0$ and $x$ and find that the path $\Gamma(0,x)$ is contained in $\Lambda_r$, then the two random walks started from $0$ and $x$ must intersect at some point in $\Lambda_r$. Thus, the claimed upper bound of \cref{prop:extrinsic_volume} follows immediately from \cref{thm:sumofinters}.

We now turn to the lower bounds of \cref{prop:extrinsic_volume,prop:volume_lower}. Let $c>0$ be the implicit constant from the lower bound of \cref{prop:extrinsic_past_first}. Let $r\geq 1$, let $n = \lfloor r^2 (\log r)^{-1/3} \rfloor$, let $v_0$ be a uniform random point in $\Lambda_r$, and consider the random variable
\[
Z_r = |\{x\in \fP(v_0,n) \setminus \fP(v_0, \lfloor c n / 2\rfloor) : \Gamma(v_0,x) \subseteq \Lambda_r \}|.
\]
We have by \cref{prop:unrootedupper} that
\begin{align*}
	\P(Z_r > 0) \lesssim \frac{(\log n)^{1/3}}{n} \asymp \frac{(\log r)^{2/3}}{r^2}
\end{align*}
and by~\cref{prop:extrinsic_past_first} and \cref{lemma:WUSF_ball} that
\begin{equation*}
\E {Z_r} \geq cn - \E{|\fP(v_0, \lfloor c n / 2\rfloor)|} \geq cn-cn/2 \asymp \frac{r^2}{(\log r)^{1/3}}, \quad \text{ so that } \quad \E{ Z_r \mid Z_r > 0} \gtrsim \frac{r^4}{\log r}.
\end{equation*}
On the other hand, we have by \cref{lem:past_total_comparison} that
\[
\E{Z_r^2} \lesssim 
(n+1) \mathbb{E} |\{x \in \Lambda_r : \Gamma(v_0,x) \subseteq \Lambda_r \}| \asymp \frac{r^2}{(\log r)^{1/3}} \mathbb{E} |\{x \in \Lambda_r : \Gamma(v_0,x) \subseteq \Lambda_r \}|
\]
 and hence by the Paley-Zygmund inequality that there exists a positive constant $c'$ such that
\begin{multline}
\label{eq:volumetailtomoment}
\P\left(\fP(v_0) \geq \frac{c'r^4}{\log r}\right) \geq
\P\left(Z_r \geq \frac{c'r^4}{\log r}\right) \geq \frac{\E{Z_r}^2}{4\E{ Z_r^2}} \gtrsim \frac{r^2}{(\log r)^{1/3}\mathbb{E} |\{x \in \Lambda_r : \Gamma(v_0,x) \subseteq \Lambda_r \}|}
\end{multline}
for every $r\geq 2$.
Since $\Lambda_r \subseteq \Lambda_{2r} - v$ for every $v\in \Lambda_r$ and $\Lambda_r \supseteq \Lambda_{\lfloor r/2 \rfloor}-v$ for every $v\in \Lambda_{\lfloor r/2 \rfloor}$, we have that
\[
\mathbb{E} |\{x \in \Lambda_r : \Gamma(0,x) \subseteq \Lambda_{\lfloor r/2 \rfloor} \}| \lesssim \mathbb{E} |\{x \in \Lambda_r : \Gamma(v_0,x) \subseteq \Lambda_r \}| \leq \mathbb{E} |\{x \in \Lambda_r : \Gamma(0,x) \subseteq \Lambda_{2r} \}|
\]
for every $r\geq 1$.
Substituting this estimate into \eqref{eq:volumetailtomoment}, we see that the upper bound of \cref{prop:extrinsic_volume} implies the claimed lower bound of \cref{prop:volume_lower} and the upper bound of \cref{prop:volume_upper} implies the claimed lower bound of \cref{prop:extrinsic_volume}.
\end{proof}

We have now proven all the bounds stated in \cref{thm:wusf}.

\begin{proof}[Proof of \cref{thm:wusf}]
The upper and lower bounds of \eqref{exponent:intrad} are established by \cref{prop:unrootedupper} and \cref{pro:lowerbounds} respectively, while the upper and lower bounds of \eqref{exponent:vol} are established by \cref{prop:volume_upper}  and \cref{prop:volume_lower} respectively.
\end{proof}

\subsection{Intersections inside a box}\label{sec:spaceintersections}
In this section we give the proof of Theorem~\ref{thm:sumofinters}. We start by computing the first and second moments of the random variable
\[
I_r = \sum_{i=0}^{\infty}\sum_{j=0}^{\infty} \1(X_i=Y_j \in \Lambda_r),
\]
which counts the total number of intersections that happen inside the box $\Lambda_r$. 

\begin{lemma}\label{lem:spaceinters_first_moment}
		Let $X$ and $Y$ be two independent simple random walks in $\Z^4$, where $Y$ starts at the origin and $X$ starts at independent uniform random point $X_0$ of $\Lambda_r$. Then
	\[
	\E{I_{r}} \asymp 1 \qquad \text{ and } \qquad \E{I_{r}^2} \asymp \log r
	\]
	for every $r\geq 2$.
\end{lemma}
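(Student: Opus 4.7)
The plan is to compute both moments directly by Fubini and reduce everything to standard four-dimensional Green's-function sums, using the asymptotic estimate \eqref{Green.bound}. Write $\bar\bG_r(z) := \sum_{x_0 \in \Lambda_r} \bG(x_0, z)$, so that $\E{\#\{i : X_i = z\}} = \bar\bG_r(z)/|\Lambda_r|$. The key inputs are the two uniform estimates
\[
\bar\bG_r(z) \asymp r^2 \qquad \text{and} \qquad \sum_{z' \in \Lambda_r} \bG(z, z')^2 \asymp \log r,
\]
valid for every $z \in \Lambda_r$ and $r \geq 2$. These follow from \eqref{Green.bound} by elementary four-dimensional integral estimates: $\sum_{y \in \Lambda_R}(\|y\|^2+1)^{-1} \asymp R^2$ (from $\int_0^R s^3/s^2 \, ds \asymp R^2$) and $\sum_{y \in \Lambda_R}(\|y\|^2+1)^{-2} \asymp \log R$ (the logarithmic four-dimensional divergence $\int_1^R s^3/s^4 \, ds \asymp \log R$). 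The upper bounds are immediate from the inclusion $\Lambda_r - z \subseteq \Lambda_{2r}$, while the matching lower bounds are uniform in $z \in \Lambda_r$ because for every $z \in \Lambda_r$ the set $\Lambda_r - z$ always contains an axis-aligned box of side length at least $r$ (one of $r+z_i$ or $r-z_i$ is $\geq r$ in each coordinate), and that box contains $\gtrsim t^4$ points at $\|\cdot\|_\infty$-distance in $[t,2t]$ from the origin for every $1 \leq t \leq r/2$.

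Given these, the first moment is immediate:
\[
\E{I_r} = \frac{1}{|\Lambda_r|} \sum_{z \in \Lambda_r} \bar\bG_r(z) \, \bG(0, z) \asymp \frac{r^2}{r^4} \sum_{z \in \Lambda_r} \bG(0, z) \asymp \frac{r^2 \cdot r^2}{r^4} = 1.
\]

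For the second moment I will expand
\[
\E{I_r^2} = \sum_{z_1, z_2 \in \Lambda_r} \Bigl( \sum_{i_1, i_2 \geq 0} \P(X_{i_1} = z_1, X_{i_2} = z_2) \Bigr) \Bigl( \sum_{j_1, j_2 \geq 0} \P(Y_{j_1} = z_1, Y_{j_2} = z_2) \Bigr).
\]
Splitting each inner sum into the subcases $i_1 < i_2$, $i_1 > i_2$, $i_1 = i_2$ and applying the Markov property gives, up to a diagonal contribution of strictly smaller order,
\[
\sum_{i_1, i_2 \geq 0} \P(X_{i_1} = z_1, X_{i_2} = z_2) \asymp \frac{1}{|\Lambda_r|} \bigl(\bar\bG_r(z_1) + \bar\bG_r(z_2)\bigr) \bG(z_1, z_2),
\]
and the analogous identity for $Y$ with $\bG(0, z_i)$ replacing $\bar\bG_r(z_i)/|\Lambda_r|$. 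Multiplying these out produces four terms, each of which (after possibly relabelling $z_1 \leftrightarrow z_2$) has the shape
\[
\frac{1}{|\Lambda_r|} \sum_{z_1, z_2 \in \Lambda_r} \bar\bG_r(z_1) \, \bG(0, z_1) \, \bG(z_1, z_2)^2 \asymp \frac{r^2 \log r}{|\Lambda_r|} \sum_{z_1 \in \Lambda_r} \bG(0, z_1) \asymp \log r,
\]
yielding $\E{I_r^2} \asymp \log r$.

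The only real obstacle is to verify that the two displayed Green's-function estimates hold uniformly in $z \in \Lambda_r$, including when $z$ lies near the boundary; once this is handled, the rest of the proof is a mechanical bookkeeping of dominant terms, and the contributions from the diagonal ($z_1 = z_2$) and from $i_1 = i_2$ or $j_1 = j_2$ are easily checked to be of lower order.
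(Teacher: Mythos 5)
Your proposal is correct and follows essentially the same route as the paper's proof: both compute $\E{I_r}$ and $\E{I_r^2}$ directly as Green's-function sums, decomposing the two-point visit function according to which of $z_1,z_2$ is visited first and using the uniform estimates $\sum_{x_0\in\Lambda_r}\bG(x_0,z)\asymp r^2$ and $\sum_{z'\in\Lambda_r}\bG(z,z')^2\asymp\log r$ for $z\in\Lambda_r$ (your corner-box argument for uniformity near the boundary plays the role of the paper's observation that each sphere of radius $\ell\leq r$ around a point of $\Lambda_r$ meets $\Lambda_r$ in $\Theta((\ell+1)^3)$ points). One small slip: after expanding, two of the four terms have the cross shape $|\Lambda_r|^{-1}\sum_{z_1,z_2}\bar\bG_r(z_1)\bG(0,z_2)\bG(z_1,z_2)^2$, which is not obtained from your displayed term by relabelling $z_1\leftrightarrow z_2$; however, the same two uniform estimates (summing over $z_1$ first, then over $z_2$) show this term is also $\asymp\log r$, so the conclusion is unaffected.
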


\begin{proof}[\bf Proof]
Write $\langle x \rangle = \|x\|_\infty+1$ for each $x\in \Z^4$. For the first moment, we have that
	\begin{align*}
		\E{I_{r}}\asymp r^{-4}\sum_{y,z\in \Lambda_r}  \bG(z-0)\bG(z-y) \asymp r^{-4} \sum_{y,z\in \Lambda_r}  \langle z\rangle^{-2} \langle y-z \rangle^{-2}
	\end{align*}
for every $r\geq 1$.
	For each fixed $x \in \Lambda_r$ and $\ell \geq 1$, there are $\Theta((\ell+1)^3)$ points in $\Lambda_r$ at $\|\cdot\|_\infty$-distance exactly $\ell$ from $x$, and we deduce that
	\[
r^{-4}\sum_{z\in \Lambda_{r}}  \langle z\rangle^{-2} \sum_{y \in \Lambda_{r}}\langle y-z \rangle^{-2} \asymp r^{-4} \left(\sum_{\ell=0}^{r} (\ell+1)^{-2}\cdot (\ell+1)^3 \right)^2 \asymp 1
	\]
	for every $r\geq 1$ as claimed. 
	We now turn to the second moment. As above, we can write
		\begin{align*}
		\E{I_{r}^2}&\asymp r^{-4}\sum_{y,z,w\in \Lambda_r} \left[\bG(z-0)\bG(w-z)+\bG(w-0)\bG(z-w)\right] \\
		&\hspace{7cm} \cdot \left[\bG(z-y)\bG(w-z)+\bG(w-y)\bG(z-w)\right] \\
		&=2r^{-4} \sum_{y,z,w \in \Lambda_r}  \bG(z)\bG(w-z)^2 \bG(z-y) + 2r^{-4}\sum_{y,z,w \in \Lambda_r}  \bG(z)\bG(w-z)^2 \bG(w-y) \\
		&\asymp r^{-4} \sum_{y,z\in \Lambda_r}\bG(z)\bG(z-y)\sum_{\ell=1}^{r} \frac{\ell^3}{\ell^4} \asymp r^{-4} \log r \sum_{w,z\in \Lambda_r} \bG(z)\bG(z-y) \asymp \log r
	\end{align*}
	and this completes the proof.
\end{proof}

\begin{proof}[\bf Proof of \cref{thm:sumofinters}]
The lower bound follows immediately from \cref{lem:spaceinters_first_moment} and the Cauchy-Schwarz inequality $\P(I_{r} > 0) \geq \E{I_r}^2/\E{I_r^2}$.

We now prove the upper bound, for which we follow a similar argument to that used by Lawler in~\cite[Theorem 3.3.2]{Law91}. 
Since a uniform point in $\Lambda_{2r}$ lies in $\Lambda_r$ with probability bounded from below, \cref{lem:spaceinters_first_moment} implies that $\E{I_{2r}} \lesssim 1$.
 Define $\til{D}_r$ to be the conditional expectation of $I_r$ given the random walk $Y$, so that
\begin{align*}
	\til{D}_r= \econd{I_r}{Y} = \sum_{j=0}^{\infty} \1(Y_j \in \Lambda_r) \bG(Y_j),
\end{align*}
where $\bG$ is the Green kernel for simple random walk in $\Z^4$. Following Lawler, we also define 
\begin{align*}
	D_n= \sum_{j=0}^{n} \bG(Y_j)
\end{align*}
for each $n\geq 1$. Note that we are using $r$ as a distance variable and $n$ as a time variable.
Using the estimates on the variance and the first moment of $D_n$ proven by Lawler in~\cite[Proposition~3.4.1]{Law91}, we get that there exists a positive constant $C_1$  such that
\begin{align}\label{eq:boundfromlawler}
	\pr{D_n\leq C_1 \log n}\lesssim \frac{1}{\log n}
\end{align}
for every $n\geq 2$.
Writing $\tau_r$ for the first exit time of the box $\Lambda_r$ by $Y$, we notice that $\tau_r\geq r$, and hence
$\til{D}_r\geq D_{r}$ which gives
\begin{align}
	\pr{\til{D}_r\leq C_1 \log r} \leq  \pr{D_{r}\leq C_1\log r}\lesssim \frac{1}{\log r}.
	\label{eq:boundontild}
\end{align}
Fix $r\geq 2$. Following Lawler again, 
we call a time $j \geq 0$ \textbf{good} if 
\[
\sum_{\ell=0}^{\infty} \1(Y_{j+\ell}-Y_j \in \Lambda_r) \bG(Y_{j+\ell} - Y_j) \geq C_1\log r
\]
and bad otherwise. The bound~\eqref{eq:boundontild} then immediately implies
\begin{align}\label{eq:probjisbad}
\pr{j \text{ is bad}} \lesssim \frac{1}{\log r}
\end{align}
for every $j\geq 0$. Let $\tau=\inf\{i\geq 0: X_i\in Y\cap \Lambda_r\}$ and $\sigma=\inf\{j: Y_j=X_{\tau}\}$.
With these definitions we now have 
\begin{align}\label{eq:boundoninters}
	&\pr{X\cap Y\cap \Lambda_r \neq \emptyset} = 
	 \pr{\tau<\infty, \sigma \text{ is good}} + \pr{\tau<\infty, \sigma \text{ is bad}}.
\end{align}
 Since the event that $j$ is bad is independent of $X$ and $Y^j$ we have by \cref{lem:spaceinters_first_moment} and \eqref{eq:probjisbad} that
\begin{align*}
	\pr{\tau<\infty, \sigma \text{ is bad}} \leq \sum_{i=0}^{\infty}\sum_{j=0}^{\infty} \pr{X_i=Y_j\in \Lambda_r} 
	 \pr{j\text{ is bad}} \lesssim \frac{\E{I_r} }{\log r}{\asymp \frac{1}{\log r}}
\end{align*}
for every $r\geq 1$, and it remains only to prove a similar bound on the first term of \eqref{eq:boundoninters}.
We have that 
\begin{align}\label{eq:firsttermofbound}
	\pr{\tau<\infty, \sigma \text{ is good}} \leq \frac{\E{I_{2r}}}{\E{I_{2r} \mid \tau<\infty, \sigma \text{ is good}}}.
\end{align}
Observe that the random variable $(X_{\tau+i}-X_\tau)_{i\geq 0}$ is conditionally independent of $X^\tau$ and $Y$ given that $\tau<\infty$. Defining
\[
I'_r = \sum_{i=0}^\infty\sum_{i=j}^\infty \mathbf{1} (X_{\tau+i}=Y_{\sigma+j}\in \Lambda_r - Y_\sigma)
\]
and observing that $I_{2r} \geq I_r'$ on the event that $\tau<\infty$, it follows that
\begin{multline*}
	\E{I_{2r}\mid\tau<\infty, \sigma \text{ is good}} \geq \E{I_{r}'\mid\tau<\infty, \sigma \text{ is good}}  \\
	= \E{\sum_{j=0}^{\infty}\1(Y_{j+\sigma} -Y_\sigma\in \Lambda_r) \bG(Y_{j+\sigma}-Y_\sigma) \;\Big|\; \tau<\infty, \sigma\text{ is good}}\geq C_1\log r,
\end{multline*}
where for the final inequality we used that $\sigma$ is a good time and for the first inequality we also used the fact that if  $Y_{j+\sigma}-Y_\sigma\in \Lambda_r$ then $Y_{j+\sigma}\in \Lambda_{2r}$, since $Y_\sigma\in \Lambda_r$ by definition.
Substituting this bound into~\eqref{eq:firsttermofbound} we get that
\begin{align}\label{eq:onepart}
	\pr{\tau<\infty, \sigma \text{ is good}} \leq \frac{\E{I_{2r}}}{C_1\log r}\lesssim \frac{1}{\log r}
\end{align}
as required.
\end{proof}

\begin{remark}
	\rm{
	We note that if $\norm{X_0}\asymp r$ and $Y_0=0$, then we also get 
	\[
	\pr{X\cap Y \cap \Lambda_r\neq \emptyset} \asymp \frac{1}{\log r}.
	\]
	Indeed, the initial starting points only enter the proof of~\cref{thm:sumofinters} via the first and second moments of $I_r$, which are given by~\cref{lem:spaceinters_first_moment}.  It is then easy to check that the proof of~\cref{lem:spaceinters_first_moment} goes through for these initial conditions as well.
	}
\end{remark}

\section{Upper bounds on the extrinsic radius}
\label{sec:extrinsic}

In this section we prove the upper bounds on the tail of the extrinsic radius.

\begin{prop}
\label{prop:extrinsicupper}
Let $\fP(0)$ and $\fP_0(0)$ be the past of the origin in the uniform spanning tree and $0$-wired uniform spanning forest of $\Z^4$ respectively. Then we have that
\begin{align}
\P(\fP(0) \cap \partial \Lambda_r \neq \emptyset) \lesssim_\eps \frac{(\log r)^{2/3+\eps}}{r^2}
\end{align}
and
\begin{align}
\P(\fP_0(0) \cap \partial \Lambda_r \neq \emptyset) \lesssim_\eps \frac{(\log r)^{1+\eps}}{r^2}
\end{align}
for every $\eps>0$ and $r\geq 2$.
\end{prop}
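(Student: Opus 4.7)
The plan is to follow the heuristic outlined at the end of \cref{subsec:about_the_proof}, which revolves around a surrogate notion for the length of the random walk generating a given segment of the UST. Specifically, I will define, for each self-avoiding path $\eta$ in $\Z^4$, a \emph{typical time} $T(\eta)$ designed so that a simple random walk started at the endpoint of $\eta$ and conditioned to have loop-erasure equal to $\eta$ (say when stopped at $0$) has length concentrated around $T(\eta)$. Concretely one can take $T(\eta)$ to be the sum over $i$ of the Green-function-weighted contributions predicted by the loop-erasure bijection, in the spirit of \cref{lem:weakl1}, but the key properties I want are:
(a) a concentration lemma stating that the conditioned walk has length $T(\eta)(1\pm o(1))$ with high probability, and (b) an unconditional estimate showing that when $\eta$ is distributed as an (initial segment of) LERW of length $n$, one has $T(\eta)\asymp n(\log n)^{1/3+o(1)}$ with probability $1-o(1)$. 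Property (b) should follow from \cref{lem:amounterased} together with its moment consequences and the cut-time estimates of \cref{lem:Lawlercuttimes}.

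Next, I will prove the key tail bound
\begin{equation*}
\P\bigl(\exists x\in\fP(0)\text{ with }T(\Gamma(0,x))\geq m\bigr)\lesssim_\eps \frac{(\log m)^{2/3+\eps}}{m}
\end{equation*}
and an analogous $m^{-1}(\log m)^{1+\eps}$ bound for $\fP_0(0)$. Using property (b) one can compare this to the intrinsic-arm event $\{\partial\fP(0,n)\neq\emptyset\}$ with $n\asymp m/(\log m)^{1/3+o(1)}$, invoking \cref{prop:unrootedupper} (resp.\ \cref{prop:rootedupper}) to conclude. Some care is needed because the relation between $|\eta|$ and $T(\eta)$ is only in high probability, so I will decompose on the event that the path along which the maximum is realized has $T(\eta)/|\eta|$ in the typical range $(\log|\eta|)^{1/3\pm o(1)}$; the complementary bad event is handled by summing property (b) along the geodesic in the past, which costs only a polylogarithmic correction.

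Finally, I will close the proof by combining this typical-time tail bound with property (a) via Wilson's algorithm. If $\radext(\fP(0))\geq r$ then there exists $x\in\partial\Lambda_r$ such that the path $\Gamma(0,x)$ lies in $\fP(0)$, and sampling via Wilson's algorithm rooted at $\infty$ with $x$ first, the loop-erasure of the walk from $x$ reaching $0$ produces this $\Gamma(0,x)$. A standard diffusive estimate (Azuma, as in \eqref{eq:Azuma}) implies that a random walk from $x\in\partial\Lambda_r$ reaches $0$ in at least $r^2(\log\log r)^{-2}$ steps with probability $1-o(1)$, so by property (a) the path $\eta=\Gamma(0,x)$ satisfies $T(\eta)\geq r^2(\log\log r)^{-2}(\log r)^{-o(1)}$ with high conditional probability. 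A union bound over the $O(r^3)$ vertices of $\partial\Lambda_r$ (or, better, a mass-transport argument exploiting the fact that the path from $0$ to its furthest descendant is unique) then yields
\begin{equation*}
\P(\fP(0)\cap\partial\Lambda_r\neq\emptyset)\leq \P\bigl(\exists\,\eta\subset\fP(0):T(\eta)\geq m\bigr)+(\text{error})
\end{equation*}
with $m\asymp r^2/(\log r)^{o(1)}$, giving $\lesssim_\eps r^{-2}(\log r)^{2/3+\eps}$; the $0$-WUSF estimate is identical using the $\fP_0$ version of the typical-time bound.

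The main obstacle, as flagged in the paper's introduction, will be step (a): proving that conditional on $\LE(X^{\tau_0})=\eta$, the length $\tau_0$ of the walk is concentrated around $T(\eta)$ for \emph{every} $\eta$ (or at least for a dense class of $\eta$'s arising from LERW). The conditional independence of the increments $\ell_{k+1}-\ell_k$ used in the proof of \cref{lem:weakl1} is a good start, but \cref{lem:weakl1} only gives a weak-$L^1$ (one-sided) bound of the wrong order; to get genuine concentration one needs to control the second moment of these increments, which requires refined non-return estimates along $\eta$ — essentially a quenched version of the capacity estimates of \cref{sec:capacity} that handle arbitrary self-avoiding conditioning paths rather than typical LERW realizations.
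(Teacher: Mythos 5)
Your skeleton — introduce a typical time $T(\eta)$, prove conditional concentration of the walk length around it, bound the tail of the maximal typical time over the past, and then play this off against the diffusive lower bound on the length of a walk from $\partial\Lambda_r$ to $0$ — is exactly the paper's strategy (\cref{sec:typicaltime,subsec:extrinsicproof}). However, your final counting step fails quantitatively, and this is where the real work lies. After passing to the $0$-WUSF and applying \cref{lem:typicaltime}, the first-moment sum over $x\in\partial\Lambda_r$ of $\P(x\in\fP_0(0),\,T_0(x)\leq m)$ reduces by time reversal to roughly $m\exp(-cr^2/m)$ (this is \cref{lem:crude} with time in place of intrinsic length); to make this $\lesssim r^{-2}$ you need $m\lesssim r^2/\log r$. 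But the typical-time-radius bound $\P(\sT(0)\geq m)\lesssim m^{-1}(\log m)^{2/3+\eps}$ only lets you discard typical times above $r^2/(\log r)^{o(1)}$: taking $m=r^2/\log r$ there would give $r^{-2}(\log r)^{5/3+\eps}$ and ruin the exponent. With your choice $m\asymp r^2/(\log r)^{o(1)}$ the Azuma factor is only $\exp(-c(\log r)^{o(1)})$, so the error term is of order $r^2\exp(-c(\log r)^{o(1)})\gg r^{-2}$; neither the union bound over the $\asymp r^3$ boundary vertices nor uniqueness-of-geodesic mass transport closes this gap of a full $\log r$ in the exponent. The missing idea is to intersect with a third constraint and work multi-scale: in \cref{subsec:extrinsicproof} the far point is also required to have intrinsic distance at most $s\approx r^2/((\log r)^{1/3}(\log\log r)^2)$, and on each intrinsic scale $a_k$ the far point has $\asymp b_k\approx a_k/(\log a_k)^{99}$ ancestors, each carrying an intrinsic arm of length $b_k$; the stochastic domination property (\cref{lem:domination}) then supplies an extra factor $Q_0(b_k)/b_k\approx (\log r)^{O(1)}/r^4$, after which the mild Azuma factor $\exp(-c(\log\log r)^2)$ coming from $t_0=r^2/(\log\log r)^2$ suffices to absorb all polylogarithms.

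Two further points. First, you misplace the main difficulty: the conditional concentration (your (a)) needs no quenched capacity estimates or second-moment control along arbitrary $\eta$. The paper defines $T(\eta)$ with the loop lengths truncated at $|\eta|$ and uses only the universal bound $\P(\ell_i-\ell_{i-1}=m+1\mid\cdot)\leq p_m(0,0)\lesssim m^{-2}$ together with Bennett's inequality, obtaining deviations at the additive scale $\lambda|\eta|$ with probability $C/\lambda$ (\cref{lem:typicaltime}); multiplicative $(1\pm o(1))$ concentration for general $\eta$ is neither available nor needed. Second, your property (b) as stated (probability $1-o(1)$) is too weak for your own step 2: summing bad-path probabilities along the geodesic, the weak-$L^1$ bound and the one-arm factor cost a fixed power of $\log$, so one needs the probability that a LERW segment is bad to decay faster than any power of $\log$ (\cref{lem:geomfixedtime}); this cannot be extracted from \cref{lem:amounterased} and cut times, and the paper instead needs Lawler's $L^p$ moment bounds for the conditional non-intersection probability (\cref{thm:lawlerlog}) and the escape-probability bookkeeping of \cref{sec:bathpads}. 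Finally, note that for the unrooted UST the event $x\in\fP(0)$ is not $\{\tau_0<\infty\}$ for the Wilson walk started at $x$ rooted at infinity (the origin may be hit and then erased), so the conditioning you use must be routed through the $0$-WUSF via \cref{lem:domination}, as the paper does throughout.
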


Before proceeding, let us briefly note how our remaining main theorems follow from this proposition together with our earlier results.

\begin{proof}[Proof of \cref{thm:extrinsic}]
The lower bound is established by \cref{pro:lowerbounds}, while the upper bound is established by \cref{prop:extrinsicupper}.
\end{proof}

\begin{proof}[Proof of \cref{thm:wusfo}]
The upper and lower bounds of \eqref{exponent:intrado} are established by \cref{prop:rootedupper} and \cref{pro:lowerbounds0} respectively, the upper and lower bounds of \eqref{exponent:extrado} are established by \cref{prop:extrinsicupper} and \cref{pro:lowerbounds0} respectively, while \eqref{exponent:volo} is established by \cref{prop:volume_upper}.
\end{proof}

The remainder of the section is structured as follows. In \cref{sec:typicaltime} we introduce the notions of the \emph{typical time} and the \emph{time radius}, and prove upper bounds on the tail of the typical time radius (\cref{prop:timeradius}) conditional on a technical lemma whose proof is deferred to \cref{sec:bathpads}. We then use this machinery to complete the proof of \cref{prop:extrinsicupper} in \cref{subsec:extrinsicproof}.

\subsection{Typical times and the time radius}
\label{sec:typicaltime}

 We now begin to develop some machinery that will be used in the proof of \cref{prop:extrinsicupper}.
Let $\eta$ be a finite path connecting two vertices $x$ and $y$ in $\Z^4$.
Recall from the proofs of \cref{lem:weakl1} and \cite[Lemma 5.1]{1804.04120} that if $X$ is a simple random walk started at $x$ then, conditional on the event that $\tau_y<\infty$ and $\LE(X^{\tau_y}) = \eta$, the loop lengths 
 $(\ell_i-\ell_{i-1})_{i=1}^{|\eta|}:=(\ell_i(X^{\tau_y})-\ell_{i-1}(X^{\tau_y}))_{i=1}^{|\eta|}$ are independent and satisfy
\begin{align}\label{eq:boundtobeused}
\P_x(\ell_i-\ell_{i-1} = m+1 \mid \tau_y<\infty,\, \LE(X^{\tau_y})=\eta)&\leq 
\P_{\eta_{i-1}}\left(X^m \cap \eta^{i-2} = \emptyset,\, X_m = \eta_{i-1} \right)
\nonumber
\\
&\leq p_m(0,0) \lesssim \frac{1}{m^2}
\end{align}
for each $1\leq i \leq |\eta|$.
Writing $a\wedge b = \min\{a,b\}$, we define the \textbf{typical time} of $\eta$ to be
\[
T(\eta):= 
\mathbb{E}_x\left[ \sum_{i=1}^{|\eta|} \left(\ell_i(X^{\tau_y})-\ell_{i-1}(X^{\tau_y})\right)\wedge |\eta| \;\Bigg|\; \tau_y<\infty,\,\LE(X^{\tau_y})=\eta \right].
\]
This terminology is justified by the following lemma.

\begin{lemma}[Conditional concentration of the time around the typical time] 
\label{lem:typicaltime}
There exists a constant $C$ such that if $x,y\in \Z^4$ and $\eta$ is a simple path from $x$ to $y$ then
\begin{equation}
\label{eq:typicaltime1}
\P_x\Bigl(\bigl|\tau_y-T(\eta)\bigr| \geq \lambda |\eta| \;\Big|\; \tau_y<\infty, \LE(X^{\tau_y})=\eta\Bigr) \leq \frac{C}{\lambda}
\end{equation}
for every $\lambda \geq 1$.
\end{lemma}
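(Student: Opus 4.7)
The plan is to condition throughout on $\tau_y < \infty$ and $\LE(X^{\tau_y}) = \eta$ and to set $n := |\eta|$. As already recalled in the excerpt, under this conditioning the loop lengths $Z_i := \ell_i(X^{\tau_y}) - \ell_{i-1}(X^{\tau_y})$ for $1 \leq i \leq n$ are independent and satisfy $\P(Z_i = m+1) \leq p_m(0,0) \lesssim (m+1)^{-2}$, so in particular $\P(Z_i \geq m) \lesssim 1/m$. With $\widetilde Z_i := Z_i \wedge n$ I would then decompose
\begin{equation*}
\tau_y - T(\eta) \;=\; \underbrace{\sum_{i=1}^{n}\bigl(\widetilde Z_i - \E{\widetilde Z_i}\bigr)}_{=:\,A} \;+\; \underbrace{\sum_{i=1}^{n}(Z_i - n)_+}_{=:\,B}
\end{equation*}
and bound $\P(|A| \geq \lambda n/2)$ and $\P(B \geq \lambda n/2)$ separately.

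The term $A$ is a sum of independent centred variables bounded by $n$, with $\var(\widetilde Z_i) \leq \E{\widetilde Z_i^2} \lesssim \sum_{m=1}^{n} m^2 \cdot m^{-2} \asymp n$, hence $\var(A) \lesssim n^2$ and Chebyshev yields $\P(|A| \geq \lambda n/2) \lesssim \lambda^{-2}$, comfortably smaller than the target $C/\lambda$.

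The term $B \geq 0$ is the main source of difficulty: the tail $\P(Z_i \geq m) \lesssim 1/m$ is exactly critical, so $\E{(Z_i - n)_+}$ diverges logarithmically and any first-moment bound must lose a log. To avoid this I would truncate at scale $\lambda n$, setting $B' := \sum_{i=1}^n \bigl((Z_i - n)_+ \wedge \lambda n\bigr)$. The event $\{B \neq B'\}$ is contained in $\bigcup_i \{Z_i > (1+\lambda)n\}$, whose probability is $\lesssim n \cdot (\lambda n)^{-1} = 1/\lambda$ by a union bound. A direct computation using $\P(Z_i = m) \lesssim m^{-2}$ then gives $\E{B'} \lesssim n \log(1+\lambda)$ and, crucially, bounds the second moment of each summand by $\sum_{k=1}^{\lambda n} k^2/(n+k)^2 + (\lambda n)^2 \P(Z_i > (1+\lambda)n) \lesssim \lambda n$, so $\var(B') \lesssim \lambda n^2$. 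For $\lambda$ larger than some absolute constant one has $\E{B'} \leq \lambda n/4$, and Chebyshev gives $\P(B' \geq \lambda n/2) \lesssim \lambda n^2/(\lambda n)^2 = 1/\lambda$; for bounded $\lambda$ the claimed inequality holds trivially. Summing the contributions yields $\P(|\tau_y - T(\eta)| \geq \lambda n) \leq C/\lambda$.

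The main obstacle is precisely this borderline integrability: the bound $\P(Z_i = m) \lesssim m^{-2}$ places the loop lengths right on the boundary between $L^1$ and weak $L^1$, so any argument routed through the Vershynin weak triangle inequality (as in the proof of \cref{lem:weakl1}) introduces a spurious $\log n$ factor. The whole point of the argument is that choosing the truncation level in $B'$ to match the target deviation $\lambda n$ replaces Markov's inequality by Chebyshev's, which is exactly what removes this logarithmic loss.
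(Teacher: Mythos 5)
Your proof is correct, and it reaches the same $1/\lambda$ bound by a somewhat more elementary route than the paper. The shared skeleton is the same: conditional independence of the loop lengths with the weak-$L^1$ tail $\P(Z_i \geq m)\lesssim 1/m$, truncation at scale $\lambda n$ whose failure probability is controlled by a union bound of order $n/(\lambda n)\asymp 1/\lambda$, and absorption of the resulting mean shift of order $n\log\lambda$ into $\lambda n$ once $\lambda$ exceeds a constant. Where you differ is in the decomposition and the concentration tool: the paper works with a single truncation $Z_i\wedge M$ at an optimized level $M(\lambda)\asymp \lambda n$ and applies Bennett's inequality to that sum, then compares the truncated mean $\mu(M(\lambda))$ with $T(\eta)$; you instead split $\tau_y-T(\eta)$ into the centred sum of $Z_i\wedge n$ (which matches the definition of $T(\eta)$ exactly, so no mean comparison is needed there) plus the overshoot $\sum_i(Z_i-n)_+$, truncate only the overshoot at $\lambda n$, and use Chebyshev twice, the key computations being $\var(A)\lesssim n^2$ and $\var(B')\lesssim \lambda n^2$ with $\E{B'}\lesssim n\log(1+\lambda)$. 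Your version buys simplicity: second moments suffice because the final rate is in any case dictated by the $1/\lambda$ truncation cost, so Bennett's exponential bound is stronger than needed; the paper's version would, however, give better-than-$1/\lambda$ control of the truncated fluctuation if one ever wanted deviations at larger scales. The only cosmetic omission is the degenerate case $|\eta|=0$, which the paper also dismisses in one line, and the implicit fact (recalled in the paper and used by you) that the conditional loop-length law satisfies the stated tail bound.
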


\begin{remark}
Four dimensions should be marginal for a concentration result of this form to hold. In dimension $d\leq 3$, we expect the sum of the loop lengths $\ell_i(X)-\ell_{i-1}(X)$ to be dominated by its maximum and therefore not to be concentrated. 
\end{remark}

The proof of this lemma
 will use Bennett's inequality \cite[Theorem 2.9]{MR3185193}, which states that if $Z_1,\ldots,Z_n$ are independent random variables with $|Z_i| \leq M$ for every $1 \leq i \leq n$  and we define $S=\sum_{i=1}^n Z_i$, $\mu = \E{S} = \E {\sum_{i=1}^n Z_i}$, and $\sigma^2 = \operatorname{Var}(S) = \sum_{i=1}^n \operatorname{Var}(Z_i)$ then
\begin{equation}
\label{eq:Bennett}
\P\left(|S - \mu| \geq t\right) \leq 2 \exp\left[-\frac{\sigma^2}{M^2} \cdot h\!\left(\frac{Mt}{\sigma^2}\right) \right]
\end{equation}
for every $t>0$, 
where $h:(0,\infty)\to (0,\infty)$ is the increasing function $h(x)=(1+x)\log(1+x)-x$.

\begin{proof}[Proof of \cref{lem:typicaltime}]
Let $\eta$ be a simple path between $x$ and $y$ of length $n \geq 0$.
The claim holds vacuously when $n=0$, so we may assume that $n\geq 1$.
 Let $M \geq 1$ be a parameter to be optimized over. Write $\ell_i=\ell_i(X^{\tau_y})$ for each $0 \leq i \leq |\eta|$. We will apply Bennet's inequality to the random variables 
 \[Z_i=Z_i(M)=\left(\ell_i-\ell_{i-1}\right)\wedge M
\qquad i =1 ,\ldots,n\] conditioned on the event $\Omega=\{\tau_y < \infty$, $\LE(X^{\tau_y}) = \eta\}$. Let $S=S(M)=\sum_{i=1}^n Z_i$. As discussed above, the random variables $Z_1,\ldots,Z_n$ are conditionally independent given $\Omega$ and by~\eqref{eq:boundtobeused} there exists a constant $C_1$ such that $\P(Z_i \geq t \mid \Omega) \leq C_1 t^{-1}$ for every $t\geq 1$ and $1 \leq i \leq n$. It follows in particular that
\[
\sum_{i=1}^n\operatorname{Var}(Z_i \mid \Omega) \leq \sum_{i=1}^n
\E{Z_i^2 \mid \Omega} \leq 2
 \sum_{i=1}^n\sum_{t=1}^{\lfloor M \rfloor} t \cdot \P(Z_i \geq t \mid \Omega) \leq 2C_1 M n
\]
for every $1\leq i \leq n$ and $M \geq 1$ and, since $\tau_y=\ell_n$, that 
\[
\P\left(\tau_y \neq S \mid \Omega\right) = \P( \ell_i-\ell_{i-1} > M \text{ for some $1\leq i \leq n$} \mid \Omega) \lesssim \frac{n}{M}
\]
for every $M \geq n$. Letting $\mu(M) = \sum_{i=1}^n \E{Z_i \mid \Omega }$, it follows by a union bound and Bennett's inequality that
\[
\P\left(|\tau_y - \mu(M)| \geq \lambda n \mid \Omega \right) \lesssim \frac{n}{M} + \exp\left[-\frac{2C_1n}{M} \cdot h\left(\frac{\lambda}{2C_1}\right)\right]
\]
for every $M \geq n$. Taking the approximately optimal choice of $M$ 
\[M(\lambda) := n \vee \frac{2C_1 n h (\lambda/2C_1)}{\log h(\lambda/2C_1)}  \asymp \lambda n\]
yields that there exists a positive constant $C_2$ such that
\[
\P\left(|\tau_y - \mu(M(\lambda))| \geq \lambda n \mid \Omega\right) \lesssim \frac{\log h(\lambda / 2C_1)}{h(\lambda/2C_1)} \lesssim \frac{1}{\lambda}
\]
for every $\lambda \geq C_2$. By increasing the value of the implicit constant if necessary we may take this inequality to hold for all $\lambda \geq 1$. On the other hand, we have that
\[
\mu(M(\lambda)) - T(\eta) \leq \sum_{i=1}^{n} \sum_{t=n+1}^{\lfloor M(\lambda) \rfloor} \P(Z_i \geq t \mid \Omega) \lesssim n\log \frac{M(\lambda)}{n} \lesssim n\log \lambda.
\]
It follows that there exists a constant $C_3$ such that if $\lambda \geq C_3$ then $|\mu_n(M_n(\lambda)) - T_n(\eta)| \leq \lambda n$ and hence that
\[
\P\left(|\tau_y - T(\eta)| \geq 2\lambda n \mid \Omega\right)
\leq \P\left(|\tau_y - \mu(M(\lambda))| \geq 2\lambda n \mid \Omega\right) \lesssim \frac{1}{\lambda}
\]
for every $\lambda \geq C_3$. This is easily seen to imply the claim.
\end{proof}

Note that we trivially have that $T(\eta) \geq |\eta|$ for every finite simple path $\eta$. On the other hand, if $\eta$ is a path of length $n\geq 2$ we also have as above that
\begin{align}
T(\eta) 
&\leq 
\sum_{i=1}^{n} \sum_{k=1}^{n} k  \mathbb{P}_{\eta_{i-1}}\left( X_{k-1} = \eta_{i-1} \text{ and } X^{k-1}\cap \eta^{i-2}=\emptyset  \right) \lesssim \sum_{i=1}^{n} \sum_{k=1}^{n} \frac{1}{k} \asymp n \log n.
\end{align}
We remark that the lower bound $T(\eta) \gtrsim |\eta|$ is attained up to constants when $\eta$ is a space-filling curve, while the upper bound $T(\eta) \lesssim |\eta| \log |\eta|$ is attained when $\eta$ is a straight line. In all the regimes we are interested in we have that $T(\eta) \gg |\eta|$, so that \cref{lem:typicaltime} can indeed be thought of as a concentration estimate. Indeed, when $\eta$ is a loop-erased random walk of length $n$ we will typically have that $T(\eta) \asymp n(\log n)^{1/3}$.  

It will be helpful for us to replace the typical time by a simpler quantity that is easier to estimate. 
Given a finite path $\eta$ of length $n$ and $k\geq 0$ we define $\operatorname{Esc}_k(\eta)=\P_{\eta_n}(X^k \cap \eta^{n-1} = \emptyset)$ to be the probability that the first $k$ steps of a random walk started at the endpoint of $\eta$ do not intersect the other points of $\eta$. Let $a=\lfloor k/3 \rfloor$ and $b=k-1-\lfloor k/3 \rfloor$. 
Splitting the random walk of length $k$ into three pieces of roughly equal length and applying a time-reversal to the final piece, we deduce that if $\eta$ is a (finite or infinite) simple path and $0 \leq i \leq |\eta|$ then
\begin{align*}
&\mathbb{P}_{\eta_i}\left( X_{k-1} = \eta_i \text{ and } X^{k-1}\cap \eta^{i-1}=\emptyset  \right) \\
&\hspace{2.3cm}
\leq \sum_{x,y}\prstart{X\left[0,a\right] \cap \eta^{i-1}=\emptyset, X_{a}=x, X_{ b}=y, X\left[ b,k-1\right] \cap \eta^{i-1}=\emptyset, X_{k-1}=\eta_{i} }{\eta_{i}}
\\&\hspace{2.3cm}\leq \operatorname{Esc}_{\lfloor k/3 \rfloor}(\eta^i)^2 \sup_{x,y \in \Z^4} \P_x(X_{b-a} = y)\lesssim \frac{1}{k^2} \operatorname{Esc}_{\lfloor k/3 \rfloor}(\eta^i)^2
\end{align*}
for every $k\geq 1$.
Thus, if $\eta$ is a finite simple path of length $n$ and we define $A_{i}(\eta) = \sum_{k=1}^n \frac{1}{k} \operatorname{Esc}_{k}(\eta^i)^2$ for each $0\leq i \leq n-1$ and $\widetilde T(\eta)=\sum_{i=0}^{n-1}A_i(\eta)$ then
\[\widetilde T(\eta) := \sum_{i=0}^{n-1} A_{i}(\eta) =\sum_{i=0}^{n-1} \sum_{k=1}^n \frac{1}{k} \operatorname{Esc}_{k}(\eta^i)^2 \gtrsim T(\eta).\]


Let $\fT$ and $\fF_0$ be the uniform spanning tree and $0$-wired uniform spanning forest of $\Z^4$.
For each vertex $x$ in the past $\fP=\fP(0)$ we define $T(x)=\widetilde T(\Gamma(x,0))$, where $\Gamma(x,0)$ is the path connecting $x$ to $0$ in $\fT$. Similarly, for each vertex $x$ in the past $\fP_0=\fP_0(0)$ we define $T_0(x)=\widetilde T(\Gamma_0(x,0))$ where $\Gamma_0(x,0)$ is the path connecting $x$ to $0$ in $\fF_0$. We define the \textbf{time radii} $\sT(0)$ and $\sT_0(0)$ of $\fP(0)$ and $\fP_0(0)$ to be
\[
\sT(0) = \max\{T(x) : x\in \fP(0)\} \qquad \text{ and } \qquad \sT_0(0) = \max \{T_0(x) : x\in \fP_0(0)\}
\]
respectively.
 Intuitively, we typically expect that $T(x)$ should be of order $n (\log n)^{1/3}$ when $x \in \partial\fP(0,n)$, and hence that the time radius and the intrinsic radius of the past should be related in a similar way with high probability.
 Our next goal is to prove the following tail estimate, which makes this intuition precise with regards to tail probabilities.

\begin{prop}
\label{prop:timeradius}
 Let $\sT(0)$ and $\sT_0(0)$ be the time radius of the past of the origin in the uniform spanning tree and $0$-wired uniform spanning forest of $\Z^4$ respectively. Then
\begin{equation}
 \P(\sT(0) \geq t) \lesssim_\eps \frac{(\log t)^{2/3+\eps}}{t} 
\qquad\text{and}\qquad \P(\sT_0 (0) \geq t) \lesssim_\eps \frac{(\log t)^{1+\eps}}{t} 
\label{eq:timeradius_rooted}
\end{equation}
for every $t \geq 2$ and $\eps>0$.
\end{prop}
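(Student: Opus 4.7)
The plan is to combine the intrinsic-radius tail bounds of Propositions~\ref{prop:unrootedupper} and~\ref{prop:rootedupper} with a sharp upper-tail estimate for $\widetilde T(\eta)$ when $\eta$ is distributed as the four-dimensional loop-erased random walk. The guiding heuristic is that a typical segment of length $m$ of the loop-erased walk satisfies $\widetilde T \asymp m(\log m)^{1/3}$, so that at scale $t$ the event $\sT(0)\geq t$ corresponds roughly to $\radint(\fP)\geq t/(\log t)^{1/3}$. Plugging this length into $\P(\radint(\fP)\geq n)\lesssim (\log n)^{1/3}/n$ from \cref{prop:unrootedupper} recovers the target $(\log t)^{2/3}/t$, and similarly for the $0$-WUSF using \cref{prop:rootedupper}.

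The first step is to establish a polynomially small upper-tail estimate of the form
\begin{equation*}
\P\!\left(\widetilde T\bigl(\LE(X)^{m}\bigr) \geq C\,m(\log m)^{1/3+\delta}\right) \lesssim_{K,\delta}\, m^{-K}
\end{equation*}
for every $K,\delta>0$. Since $\widetilde T(\eta)=\sum_{i=0}^{|\eta|-1}A_i(\eta)$ with $A_i(\eta)=\sum_{k=1}^{|\eta|}k^{-1}\operatorname{Esc}_k(\eta^i)^2$, and escape probabilities along LERW segments are controlled by the non-intersection probabilities of \cref{thm:logpaper} (of typical order $(\log k)^{-1/3}$), the expected contribution of each $A_i$ is $\asymp(\log m)^{1/3}$. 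A Bernstein/Bennett-type concentration can be extracted by decomposing the walk at cut-times (\cref{lem:Lawlercuttimes}) to obtain approximate independence between well-separated segments, combined with moment bounds on each $A_i$ obtained by integrating Lawler's non-intersection decorrelation estimates. Working with $\LE(X^T)$ for a geometric random time $T$ of mean~$m$, in the spirit of \cref{subsec:polygamous_deviants}, makes the decorrelation cleaner to exploit and transfers to the desired statement about $\LE(X)^m$ by the usual Azuma/local-limit comparison arguments.

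Given this concentration input, the UST bound follows from the decomposition
\begin{equation*}
\P(\sT(0)\geq t)\,\leq\,\P(\radint(\fP)\geq n)\,+\,\sum_{m=M_0}^{n}\E{\bigl|\{x\in \partial\fP(0,m):T(x)\geq t\}\bigr|},
\end{equation*}
where $M_0\asymp t/\log t$ is the smallest $m$ for which $\widetilde T\geq t$ is at all possible (using $\widetilde T(\eta)\lesssim|\eta|\log|\eta|$) and $n=\lceil t(\log t)^{-1/3-\eps'}\rceil$ for a small $\eps'<\eps$. Proposition~\ref{prop:unrootedupper} handles the first term, yielding $(\log t)^{2/3+\eps'}/t$. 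For the second term I apply the mass-transport principle to the diagonally-invariant function $F(x,y)=\P(y\in \partial\fP(x,m),\,\widetilde T(\Gamma(y,x))\geq t)$ together with $\E{|\partial\fP(x,m)|}=1$ from \cref{lemma:WUSF_ball}, which rewrites each summand as $\P(\widetilde T(\LE(X)^m)\geq t)$; the concentration of step~1, with $K$ chosen sufficiently large in terms of $\eps$ and $\eps'$, then makes the sum at most $(\log t)^{2/3+\eps}/t$.

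For the $0$-WUSF the scheme is the same with \cref{prop:rootedupper} replacing \cref{prop:unrootedupper} and Wilson's algorithm rooted at $0$ replacing the mass-transport step: summing over starting vertices gives
\begin{equation*}
\E{\bigl|\{x\in \partial\fP_0(0,m):T_0(x)\geq t\}\bigr|} =\sum_{x\neq 0}\prstart{\tau_0<\infty,\,|\LE(X^{\tau_0})|=m,\,\widetilde T(\LE(X^{\tau_0}))\geq t}{x},
\end{equation*}
which I bound via \cref{thm:reversibility} in terms of the unconditioned loop-erasure. This now carries the extra prefactor $\E{|\partial\fP_0(0,m)|}\asymp(\log m)^{1/3}$ from \cref{prop:WUSFo_ball}, which is precisely the source of the additional $(\log t)^{1/3}$ in the $0$-WUSF rate. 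The main obstacle is step~1: first-moment (Markov) bounds on $\widetilde T(\LE(X)^m)$ only give a sum of size $\gtrsim t/(\log t)^{O(1)}$, far larger than the target $(\log t)^{2/3+\eps}/t$, so a genuine refined moment analysis exploiting approximate independence of the escape probabilities $\operatorname{Esc}_k$ across cut-time blocks is required to drive the polylogarithmic slack arbitrarily small.
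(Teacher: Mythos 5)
Your overall decomposition (intrinsic radius at depth $n_1\approx t/(\log t)^{1/3}$ plus a sum over depths $m$ of the expected number of points whose path to the origin has large $\widetilde T$, rewritten by mass transport as $\P(\widetilde T(\LE(X)^m)\ge t)$) is sound as far as it goes, but it stands or falls with your Step 1, and that is where there is a genuine gap. After mass transport there is no smallness left except in the badness probability itself: since $\E{|\partial\fP(0,m)|}=1$, summing over the roughly $t/(\log t)^{1/3}$ values of $m$ forces you to prove $\P(\widetilde T(\LE(X)^m)\ge Cm(\log m)^{1/3+\delta})\lesssim m^{-2-\epsilon}$, a genuinely polynomial tail bound. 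Your proposed route --- cut-time blocks (\cref{lem:Lawlercuttimes}) for approximate independence plus Bennett --- cannot deliver this: the probability that a macroscopic block contains no cut time is only of order $\log\log m/\log m$, so conditioning on a good cut-time skeleton leaves an exceptional event of merely polylogarithmic probability on which you have no control beyond the deterministic bound $\widetilde T\lesssim m\log m$; likewise Lawler's moment estimates (\cref{thm:lawlerlog}) give tails for the escape probabilities that are smaller than any power of $\log m$ but never polynomially small. So the claimed $m^{-K}$ bound is unsupported (and, with the tools available in this paper, out of reach); without it your sum is of size $t/(\log t)^{O(1)}$, as you yourself note.

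The paper's proof is organized precisely to avoid needing any polynomial tail for $\widetilde T$. It truncates: a path is $\delta$-bad only if $\sum_i A_i\1\bigl(A_i\ge(\log n)^{1/3+\delta}\bigr)>\delta n$, and \cref{lem:geomfixedtime} shows, by a first-moment/H\"older argument on this truncated sum using \cref{thm:lawlerlog} (not by concentration or cut times), that the Ces\`aro-averaged probability of badness is $\lesssim_{\delta,p}(\log n)^{-p}$ --- only polylogarithmically small. The crucial factor $1/n$ then comes from a different mechanism: in \cref{lem:roughpaths2}, a bad point at depth $\sim n$ in the past forces $\gtrsim n/(\log n)^2$ points whose path to $0$ is bad \emph{and} whose own past reaches depth $m_0\asymp n/(\log n)^2$; Markov's inequality together with the stochastic domination property (\cref{lem:domination}) factorizes this as (expected number of bad points, controlled via Wilson's algorithm, \cref{lem:weakl1}, and mass transport) times $Q_0(m_0)\lesssim (\log n)^{O(1)}/n$ from \cref{prop:rootedupper}. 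The proposition then follows by a union bound over dyadic depth scales between $t/\log t$ and $t/(\log t)^{1/3+\delta}$, with \cref{prop:unrootedupper} (resp.\ \cref{prop:rootedupper}) supplying the main term; in particular the extra $(\log t)^{1/3}$ in the $0$-wired case comes from the intrinsic-radius bound, not from a $(\log m)^{1/3}$ boundary-volume prefactor. To salvage your scheme you would need either to prove the polynomial tail (a substantial new result) or to adopt the truncation-plus-stochastic-domination mechanism, at which point the argument becomes the paper's.
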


We now introduce some further  definitions that will be used in the proof of \cref{prop:timeradius}. 
Given $\delta>0$, we say that a finite path $\eta$ of length $n \geq 0$ is $\delta$\textbf{-good} if 
\[
\sum_{i=0}^{n-1} A_{i}(\eta) \mathbbm{1}\left( A_{i} \geq (\log n)^{1/3+\delta}\right) \leq \delta n.
\]
(Of course this definition is highly arbitrary.)
We say that $\eta$ is $\delta$\textbf{-bad} if it is not $\delta$-good. Observe that if $\eta$ is a $\delta$-good simple path of length $n \geq 2$ then
\[
\widetilde T(\eta) \leq \delta n + \sum_{i=0}^{n-1}A_{i}(\eta)\mathbbm{1}\left(A_{i} < (\log n)^{1/3+\delta}\right) \lesssim n (\log n)^{1/3+\delta}.
\]
 The following technical lemma, which relies on the results of Lawler \cite{Lawlerlog}, is proved in Section~\ref{sec:bathpads}.

 \begin{lemma}[Bad paths are highly unlikely]\label{lem:geomfixedtime}
Let $\delta>0$ and $p\geq 0$. Then
	\begin{align*}
		\frac{1}{n}\sum_{k=0}^{n} \prstart{\LE(X^k) \text{\emph{ is $\delta$-bad}}}{0} 
	\lesssim_{\delta,p} \frac{1}{(\log n)^p}.
	\end{align*}
  for every $n\geq 2$.
\end{lemma}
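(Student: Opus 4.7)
The plan is to combine high-moment bounds on each individual $A_{i,n}(\eta)$ with a union bound. The heuristic is that for a loop-erased random walk $\eta$ of length $n$, each $A_{i,n}(\eta)$ is typically of order $(\log n)^{1/3}$ by Theorem~\ref{thm:logpaper} (since $\operatorname{Esc}_k \asymp (\log k)^{-1/3}$ in expectation after averaging over $\eta$), so the threshold $(\log n)^{1/3+\delta}$ sits just above the typical value, and we just need enough moments to obtain polylogarithmic concentration.

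The first and main step is to establish the moment bound
\[
\E\left[A_{i,n}\bigl(\LE(X^m)\bigr)^q\right] \lesssim_q (\log m)^{q/3}
\]
for every $q \geq 1$ and every $i \leq n \leq m$ with $i$ in a typical range (say $i \geq m^c$ for some $c>0$). After a dyadic decomposition in $k$ and a power-mean inequality this reduces to estimates of the form $\E\bigl[\operatorname{Esc}_{2^j}(\LE(X^m)^i)^{2q}\bigr] \lesssim_q (\log 2^j)^{-2q/3}$, which express the probability that $2q$ independent walks started at the endpoint of an initial segment of a LERW all avoid that segment. These multi-walk non-intersection estimates should follow from the same methods Lawler uses in~\cite{Lawlerlog}, with the relevant intersection exponent being additive in four dimensions.

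Given the moment bound, Markov's inequality yields $\P\bigl(A_{i,n}(\LE(X^m)) \geq (\log m)^{1/3+\delta}\bigr) \lesssim_q (\log m)^{-q\delta}$. Combined with the deterministic bound $A_{i,n} \leq C \log n$ (valid because $\operatorname{Esc}_k \leq 1$), the $\delta$-bad condition forces $\#\{i : A_{i,n}(\eta) \geq (\log n)^{1/3+\delta}\} \geq \delta n / (C \log n)$, and a further Markov inequality over this count yields that, conditional on $|\LE(X^m)| = n$, the probability of $\LE(X^m)$ being $\delta$-bad is at most a constant times $(\log m)^{1-q\delta}/\delta$. Choosing $q$ sufficiently large in terms of $p$ and $\delta$ yields any desired polylogarithmic decay.

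To handle the random length $M_k = |\LE(X^k)|$ appearing in the statement, I would split according to whether $M_k$ is of its typical order or not. The event $\{M_k \leq k/(\log k)^{1/3+\alpha}\}$ is controlled on average over $k$ by Corollary~\ref{cor:capgeom}; on the complementary event $\log M_k \asymp \log k$, so the preceding argument applies with $m$ comparable to $k$ and with the threshold $(\log M_k)^{1/3+\delta}$ of the right order. The averaging over $k$ in the lemma's statement is exactly what allows one to absorb these atypical values. The main obstacle is the high-moment bound in the first step: without moments of arbitrary order, Chebyshev-type arguments deliver only polynomial-in-log decay with exponent depending on $\delta$ and on the available moment, which is insufficient for the claimed \emph{arbitrary} polylogarithmic bound.
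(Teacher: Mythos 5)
Your outline has the right flavor and in fact the ingredient you single out as the main obstacle is not the obstacle at all: the all-order moment bounds on escape probabilities are exactly what the paper quotes from Lawler as \cref{thm:lawlerlog} (implicit in \cite{Lawlerlog}, explicit in \cite{LawlerSunWu}), and via reversibility of the LERW they give \cref{cor:lawlerlogonethrid}. The first genuine gap is in your Step 1. The quantities $\operatorname{Esc}_k(\LE(X^m)^i)$ are escape probabilities from the endpoint of an \emph{initial segment} of the loop-erasure; reversibility converts escape from the endpoint of the \emph{full} loop-erasure $\LE(X^m)$ into escape from the starting point of a LERW, which is what Lawler's theorem controls, but for fixed $i$ the segment $\LE(X^m)^i$ equals $\LE(X^{\ell_i})$ with $\ell_i$ the random last-exit time of $\LE(X^m)_i$, and a fixed-$i$ moment bound would require an extra decomposition over this random time that you do not supply. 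The paper's proof of \cref{lem:badgeom} is structured precisely to avoid fixed-$i$ estimates: it introduces $Z_m=\sum_k k^{-1}\operatorname{Esc}_k(\LE(X^m))^2$ at every deterministic walk time $m$, uses the identity $A_i=Z_{\ell_i}$, and dominates the sum over LERW indices by the sum over all walk times $m\le T$, so that only moments of escape probabilities of full loop-erasures at deterministic times are ever needed. Relatedly, your ``conditional on $|\LE(X^m)|=n$'' Markov step is not valid as written: an unconditional moment bound does not yield a conditional probability bound without paying a factor $\P(|\LE(X^m)|=n)^{-1}$; this is patchable by arguing unconditionally on the event that the length exceeds a threshold, but that leads to the second problem.

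The second gap is your treatment of the atypically short loop-erasure, and it caps your final bound below what the lemma claims. \cref{cor:capgeom} only applies with $\alpha\le 2/3$ and hence controls the averaged event $\{|\LE(X^k)|\le k/(\log k)^{1/3+\alpha}\}$ only to order $(\log n)^{-4/3+\eps}$, so a union bound containing this term cannot produce an \emph{arbitrary} power $p$; worse, as you increase $q$ in the counting/Markov step you must tolerate lengths down to $k/(\log k)^{B}$ with $B$ growing in $p$, so you genuinely need arbitrary polylogarithmic decay for the short-LERW event, which is not available at fixed times from the results you cite. The paper exploits the Ces\`aro average differently: it dominates $\frac1n\sum_{k\le n}\P(\LE(X^k)\text{ is $\delta$-bad})$ by the corresponding probability at an independent geometric time $T$ of mean $n$, and then the event $\{|\LE(X^T)|<t/(\log t)^{p+1}\}$ has probability $\lesssim (\log t)^{-p}$ by the weak-$L^1$ estimate \eqref{eq:weakl1geometric}, since the loop-erasure of a geometric-time walk can only be short if $T$ itself is small. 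Replacing your use of \cref{cor:capgeom} by this geometric-time reduction with a $p$-dependent length threshold, and replacing your fixed-$i$ moment bounds by the last-exit-time/sum-over-walk-times device, is what separates your outline from a complete proof.
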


In the remainder of this subsection we complete the proof of \cref{prop:timeradius} conditional on \cref{lem:geomfixedtime}. We begin by deducing that it is very hard for the past of the origin to contain a long $\delta$-bad path.

\begin{lemma}
\label{lem:roughpaths2}
Let $\delta>0$ and $p \geq 0$. Then
\[
\P\left(\exists x \in \fP_0(0,2n)\setminus \fP_0(0,n) \text{\emph{ such that $\Gamma_0(x,0)$ is $\delta$-bad}}\right) \lesssim_{\delta,p} \frac{1}{n(\log n)^p}\]
for every $n \geq 2$.
\end{lemma}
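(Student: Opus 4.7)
The plan is to apply a first-moment (Markov-type) bound, reducing the statement via Wilson's algorithm to an average LERW estimate controlled by \cref{lem:geomfixedtime}. Markov's inequality gives
\[
\P\Bigl(\exists x \in \fP_0(0,2n)\setminus \fP_0(0,n),\, \Gamma_0(x,0) \text{ is $\delta$-bad}\Bigr) \leq \sum_{x\in \Z^4} \P_x\Bigl(\tau_0<\infty,\, n<|\LE(X^{\tau_0})|\leq 2n,\, \LE(X^{\tau_0}) \text{ is $\delta$-bad}\Bigr),
\]
where we have used Wilson's algorithm rooted at $0$ to rewrite the event $\{x \in \fP_0,\, \Gamma_0(x,0) = \gamma\}$ in terms of the loop-erasure of a random walk $X$ started at $x$ and stopped when it hits $0$.

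The key reductions are as follows. First, \cref{lem:weakl1} applied with $|\gamma|\leq 2n$ shows that conditionally on $\tau_0<\infty$ and $|\LE(X^{\tau_0})|\leq 2n$, the probability that $\tau_0\geq Cn\log n$ is at most $1/2$ for a suitable constant $C$; this lets us restrict to $\tau_0 \leq Cn\log n$ at the cost of a factor of $2$, so that
\[
\E[N] \leq 2\sum_{x\in \Z^4}\sum_{t=1}^{\lfloor Cn\log n\rfloor} \P_x\Bigl(X_t=0,\, \LE(X^t) \text{ is $\delta$-bad}\Bigr).
\]
Second, since being $\delta$-bad is translation-invariant, exchanging the sums and applying the mass-transport identity $\sum_x \P_x(X_t=0,\, \LE(X^t) \in A) = \P_0(\LE(X^t) \in A)$ reduces this to $2\sum_{t=1}^{\lfloor Cn\log n\rfloor} \P_0(\LE(X^t) \text{ is $\delta$-bad})$. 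Third, \cref{lem:geomfixedtime} applied with $p$ replaced by $p+1$ bounds this sum by a constant times $n\log n/(\log n)^{p+1} \asymp n/(\log n)^p$.

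The hard part will be closing the remaining gap: the naive first-moment argument above yields only a bound of order $n/(\log n)^p$, whereas the lemma asserts the much smaller bound $1/(n(\log n)^p)$, a polynomial improvement of $n^2$. To obtain this, the argument must combine the LERW input above with a structural refinement. A natural route is to work conditionally on $\{\radint(\fP_0)\geq n\}$, whose probability is at most $\lesssim (\log n)^{2/3}\log\log n/n$ by \cref{prop:rootedupper}, and either exploit the stochastic-domination property of \cref{lem:domination} or mimic the interlacement Aldous--Broder dynamics argument used in the proof of \cref{prop:rootedupper}, shifting time by some small $\epsilon$ and using that the typical capacity of the path $\Gamma_0(x,0)$ of length in $(n,2n]$ is of order $n/(\log n)^{2/3}$ (from \cref{prop:firstconcentration} and \cref{cor:capgeom}) to get the exponential decay $e^{-\epsilon\cdot\mathrm{cap}}$ needed to extract the extra polynomial factor.
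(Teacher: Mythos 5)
Your first-moment reduction (Wilson's algorithm rooted at $0$, \cref{lem:weakl1} to restrict to $\tau_0\lesssim n\log n$, mass transport, and \cref{lem:geomfixedtime}) is correct and is indeed part of the paper's argument, but as you yourself note it only yields a bound of order $n(\log n)^{-p}$, which is off from the target by a factor of $n^2$. The proposal does not close this gap: it ends with a heuristic suggestion, and the specific mechanism you lean on — shifting time by $\epsilon$ in the interlacement Aldous--Broder dynamics and using the factor $e^{-\epsilon\,\mathrm{cap}(\Gamma_0(x,0))}$ — cannot produce a polynomial gain. With $\mathrm{cap}(\Gamma_0(x,0))\lesssim n/(\log n)^{2/3}$, getting $e^{-\epsilon\,\mathrm{cap}}\approx n^{-2}$ forces $\epsilon\gtrsim (\log n)^{5/3}/n$, and then the complementary event that $0$ is hit by a trajectory in a time window of length $\epsilon$ has probability of order $\epsilon\gg n^{-1}(\log n)^{-p}$, on which you have no control without essentially re-proving the lemma; with smaller $\epsilon$ the exponential factor is only polylogarithmic. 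Likewise, "conditioning on $\radint(\fP_0)\geq n$" does not let you simply multiply your first-moment bound by $Q_0(n)$ — you need a concrete decoupling of the bad-path event from a deep-past event, and that is the missing idea.

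The paper's decoupling is structural, not dynamical. If $x\in\fP_0(0,2n)\setminus\fP_0(0,n)$ has $\Gamma_0(x,0)$ $\delta$-bad, then (for $n$ large, with $m_0=\lfloor n/(\log n)^2\rfloor$) every vertex $y$ lying $m$ steps into the future of $x$ with $m_0\leq m\leq 2m_0$ has $\Gamma_0(y,0)$ $\delta/2$-bad, and moreover $\partial\fP_0(y,m_0)\neq\emptyset$ since $x$ lies in the past of $y$ at depth at least $m_0$. So the event forces at least $m_0$ witnesses $y$ with \emph{both} properties. Markov's inequality applied to the number of witnesses contributes a factor $1/m_0$, and the stochastic domination property (\cref{lem:domination}) bounds the conditional probability of $\partial\fP_0(y,m_0)\neq\emptyset$ given the future of $y$ by $Q_0(m_0)\lesssim (\log n)^{O(1)}/n$ (\cref{prop:rootedupper}). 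Multiplying your first-moment estimate for the expected number of $y$ with $\Gamma_0(y,0)$ $\delta/2$-bad (which is where Wilson, \cref{lem:weakl1}, mass transport and \cref{lem:geomfixedtime} with a larger exponent enter) by $Q_0(m_0)/m_0\approx (\log n)^{O(1)}/n^2$ gives exactly the claimed $n^{-1}(\log n)^{-p}$. Without this "many witnesses along the future of $x$, each carrying a deep past" step, your argument stalls at $n(\log n)^{-p}$.
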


\begin{proof}
Suppose $x \in \fP_0(0,2n)\setminus \fP_0(0,n)$ is such that $\Gamma_0(x,0)$ is $\delta$-bad. It is readily verified from the definitions that there exists a constant $n_0=n_0(\delta)$ 
such that if $n \geq n_0$ and $m_0=\lfloor n/(\log n)^2 \rfloor$, $m_0 \leq m \leq 2m_0$, and $y$ is the vertex $m$ steps into the future of $x$ then $\Gamma_0(y,0)$ is $\delta/2$-bad. Thus, on the event in question, there must exist at least $m_0$ points $y \in \fP(0,2n)$ such that $\Gamma_0(y,0)$ is $\delta/2$-bad and $\partial \fP_0(y,m_0) \neq \emptyset$. It follows by the stochastic domination property (\cref{lem:domination}) and Markov's inequality that if $n \geq n_0$ then
\begin{multline*}
\P\left(\exists x \in \fP_0(0,2n)\setminus \fP_0(0,n) \text{ such that $\Gamma_0(x,0)$ is $\delta$-bad}\right) \\
\leq \frac{1}{m_0}\sum_y \P(\text{$y \in \fP_0(0,2n)$ and $\Gamma_0(y,0)$ is $\delta/2$-bad}) Q_0(m_0),
\end{multline*}
where $Q_0(m_0)$ is the probability that $\partial \fP_0(0,m_0) \neq \emptyset$.  If we consider sampling the $0$-wired uniform spanning forest using Wilson's algorithm starting with $y$, it follows from \cref{lem:weakl1} that there exists a constant $C$ such that
\begin{align*}
\P(\text{$y \in \fP_0(0,2n)$ and $\Gamma_0(y,0)$ is $\delta/2$-bad}) 
&= \P_y\bigl(|\LE(X^{\tau_0})| \leq 2n, \LE(X^{\tau_0}) \text{ is $\delta/2$-bad} \bigr)
\\
&\leq 2 \sum_{k=0}^{\lceil C n \log n \rceil} \P_y\bigl(X_k=0, \LE(X^k) \text{ is $\delta/2$-bad} \bigr).
\end{align*}
Applying the mass-transport principle and \cref{prop:rootedupper} yields that
\begin{align*}
&\P\left(\exists x \in \fP_0(0,2n)\setminus \fP_0(0,n) \text{ such that $\Gamma_0(x,0)$ is $\delta$-bad}\right) \\
&\hspace{6.5cm}\leq \frac{1}{m_0}\sum_{k=0}^{\lceil C n \log n \rceil} \P_0\bigl(\LE(X^k) \text{ is $\delta/2$-bad} \bigr) Q_0(m_0) 
\\&\hspace{6.5cm}\lesssim_{\delta,p} \frac{(\log n)^{2}}{n} \cdot (n\log n) \cdot \frac{1}{(\log n)^p} \cdot \frac{(\log n)^{2+2/3} \log \log n}{n}
\end{align*}
for every $n \geq n_0$, $\delta >0$ and $p \geq 0$, which clearly implies the claim. 
\end{proof}



\begin{proof}[Proof of \cref{prop:timeradius}]
We will prove the claim concerning $\sT(0)$, the claim concerning $\sT_0(0)$ following similarly. 
Fix $\delta>0$. It follows from the definitions that there exist positive constants $c_0$ and $c_1=c_1(\delta)$ such that the following hold for every $t\geq 2$:
\begin{enumerate}
\item If $\eta$ is any finite path of length at most $c_0 t / \log t$ then $\widetilde T(\eta) < t$. 
\item If $\eta$ is a $\delta$-good path of length at most $c_1t/(\log t)^{1/3+\delta}$ then $\widetilde T(\eta) < t$.
\end{enumerate}
Fix $t\geq 2$ and define $n_0=\lceil c_0 t/\log t \rceil$ and $n_1 = \lfloor c_1t/(\log t)^{1/3+\delta}\rfloor$.
Then we have by a union bound that
\begin{align*}
\P(\sT(0) \geq t) &\leq \P( \partial \fP(0,n_1) \neq \emptyset) + \P(\exists x \in \fP(0,n_1) \setminus \fP(0,n_0) \text{ such that } \Gamma(x,0) \text{ is $\delta$-bad})\\
&\leq \P( \partial \fP(0,n_1) \neq \emptyset) + \P(\exists x \in \fP_0(0,n_1) \setminus \fP_0(0,n_0) \text{ such that } \Gamma_0(x,0) \text{ is $\delta$-bad}),
\end{align*}
where we used the stochastic domination property (\cref{lem:domination}) in the second line.
Letting $k_0=\lfloor \log_2 n_0 \rfloor$ and $k_1 = \lceil \log_2 n_1 \rceil$, we deduce via a further union bound that
\begin{multline*}
\P(\sT(0) \geq t) \leq 
\P( \partial \fP(0,n_1) \neq \emptyset) \\+ \sum_{k=k_0}^{k_1} \P(\exists x \in \fP_0(0,2^{k+1}) \setminus \fP_0(0,2^k) \text{ such that } \Gamma_0(x,0) \text{ is $\delta$-bad}).
\end{multline*}
Applying \cref{prop:unrootedupper} to bound the first term and~\cref{lem:roughpaths2}  to bound the second yields that
\[
\P(\sT(0) \geq t) 
\lesssim_{\delta,p} \frac{(\log t)^{2/3+\delta}}{t} + \sum_{k=k_0}^{k_1} \frac{1}{k^p 2^k} \lesssim_{\delta,p} \frac{(\log t)^{2/3+\delta}}{t} + \frac{(\log t)^{1-p}}{t},
\]
and the claim follows easily by choosing an appropriately large value of $p$.
\end{proof}

\subsection{The extrinsic radius}
\label{subsec:extrinsicproof}

We now wish to apply the typical-time technology we developed in the previous subsection to prove \cref{prop:extrinsicupper}. (Strictly speaking, the proofs in this section are still conditional on \cref{lem:geomfixedtime}, which is proven in \cref{sec:bathpads}.)
Let $\fT$ and $\F_0$ be the uniform spanning tree and $0$-wired uniform spanning forest of $\Z^4$ respectively. 
For each $r \in [0,\infty)$ and $s,t \in [0,\infty]$, we define the events
\[\sE(r,s,t)=\Bigl\{\text{there exists $x\in \fP(0, \lfloor s \rfloor)$ with $T(x) \leq t$ and $\|x\|_\infty \geq r$}\Bigr\}\]
and similarly
\[\sE_0(r,s,t)=\Bigl\{\text{there exists $x\in \fP_0(0, \lfloor s \rfloor)$ with $T_0(x) \leq t$ and $\|x\|_\infty \geq r$}\Bigr\}.\]
Let $E(r,s,t)$ and $E_0(r,s,t)$ be the respective probabilities of these two events, which are decreasing in $r$ and increasing in $s$ and $t$. The stochastic domination property,~\cref{lem:domination}, implies that $E(r,s,t) \leq E_0(r,s,t)$ for every $r \in [0,\infty)$ and $s,t \in [0,\infty]$.   We have by \cref{prop:rootedupper,prop:timeradius} that 
\begin{equation}
\P(\fP_0(0)\cap \partial \Lambda_r \neq \emptyset) = E_0(r,\infty,\infty) \lesssim_\eps E_0(r,s,t) + \frac{(\log s)^{2/3+\eps}}{s} + \frac{(\log t)^{1+\eps}}{t}
\label{eq:E0_union_1}
\end{equation}
for every $\eps>0$ and $r,s,t \geq 2$, and similarly by \cref{prop:unrootedupper,prop:timeradius} again that
\begin{multline}
\P(\fP(0)\cap \partial \Lambda_r \neq \emptyset) = E(r,\infty,\infty) \lesssim_\eps E(r,s,t) + \frac{(\log s)^{1/3}}{s} + \frac{(\log t)^{2/3+\eps}}{t} \\\leq E_0(r,s,t) + \frac{(\log s)^{1/3}}{s} + \frac{(\log t)^{2/3+\epsilon}}{t} 
\label{eq:E_union_1}
\end{multline}
for every $\eps>0$ and $r,s,t\geq 2$. Thus, in order to prove \cref{prop:extrinsicupper}, it suffices to prove that 
\begin{equation}
\label{eq:lesscrude}
E_0\left(r,\frac{r^2}{(\log r)^{1/3} (\log \log r)^2},\frac{r^2}{(\log \log r)^2}\right) \lesssim_p \frac{1}{(\log r)^pr^2}
\end{equation}
for every $p\in \R$ and $r\geq 3$. 
(Indeed, it would suffice for our purposes to prove this estimate with $p=-1+\epsilon$; we state it this way to emphasise that there is a lot of room.)

We begin by applying the weak $L^1$ method to prove the following crude estimate. The only important feature of this estimate is that the right hand side is very small when $s \ll r^2 (\log r)^{-C}$ for sufficiently large  $C$.
%
\begin{lemma} 
\label{lem:crude}
There exists a positive constant $c$ such that 
$E_0(r,s,\infty) \lesssim s \log s \exp\left[-\frac{c r^2}{s \log s}\right]$ for every $r,s\geq 2$.
\end{lemma}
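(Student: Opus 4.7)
The plan is to combine a straightforward Markov union bound with the weak-$L^1$ machinery of \cref{lem:weakl1} and a Gaussian-type deviation estimate for simple random walk. Since the event $\sE_0(r,s,\infty)$ asserts the existence of some $v$ with $\|v\|_\infty \geq r$ lying in $\fP_0(0,\lfloor s\rfloor)$, the first moment bound gives
\[
E_0(r,s,\infty) \leq \sum_{\|v\|_\infty \geq r} \P\bigl(v \in \fP_0(0,\lfloor s\rfloor)\bigr).
\]
Sampling the $0$-WUSF via Wilson's algorithm started from $v$, the event $\{v \in \fP_0(0,\lfloor s\rfloor)\}$ translates into $\{\tau_0 < \infty\}\cap\{|\LE(X^{\tau_0})|\leq s\}$, so this sum equals $\sum_{\|v\|_\infty \geq r} \P_v\bigl(\tau_0 < \infty,\, |\LE(X^{\tau_0})| \leq s\bigr)$.

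Next, I would apply \cref{lem:weakl1} to trade the intrinsic-length constraint $|\LE(X^{\tau_0})| \leq s$ for a walk-time constraint, at a cost of a logarithmic factor. Indeed, the lemma yields a constant $C_1$ such that, conditional on $\tau_0 < \infty$ and $\LE(X^{\tau_0}) = \gamma$ with $|\gamma| \leq s$, the probability that $\tau_0 \geq C_1 s\log s$ is at most $\tfrac12$. Summing over such $\gamma$ and rearranging gives
\[
\P_v\bigl(\tau_0 < \infty,\, |\LE(X^{\tau_0})|\leq s\bigr) \leq 2\,\P_v\bigl(\tau_0 \leq C_1 s\log s\bigr).
\]
Setting $n = \lfloor C_1 s\log s\rfloor$, decomposing $\{\tau_0 \leq n\}$ according to the exact value of $\tau_0$, and applying time reversal of the simple random walk path,
\[
\sum_{\|v\|_\infty \geq r} \P_v(\tau_0 \leq n) = \sum_{k=0}^{n} \sum_{\|v\|_\infty \geq r} \P_0\bigl(X_k=v,\, X_j \neq 0 \text{ for } 1\leq j \leq k\bigr) \leq \sum_{k=0}^{n} \P_0\bigl(\|X_k\|_\infty \geq r\bigr).
\]

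To close, I would invoke the coordinate-wise Azuma--Hoeffding estimate (as in \eqref{eq:Azuma}) to get $\P_0(\|X_k\|_\infty \geq r) \lesssim \exp(-cr^2/k)$ uniformly in $k,r\geq 1$. Since $k \mapsto \exp(-cr^2/k)$ is increasing, the sum is dominated by the final term, giving
\[
\sum_{k=0}^n \P_0(\|X_k\|_\infty\geq r) \lesssim (n+1)\exp\!\left(-\frac{cr^2}{n}\right) \lesssim s\log s \cdot \exp\!\left(-\frac{c' r^2}{s\log s}\right),
\]
which is the desired conclusion after choosing $c'$ appropriately. The only step with genuine content is the weak-$L^1$ reduction that converts the intrinsic constraint $|\LE(X^{\tau_0})|\leq s$ into the time-budget $\tau_0 \leq C_1 s\log s$ (and accounts for the extra $\log s$ factor in the final estimate); everything else is a routine random-walk computation.
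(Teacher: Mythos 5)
Your proof is correct and follows essentially the same route as the paper: a Markov/union bound combined with Wilson's algorithm, the weak-$L^1$ estimate of \cref{lem:weakl1} to convert the constraint $|\LE(X^{\tau_0})|\leq s$ into the time budget $\tau_0\leq C_1 s\log s$, then time reversal and the Azuma bound \eqref{eq:Azuma}. The only (immaterial) difference is that you sum over all $v$ with $\|v\|_\infty\geq r$ whereas the paper sums only over $v\in\partial\Lambda_r$; both collapse to the same random-walk estimate after reversal.
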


\begin{proof}[Proof of \cref{lem:crude}] By Markov's inequality, the stochastic domination property (\cref{lem:domination}) and Wilson's algorithm we have that
\[
E_0(r,s,\infty) \leq
\sum_{v\in \partial \Lambda_r}  \P\left(v \in \fP_0(0,s)\right) =
\sum_{v\in \partial \Lambda_r} \P_v(\tau_0 <\infty, |\LE(X^{\tau_0})| \leq s).
\]
Applying \cref{lem:weakl1} yields that there exists a constant $C$ such that
\[
E(r,s,\infty) \leq 2 \sum_{v\in \partial \Lambda_r} \P_v(\tau_0 \leq C s \log s) = 
2 \sum_{v\in \partial \Lambda_r} \sum_{m=1}^{\lceil Cs\log s\rceil}\P_0(\tau_0^+ >m, X_m = v),
\]
and the claim follows from Azuma's inequality \eqref{eq:Azuma}.
\end{proof}

\begin{proof}[Proof of \cref{prop:extrinsicupper}]
It remains to only prove the estimate \eqref{eq:lesscrude}, the claim then following from \eqref{eq:E0_union_1} and \eqref{eq:E_union_1}. 
All the estimates in this proof will hold with a large amount of room.
Fix $r \geq 3$ and let $t_0 =  r^2/(\log \log r)^2$. For each 
$k\geq 1$ let $a_k = \lfloor \exp(k^{1/100})\rfloor$, let $\sD_k$ be the event that there exists $x \in \fP_0(0,a_{k+1})\setminus \fP_0(0,a_k)$ such that $T_0(x) \leq t_0$ and $\|x\|_\infty \geq r$, and let $D_k$ be the probability of $\sD_k$. If we define 
\[k_0= \left\lfloor \left(\log \frac{r^2}{(\log r)^3}\right)^{100}\right\rfloor \qquad \text{and} \qquad k_1 = \left\lceil \left(\log \frac{r^2}{(\log r)^{1/3} (\log \log r)^2}\right)^{100}\right\rceil\] then we have by a union bound that
\begin{equation}
\label{eq:Dk_union}
E_0\left(r,\frac{r^2}{(\log r)^{1/3} (\log \log r)^2},\frac{r^2}{(\log \log r)^2}\right)
\leq E_0\left(r,\frac{r^2}{(\log r)^3},\infty\right) + \sum_{k=k_0}^{k_1} D_k.
\end{equation}
The first term is superpolynomially small in $r$ by \cref{lem:crude}. We now bound the second term. For each $k_0 \leq k \leq k_1$ define 
\[b_k=a_{k+1}-a_k \asymp \frac{a_k}{k^{99/100}} \asymp \frac{a_k}{(\log a_k)^{99}}.\]
If the event $\sD_k$ holds then there must exist at least $b_k$ vertices $x \in \fP_0(0,a_k-b_k) \setminus \fP_0(0,a_k-2b_k)$ such that there exists $y \in \fP_0(x,3b_k) \setminus \fP_0(x,b_k)$ with $\|y\|_\infty \geq r$ and $T_0(x) \leq T_0(y) \leq t_0$. Decomposing according to whether or not $\|x\|_\infty \geq r/2$ and applying the stochastic domination property (\cref{lem:domination}) we deduce that
\begin{multline*}
D_k \leq \frac{1}{b_k} \E{\#\{x \in \fP_0(0,a_k-b_k) \setminus \fP_0(0,a_k-2b_k) : T_0(x) \leq t_0, \|x\|_\infty \geq r/2\}} Q_0(b_k)
\\
+ \frac{1}{b_k}\E{\#\{x \in \fP_0(0,a_k-b_k) \setminus \fP_0(0,a_k-2b_k) : T_0(x) \leq t_0 \}} E_0\left(\frac{r}{2},3b_k,\infty\right)
\end{multline*}
for every $k_0\leq k \leq k_1$, where again we write $Q_0(m)$ for the probability that $\partial \fP_0(0,m) \neq \emptyset$. We have by Wilson's algorithm and the definition of $T_0(x)$ that there exists a constant $C_1$ such that
\begin{align*}
\P\left(x \in \fP_0(0,a_k-b_k) \setminus \fP_0(0,a_k-2b_k), T_0(x) \leq t_0 \right)
&\leq \P(x \in \fP_0(0), T_0(x) \leq t_0) \\&\leq \P_x\left(\tau_0<\infty, T\bigl(\LE(X^{\tau_0})\bigr) \leq C_1 t_0\right),
\end{align*}
where $T\bigl(\LE(X^{\tau_0})\bigr)$ is the typical time of $\LE(X^{\tau_0})$, so that applying \cref{lem:typicaltime} yields that there exists a constant $C_2$ such that
\begin{multline*}
\P\left(x \in \fP_0(0,a_k-b_k) \setminus \fP_0(0,a_k-2b_k), T_0(x) \leq t_0 \right)
\leq 2\P_x(\tau_0 \leq C_2 t_0)
 =2\sum_{m=0}^{\lceil Ct_0 \rceil} \P_0(X_m=x)
\end{multline*}
for every $x\in \Z^4$. Since $t_0$, $b_k$, and $r^2$ all agree up to polylogarithmic factors of bounded exponent, we can sum over $x$ and apply Azuma's inequality \eqref{eq:Azuma} to deduce that there exist positive constants $c_1,c_2$, $c_3$, and $C_3$ such that
\begin{multline*}
D_k \lesssim \frac{t_0}{b_k} e^{-c_1 r^2/t_0} Q_0(b_k)
+ \frac{t_0}{b_k} E_0\left(\frac{r}{2},3b_k,\infty\right) 
\\\lesssim 
\frac{(\log r)^{C_3}}{r^2} \exp\left[-c_2 (\log \log r)^2\right]
+ r^2(\log r)^{C_3} \exp \left[ -c_3 (\log r)^{98}\right]
\\\lesssim 
\frac{(\log r)^{C_3}}{r^2} \exp\left[-c_2 (\log \log r)^2\right]
\end{multline*}
for every $k_0 \leq k \leq k_1$, where we used \cref{prop:rootedupper} to bound $Q_0(b_k)$ and used \cref{lem:crude} to bound $E_0(r/2,3b_k,\infty)$. Summing this estimate over $k_0\leq k \leq k_1$, substituting the resulting bound into \eqref{eq:Dk_union}, and using that $k_1$ is at most polylogarithmic in $r$, we deduce that there exists a constant $C_4$ such that
\begin{equation}
E_0\left(r,\frac{r^2}{(\log r)^{1/3} (\log \log r)^2},\frac{r^2}{(\log \log r)^2}\right) \\
\lesssim \frac{(\log r)^{C_4}}{r^2} \exp\left[-c_2 (\log \log r)^2\right],
\end{equation}
which decays faster than $r^{-2} (\log r)^{-p}$ for any $p\in \R$ as required.
\end{proof}

\subsection{Bad paths are highly unlikely}\label{sec:bathpads}

In this section we complete the proofs of \cref{prop:extrinsicupper,prop:timeradius} by proving Lemma~\ref{lem:geomfixedtime}. We start by recalling the following theorem proven implicitly by Lawler \cite{Lawlerlog}; stronger versions of the same theorem also appear explicitly in the work of Lawler, Sun, and Wu \cite{LawlerSunWu}.

\begin{theorem}[Lawler, \cite{Lawlerlog}]\label{thm:lawlerlog}
Let $S$ and $X$ be two independent simple random walks in $\Z^4$ both started from~$0$. For each $p\geq 1$ we have that 
\[
\E{\prcond{S[0,n^2]\cap \LE(X[1,\infty))=\emptyset}{X}{}^p } \asymp_p \frac{1}{(\log n)^{p/3} }
\]
for every $n\geq 2$.
\end{theorem}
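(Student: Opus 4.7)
The plan is to apply Fubini's theorem to rewrite the $p$-th moment as a $p$-fold non-intersection probability, and then to prove matching upper and lower bounds. Let $S^{(1)},\ldots,S^{(p)}$ be i.i.d.\ simple random walks independent of $X$, all starting from the origin, and write $f(X) = \prcond{S[0,n^2]\cap \LE(X[1,\infty))=\emptyset}{X}{}$. Fubini's theorem yields
\[
\E{f(X)^p} = \pr{\bigcap_{i=1}^p \bigl\{S^{(i)}[0,n^2] \cap \LE(X[1,\infty)) = \emptyset\bigr\}}.
\]

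The lower bound is comparatively easy. Jensen's inequality applied to the convex function $x\mapsto x^p$ (for $p\geq 1$) gives $\E{f(X)^p}\geq \E{f(X)}^p$, so it suffices to show $\E{f(X)}\asymp (\log n)^{-1/3}$. This is a straightforward adaptation of \cref{thm:logpaper}: on the positive-probability event that $X$ does not return to the origin we have $0\notin \LE(X[1,\infty))$, and the remaining intersection event can, up to $o((\log n)^{-1/3})$ exit-from-ball corrections, be compared with $\{S[0,n^2]\cap \LE(X^m) = \emptyset\}$ for $m\asymp n^2\log n$, since $S[0,n^2]\subseteq B(0, C n\sqrt{\log n})$ with overwhelming probability. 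Substituting this choice of $m$ into \cref{thm:logpaper} then yields $\E{f(X)}\asymp (\log n)^{-1/3}$.

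The upper bound is the main substance and requires a dyadic multi-scale decomposition. For each $k\geq 1$ set $\sigma_k = \inf\{t\geq 1: X_t\notin B(0,2^k)\}$ and $A_k = B(0,2^k)\setminus B(0,2^{k-1})$, and decompose the infinite LERW into annular segments $L_k := \LE(X^{\sigma_k})\cap A_k$. Combining the domain Markov property of LERW (\cref{thm:domain}) with a cut-time analysis of $X$ at dyadic scales (in the spirit of \cref{lem:Lawlercuttimes}) yields that, conditional on a high-probability regularity event, the segments $(L_k)$ are approximately independent across scales. For each $k$, a Harnack-type decoupling combined with Lawler's one-scale non-intersection estimate shows that the conditional probability that a single walk $S^{(i)}$ avoids $L_k$ while traversing $A_k$, given the avoidance at earlier scales, is $1-c/k+O(k^{-2})$ with $c=1/3$. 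Since the $S^{(i)}$ are mutually independent, the joint avoidance probability at scale $k$ is $(1-c/k)^p+O(k^{-2})$, and taking the product over $k=1,\ldots,\lfloor \log_2 n\rfloor$ produces the bound $\E{f^p}\lesssim_p (\log n)^{-p/3}$.

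The main obstacle is the rigorous justification of this scale-by-scale approximate independence. The segments $(L_k)$ are correlated through the underlying walk $X$, and the annulus-avoidance events for each $S^{(i)}$ at scale $k$ depend on the configuration of $S^{(i)}$ in earlier annuli; both sources of dependence must be handled simultaneously. The strategy is to use cut-time estimates for $X$ to decouple the $(L_k)$ and the parabolic Harnack inequality to decouple each annulus crossing of each $S^{(i)}$ from the earlier configuration, so that the scale-$k$ avoidance event is essentially determined by fresh randomness. These are the ingredients developed in~\cite{Lawlerlog} and refined in~\cite{LawlerSunWu}.
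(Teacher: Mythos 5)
First, note that the paper does not actually prove this theorem: it is stated with the attribution ``Lawler, \cite{Lawlerlog}'', and the accompanying remark explains only that the precise form proven in \cite{Lawlerlog} differs from the one stated here in a minor, inessential way (whether the origin is deleted from $S$ or from the loop-erasure of $X$), the two forms being equivalent and provable by the same argument. Your proposal, by contrast, attempts to reconstruct Lawler's proof from scratch, which is a much heavier undertaking than what the paper does here.

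Your lower bound via Jensen and the $p=1$ estimate (\cref{thm:logpaper}) is sound, and your upper-bound sketch correctly identifies the broad structure of Lawler's argument (dyadic annuli, cut-time decoupling of the LERW segments, Harnack decoupling of the annulus crossings of each $S^{(i)}$ from its earlier trajectory), but there is a genuine gap at the central step. Write $Z_k$ for the conditional probability, given $X$ (hence given $L_k$) and the earlier-scale data, that a single walk $S^{(i)}$ avoids $L_k$. Conditional on $X$, the walks $S^{(i)}$ are independent, so the scale-$k$ factor you actually need is $\E{Z_k^p}$, not $\E{Z_k}^p$. Your sketch passes from Lawler's one-scale estimate \emph{in expectation}, $\E{Z_k}\approx 1-c/k$, to ``the joint avoidance probability at scale $k$ is $(1-c/k)^p+O(k^{-2})$'' as though $Z_k$ were deterministic. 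Writing $Z_k=1-U_k/k$ with $U_k\geq 0$ and $\E{U_k}\approx c$, one has
\[
\E{Z_k^p} = 1 - \frac{p\,\E{U_k}}{k} + O\!\left(\frac{\E{U_k^2}}{k^2}\right),
\]
and the trivial bound $U_k\leq k$ gives only $\E{U_k^2}=O(k)$, so the error term is of the same order $1/k$ as the main term and the product over scales collapses. Proving that $U_k$ is essentially bounded --- i.e.\ that the per-scale escape probability for the LERW segment concentrates, rather than merely being correct on average --- is the technical heart of \cite{Lawlerlog} and is precisely what pins the exponent to $p/3$ rather than to some other function of $p$. Your sketch buries this inside the phrase ``is $1-c/k+O(k^{-2})$'' and the unspecified regularity event, which begs the question. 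Since your closing paragraph does acknowledge that the rigorous decoupling lives in \cite{Lawlerlog,LawlerSunWu}, the proposal is best read as an informed summary of Lawler's argument rather than a self-contained proof.
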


\begin{remark}
\rm{
The exact statement proven in~\cite{Lawlerlog}
is that 
\[
\E{ \prcond{S[1,n^2]\cap \LE(X[0,\infty))=\emptyset}{X}{}^p } \asymp_p \frac{1}{(\log n)^{p/3}}
\]
for every $p \geq 1$ and $n \geq 2$. 
Exactly the same proof as in~\cite{Lawlerlog} works to prove Theorem~\ref{thm:lawlerlog}. The two forms of the estimate are also easily seen to imply each other.
}
\end{remark}

In the following corollary, we use Lawler's result to get control on the non-intersection probability when the walk $X$ is only run for finite time.

\begin{corollary}\label{cor:lawlerlogonethrid}
	Let $S$ and $X$ be two independent simple random walks in $\Z^4$ started from $0$. For each $p\geq 1$ we have that
	\[
	\E{\prcond{S[0,i]\cap \ler{}{X[1,k]}=\emptyset}{X}{}^p}\lesssim_p \frac{1}{(\log (k\wedge i))^{p/3}}
	\]
  for every $i,k \geq 2$.
%
%
%
\end{corollary}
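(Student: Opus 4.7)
The plan is to reduce the estimate to \cref{thm:lawlerlog} via monotonicity together with a decomposition isolating the contribution from the tail of the infinite loop-erasure. Let $N = i\wedge k$ and $n = \lfloor \sqrt{N}/2\rfloor$.

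The key observation is the pointwise set inclusion $\LE_\infty(X[1,N])\subseteq \LE(X[1,k])$: any vertex $X_j$ with $j\le N$ that is never revisited in $(j,\infty)$ is, in particular, not revisited in $(j,k]$, and so lies in $\LE(X[1,k])$. Combined with the trivial monotonicity $S[0,N]\subseteq S[0,i]$ and the identity $\LE_\infty(X[1,N]) = \LE(X[1,\infty))^{\rho_N}$, this yields the pointwise bound
\[
\prcond{S[0,i]\cap \LE(X[1,k])=\emptyset}{X}{}
\;\le\;\prcond{S[0,N]\cap \LE(X[1,\infty))^{\rho_N}=\emptyset}{X}{}.
\]
Since $\LE(X[1,\infty))^{\rho_N}\subseteq \LE(X[1,\infty))$, a union bound dominates the right-hand side by $P+R$, where
$P=\prcond{S[0,N]\cap \LE(X[1,\infty))=\emptyset}{X}{}$ and $R=\prcond{S[0,N]\cap \LE(X[1,\infty))[\rho_N,\infty)\ne\emptyset}{X}{}$. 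Using the convexity bound $(a+b)^p\lesssim_p a^p+b^p$, the problem reduces to controlling $\E{P^p}$ and $\E{R^p}$ separately. The first is immediate from \cref{thm:lawlerlog} applied with $n=\lfloor\sqrt{N}/2\rfloor$, yielding $\E{P^p}\lesssim_p (\log N)^{-p/3}$, which is exactly the target order since $N=i\wedge k$.

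The main obstacle is the remainder term $\E{R^p}$. The heuristic is that, by \cref{lem:amounterased}, $\rho_N\asymp N/(\log N)^{1/3}$ with high probability, so that the vertex $\LE(X[1,\infty))_{\rho_N}$ lies at distance of order $\sqrt{N}$ from the origin (the same scale as the diffusive range of $S[0,N]$), and the loop-erasure past this index behaves essentially like a fresh infinite loop-erased walk emanating from that far point. Its intersection probability with $S[0,N]$ should therefore again be of order $(\log N)^{-1/3}$ with matching $p$-th moment concentration, in direct analogy with \cref{thm:logpaper} and \cref{thm:lawlerlog}. To make this rigorous, I would condition on the initial segment $\LE(X[1,\infty))^{\rho_N}$ using the domain Markov property (\cref{thm:domain}), combine the displacement estimate \cref{lem:displacement} with a parabolic Harnack–type decorrelation as in \cref{lem:forget} to show that the tail segment is well-approximated by an independent infinite LERW from a far point, and then invoke a suitable variant of \cref{thm:lawlerlog}. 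The delicate point, and the hard part of the argument, is that \cref{lem:amounterased} controls deviations of $\rho_N$ only down to order $\log\log N/(\log N)^{2/3}$, which is insufficient for large $p$; handling this likely requires either a bootstrap using the polylogarithmic deviation estimates of \cref{subsec:polygamous_deviants} or splitting $R$ further according to a geometric threshold that guarantees sufficient separation between $S[0,N]$ and the tail.
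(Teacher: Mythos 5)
Your reduction to Theorem \ref{thm:lawlerlog} is only half complete, and the half you leave open is the genuinely hard part. Writing $N=i\wedge k$, your decomposition bounds the quantity of interest by $\E{P^p}+\E{R^p}$ with $P=\P(S[0,N]\cap \LE(X[1,\infty))=\emptyset\mid X)$ and $R=\P\bigl(S[0,N]\cap \LE(X[1,\infty))[\rho_N,\infty)\neq\emptyset\mid X\bigr)$. The term $\E{P^p}$ is indeed immediate from Theorem \ref{thm:lawlerlog}, but $R$ is not a negligible error: the tail of the infinite loop-erasure beyond index $\rho_N$ starts at distance of order $\sqrt{N}$ from the origin, which is exactly the diffusive scale of $S[0,N]$, so $R$ is a critical same-scale intersection probability (annealed it is of order $1/\log N$, cf.\ \cite[Theorem 4.3.3]{Law91}). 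To close the argument you would need a quenched bound of the form $\E{R^p}\lesssim_p(\log N)^{-p/3}$ for all $p\geq 1$, and your proposal only offers a heuristic plus a plan (domain Markov property, a Harnack-type decorrelation as in Lemma \ref{lem:forget}, "a suitable variant of Theorem \ref{thm:lawlerlog}") whose difficulties you yourself flag as unresolved; note also that the crude bound $\E{R^p}\leq \E{R}\lesssim 1/\log N$ fails for $p>3$, and that no choice of $N$ in your scheme makes $R$ small, since you never exploit any separation between $i$ and $k$. A smaller but real issue: your justification of the inclusion $\LE_\infty(X[1,N])\subseteq\LE(X[1,k])$ is wrong as stated — a vertex $X_j$ that is never revisited need not belong to any loop-erasure (it may sit inside a loop that is later erased), and $\LE_\infty$ is not the set of never-revisited vertices — although the inclusion itself is true, being the statement that $\LE_\infty(X[1,N])=\LE(X)^{\rho_N}$ is an initial segment of $\LE(X[1,k])$ for $N\leq k$.

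The paper avoids your remainder term entirely by using the opposite containment together with a reduction in $i$. Since the left-hand side is decreasing in $i$, one may assume $i\leq k^{1/4}$. On the event $E=\{\LE(X[1,\infty))\cap B(0,i)\subseteq \LE(X[1,k])\}$, whose complement forces $X[k,\infty)$ to return to $B(0,i)$ and hence has probability $\lesssim i^2/k\leq i^{-2}$, every point of the infinite loop-erasure inside $B(0,i)$ already lies in the finite loop-erasure; since $S[0,i]\subseteq B(0,i)$, avoiding $\LE(X[1,k])$ then implies avoiding $\LE(X[1,\infty))$, and Minkowski's inequality plus Theorem \ref{thm:lawlerlog} gives the bound with an error $i^{-2/p}$ that is far below $(\log i)^{-1/3}$. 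In short, the paper's only error event is a simple random-walk return estimate, whereas your route requires a new quantitative non-intersection estimate at criticality; as it stands the treatment of $\E{R^p}$ is a genuine gap.
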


\begin{proof}[Proof]
Since the left hand side is decreasing in $i$ it suffices to prove the claim in the case $i \leq k^{1/4}$.
Fix $1 \leq i \leq k^{1/4}$, let $\eta=\ler{}{X[1,\infty)}$ and let $Z=\mathbb{P}(S[0,i]\cap \ler{}{X[1,k]}=\emptyset \mid X)$ be the random variable whose $p$-th moment we wish to estimate.
Let $\eta=\ler{}{X[1,\infty)}$.
Recall that $B(0,r)$ denotes the Euclidean ball of radius $r$. We define the event
\[
 E=\left\{ \eta \cap B(0,i)\subseteq \ler{}{X[1,k]}   \right\}.
\]
If $E$ does not hold then $X$ must hit $B(0,i)$ after time $k$, so that
\begin{align}\label{eq:proofeik}
	\pr{E^c} & \leq \pr{X[k,\infty)\cap B(0,i)\neq \emptyset} \lesssim \frac{i^2}{k} \leq \frac{1}{i^2}
\end{align}
by a standard random walk calculation (see e.g.\ the proof of \cite[Lemma 4.4]{1804.04120}).
Since $S[0,i]$ is certainly contained in the ball $B(0,i)$, we deduce by Minkowski's inequality (the triangle inequality for the $L^p$ norm) that
\begin{align*}
	\E{Z^p}^{1/p} &\leq \E{\prcond{S[0,i]\cap \ler{}{X[1,k]}=\emptyset}{X}{}^p\mathbbm{1}(E)}^{1/p} + \P(E^c)^{1/p}
  \\&
   \lesssim \E{\prcond{S[0,i]\cap \eta=\emptyset}{X}{}^p}^{1/p} + \frac{1}{i^{2/p}}
\end{align*}
for every $p\geq 1$, so that the claim follows from \cref{thm:lawlerlog}.
\end{proof}

We will deduce \cref{lem:geomfixedtime} from the following variation on the same estimate in which the walk is run for a geometric random number of steps.

\begin{lemma}\label{lem:badgeom}
Let $T$ be a geometric random variable with mean $t$, and let $X$ be an independent random walk in $\Z^4$ started from $0$. Then 
\[
\pr{\LE(X^T) \text{\emph{ is $\delta$-bad}}}\lesssim_{\delta,p} \frac{1}{(\log t)^p}
\]	
for every $\delta>0$, $p\geq 1$, and $t \geq 2$.
\end{lemma}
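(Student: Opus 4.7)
The strategy is a Markov-plus-moment argument whose main input is a finite-LERW analogue of Corollary~\ref{cor:lawlerlogonethrid}. Throughout, set $n:=|\LE(X^T)|$.

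\emph{Step 1: Restriction to typical scales.} Combining standard concentration of the geometric time $T$ around $t$ (so $T\in[t/2,2t]$ except with probability $\lesssim_p(\log t)^{-p}$) with a geometric-time version of \cref{lem:amounterased} obtained by conditioning on $T$, the event $n\notin[M_0,2M_0]$ with $M_0\asymp t/(\log t)^{1/3}$ has probability $\lesssim_p(\log t)^{-p}$ for every $p$, and I can restrict to the complementary event, on which $\log n\asymp \log t$.

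\emph{Step 2: Reduction to a moment bound on $\operatorname{Esc}_k$.} Since $\operatorname{Esc}_k(\eta^i)\leq 1$ we have the crude bound $A_{i,n}\leq \sum_{k=1}^n 1/k\lesssim \log n$ always, so the small-$i$ indices $i\leq i_0:=\lceil n/(\log n)^{100}\rceil$ contribute at most $i_0\log n\ll\delta n$ to the sum $\sum_iA_{i,n}\mathbbm{1}(A_{i,n}\geq(\log n)^{1/3+\delta})$, and hence on the bad event the same sum restricted to $i\geq i_0$ exceeds $\delta n/2$. Markov's inequality then gives
\begin{equation*}
\pr{\LE(X^T)\text{ is }\delta\text{-bad},\,n\in[M_0,2M_0]}\,\lesssim_\delta\,\frac{1}{M_0}\sum_{i=i_0}^{2M_0-1}\E{A_{i,n}\,\mathbbm{1}\!\left(A_{i,n}\geq(\log n)^{1/3+\delta},\,i<n\leq 2M_0\right)}.
\end{equation*}
Granted the key estimate
\begin{equation*}
\left\|\operatorname{Esc}_k(\LE(X^T)^i)\,\mathbbm{1}(i<n\leq 2M_0)\right\|_{2q}^{2q}\,\lesssim_q\,\frac{1}{(\log(k\wedge i))^{2q/3}}\qquad(i_0\leq i\leq 2M_0,\,1\leq k\leq 2M_0),
\end{equation*}
Minkowski's inequality gives $\|A_{i,n}\mathbbm{1}(i<n\leq 2M_0)\|_q\lesssim_q(\log t)^{1/3}$ for $i\geq i_0$; Markov applied to $A_{i,n}^q$ yields $\pr{A_{i,n}\geq(\log n)^{1/3+\delta},\,i<n\leq 2M_0}\lesssim_q(\log t)^{-\delta q}$, and H\"older combines these to give $\E{A_{i,n}\mathbbm{1}(A_{i,n}\geq(\log n)^{1/3+\delta},\,i<n\leq 2M_0)}\lesssim_{q,\delta}(\log t)^{1/3-\delta(q-1)}$. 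Summing over $i$, dividing by $M_0$, and choosing $q$ large enough that $\delta(q-1)\geq p+1/3$ completes the argument.

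\emph{Step 3 (the main obstacle): the moment estimate on $\operatorname{Esc}_k$.} This is a finite-LERW analogue of Corollary~\ref{cor:lawlerlogonethrid}, whose statement concerns an \emph{infinite} loop-erased walk. My plan is to use the reversibility of LERW (\cref{thm:reversibility} and its geometric form \eqref{eq:reversibility_geometric}) to realize $\LE(X^T)^{i-1}$, reparameterized from $\eta_i$, as (the initial portion of) the loop-erasure of a walk started at $\eta_i$ and run until hitting $0$. A cut-time/domain-Markov decorrelation argument in the spirit of \cref{lem:capgeom}, together with the cut-time estimate \cref{lem:Lawlercuttimes}, should allow one to couple the first $k\wedge i$ steps of this reparameterized LERW with the corresponding initial segment of a genuine infinite LERW started at $\eta_i$, with cut-time failure contributing only a negligible additive correction of order at most $(\log t)^{-1+\epsilon}$ which is easily absorbed at the cost of increasing the required exponent $q$. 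On the coupled event the $2q$ independent fresh walks of length $k$ from $\eta_i$ witnessing $\operatorname{Esc}_k(\LE(X^T)^i)^{2q}$ are independent of this infinite LERW, and Corollary~\ref{cor:lawlerlogonethrid} applied to the infinite LERW yields the required bound (the roles of $k$ and $i$ being symmetric in the $\log(k\wedge i)$ estimate).
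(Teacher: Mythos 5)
Your Step 3 is where the argument breaks down, and the breakdown is quantitative, not just a matter of missing details. The lemma demands decay $(\log t)^{-p}$ for \emph{every} $p$, and your plan is to access the law of the initial segment $\LE(X^T)^i$ via reversibility, the domain Markov property, and a cut-time coupling with an honest LERW, conceding a coupling-failure probability of order $(\log t)^{-1+\epsilon}$ (this is the best one can hope for from \cref{lem:Lawlercuttimes}, whose failure probability is only $\log\log/\log$). That additive error is fatal: whether you insert it inside the $2q$-th moment of $\operatorname{Esc}_k$ (after taking $q$-th roots it dominates $(\log(k\wedge i))^{-2/3}$ as soon as $q\geq 2$, so Minkowski over $k$ gives $\|A_{i,n}\|_q\approx\log t$ rather than $(\log t)^{1/3}$) or at the level of $\P(A_{i,n}\geq(\log n)^{1/3+\delta})$ (H\"older then caps $\E{A_{i,n}\mathbbm{1}(\cdot)}$ at about $(\log t)^{-2/3+\epsilon}$), the final bound saturates at a fixed negative power of $\log t$ and cannot be pushed to arbitrary $p$ by "increasing $q$". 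A smaller issue: \cref{cor:lawlerlogonethrid} is already a finite-LERW statement (it is \cref{thm:lawlerlog} that concerns the infinite loop-erasure), so the object you set out to manufacture by coupling is not what the corollary needs. Separately, your Step 1 asserts that $|\LE(X^T)|\notin[M_0,2M_0]$ with $M_0\asymp t(\log t)^{-1/3}$ has probability $\lesssim_p(\log t)^{-p}$ for all $p$; no such superpolylogarithmic concentration is available — \cref{lem:amounterased} only gives errors of order $\log\log n/(\log n)^{2/3}$ — though for your purposes a one-sided bound of the form $\P(|\LE(X^T)|<t/(\log t)^{p+1})\lesssim(\log t)^{-p}$ (which does follow from the weak-$L^1$ estimate for geometric times) would have sufficed.

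The paper's proof avoids any coupling or conditioning on the shape of $\LE(X^T)$ by an exact identity: writing $\ell_i$ for the last time $X^T$ visits $\LE(X^T)_i$, one has $\LE(X^T)^i=\LE(X^{\ell_i})$, so the quantity $A_i$ equals $Z_{\ell_i}$ where $Z_m=\sum_k k^{-1}\operatorname{Esc}_k(\LE(X^m))^2$ is indexed by a \emph{deterministic} walk time $m$. On the bad event the sum $\sum_{m<T}Z_m\mathbbm{1}(Z_m\geq c(\log t)^{1/3+\delta})$ exceeds $\delta t/(\log t)^{p+1}$, and Markov reduces everything to bounding $\E{Z_m\mathbbm{1}(Z_m\geq c(\log t)^{1/3+\delta})}$ for each fixed $m$. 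Exact reversibility of LERW turns $\E{\operatorname{Esc}_k(\LE(X^m))^q}$ into precisely the quantity $\E{\P(S[0,k]\cap\LE(X[1,m])=\emptyset\mid X)^q}$ of \cref{cor:lawlerlogonethrid}, with no error terms at all, and then Minkowski plus H\"older with large exponents delivers arbitrary polylogarithmic decay; small $m$ is handled by the trivial bound $Z_m\lesssim\log t$. If you want to salvage your outline, the step to import is this replacement of the sum over LERW indices $i$ by a sum over deterministic walk times $m$ — it is exactly what removes the need for the coupling whose error your argument cannot afford.
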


Before starting the proof, we observe that the same argument used in the proof of \cref{lem:weakl1} also implies that there exists a constant $C$ such that if $T$ is a geometric random time (of any finite mean) and $\eta$ is a finite simple path starting at $x \in \Z^4$ then
\begin{equation}
\label{eq:weakl1geometric}
\P( T \geq m \mid \LE(X^T)=\eta) \leq \frac{C |\eta| \log(|\eta|+1)}{m}
\end{equation}
for every $m\geq 1$.

\begin{proof}[Proof of \cref{lem:badgeom}]
Fix $\delta>0$, $t\geq 2$, and $p\geq 1$. Then we have the union bound
\begin{multline*}
	\pr{\text{$\LE(X^T)$ is $\delta$-bad}} \leq \pr{\text{$\LE(X^T)$ is $\delta$-bad},\, |\LE(X^T)| \geq \frac{t}{(\log t)^{p+1}}, \text{ and $T\leq t(\log t)^2$}} 
	\\+ \pr{|\LE(X^T)|<\frac{t}{(\log t)^{p+1}}}
	 + \pr{T>t(\log t)^2}.
\end{multline*}
The final term is superpolynomially small in $t$ and hence is negligible for our purposes. Meanwhile, for the second term, \eqref{eq:weakl1geometric} implies that there exists a constant $C$ such that
\begin{align*}
	\pr{|\LE(X^T)|<\frac{t}{(\log t)^{p+1}}} \leq 2 \pr{T\leq  \frac{Ct}{(\log t)^{p}}} \lesssim \frac{1}{(\log t)^{p}}.
\end{align*}
as required.
  It therefore suffices to prove that
	\begin{align}\label{eq:deltabadgeom}
		\pr{\text{$\LE(X^T)$ is $\delta$-bad},\, |\LE(X^T)| \geq \frac{t}{(\log t)^{p+1}}, \text{ and $T\leq t(\log t)^2$}} \lesssim_{\delta,p} \frac{1}{(\log t)^p}.
	\end{align}

Let $E$ denote the event whose probability we wish to bound, let $\rho =|\LE(X^T)|$, and for each $0\leq i \leq \rho-1$ and $1\leq k \leq \rho$ let $A_{i} = \sum_{k=1}^\rho \frac{1}{k}\operatorname{Esc}_k(\LE(X^T)^i)^2$. In order for $\LE(X^T)$ to be $\delta$-bad, we must have by the definitions that
\begin{align*}
\sum_{i=0}^{\rho-1} A_{i}\mathbbm{1}\left(A_{i} \geq (\log \rho)^{1/3+\delta}\right)
>\delta \rho.
\end{align*}
Let $M = \lceil t (\log t)^2 \rceil$ and for each $0\leq m \leq M$ and $1 \leq k \leq M$ consider the random variables
\[
W_{m,k} =\operatorname{Esc}_k\left(\LE(X^m)\right) \qquad \text{ and } \qquad Z_m = \sum_{k=1}^{M} \frac{1}{k} W_{m,k}^2.
\]
For each $0\leq i \leq \rho$, let $\ell_i=\ell_i(X^T)$ be the last time that $X^T$ visits $\LE(X^T)_i$.
Since $\LE(X^{\ell_i})=\LE(X^T)^i$ for every $0\leq i \leq \rho$, we have that $Z_{\ell_i}=A_i$ for every $0\leq i \leq \rho-1$ and hence that if $\LE(X^T)$ is $\delta$-bad then
\begin{align*}
\sum_{m=0}^{T-1}Z_m \mathbbm{1}(Z_m \geq (\log \rho)^{1/3+\delta})
   > \delta \rho.
\end{align*}
It follows that there exists a positive constant $c$ such that if $E$ holds then
\begin{align*}
	\sum_{m=0}^{M-1} Z_m \mathbbm{1}(Z_m \geq c(\log t)^{1/3+\delta})  > \frac{\delta t}{(\log t)^{p+1}}.
\end{align*}
To conclude the proof of \eqref{eq:deltabadgeom}, it therefore suffices by Markov's inequality to prove that 
\begin{equation}
\sum_{m=0}^{M-1} \E{Z_m \mathbbm{1}(Z_m \geq c(\log t)^{1/3+\delta})} \lesssim_{\delta,p} \frac{t}{(\log t)^{2p+1}}. 
\end{equation}
This estimate will be deduced from \cref{cor:lawlerlogonethrid} via a simple computation with $L^q$ norms. It follows from the reversibility of LERW and~\cref{cor:lawlerlogonethrid} that 
\begin{align*}
	\E{W_{m,k}^q} = \E{\left(\prcond{S[0,k]\cap \LE(X[1,m])=\emptyset}{X}{0}\right)^q}\lesssim_q \frac{1}{(\log (k\wedge m))^{q/3}}.
\end{align*}
for every $q \geq 1$, $0\leq m \leq M$, and $1 \leq k \leq M$. Using Minkowski's inequality  to sum this estimate over $k$, we obtain that
%
\begin{multline}\label{eq:boundonzinorm}
	\norm{Z_m}_q \leq \sum_{k=1}^{M}\frac{1}{k}\norm{W_{m,k}^2}_q \lesssim_q \sum_{k=1}^{m}\frac{1}{k} \cdot \frac{1}{(\log k)^{2/3}} + \sum_{k=m}^{M}\frac{1}{k}\cdot \frac{1}{(\log m)^{2/3}}\\\lesssim (\log m)^{1/3} + \frac{\log M}{(\log m)^{2/3}} 
\end{multline}
for every $0\leq m \leq M$ and $q\geq 1$. Letting $m_0 = \lfloor t/(\log t)^{2p+2} \rfloor$, we deduce 
that if $m_0 \leq m \leq M$ then $\|Z_m\|_q \lesssim_q (\log M)^{1/3}$ for every $q\geq 1$. It follows by H\"older's inequality that 
\begin{align*}\label{eq:holderineq}
	\E{Z_m \1(Z_m \geq c(\log t)^{1/3+\delta})} &\leq
  \E{Z_m^{q}}^{1/q} \cdot \pr{Z_m\geq c(\log t)^{1/3+\delta}}^{(q-1)/q}\\
	&\lesssim_q (\log t)^{1/3} \cdot\left(\frac{\E{Z_m^{u}}}{(\log t)^{u/3+ u\delta}} \right)^{(q-1)/q} \lesssim_{q,u} (\log t)^{1/3-\delta u(q-1)/q}
\end{align*}
for every $m_0 \leq m \leq M$ and $q,u> 1$. Choosing $q,u> 1$ appropriately yields that 
\[
\E{Z_m \1(Z_m \geq c(\log t)^{1/3+\delta})} \lesssim_{p,\delta} \frac{1}{(\log t)^{2p+3}}
\]
for every $m_0 \leq m \leq M$. 
Since we also trivially have that $Z_m \lesssim \log t$ for every $0\leq m \leq M$, we obtain that 
\[
\sum_{m=0}^M \E{Z_m \1(Z_m \geq c(\log t)^{1/3+\delta})} \lesssim_{\delta,p} m_0 \log t + \frac{M}{(\log t)^{2p+3}} \lesssim_{\delta,p} \frac{t}{(\log t)^{2p+1}}
\]
as required, completing the proof.
\end{proof}

\begin{proof}[Proof of Lemma~\ref{lem:geomfixedtime}]
Fix $\delta>0$ and $p\geq 1$.
Let $T$ be a geometric random variable of mean $n\geq 2$ independent of $X$. Summing 
 over the possible values of $T$ and applying \cref{lem:badgeom} gives that
\begin{multline*}
	\frac{1}{n}\sum_{k=0}^{n} \prstart{\LE(X^k) \text{ is $\delta$-bad}}{0} \lesssim \sum_{k=0}^{n} \frac{1}{n}\left( 1 -\frac{1}{n}\right)^k \cdot \prstart{\LE(X^k) \text{ is $\delta$-bad}}{0} \\ \leq \prstart{\LE(X^T) \text{{ is $\delta$-bad}}}{0} \lesssim_{\delta,p} \frac{1}{(\log n)^{p}}
\end{multline*}
for every $n\geq 2$ as required.
\end{proof}

\subsection*{Acknowledgments} This work was carried out while TH was a Herchel Smith Postdoctoral Research Fellow at the University of Cambridge  and a Junior Research Fellow at Trinity College Cambridge. PS's research was supported by the Engineering and Physical Sciences Research Council: EP/R022615/1.

\phantomsection

\addcontentsline{toc}{part}{References}

%

 \setstretch{1}
 \footnotesize{
  \bibliographystyle{abbrv}
  \bibliography{TomsRefs,biblio}
  }

  Data availability statement: This manuscript has no assosciated data.

\end{document}